\DeclareSymbolFont{bbold}{U}{bbold}{m}{n}
\DeclareSymbolFontAlphabet{\mathbbold}{bbold}
\DeclareSymbolFontAlphabet{\amsbb}{AMSb}
\renewcommand{\mathbb}[1]{\amsbb{#1}}
\newtheorem{theorem}{Theorem}
\numberwithin{theorem}{section}
\newtheorem{thm}[theorem]{Theorem}
\newtheorem{proposition}[theorem]{Proposition}
\newtheorem{propn}[theorem]{Proposition}
\newtheorem{corollary}[theorem]{Corollary}
\newtheorem{cor}[theorem]{Corollary}
\newtheorem{lemma}[theorem]{Lemma}
\theoremstyle{definition}
\newtheorem{definition}[theorem]{Definition}
\newtheorem{defn}[theorem]{Definition}
\newtheorem{example}[theorem]{Example}
\newtheorem{notation}[theorem]{Notation}
\newtheorem{remark}[theorem]{Remark}
\providecommand{\op}{\mathrm{op}}
\providecommand{\xel}{\mathrm{el}}
\providecommand{\xint}{\mathrm{int}}
\newcommand{\act}{\name{act}}
\newcommand{\Act}{\name{Act}}
\newcommand{\el}{\name{el}}
\DeclareMathOperator{\colimP}{colim}
\newcommand{\colim}{\mathop{\colimP}}
\newcommand{\PSh}{\operatorname{P}}
\newcommand{\LPSh}{\widehat{\PSh}}
\newcommand{\Map}{\operatorname{Map}}
\newcommand{\xAlg}{\operatorname{Alg}}
\newcommand{\xMon}{\operatorname{Mon}}
\newcommand{\sSet}{\Set_{\Delta}}
\newcommand{\xxO}{\mathcal{O}}
\newcommand{\xcc}{\mathcal{C}}
\newcommand{\xS}{\mathcal{S}}
\newcommand{\xV}{\mathcal{V}}
\newcommand{\id}{\operatorname{id}}
\newcommand{\cocart}{\name{cocart}}
\newcommand{\Ass}{\name{Ass}}
\newcommand{\Bimod}{\name{Bimod}}
\newcommand{\CMod}{\name{CMod}}
\newcommand{\xF}{\mathbb{F}}
\newcommand{\icat}{$\infty$-category}
\newcommand{\igpd}{$\infty$-groupoid}
\newcommand{\igpds}{$\infty$-groupoids}
\newcommand{\icats}{$\infty$-categories}
\newcommand{\iopd}{$\infty$-operad}
\newcommand{\iopds}{$\infty$-operads}
\newcommand{\isoto}{\xrightarrow{\sim}}
\newcommand{\xto}[1]{\xrightarrow{#1}}
\def\@tocline#1#2#3#4#5#6#7{\relax
  \ifnum #1>\c@tocdepth 
  \else
  \par \addpenalty\@secpenalty\addvspace{#2}%
  \begingroup \hyphenpenalty\@M
  \@ifempty{#4}{%
    \@tempdima\csname r@tocindent\number#1\endcsname\relax
  }{%
  \@tempdima#4\relax
}%
\parindent\z@ \leftskip#3\relax \advance\leftskip\@tempdima\relax
\rightskip\@pnumwidth plus4em \parfillskip-\@pnumwidth
#5\leavevmode\hskip-\@tempdima
\ifcase #1
\or \hskip -1em \or \hskip 1em \or \hskip 3em \else \hskip 5em \fi%
#6\nobreak\relax
\hfill\hbox to\@pnumwidth{\@tocpagenum{#7}}
\par
\nobreak
\endgroup
\fi}
\newcommand{\name}[1]{\ensuremath{\text{\textup{#1}}}}
\newcommand{\simp}{\bbDelta}
\newcommand{\Dop}{\simp^{\op}}
\newcommand{\bbO}{\bbOmega}
\newcommand{\Set}{\name{Set}}
\newcommand{\Mon}{\name{Mon}}
\newcommand{\Fun}{\name{Fun}}
\newcommand{\blank}{\text{\textendash}}
\newcommand{\Cat}{\name{Cat}}
\newcommand{\CatI}{\Cat_{\infty}}
\newcommand{\LCatI}{\widehat{\Cat}_{\infty}}
\newcommand{\IFF}{if and only if}
\newcommand{\Alg}{\name{Alg}}
\newcommand{\eg}{e.g.\@}
\newcommand{\ie}{i.e.\@}
\newcommand{\angled}[1]{\langle #1 \rangle}
\newcommand{\RFib}{\name{RFib}}
\newcommand{\LRFib}{\widehat{\RFib}}
\newcommand{\RSl}{\name{RSl}}
\newcommand{\ev}{\name{ev}}
\newcommand{\actto}{\rightsquigarrow}
\newcommand{\intto}{\rightarrowtail}
\def\MT_leftarrow_fill:{%
  \arrowfill@\leftarrow\relbar\relbar}
\def\MT_rightarrow_fill:{%
  \arrowfill@\relbar\relbar\rightarrow}
\newcommand{\xrightleftarrows}[2][]{\mathrel{%
  \raise.55ex\hbox{%
    $\ext@arrow 0359\MT_rightarrow_fill:{\phantom{#1}}{#2}$}%
  \setbox0=\hbox{%
    $\ext@arrow 3095\MT_leftarrow_fill:{#1}{\phantom{#2}}$}%
  \kern-\wd0 \lower.55ex\box0}}
\title{Free Algebras Through Day Convolution}
\author{Hongyi Chu}
\address{Max Planck Institute for Mathematics, Bonn, Germany}
\author{Rune Haugseng}
\address{Norwegian University of Science and Technology (NTNU), Trondheim, Norway}
\date{\today}
\begin{document}
\begin{abstract}
  Building on the foundations in our previous paper, we study Segal
  conditions that are given by finite products, determined by
  structures we call cartesian patterns. We set up Day convolution on
  presheaves in this setting and use it to give conditions under which
  there is a colimit formula for free algebras and other left
  adjoints. This specializes to give a simple proof of Lurie's results
  on operadic left Kan extensions and free algebras for symmetric
  \iopds{}.
\end{abstract}

\maketitle
\tableofcontents

\section{Introduction}
A key feature of symmetric \iopds{}, as defined in Lurie's book
\cite{ha}, is that there is an explicit formula for their free
algebras. More generally, for any morphism
$f \colon \mathcal{O} \to \mathcal{P}$ of \iopds{} there is a formula
for the corresponding left operadic Kan extension, \ie{} the left
adjoint to the functor
$f^{*} \colon \Alg_{\mathcal{P}}(\mathcal{C})\to
\Alg_{\mathcal{O}}(\mathcal{C})$ between \icats{} of algebras given by
composition with $f$. However, the construction of these left adjoints
in \cite{ha} is by a cumbersome simplex-by-simplex induction using a
delicate analysis of the inert--active factorization system on finite
pointed sets. Part of our goal in this paper is to give a new, simpler
construction of these left adjoints in the following three steps:
\begin{enumerate}[(1)]
\item We first consider algebras in the \icat{} of spaces (with the cartesian
  monoidal structure). Here it is easy to see that the left adjoint is
  just given by an ordinary left
  Kan extension.
\item Next we consider algebras in presheaves on small symmetric
  monoidal \icats{}. Here the universal property of the Day
  convolution structure allows us to reduce to the previous case.
\item Finally, we use that any presentably symmetric monoidal \icat{} is a
  symmetric monoidal localization of a presheaf \icat{} with Day
  convolution. Since the localization functor is symmetric monoidal
  and colimit-preserving, we can use it to transport the colimit
  formula for the left adjoint from the previous step.
\end{enumerate}
Symmetric \iopds{} in Lurie's sense are certain \icats{} over the
category $\xF_{*}$ of pointed finite sets; these are, in a sense, the
universal objects that have algebras in symmetric monoidal
\icats{}. In practice, however, it can be useful to describe algebraic
structures by more general \icats{} over $\xF_{*}$.  For example, we
can sometimes find a combinatorially simpler description by using an
\icat{} that is not an \iopd{}; as a somewhat trivial example,
associative algebras can be described in terms of the simplex category
$\Dop$, which is less complicated than the symmetric associative
operad. In other cases, even though it is formally known that a
certain structure is described by a symmetric \iopd{}, this object may
be difficult to describe explicitly; for instance, the structure of $n$
compatible associative algebra structures can trivially be described
in terms of the product $\simp^{n,\op}$, while describing the
associated \iopd{} $\mathbb{E}_{n}$ amounts to proving the Dunn--Lurie
additivity theorem.

It is therefore desirable to understand when free algebras and other
left adjoints can be described by an explicit formula without passing
to the associated symmetric \iopds{}. Our main goal in this paper is
to obtain a simple criterion for this by following the same three
steps we outlined above.

We start in \S\ref{sec:cartpatt} by making precise the class of
\icats{} over $\xF_{*}$ we are interested in, which we call
\emph{cartesian patterns}. This builds on the foundations in our
previous paper \cite{patterns}, where we studied Segal
conditions in general; here we are interested in those Segal-type
limit conditions that are given by finite products. We then give some
examples of cartesian patterns in \S\ref{sec:pattex} and introduce
monoidal \icats{} over a cartesian pattern and algebras therein in
\S\ref{sec:Omonoidal}. 

The bulk of the paper is then taken up by extending to the setting of
general cartesian patterns the results we need to carry out our proof
strategy:
\begin{itemize}
\item In \S\ref{sec:algmonoid} we show that if $\mathcal{O}$ is a
  cartesian pattern and $\mathcal{C}$ is an \icat{} with finite
  products, then $\mathcal{O}$-monoids in $\mathcal{C}$ are equivalent
  to $\mathcal{O}$-algebras in an $\mathcal{O}$-monoidal structure on
  $\mathcal{C}$ given by cartesian products.
\item In \S\ref{sec:dayconv} we introduce Day convolution on
  presheaves for monoidal \icats{} over a cartesian pattern.
\item In \S\ref{sec:monloc} we study monoidal localizations over a
  cartesian pattern and prove that any presentably monoidal \icat{} is
  a monoidal localization of a Day convolution structure.
\end{itemize}
These foundations allow us to prove our main result in
\S\ref{sec:freealg}:
\begin{thm}\ 
  \begin{enumerate}[(i)]
  \item Suppose $f \colon \mathcal{O} \to \mathcal{P}$ is a morphism
    of cartesian patterns that is \emph{extendable} in the sense of
    \cref{def ext} and $\mathcal{V}$ is a presentably
    $\mathcal{P}$-monoidal \icat{}. Then the functor
    \[ f^{*} \colon \Alg_{\mathcal{P}}(\mathcal{V}) \to
      \Alg_{\mathcal{O}/\mathcal{P}}(\mathcal{V})\]
    between \icats{} of algebras given by restriction along $f$
    has a left adjoint $f_{!}$, which for $P \in \mathcal{P}^{\el}$ satisfies
    \[ (f_{!}A)(P) \simeq \colim_{(O,\,\phi \colon \! f(O) \actto P) \in
        \mathcal{O}^{\act}_{/P}} \phi_{!}A(O).\]
  \item Suppose $\mathcal{O}$ is a cartesian pattern that is
    \emph{extendable} in the sense of \cref{def ext patt} and
    $\mathcal{V}$ is a presentably $\mathcal{O}$-monoidal
    \icat{}. Then the functor
    \[ U_{\mathcal{O}} \colon \Alg_{\mathcal{O}}(\mathcal{V}) \to
      \Fun_{/\mathcal{O}^{\el}}(\mathcal{O}^{\el}, \mathcal{V}) \]
    given by restriction to the subcategory $\mathcal{O}^{\el}$ of
    elementary objects has a left adjoint $F_{\mathcal{O}}$, which for
    $E \in \mathcal{O}^{\el}$ satisfies
    \[ U_{\mathcal{O}}F_{\mathcal{O}}\Phi(E) \simeq \colim_{\phi
        \colon \! O \actto E\, \in\, \Act_{\mathcal{O}}(E)}
      \phi_{!}(\Phi(O_{1}),\ldots,\Phi(O_{n})). \] Moreover, the
    adjunction $F_{\mathcal{O}} \dashv U_{\mathcal{O}}$ is monadic.
  \end{enumerate}
\end{thm}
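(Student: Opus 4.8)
The plan is to run the three-step argument sketched in the introduction for part~(i), treating spaces, then Day convolution presheaves, then general presentably monoidal \icats{}, and to obtain part~(ii) as the special case of the morphism from the trivial pattern on $\mathcal{O}^{\el}$ into $\mathcal{O}$. \emph{Spaces.} First I take $\mathcal{V}=\mathcal{S}$ with its cartesian $\mathcal{P}$-monoidal structure. By the comparison of \S\ref{sec:algmonoid}, $\Alg_{\mathcal{P}}(\mathcal{S})$ and $\Alg_{\mathcal{O}/\mathcal{P}}(\mathcal{S})$ are identified with the \icats{} of $\mathcal{P}$- and $\mathcal{O}$-monoids in $\mathcal{S}$, \ie{} presheaves satisfying the product Segal condition, and $f^{*}$ becomes restriction along $f$. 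Its left adjoint on all presheaves is the pointwise left Kan extension $\mathrm{Lan}_{f}$, and the heart of this step is to check that when $f$ is extendable $\mathrm{Lan}_{f}$ again satisfies the Segal condition — so that it computes $f_{!}$ — and that, for $P\in\mathcal{P}^{\el}$, the colimit over the comma \icat{} $\mathcal{O}\times_{\mathcal{P}}\mathcal{P}_{/P}$ reduces, using the Segal condition on the target monoid to absorb the inert parts of morphisms, to the colimit over $\mathcal{O}^{\act}_{/P}$. The pointwise formula for $\mathrm{Lan}_{f}$ after this reduction is precisely the asserted colimit, with $\phi_{!}$ the cartesian-product pushforward.

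\emph{Presheaves, then general presentably monoidal \icats{}.} For $\mathcal{M}$ a small $\mathcal{P}$-monoidal \icat{} and $\mathcal{V}=\PSh(\mathcal{M})$ with the Day convolution structure of \S\ref{sec:dayconv}, the universal property of Day convolution identifies $\Alg_{\mathcal{O}}(\PSh(\mathcal{M}))$ and $\Alg_{\mathcal{O}/\mathcal{P}}(\PSh(\mathcal{M}))$ with categories of algebras for a cartesian pattern built from $\mathcal{O}$ and $\mathcal{M}$ with values in $\mathcal{S}$, compatibly with restriction along $f$ and — since the active pushforwards are part of the monoidal structure and Day convolution is natural in $\mathcal{M}$ — with the $\phi_{!}$; applying the previous step and unwinding then gives $f_{!}$ and its formula over $\PSh(\mathcal{M})$. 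For a general presentably $\mathcal{P}$-monoidal $\mathcal{V}$, \S\ref{sec:monloc} presents $\mathcal{V}$ as a $\mathcal{P}$-monoidal localization $L\colon\PSh(\mathcal{M})\to\mathcal{V}$ with fully faithful, lax $\mathcal{P}$-monoidal right adjoint $i$; this induces adjunctions $L_{*}\dashv i_{*}$ on $\mathcal{P}$- and on $\mathcal{O}/\mathcal{P}$-algebras with $i_{*}$ fully faithful, and $i_{*}f^{*}\simeq f^{*}i_{*}$ since restriction commutes with post-composition. A routine chain of adjunction equivalences, using these two facts, exhibits $B\mapsto L_{*}(f_{!}^{\PSh}(i_{*}B))$ as a left adjoint $f_{!}$ to $f^{*}$ on $\mathcal{V}$ (read from right to left, so that existence is not presupposed). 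Evaluating at $P\in\mathcal{P}^{\el}$, and using that $L$ preserves colimits, is $\mathcal{P}$-monoidal (hence intertwines the $\phi_{!}$) and satisfies $Li\simeq\id$, converts the $\PSh(\mathcal{M})$-formula into
\[ (f_{!}B)(P) \simeq L\Bigl(\colim_{(O,\phi)\in\mathcal{O}^{\act}_{/P}}\phi_{!}^{\PSh}\bigl(iB(O)\bigr)\Bigr) \simeq \colim_{(O,\phi)\in\mathcal{O}^{\act}_{/P}}\phi_{!}^{\mathcal{V}}B(O), \]
which is the claim of~(i).

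\emph{Part (ii).} View the inclusion $\mathcal{O}^{\el}\hookrightarrow\mathcal{O}$ as the morphism of cartesian patterns out of the trivial pattern on $\mathcal{O}^{\el}$; it is extendable in the sense of \cref{def ext} exactly when $\mathcal{O}$ is extendable in the sense of \cref{def ext patt}, and under it $\Alg_{\mathcal{O}^{\el}/\mathcal{O}}(\mathcal{V})\simeq\Fun_{/\mathcal{O}^{\el}}(\mathcal{O}^{\el},\mathcal{V})$ and $f^{*}=U_{\mathcal{O}}$, so part~(i) supplies $F_{\mathcal{O}}=f_{!}$ together with its formula — the indexing \icat{} now being that of active maps $O=(O_{1},\dots,O_{n})\actto E$ out of tuples of elementary objects, with $\phi_{!}(\Phi(O))=\phi_{!}(\Phi(O_{1}),\dots,\Phi(O_{n}))$. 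For monadicity of $F_{\mathcal{O}}\dashv U_{\mathcal{O}}$ I invoke the Barr--Beck--Lurie theorem: $U_{\mathcal{O}}$ is conservative because an $\mathcal{O}$-algebra, and a morphism of such, is determined by its restriction to $\mathcal{O}^{\el}$ via the Segal condition; and $U_{\mathcal{O}}$ preserves $U_{\mathcal{O}}$-split — indeed all sifted — colimits, since these are computed pointwise in $\Alg_{\mathcal{O}}(\mathcal{V})$ and $U_{\mathcal{O}}$ is a restriction functor.

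The technical core, and the only genuinely hard part, is the Spaces step together with its transfer through Day convolution: proving that extendability is precisely the hypothesis making the pointwise left Kan extension preserve the product Segal condition, and that the resulting colimit is the active-maps colimit with the stated pushforwards. Once that is in place, the passage to general presentably monoidal $\mathcal{V}$ and the deduction of part~(ii) are formal.
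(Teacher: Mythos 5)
Your part (i) is essentially the paper's argument: monoids in $\mathcal{S}$ via left Kan extension and extendability, transfer to Day convolution via the identification of algebras in $\PSh_{\mathcal{P}}(\mathcal{C})^{\otimes}$ with $\mathcal{C}^{\op,\otimes}$-monoids in $\mathcal{S}$ and the extendable morphism $f^{*}\mathcal{C}^{\op,\otimes} \to \mathcal{C}^{\op,\otimes}$, then descent along an $\mathcal{O}$-monoidal localization exactly as in \cref{cor:f!alg}. The sketchy points there (cofinality of $\mathcal{O}^{\act}_{/P} \subseteq \mathcal{O}\times_{\mathcal{P}}\mathcal{P}_{/P}$, which comes from unique inert lifting rather than from any Segal condition on the target, and the identification of the inner colimit with the Day convolution pushforward) are acceptable at this level of detail.

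Part (ii), however, contains a genuine error: you reduce to part (i) using the morphism $\mathcal{O}^{\el}\hookrightarrow\mathcal{O}$ and claim it is extendable in the sense of \cref{def ext} exactly when $\mathcal{O}$ is extendable in the sense of \cref{def ext patt}. This is false. First, $\mathcal{O}^{\el}\to\mathcal{O}$ almost never has unique lifting of inert morphisms: already for $\mathcal{O}=\xF_{*}^{\flat}$ the inert map $\angled{1}\intto\angled{0}$ has no lift to the trivial pattern on the point $\{\angled{1}\}$, even though $\xF_{*}^{\flat}$ is extendable. Second, and more seriously, even ignoring this, the formula part (i) would produce is a colimit over $(\mathcal{O}^{\el})^{\act}_{/E}$, i.e.\ over active maps \emph{from elementary objects} to $E$ -- not over $\Act_{\mathcal{O}}(E)$, whose objects are active maps from arbitrary $O$ decomposing into tuples $(O_{1},\ldots,O_{n})$ of elementaries; for $\xF_{*}^{\flat}$ your reduction would give $F(V)\simeq V$ instead of $\coprod_{n}V^{\otimes n}_{h\Sigma_{n}}$. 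The correct reduction is along $\mathcal{O}^{\xint}\to\mathcal{O}$: there unique inert lifting is tautological, extendability of this morphism is literally \cref{def ext patt}, its active slice over $E$ is $\Act_{\mathcal{O}}(E)$, and one then needs the nontrivial identification $\Alg_{\mathcal{O}^{\xint}/\mathcal{O}}(\mathcal{V})\simeq\Fun_{/\mathcal{O}^{\el}}(\mathcal{O}^{\el},\mathcal{V})$ of \cref{lem:AlgOint} (a right Kan extension argument via \cref{OintmonRKE}), which is not automatic for your chosen source. Finally, your monadicity argument asserts that $U_{\mathcal{O}}$-split (sifted) colimits are ``computed pointwise'' in $\Alg_{\mathcal{O}}(\mathcal{V})$; this still requires checking that the pointwise colimit of algebras is again an algebra (using compatibility of the active pushforwards with sifted colimits, as in \cref{rmk:Monsifted}, or, as the paper does, passing through the cartesian square of \cref{rmk:algloc} and monadicity of monoids over $\mathcal{C}^{\op,\otimes}$), so state that step rather than leaving it implicit.
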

This theorem is a combination of Corollaries~\ref{cor:f!alg} and
\ref{cor:FOmonadic}; we refer the reader to \S\S\ref{sec:cartpatt} and
\ref{sec:Omonoidal} for the notation used. We discuss some examples
of extendable cartesian patterns and morphisms to which the theorem
applies in \S\ref{sec:extex}. The explicit formula for free algebras
leads to a simple criterion for a morphism of cartesian patterns to
induce equivalences on \icats{} of algebras, which we consider in
\S\ref{sec:Morita} together with some easy applications.

\subsection*{Acknowledgments}
The first author thanks the Labex CEMPI (ANR-11-LABX-0007-01) and Max
Planck Institute for Mathematics for their hospitality and financial
support during the process of writing this article.

Some work on this paper was carried out while the second author was
employed by the IBS Center for Geometry and Physics in Pohang, in a
position funded by grant IBS-R003-D1 of the Institute for Basic
Science of the Republic of Korea; it was completed while he
was in residence at the Matematical Sciences Research Institute in
Berkeley, California, during the Spring 2020 semester, and is thereby
based upon work supported by the National Science Foundation under
grant DMS-1440140.

We also thank Hadrian Heine for sharing his work on Day convolution
\cite[\S 6.1]{HeineThesis} with us.

\section{Cartesian Patterns and Monoids}\label{sec:cartpatt}
In this section we review some basic definitions from \cite{patterns}
and introduce the algebraic structures we will study in
this paper, namely cartesian patterns and algebras and monoids over them.

\begin{defn}
  An \emph{algebraic pattern} consists of an \icat{} $\mathcal{O}$
  equipped with a factorization system, whereby each map factors essentially uniquely as an
  \emph{inert} map followed by an \emph{active} map, together with a
  collection of \emph{elementary} objects. We write
  $\mathcal{O}^{\act}$ and $\mathcal{O}^{\xint}$ for the subcategories
  of $\mathcal{O}$ containing only the active and inert morphisms,
  respectively, and $\mathcal{O}^{\el} \subseteq \mathcal{O}^{\xint}$
  for the full subcategory of elementary objects and inert morphisms
  among them. A morphism of algebraic patterns from $\mathcal{O}$ to
  $\mathcal{P}$ is a functor $f \colon \mathcal{O} \to \mathcal{P}$
  that preserves inert and active morphisms and elementary objects.
\end{defn}

\begin{notation}
  If $\mathcal{O}$ is an algebraic pattern, we will indicate an inert
  map between objects $O,O'$ of $\mathcal{O}$ as $O \intto O'$ and an
  active map as $O \actto O'$. These symbols are not meant to suggest
  any intuition about the nature of inert and active maps.
\end{notation}

\begin{example}
  Let $\xF_{*}$ denote a skeleton of the category of finite pointed
  sets with objects $\angled{n} := (\{0,1,\ldots,n\},0)$. This has an inert--active
  factorization system where a morphism $\phi \colon \angled{n} \to \angled{m}$ is 
  \begin{itemize}
  \item \emph{inert} if it is an isomorphism away from the base point,
    \ie{} $|\phi^{-1}(i)| = 1$ if $i \neq 0$,
  \item \emph{active} if it doesn't send anything except the base
    point to the base point, \ie{} $\phi^{-1}(0) = \{0\}$.
  \end{itemize}
  We write $\xF_{*}^{\flat}$ for the algebraic pattern given by this
  factorization system with $\angled{1}$ as the only elementary
  object.
\end{example}

\begin{notation}
  If $\mathcal{O}$ is an algebraic pattern and $O$ is an object of
  $\mathcal{O}$, we write
  \[\mathcal{O}^{\el}_{O/} :=
    \mathcal{O}^{\el}\times_{\mathcal{O}^{\xint}}
    \mathcal{O}^{\xint}_{O/}\]
  for the \icat{} of inert maps from $O$ to elementary objects, and
  inert maps between them.
\end{notation}

\begin{notation}
  We write $\rho_{i} \colon \angled{n} \to \angled{1}$, $i = 1,\ldots,n$, for the inert
  map given by
  \[ \rho_{i}(j) =
    \begin{cases}
      0, & j \neq i, \\
      1, & j = i.
    \end{cases}
  \]
  Then $(\xF_{*}^{\flat})^{\el}_{\angled{n}/}$ is equivalent to the
  discrete set $\{\rho_{1},\ldots,\rho_{n}\}$.
\end{notation}

\begin{defn}
  A \emph{cartesian pattern} is an algebraic pattern $\mathcal{O}$
  equipped with a morphism of algebraic patterns
  $|\blank| \colon \mathcal{O} \to \xF_{*}^{\flat}$ such that
  for every object $O \in \mathcal{O}$ the induced map
  \[ \mathcal{O}^{\el}_{O/} \to \xF_{*,|O|/}^{\el} \] is an
  equivalence. A \emph{morphism of cartesian patterns} is a
  morphism of algebraic patterns over $\xF_{*}^{\flat}$.
\end{defn}

\begin{notation}
  If $\mathcal{O}$ is a cartesian pattern and $O$ is an object of
  $\mathcal{O}$ such that $|O| \cong \angled{n}$, then the \icat{}
  $\mathcal{O}^{\el}_{O/}$ is equivalent to a discrete set consisting
  of $n$ inert morphisms with source $O$, with an essentially unique
  such morphism lying over
  $\rho_{i} \colon \angled{n} \to \angled{1}$ for $i = 1,\ldots,n$.
  We denote this inert morphism by $\rho_{i}^{O}\colon O \to O_{i}$.
\end{notation}

\begin{lemma}\label{rem:Oel}
  If $\mathcal{O}$ is a cartesian pattern, then the \icat{}
  $\mathcal{O}^{\el}$ is an \igpd{}.
\end{lemma}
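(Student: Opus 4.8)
The plan is to show that every morphism in $\mathcal{O}^{\el}$ is an equivalence, since an \icat{} in which all morphisms are equivalences is an \igpd{}. Recall that $\mathcal{O}^{\el}$ consists of elementary objects of $\mathcal{O}$ and the inert morphisms between them. So let $\alpha \colon E \to E'$ be an inert morphism with $E, E'$ elementary. Applying the structure map $|\blank| \colon \mathcal{O} \to \xF_{*}^{\flat}$, we get an inert morphism $|\alpha| \colon |E| \to |E'|$ in $\xF_{*}^{\flat}$; since $E$ and $E'$ are elementary, $|E|$ and $|E'|$ are both elementary in $\xF_{*}^{\flat}$, \ie{} both are isomorphic to $\angled{1}$. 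An inert morphism $\angled{1} \to \angled{1}$ in $\xF_{*}$ is necessarily an isomorphism (it is injective away from the basepoint between one-element sets), so $|\alpha|$ is an equivalence.

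It then remains to promote this to the statement that $\alpha$ itself is an equivalence, and the mechanism for this is the cartesian pattern axiom: for every object $O \in \mathcal{O}$ the map $\mathcal{O}^{\el}_{O/} \to \xF_{*,|O|/}^{\el}$ is an equivalence. First I would observe that $\alpha \colon E \to E'$, being an inert map to an elementary object, defines an object of $\mathcal{O}^{\el}_{E/}$; and the identity $\id_E$ defines another such object, lying over $\id_{|E|}$ (up to the chosen identification $|E| \cong \angled{1}$, this is $\rho_1 \colon \angled{1} \to \angled{1}$, the unique element of $\xF_{*,\angled{1}/}^{\el}$). Since $|\alpha|$ and $|\id_E|$ are the same object of $\xF_{*,|E|/}^{\el} \simeq \ast$, and since $\mathcal{O}^{\el}_{E/} \to \xF_{*,|E|/}^{\el}$ is an equivalence, $\alpha$ and $\id_E$ are equivalent as objects of $\mathcal{O}^{\el}_{E/}$. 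An equivalence in $\mathcal{O}^{\el}_{E/}$ between $\alpha$ and $\id_E$ is in particular a morphism in $\mathcal{O}^{\el}$ (hence an inert morphism $E' \to E$) commuting with the maps from $E$; combined with the two-out-of-three property, this forces $\alpha$ to be an equivalence in $\mathcal{O}$, and hence in $\mathcal{O}^{\el}$.

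The main obstacle, such as it is, lies in being careful with the slice-category bookkeeping: one must make sure the equivalence $\mathcal{O}^{\el}_{E/} \simeq \xF_{*,|E|/}^{\el}$ is genuinely being used to transport an \emph{object} identification into a \emph{morphism} of $\mathcal{O}^{\el}$, and that this morphism is compatible with the structure maps from $E$ so that two-out-of-three applies. A clean way to package this is to note that $\xF_{*,\angled{1}/}^{\el}$ is contractible, so $\mathcal{O}^{\el}_{E/}$ is contractible for every elementary $E$; then for an inert $\alpha \colon E \to E'$ the mapping space $\Map_{\mathcal{O}^{\el}_{E/}}(\id_E, \alpha)$ is nonempty (indeed contractible), which exhibits the required comparison morphism and its coherence simultaneously. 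Everything else — the identification of elementary objects of $\xF_{*}^{\flat}$, and the fact that inert self-maps of $\angled{1}$ are isomorphisms — is immediate from the definitions recalled above.
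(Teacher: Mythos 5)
Your proof is correct, and it rests on exactly the same key input as the paper's argument: for every elementary $E$ the cartesian pattern condition identifies $\mathcal{O}^{\el}_{E/}$ with $\xF_{*,\angled{1}/}^{\el}$, which is contractible. Where you diverge is in how this is converted into the conclusion. The paper argues globally: the slices $\mathcal{O}^{\el}_{E/}$ are the fibres of the cartesian fibration $\ev_{0} \colon \Fun(\Delta^{1}, \mathcal{O}^{\el}) \to \mathcal{O}^{\el}$, so that functor is an equivalence, \ie{} $\mathcal{O}^{\el}$ is local with respect to $\Delta^{0} \to \Delta^{1}$ and hence an \igpd{}. You argue pointwise: any inert $\alpha \colon E \to E'$ and $\id_{E}$ are objects of the contractible slice, hence connected by a morphism which, being a morphism in a contractible \icat{}, is automatically an equivalence; its image in $\mathcal{O}$ is then an equivalence $u \colon E' \to E$ with $u\alpha \simeq \id_{E}$, and two-out-of-three makes $\alpha$ invertible, so $\mathcal{O}^{\el}$ is an \igpd{}. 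This is a perfectly valid and somewhat more elementary finish. The one point you rightly have to be careful about --- and do address --- is that the slice morphism must be an \emph{equivalence} rather than merely exist, since a bare retraction $u\alpha \simeq \id_{E}$ would not suffice for two-out-of-three; contractibility of the slice supplies this. (Two small remarks: in your last paragraph the direction $\Map_{\mathcal{O}^{\el}_{E/}}(\id_{E},\alpha)$ gives a morphism whose underlying map is homotopic to $\alpha$ itself, so it is only useful via the same observation that all morphisms in a contractible \icat{} are equivalences; and your opening observation that $|\alpha|$ is an isomorphism in $\xF_{*}$ is never actually needed.)
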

\begin{proof}
  If $O \in \mathcal{O}$ lies over $\angled{1}$ in $\xF_{*}$, then it
  follows from the definition that the \icat{}
  $\mathcal{O}^{\el}_{O/}$ is a contractible \igpd{}. This holds in
  particular for $E \in \mathcal{O}^{\el}$ (since $E$ must map to the
  unique elementary object $\angled{1}$ in $\xF_{*}^{\flat}$), so that the
  fibres of the cartesian fibration
  \[ \ev_{0} \colon \Fun(\Delta^{1}, \mathcal{O}^{\el}) \to \mathcal{O}^{\el} \]
  are contractible \igpds{}. This functor is therefore an equivalence,
  which means in particular that $\mathcal{O}^{\el}$ is local with respect to
  $\Delta^{0} \to \Delta^{1}$; it is thus an \igpd{}.
\end{proof}

\begin{defn}
  If $\mathcal{O}$ is a cartesian pattern and $\mathcal{C}$ is an
  \icat{} with finite products, then a functor $F \colon
  \mathcal{O} \to \mathcal{C}$  is an \emph{$\mathcal{O}$-monoid} if
  for $O \in \mathcal{O}$ lying over $\angled{n}$ the natural map
  \[ F(O) \to \prod_{i = 1}^{n} F(O_{i}),\]
  induced by the maps $\rho_{i}^{O} \colon O \to O_{i}$,
  is an equivalence. We write
  $\Mon_{\mathcal{O}}(\mathcal{C})$ for the full subcategory of
  $\Fun(\mathcal{O}, \mathcal{C})$ consisting of $\mathcal{O}$-monoids.
\end{defn}

\begin{remark}
  If $\mathcal{O}$ is a general algebraic pattern, then in
  \cite{patterns} we defined a \emph{Segal
    $\mathcal{O}$-object} to be a functor $F \colon \mathcal{O} \to
  \mathcal{C}$ such that $F|_{\mathcal{O}^{\xint}}$ is a right
  Kan extension of $F|_{\mathcal{O}^{\el}}$, or equivalently if for
  every object $O \in \mathcal{O}$ the canonical map
  \[ F(O) \to \lim_{E \in \mathcal{O}^{\el}_{O/}} F(E) \] is an
  equivalence. Thus an $\mathcal{O}$-monoid is just a special case of
  a Segal $\mathcal{O}$-object. We choose to use different terminology
  for this and a few other concepts to emphasize the special features
  of cartesian patterns and the parallels between our definitions and
  the special cases that are studied in \cite{ha,bar}.
\end{remark}

\begin{lemma}\label{lem:monoidpb}
  Any morphism of cartesian patterns $f \colon \mathcal{O} \to
  \mathcal{P}$ gives by composition a functor
  \[ f^{*} \colon \Mon_{\mathcal{P}}(\mathcal{C}) \to \Mon_{\mathcal{O}}(\mathcal{C}).\]
\end{lemma}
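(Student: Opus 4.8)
The plan is to show that if $F \colon \mathcal{P} \to \mathcal{C}$ is a $\mathcal{P}$-monoid, then $f^{*}F := F \circ f$ is an $\mathcal{O}$-monoid; the functor of the statement is then just the restriction of $f^{*} \colon \Fun(\mathcal{P},\mathcal{C}) \to \Fun(\mathcal{O},\mathcal{C})$ to the relevant full subcategories, once we know it lands there.

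So fix $O \in \mathcal{O}$ with $|O| \cong \angled{n}$. Since $f$ is a morphism of cartesian patterns, i.e.\ of algebraic patterns over $\xF_{*}^{\flat}$, we have $|f(O)| \cong |O| \cong \angled{n}$, and the slices $\xF_{*,|O|/}^{\el}$ and $\xF_{*,|f(O)|/}^{\el}$ are canonically identified. Because $f$ preserves inert maps and elementary objects, it induces a functor $\mathcal{O}^{\el}_{O/} \to \mathcal{P}^{\el}_{f(O)/}$ fitting into a commuting triangle over $\xF_{*,\angled{n}/}^{\el}$. By the defining property of a cartesian pattern the two maps $\mathcal{O}^{\el}_{O/} \to \xF_{*,\angled{n}/}^{\el}$ and $\mathcal{P}^{\el}_{f(O)/} \to \xF_{*,\angled{n}/}^{\el}$ are equivalences, so by two-out-of-three the induced functor $\mathcal{O}^{\el}_{O/} \to \mathcal{P}^{\el}_{f(O)/}$ is an equivalence. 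In particular, for each $i = 1,\ldots,n$ it carries $\rho_{i}^{O} \colon O \intto O_{i}$ to a map equivalent in $\mathcal{P}^{\el}_{f(O)/}$ to $\rho_{i}^{f(O)} \colon f(O) \intto f(O)_{i}$; that is, there is an equivalence $f(O_{i}) \simeq f(O)_{i}$ under $f(O)$, compatible with the structure maps.

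Consequently, the natural map $f^{*}F(O) = F(f(O)) \to \prod_{i=1}^{n} F(f(O_{i}))$ induced by the $f(\rho_{i}^{O})$ is identified, via the equivalences $f(O_{i}) \simeq f(O)_{i}$, with the map $F(f(O)) \to \prod_{i=1}^{n} F(f(O)_{i})$ induced by the $\rho_{i}^{f(O)}$, which is an equivalence since $F$ is a $\mathcal{P}$-monoid and $|f(O)| \cong \angled{n}$. Hence $f^{*}F$ is an $\mathcal{O}$-monoid, and $f^{*}$ restricts to the desired functor $\Mon_{\mathcal{P}}(\mathcal{C}) \to \Mon_{\mathcal{O}}(\mathcal{C})$. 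The only point requiring care is the bookkeeping in the second paragraph: checking that the comparison functor on $\mathcal{O}^{\el}_{O/}$ genuinely commutes over $\xF_{*,\angled{n}/}^{\el}$ so that two-out-of-three applies, and that under $f(O_{i}) \simeq f(O)_{i}$ the projection maps defining the two monoid conditions really do match up — but this is immediate from unwinding the definition of a morphism of cartesian patterns.
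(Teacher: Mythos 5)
Your argument is correct and is essentially the paper's proof: the paper simply records that $|O| \cong |f(O)|$ and $f(\rho_{i}^{O}) \simeq \rho_{i}^{f(O)}$ and calls the rest immediate, while you supply the (standard) justification of that identification via the equivalences $\mathcal{O}^{\el}_{O/} \simeq \xF_{*,|O|/}^{\el} \simeq \mathcal{P}^{\el}_{f(O)/}$ and two-out-of-three. No gaps; your version is just a more explicit unwinding of the same observation.
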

\begin{proof}
  Immediate, since for $O \in \mathcal{O}$ we have $|O| \cong |f(O)|$
  and $f(\rho_{i}^{O}) \simeq \rho_{i}^{f(O)}$.
\end{proof}

\begin{remark}\label{OintmonRKE}
  Let $\mathcal{O}$ be a cartesian pattern and $\mathcal{C}$ an
  \icat{} with finite products. Then so is
  $\mathcal{O}^{\xint}$, and a functor $F \colon \mathcal{O} \to
  \mathcal{C}$ is an $\mathcal{O}$-monoid \IFF{} its restriction
  $F|_{\mathcal{O}^{\xint}}$ is an
  $\mathcal{O}^{\xint}$-monoid. Moreover, the \icat{}
  $\Mon_{\mathcal{O}^{\xint}}(\mathcal{C})$ is precisely the full
  subcategory of $\Fun(\mathcal{O}^{\xint}, \mathcal{C})$ consisting
  of functors that are right Kan-extended from $\mathcal{O}^{\el}$, so
  that right Kan extension gives an equivalence
  \[ \Mon_{\mathcal{O}^{\xint}}(\mathcal{C}) \simeq
    \Fun(\mathcal{O}^{\el}, \mathcal{C}).\]
\end{remark}

\begin{remark}\label{rmk:Monsifted}
  Let $\mathcal{C}$ be an \icat{} with sifted colimits and finite
  products where the cartesian product preserves sifted colimits in
  each variable. If $\mathcal{O}$ is a cartesian pattern,
  then the full subcategory
  $\Mon_{\mathcal{O}}(\mathcal{C})$ is closed under sifted colimits
  in
  $\Fun(\mathcal{O}, \mathcal{C})$: given a sifted diagram $\phi
  \colon \mathcal{I} \to \Mon_{\mathcal{O}}(\mathcal{C})$ its colimit
  in $\Fun(\mathcal{O}, \mathcal{C})$, which is computed pointwise,
  satisfies
  \[ (\colim_{\mathcal{I}}\phi)(O) \isoto \colim_{\mathcal{I}}
    \left(\phi(O_{1}) \times \cdots \times \phi(O_{n}) \right) \simeq
    \left(\colim_{\mathcal{I}} \phi(O_{1})\right) \times \cdots \times
    \left(\colim_{\mathcal{I}} \phi(O_{n})\right).\]
  It follows that for any morphism $f \colon \mathcal{O} \to
  \mathcal{P}$ of cartesian patterns, the functor $f^{*} \colon
  \Mon_{\mathcal{P}}(\mathcal{C}) \to \Mon_{\mathcal{O}}(\mathcal{C})$
  preserves sifted colimits.
\end{remark}

\section{Examples of Cartesian Patterns}\label{sec:pattex}
In this section we mention some examples of cartesian patterns, and
indicate where our definitions specialize to
more familiar notions:

\begin{example}
  For the base pattern $\xF_{*}^{\flat}$, an $\xF_{*}^{\flat}$-monoid
  is precisely a commutative monoid (in
  the sense considered in \cite{ha}, but going back to Segal's work
  \cite{SegalCatCohlgy} on special $\Gamma$-spaces). 
\end{example}

\begin{example}
  Let $\simp^{\op, \flat}$ denote the simplex category with the usual
  inert--active factorization system (where the inert maps in $\simp$
  are the subinterval inclusions and the active maps are those that
  preserve the end points) and $[1]$ as the unique elementary
  object. This is a cartesian pattern using the map to $\xF_{*}$ given
  by $|[n]| = \angled{n}$, and with the map $|\phi|$ for $\phi \colon
  [n] \to [m]$ in $\simp$ given by
  \[ |\phi|(i) =
    \begin{cases}
      j, & \phi(j-1) < i \leq \phi(j),\\
      0, & \text{otherwise}.
    \end{cases}
  \]
  Then a $\simp^{\op,\flat}$-monoid is an associative monoid.
\end{example}

\begin{example}
  The product $\simp^{n,\op}$ has a factorization system with inert
  and active maps defined componentwise. We get a cartesian pattern
  $\simp^{n,\op,\flat}$ by taking $([1],\ldots,[1])$ to be the unique
  elementary object and the map to $\xF_{*}$ to be given by the composite
  \[ \simp^{n,\op} \to \xF_{*}^{n} \to \xF_{*} \] where the second map
  is the ``smash product'' of pointed finite sets. (This takes
  $(\langle m_1\rangle, \ldots, \langle m_n \rangle)$ to
  $\langle \prod_{i=1}^n m_i\rangle$; see \cite[Notation 2.2.5.1]{ha}
  for a precise description.) Then a $\simp^{n,\op,\flat}$-monoid can
  be described as an $n$-fold iterated associative monoid, or
  equivalently an $\mathbb{E}_{n}$-monoid by the Dunn--Lurie Additivity
  Theorem.
\end{example}

\begin{example}
  If $\Phi$ is a perfect operator category in the sense of \cite{bar}
  and $\Lambda(\Phi)$ is its Leinster category, then there is a
  cartesian pattern $\Lambda(\Phi)^{\flat}$ given by the inert--active
  factorization system of \cite{bar}, with the terminal object of
  $\Phi$ as the unique elementary object and the map to $\xF_{*}$ that
  induced by the unique operator morphism to $\xF$.  
  By choosing $\Phi$ to be the
  operator categories $\xF$ of finite sets, $\mathbb{O}$ of finite
  ordered sets, and the cartesian product $\mathbb{O}^{n}$, this
  example specializes to the previous ones. Another example is the
  iterated \emph{wreath product} $\mathbb{O}^{\wr n}$, whose Leinster
  category is Joyal's category $\bbTheta_{n}^{\op}$ of $n$-dimensional
  pasting diagrams. It is proved in
  \cite{bar} that $\bbTheta_{n}^{\op,\flat}$-monoids are equivalent to
  $\simp^{n,\op,\flat}$-monoids, and so are also equivalent to
  $\mathbb{E}_{n}$-monoids by the additivity theorem.
\end{example}

\begin{example}
  Let $\bbO$ be the \emph{dendroidal category} of
  \cite{MoerdijkWeiss}, with the active--inert factorization system
  described in \cite{Kock,ChuHaugsengHeuts}. Then $\bbO^{\op,\flat}$
  denotes the cartesian pattern given by this factorization system,
  with the corollas as the elementary objects and the functor to
  $\xF_{*}$ as defined in \cite{ChuHaugseng}, given by counting the
  number of corollas in a tree.  An $\bbO^{\op,\flat}$-monoid in
  $\mathcal{S}$ then describes a (pointed) one-object \iopd{}.
\end{example}

\begin{example}
  The category $\bbGamma$ of acyclic connected finite graphs defined
  by Hackney, Robertson, and Yau in \cite{HRYProperad} has an
  active--inert factorization system by \cite[2.4.14]{Kock_Properads}
  (where the active maps are called ``refinements'' and the inert maps
  are called ``convex open inclusions''). We then write
  $\bbGamma^{\op, \flat}$ for the algebraic pattern given by this
  factorization system, with the graphs with exactly one vertex as
  the elementary objects. The functor
  $\bbGamma^{\op, \flat}\to \mathbb{F}_*^\flat$ given by counting the
  number of vertices in a graph exhibits $\bbGamma^{\op, \flat}$ as a
  cartesian pattern. According to \cite{PropLect},
  $\bbGamma^{\op, \flat}$-monoids in $\xS$ model (pointed) one-object
  $\infty$-properads.
\end{example}

\begin{example}\label{ex:opdcart}
  Any (symmetric) \iopd{} $\mathcal{O} \to \xF_{*}$ as in \cite{ha}
  has an inert--active factorization system where the inert morphisms
  are the cocartesian morphisms lying over inert morphisms in
  $\xF_{*}$ and the active ones are those that lie over active
  morphisms in $\xF_{*}$. If we take $\mathcal{O}^{\flat}$ to be the
  algebraic pattern defined by this factorization system, with the elementary objects
  those that map to $\angled{1}$ (so $\mathcal{O}^{\el}$ is the
  underlying \igpd{} $\mathcal{O}_{\angled{1}}^{\simeq}$ of the fibre
  at
  $\angled{1}$), then the given map to $\xF_{*}$ exhibits
  $\mathcal{O}^{\flat}$ as a cartesian pattern. (We will discuss a
  generalization of this class of cartesian patterns in
  \cref{rmk:Oopd}.)
\end{example}

\begin{example}
  A \emph{generalized \iopd{}} $\mathcal{E} \to \xF_{*}$ as defined in
  \cite[\S 2.3.2]{ha} also has an inert--active factorization system,
  defined in the same way as for \iopds{}. If we again choose the
  elementary objects to be those that lie over $\angled{1}$, we get a
  cartesian pattern $\mathcal{E}^{\flat}$ using the given functor to
  $\xF_{*}$. 
\end{example}

\section{$\mathcal{O}$-Monoidal $\infty$-Categories and Algebras}\label{sec:Omonoidal}
In this section we define $\mathcal{O}$-monoidal \icats{} and
$\mathcal{O}$-algebras in them.

\begin{defn}
  Let $\mathcal{O}$ be a cartesian pattern. An
  \emph{$\mathcal{O}$-monoidal \icat{}} is a cocartesian fibration
  $\mathcal{V}^{\otimes} \to \mathcal{O}$ whose associated functor
  $\mathcal{O} \to \CatI$ is an $\mathcal{O}$-monoid. (We will often
  not mention the fibration explicitly and simply say that
  $\mathcal{V}^{\otimes}$ is an $\mathcal{O}$-monoidal \icat{}.)
\end{defn}

\begin{notation}
  Let $\mathcal{V}^{\otimes} \to \mathcal{O}$ be an
  $\mathcal{O}$-monoidal \icat{}. If $O$ is an object of $\mathcal{O}$
  lying over $\angled{n}$ in $\xF_{*}$, we will often denote by
  $O(V_{1},\ldots,V_{n})$ or just $O(V_{i})$ the
  object of $\mathcal{V}^{\otimes}_{O}$ that corresponds to
  $(V_{1},\ldots,V_{n})$ under the equivalence
  \[ \mathcal{V}^{\otimes}_{O} \simeq \prod_{i = 1}^{n}
    \mathcal{V}_{O_{i}} \]
  given by the cocartesian pushforwards over $\rho_{i}^{O}$.
\end{notation}

\begin{remark}
  If $f \colon \mathcal{O} \to \mathcal{P}$ is a morphism of cartesian
  patterns, then it follows from \cref{lem:monoidpb} (reinterpreted
  through the straightening equivalence for functors to $\CatI$) that
  base change along $f$ takes a $\mathcal{P}$-monoidal \icat{}
  $\mathcal{V}^{\otimes}$ to an $\mathcal{O}$-monoidal \icat{} $f^{*}\mathcal{V}^{\otimes}$.
\end{remark}

\begin{remark}
  $\mathcal{O}$-monoidal \icats{} are a special case of Segal
  $\mathcal{O}$-fibrations in the terminology of \cite{patterns}.
\end{remark}

\begin{defn}\label{def Oalg}
  Let $\mathcal{V}^{\otimes}$ be an $\mathcal{O}$-monoidal \icat{}. An
  \emph{$\mathcal{O}$-algebra} in $\mathcal{V}^{\otimes}$ is a section
  \[ 
    \begin{tikzcd}
      \mathcal{V}^{\otimes}\arrow{d} \\
      \mathcal{O} \arrow[bend left=35]{u}{A}
    \end{tikzcd}
  \]
  such that $A$ takes inert morphisms in $\mathcal{O}$ to cocartesian
  morphisms in $\mathcal{V}^{\otimes}$. We write
  $\Alg_{\mathcal{O}}(\mathcal{V})$ for the full subcategory of
  $\Fun_{/\mathcal{O}}(\mathcal{O}, \mathcal{V}^{\otimes})$ spanned by
  the $\mathcal{O}$-algebras. 
\end{defn}

\begin{example}
  Taking $\mathcal{O}$ to be $\xF_{*}^{\flat}$ our definitions
  specialize to symmetric monoidal \icats{} and commutative algebras
  as defined in \cite{ha}, while if we take $\mathcal{O}$ to be the
  cartesian pattern associated to an \iopd{}, we get the notions of
  $\mathcal{O}$-monoidal \icats{} and $\mathcal{O}$-algebras of
  \cite{ha}. Similarly, from $\simp^{\op,\flat}$ we get monoidal
  \icats{} and associative algebras.
\end{example}

\begin{defn}
  More generally, if $f \colon \mathcal{O} \to \mathcal{P}$ is a
  morphism of cartesian patterns and $\mathcal{V}^{\otimes}$ is a
  $\mathcal{P}$-monoidal \icat{}, then an \emph{$\mathcal{O}$-algebra}
  in $\mathcal{V}^{\otimes}$ is a commutative triangle
  \[
    \begin{tikzcd}
      \mathcal{O} \arrow{rr}{A} \arrow{dr}[swap]{f} & &
      \mathcal{V}^{\otimes} \arrow{dl} \\
      & \mathcal{P}
    \end{tikzcd}
  \]
  such that $A$ takes inert morphisms in $\mathcal{O}$ to cocartesian
  morphisms in $\mathcal{V}^{\otimes}$. We write
  $\Alg_{\mathcal{O}/\mathcal{P}}(\mathcal{V})$ for the full
  subcategory of
  $\Fun_{/\mathcal{P}}(\mathcal{O}, \mathcal{V}^{\otimes})$ spanned by
  the $\mathcal{O}$-algebras; if $\mathcal{P}$ is clear from the
  context, we will sometimes just write
  $\Alg_{\mathcal{O}}(\mathcal{V})$.  Base change along $f$ induces a
  natural equivalence
  \[\Alg_{\mathcal{O}/\mathcal{P}}(\mathcal{V}) \simeq
  \Alg_{\mathcal{O}}(f^{*}\mathcal{V}).\]
\end{defn}

We can view $\mathcal{O}$-algebras as a special case of morphisms of
cartesian patterns, using the following canonical pattern structure on
an $\mathcal{O}$-monoidal \icat{}:
\begin{defn}
  Let $\pi \colon \mathcal{V}^{\otimes} \to \mathcal{O}$ be an
  $\mathcal{O}$-monoidal \icat{}. We say a morphism in
  $\mathcal{V}^{\otimes}$ is \emph{active} if it lies over an active
  morphism in $\mathcal{O}$, and \emph{inert} if it is cocartesian and
  lies over an inert morphism in $\mathcal{O}$. The inert and active
  morphisms then form a factorization system on
  $\mathcal{V}^{\otimes}$ by \cite[Proposition 2.1.2.5]{ha}; we make
  $\mathcal{V}^{\otimes}$ an algebraic pattern using this
  factorization system and all the objects that lie over elementary
  objects in $\mathcal{O}$ as elementary objects. The composite map
  $\mathcal{V}^{\otimes} \to \mathcal{O} \to \xF_{*}$ then exhibits
  $\mathcal{V}^{\otimes}$ as a cartesian pattern.
\end{defn}

\begin{remark}
  Let $\mathcal{V}^{\otimes}$ be an $\mathcal{O}$-monoidal \icat{} and
  $f \colon \mathcal{P} \to \mathcal{O}$ a morphism of cartesian
  patterns. Given a commutative triangle
  \[
    \begin{tikzcd}
      \mathcal{P} \arrow[rr, "F"]\arrow[rd, "f"{swap}] & &
      \mathcal{V}^{\otimes} \arrow[ld]\\
      & \xxO,
    \end{tikzcd}
  \]
  the functor $F$ is a morphism of cartesian patterns \IFF{} it
  preserves inert morphisms, \ie{} \IFF{} it is a
  $\mathcal{P}$-algebra, since the commutativity of the diagram
  automatically implies that $F$ preserves active morphisms and
  elementary objects.
\end{remark}

\begin{defn}
  If $\mathcal{V}^{\otimes}$ and $\mathcal{W}^{\otimes}$ are
  $\mathcal{O}$-monoidal \icats{}, then a \emph{lax
    $\mathcal{O}$-monoidal functor} between them is a commutative
  triangle
  \[
    \begin{tikzcd}
      \mathcal{V}^{\otimes} \arrow{rr}{F} \arrow{dr}& &
      \mathcal{W}^{\otimes} \arrow{dl} \\
       & \mathcal{O}
    \end{tikzcd}
  \]
  such that $F$ preserves inert morphisms. Equivalently, a lax
  $\mathcal{O}$-monoidal functor is a morphism of
  algebraic patterns over $\mathcal{O}$ or a
  $\mathcal{V}^{\otimes}$-algebra in $\mathcal{W}^{\otimes}$ over
  $\mathcal{O}$. If the
  functor from $\mathcal{V}^{\otimes} \to \mathcal{W}^{\otimes}$
  preserves \emph{all} cocartesian morphisms over $\mathcal{O}$, we
  call it an \emph{$\mathcal{O}$-monoidal functor}.
\end{defn}

\begin{remark}\label{Alg2fun}
  Since $\Alg_{\mathcal{O}}(\mathcal{V})$ is a full subcategory of
  $\Fun_{/\mathcal{O}}(\mathcal{O}, \mathcal{V}^{\otimes})$, it  
  is not just functorial
  in the $\mathcal{O}$-monoidal \icat{} $\mathcal{V}^{\otimes}$, but
  even 2-functorial: a lax $\mathcal{O}$-monoidal functor
  $F \colon \mathcal{V}^{\otimes} \to \mathcal{W}^{\otimes}$ gives a
  functor
  \[ F_{*}\colon \Alg_{\mathcal{O}}(\mathcal{V}) \to
    \Alg_{\mathcal{O}}(\mathcal{W}) \] given by composition with $F$,
  and a natural transformation $\eta \colon F \to F'$ over
  $\mathcal{O}$ gives (again by composition) a natural transformation
  $\eta_{*} \colon F_{*} \to F'_{*}$. This means, for example, that
  any adjunction between $\mathcal{O}$-monoidal \icats{} induces by
  composition an adjunction on \icats{} of $\mathcal{O}$-algebras.
\end{remark}

\begin{notation}\label{not:V}
  If $\mathcal{V}^{\otimes} \to \mathcal{O}$ is an
  $\mathcal{O}$-monoidal \icat{}, we'll write $\mathcal{V} :=
  \mathcal{V}^{\otimes} \times_{\mathcal{O}}
  \mathcal{O}^{\el}$. In particular, we
  have $\xV^\otimes_E\simeq \xV_E$ for every $E
  \in \mathcal{O}^{\el}$. As this notation is sometimes ambiguous,
  we'll also occasionally use $\mathcal{V}^{\otimes}_{/\el}$ instead of $\mathcal{V}$.
  (Note that $\mathcal{V} \simeq \mathcal{V}^{\otimes}_{/\el}$ must be
  distinguished from the \igpd{} $\mathcal{V}^{\el}$ of elementary
  objects, which is the underlying \igpd{} $\mathcal{V}^{\simeq}$ of
  the \icat{} $\mathcal{V}$.)
\end{notation}

\begin{notation}
  If $\mathcal{V}^{\otimes}$ is an $\mathcal{O}$-monoidal \icat{},
  then we write 
 \[\mathcal{V}^{\otimes}_{/\xint} := \mathcal{O}^{\xint}
   \times_{\mathcal{O}} \mathcal{V}^{\otimes}.\]
 (The \icat{} $\mathcal{V}^{\otimes}_{/\xint}$ must be distinguished
 from the subcategory $(\mathcal{V}^{\otimes})^{\xint}$ of inert
 morphisms in $\mathcal{V}^{\otimes}$:
 $\mathcal{V}^{\otimes}_{/\xint}$ contains \emph{all} morphisms that
 lie over inert morphisms in $\mathcal{O}$, while
 $(\mathcal{V}^{\otimes})^{\xint}$ contains only the
 \emph{cocartesian} ones.)
\end{notation}

\begin{lemma}\label{lem:AlgOint}
  If $\mathcal{V}^{\otimes}$ is an $\mathcal{O}$-monoidal \icat{},
  then there is a natural equivalence
  \[ \Alg_{\mathcal{O}^{\xint}/\mathcal{O}}(\mathcal{V}^{\otimes})
    \simeq \Fun_{/\mathcal{O}^{\el}}(\mathcal{O}^{\el},
    \mathcal{V}) \]
  between $\mathcal{O}^{\xint}$-algebras and sections of $\mathcal{V}
  \to \mathcal{O}^{\el}$.
\end{lemma}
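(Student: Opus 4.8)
The plan is to unwind the definitions on both sides and identify the relevant subcategory inclusions. An object of $\Alg_{\mathcal{O}^{\xint}/\mathcal{O}}(\mathcal{V}^{\otimes})$ is, by definition, a section $A \colon \mathcal{O}^{\xint} \to \mathcal{V}^{\otimes}$ of the composite $\mathcal{V}^{\otimes}_{/\xint} \to \mathcal{O}^{\xint}$ (equivalently a functor $\mathcal{O}^{\xint} \to \mathcal{V}^{\otimes}_{/\xint}$ over $\mathcal{O}^{\xint}$) sending every inert morphism of $\mathcal{O}^{\xint}$ to a cocartesian morphism of $\mathcal{V}^{\otimes}$; since every morphism in $\mathcal{O}^{\xint}$ is inert, this just says $A$ carries all of $\mathcal{O}^{\xint}$ to cocartesian arrows. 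So $\Alg_{\mathcal{O}^{\xint}/\mathcal{O}}(\mathcal{V}^{\otimes})$ is the full subcategory of $\Fun_{/\mathcal{O}^{\xint}}(\mathcal{O}^{\xint}, \mathcal{V}^{\otimes}_{/\xint})$ on the cocartesian sections. By the straightening equivalence, cocartesian sections of the cocartesian fibration $\mathcal{V}^{\otimes}_{/\xint} \to \mathcal{O}^{\xint}$ are the same as natural transformations from the constant functor at a point into the straightening $F^{\xint} \colon \mathcal{O}^{\xint} \to \CatI$; that is, $\Alg_{\mathcal{O}^{\xint}/\mathcal{O}}(\mathcal{V}^{\otimes}) \simeq \lim_{\mathcal{O}^{\xint}} F^{\xint}$, the limit of the straightening diagram.

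The next step is to compute this limit by restricting along $\mathcal{O}^{\el} \hookrightarrow \mathcal{O}^{\xint}$. By \cref{OintmonRKE}, since $\mathcal{V}^{\otimes}$ is an $\mathcal{O}$-monoidal \icat{}, the functor $F^{\xint}$ is an $\mathcal{O}^{\xint}$-monoid in $\CatI$, hence right Kan extended from its restriction to $\mathcal{O}^{\el}$. A functor which is right Kan extended from a full subcategory has the same limit as its restriction to that subcategory (this is a standard consequence of the pointwise formula for right Kan extensions, or of cofinality applied to the relevant slice categories—the inclusion $\mathcal{O}^{\el} \hookrightarrow \mathcal{O}^{\xint}$ together with the Kan extension property makes the restriction map on limits an equivalence). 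Therefore
\[ \lim_{\mathcal{O}^{\xint}} F^{\xint} \simeq \lim_{\mathcal{O}^{\el}} F^{\xint}|_{\mathcal{O}^{\el}} \simeq \lim_{\mathcal{O}^{\el}} F^{\el}, \]
where $F^{\el} \colon \mathcal{O}^{\el} \to \CatI$ is the straightening of $\mathcal{V} = \mathcal{V}^{\otimes}_{/\el} \to \mathcal{O}^{\el}$ (using $\mathcal{V}^{\otimes}_E \simeq \mathcal{V}_E$ for $E \in \mathcal{O}^{\el}$ from \cref{not:V}). Unwinding the straightening equivalence again, $\lim_{\mathcal{O}^{\el}} F^{\el}$ is the \icat{} of cocartesian sections of $\mathcal{V} \to \mathcal{O}^{\el}$; but by \cref{rem:Oel} the base $\mathcal{O}^{\el}$ is an \igpd{}, so \emph{every} section is automatically cocartesian, and this is just $\Fun_{/\mathcal{O}^{\el}}(\mathcal{O}^{\el}, \mathcal{V})$. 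Chaining these equivalences gives the claim, and naturality in $\mathcal{V}^{\otimes}$ is clear since each step is natural.

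I expect the main subtlety to be the middle step: verifying cleanly that passing from $\mathcal{O}^{\xint}$ to $\mathcal{O}^{\el}$ does not change the limit. The honest way is to invoke that $F^{\xint}$ is right Kan extended from $\mathcal{O}^{\el}$ (\cref{OintmonRKE}) and that for a functor right Kan extended along a fully faithful inclusion $j \colon \mathcal{A} \hookrightarrow \mathcal{B}$ the natural map $\lim_{\mathcal{B}} j_* G \to \lim_{\mathcal{A}} G$ is an equivalence—this follows because $\lim_{\mathcal{B}} j_* G \simeq \lim_{\mathcal{B}} \lim_{(\mathcal{A}_{b/})} G$ and the resulting double limit reorganizes to $\lim_{\mathcal{A}} G$ (alternatively: the counit $j^* j_* G \to G$ is an equivalence since $j$ is fully faithful, and one checks the restriction functor on the limit cones is fully faithful and essentially surjective). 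All other steps are formal manipulations with the straightening/unstraightening equivalence and the observation that sections over an \igpd{} are automatically cocartesian.
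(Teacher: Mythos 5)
Your proposal is correct, and it rests on exactly the same two pillars as the paper's proof: the identification of $\Alg_{\mathcal{O}^{\xint}/\mathcal{O}}(\mathcal{V}^{\otimes})$ with $\Fun^{\cocart}_{/\mathcal{O}^{\xint}}(\mathcal{O}^{\xint},\mathcal{V}^{\otimes}_{/\xint})$, the fact from \cref{OintmonRKE} that the straightening of $\mathcal{V}^{\otimes}_{/\xint}\to\mathcal{O}^{\xint}$ is right Kan extended from $\mathcal{O}^{\el}$, and the observation that over the \igpd{} $\mathcal{O}^{\el}$ (\cref{rem:Oel}) every section is automatically cocartesian, since functors preserve equivalences and equivalences are cocartesian. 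Where you diverge is in the middle step: you pass entirely to the straightened side, invoking the description of limits in $\CatI$ as \icats{} of cocartesian sections and the standard fact that right Kan extension along $\mathcal{O}^{\el}\hookrightarrow\mathcal{O}^{\xint}$ does not change the limit (most cleanly seen by composing the adjunctions $\mathrm{const}\dashv\lim$ and restriction $\dashv$ right Kan extension, rather than via the pointwise double-limit reshuffle you sketch). The paper instead stays on the fibrational side: it translates the universal property of the right Kan extension through straightening into a natural equivalence $\Map^{\cocart}_{/\mathcal{O}^{\xint}}(\mathcal{E},\mathcal{V}^{\otimes}_{/\xint})\simeq\Map^{\cocart}_{/\mathcal{O}^{\el}}(\mathcal{E}|_{\mathcal{O}^{\el}},\mathcal{V})$ for an arbitrary test cocartesian fibration $\mathcal{E}$, and then upgrades from mapping spaces to \icats{} by exponentiating with $\mathcal{C}\times\mathcal{E}$. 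The two routes are closely parallel; yours is slightly more economical for the bare equivalence of \icats{}, while the paper's formulation, being stated as an equivalence natural in the test fibration, makes the naturality (which you assert but do not spell out, and which is used later, e.g.\ in the remark following the lemma and in \cref{cor:FOmonadic}) more immediately visible. There is no gap in your argument.
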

\begin{proof}
  Pulling back $\mathcal{V}^{\otimes}$ to $\mathcal{O}^{\xint}$ we get
  (since all morphisms in $\mathcal{O}^{\xint}$ are inert)
  an equivalence
  \[ \Alg_{\mathcal{O}^{\xint}/\mathcal{O}}(\mathcal{V}^{\otimes})
    \simeq
    \Fun^{\cocart}_{/\mathcal{O}^{\xint}}(\mathcal{O}^{\xint},
    \mathcal{V}^{\otimes}_{/\xint}).\]
  By \cref{OintmonRKE} the cocartesian fibration
  $\mathcal{V}^{\otimes}_{\xint} \to \mathcal{O}^{\xint}$ corresponds
  to a functor $\mathcal{O}^{\xint} \to \CatI$ that
  is right Kan extended from $\mathcal{O}^{\el}$. Translating the
  universal property of right Kan extension along the straightening
  equivalence, we get for every cocartesian fibration $\mathcal{E} \to
  \mathcal{O}^{\xint}$ a natural equivalence
  \[ \Map^{\cocart}_{/\mathcal{O}^{\xint}}(\mathcal{E},
    \mathcal{V}^{\otimes}_{/\xint}) \simeq
    \Map^{\cocart}_{/\mathcal{O}^{\el}}(\mathcal{E}|_{\mathcal{O}^{\el}},
    \mathcal{V}). \]
  This upgrades to a natural equivalence of \icats{}
  \[ \Fun^{\cocart}_{/\mathcal{O}^{\xint}}(\mathcal{E},
    \mathcal{V}^{\otimes}_{/\xint}) \simeq
    \Fun^{\cocart}_{/\mathcal{O}^{\el}}(\mathcal{E}|_{\mathcal{O}^{\el}},
    \mathcal{V}), \]
  since for $\mathcal{C} \in \CatI$ there is
  a natural equivalence \[\Map_{\CatI}(\mathcal{C},
    \Fun^{\cocart}_{/\mathcal{O}^{\xint}}(\mathcal{E},
    \mathcal{V}^{\otimes})) \simeq
    \Map^{\cocart}_{/\mathcal{O}^{\xint}}(\mathcal{C} \times
    \mathcal{E}, \mathcal{V}^{\otimes}).\]
  In our case this gives
\[ \Fun^{\cocart}_{/\mathcal{O}^{\xint}}(\mathcal{O}^{\xint},
    \mathcal{V}^{\otimes}_{/\xint}) \simeq     \Fun^{\cocart}_{/\mathcal{O}^{\el}}(\mathcal{O}^{\el},
    \mathcal{V}) \simeq \Fun_{/\mathcal{O}^{\el}}(\mathcal{O}^{\el},
    \mathcal{V}), \]
  where the last equivalence holds since $\mathcal{O}^{\el}$ is an \igpd{}.
\end{proof}

\begin{remark}
  Similarly, if $f \colon \mathcal{O} \to \mathcal{P}$ is a morphism
  of cartesian patterns and $\mathcal{V}^{\otimes}$ is a
  $\mathcal{P}$-monoidal \icat{}, we have a natural equivalence
  \[ \Alg_{\mathcal{P}^{\xint}/\mathcal{O}}(\mathcal{V}^{\otimes})
    \simeq \Fun_{/\mathcal{O}^{\el}}(\mathcal{P}^{\el},
    \mathcal{V}).\]
\end{remark}

\begin{remark}\label{rmk:Oopd}
  The notion of $\mathcal{O}$-monoidal \icat{} can be weakened to that
  of an \emph{$\mathcal{O}$-\iopd{}}, which is a functor
  $p \colon \mathcal{E} \to \mathcal{O}$ such that:
  \begin{enumerate}[(i)]
  \item For every object $\overline{O}$ in $\mathcal{E}$ lying over
    $O \in \mathcal{O}$ and every inert morphism $\phi \colon O \intto O'$ in
    $\mathcal{O}$, there exists a $p$-cocartesian morphism
    $\overline{\phi} \colon \overline{O} \to \overline{O}'$ lying over $\phi$.
  \item For every object $O \in \mathcal{O}$, the functor
    \[ \mathcal{E}_{O} \to \prod_{i}\mathcal{E}_{O_{i}}, \]
    induced by the cocartesian morphisms over the inert maps $\rho_{i}^{O}$, is an
    equivalence.
  \item Given $\overline{O}$ in $\mathcal{E}_{O}$, choose cocartesian
    morphisms $\rho^{\overline{O}}_{i} \colon \overline{O} \to
    \overline{O}_{i}$ over $\rho^{O}_{i}$. Then for any $O' \in
    \mathcal{O}$ and $\overline{O}' \in \mathcal{E}_{O'}$, the
    commutative square
    \[
      \begin{tikzcd}
        \Map_{\mathcal{E}}(\overline{O}', \overline{O}) \arrow{r} \arrow{d} & \prod_{i} \Map_{\mathcal{E}}(\overline{O}',
        \overline{O}_{i}) \arrow{d} \\
        \Map_{\mathcal{O}}(O', O) \arrow{r} & \prod_{i} \Map_{\mathcal{O}}(O', O_{i})
      \end{tikzcd}
    \]
    is cartesian.
  \end{enumerate}
  This is the same as a \emph{weak Segal fibration} over $\mathcal{O}$
  as defined in \cite[\S 9]{patterns}; we use the term
  $\mathcal{O}$-\iopds{} to emphasize that this notion
  specializes to the (symmetric) \iopds{} of
  \cite{ha} over the pattern $\xF_{*}^{\flat}$, to non-symmetric (or
  planar) \iopds{} (as in \cite{enriched} and \cite[\S 4.1.3]{ha})
  over the pattern $\simp^{\op,\flat}$, and more generally for an
  operator category $\Phi$ to $\Phi$-\iopds{} in the sense of
  \cite{bar} for the pattern $\Lambda(\Phi)^{\flat}$. If $\mathcal{E}$ is an
  $\mathcal{O}$-\iopd{}, then it has a canonical pattern structure by
  \cite[Lemma 9.4]{patterns}, where the inert morphisms are those that
  are cocartesian and lie over inert morphisms in $\mathcal{O}$, and
  the active morphisms are those that lie over active morphisms in
  $\mathcal{O}$; this is a cartesian
  pattern via the composite $\mathcal{E} \to \mathcal{O}
  \xto{|\blank|} \xF_{*}$.
\end{remark}

\section{Monoids as Algebras}\label{sec:algmonoid}
Let $\mathcal{O}$ be a cartesian pattern. Our goal in this section is
to prove that $\xxO$-monoids in an $\infty$-category $\mathcal{C}$
with finite products are equivalent to $\mathcal{O}$-algebras in an
$\mathcal{O}$-monoidal \icat{} determined by the cartesian product in
$\mathcal{C}$. More precisely, we will prove the following
generalization of \cite[Proposition 2.4.1.7]{ha}:
\begin{propn}\label{propn:moniscartalg}
  Suppose $\mathcal{C}$ is an \icat{} with finite products. Let
  $\mathcal{C}^{\times} \to \xF_{*}$ be the corresponding cartesian
  symmetric monoidal \icat{}. If $\mathcal{O}$ is a cartesian pattern,
  then there is a natural equivalence
  \[ \Mon_{\mathcal{O}}(\mathcal{C}) \simeq
    \Alg_{\mathcal{O}/\xF_{*}}(\mathcal{C}^{\times}).\] 
\end{propn}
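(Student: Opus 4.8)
The plan is to first reduce to algebras in the ``cartesian $\mathcal{O}$-monoidal structure''. Since $|\blank| \colon \mathcal{O} \to \xF_{*}^{\flat}$ is a morphism of cartesian patterns and $\mathcal{C}^{\times} \to \xF_{*}$ is an $\xF_{*}^{\flat}$-monoidal \icat{} --- its associated functor $\angled{n} \mapsto \mathcal{C}^{n}$ being the standard $\xF_{*}^{\flat}$-monoid in $\CatI$ --- base change along $|\blank|$ gives a natural equivalence $\Alg_{\mathcal{O}/\xF_{*}}(\mathcal{C}^{\times}) \simeq \Alg_{\mathcal{O}}(\mathcal{C}^{\times}_{\mathcal{O}})$, where $\mathcal{C}^{\times}_{\mathcal{O}} := |\blank|^{*}\mathcal{C}^{\times} \to \mathcal{O}$ is the $\mathcal{O}$-monoidal \icat{} classified by $\mathcal{O} \xrightarrow{|\blank|} \xF_{*} \to \CatI$. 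Its fibre over $O$ with $|O| = \angled{n}$ is $\mathcal{C}^{n} \simeq \Fun(\mathcal{O}^{\el}_{O/}, \mathcal{C})$ (as $\mathcal{O}^{\el}_{O/}$ is discrete with $n$ elements), and by \cref{OintmonRKE} its restriction to $\mathcal{O}^{\xint}$ is classified by the right Kan extension along $\mathcal{O}^{\el} \hookrightarrow \mathcal{O}^{\xint}$ of the constant functor at $\mathcal{C}$. In particular, combining \cref{lem:AlgOint} (noting $\mathcal{C}^{\times}_{\mathcal{O}} \times_{\mathcal{O}} \mathcal{O}^{\el} \simeq \mathcal{C} \times \mathcal{O}^{\el}$, since $\mathcal{O}^{\el}$ lies over $\angled{1}$) with \cref{OintmonRKE} already yields the desired equivalence over $\mathcal{O}^{\xint}$,
\[ \Alg_{\mathcal{O}^{\xint}/\mathcal{O}}(\mathcal{C}^{\times}_{\mathcal{O}}) \simeq \Fun(\mathcal{O}^{\el}, \mathcal{C}) \simeq \Mon_{\mathcal{O}^{\xint}}(\mathcal{C}), \]
so what is genuinely at issue is extending it across the active morphisms, \ie{} identifying $\Mon_{\mathcal{O}}(\mathcal{C})$ with $\Alg_{\mathcal{O}}(\mathcal{C}^{\times}_{\mathcal{O}})$.

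For this I would construct a comparison functor $\Psi$. An object of $\Alg_{\mathcal{O}}(\mathcal{C}^{\times}_{\mathcal{O}})$ is an inert-cocartesian section $A$; for $O$ with $|O| = \angled{n}$ the inert-cocartesian condition along the $\rho_{i}^{O}$ identifies $A(O) \in \mathcal{C}^{n}$ with $(A(O_{1}), \ldots, A(O_{n}))$, and we put $\Psi(A)(O) := \prod_{i=1}^{n} A(O_{i})$. The transition maps are then forced: along an inert $O \intto O'$, $\Psi(A)$ is the projection onto the factors corresponding to the components of $O'$; along an active $\phi \colon O \actto O'$, the component of $\Psi(A)(\phi)$ over $\rho_{j}^{O'}$ is
\[ \prod_{i=1}^{n} A(O_{i}) \twoheadrightarrow \prod_{i \,:\, |\phi|(i) = j} A(O_{i}) \longrightarrow A(O'_{j}), \]
the second arrow being $A$ applied to the active part $\tilde{O}_{j} \actto O'_{j}$ of the inert--active factorization $O \intto \tilde{O}_{j} \actto O'_{j}$ of $\rho_{j}^{O'} \circ \phi$, whose source $\tilde{O}_{j}$ has elementary components $O_{i}$ for $i \in |\phi|^{-1}(j)$. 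By construction $\Psi(A)(O) \simeq \prod_{i} \Psi(A)(O_{i})$, so $\Psi$ factors through $\Mon_{\mathcal{O}}(\mathcal{C})$; running the recipe in reverse gives a candidate inverse $\Phi$, sending a monoid $F$ to the section $O \mapsto (F(O_{1}), \ldots, F(O_{n}))$ with active transition data obtained from $F$, the Segal condition for $F$ providing the equivalences $\prod_{i \in |\phi|^{-1}(j)} F(O_{i}) \simeq F(\tilde{O}_{j})$ needed for this to make sense. That $\Psi$ and $\Phi$ are mutually inverse, hence that $\Psi$ is an equivalence, then follows by a standard argument comparing mapping spaces, which are computed componentwise over $\mathcal{O}^{\el}$ on both sides thanks to the Segal and cocartesian conditions.

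The main obstacle is making $\Psi$ (equivalently $\Phi$) precise as a functor of \icats{}, rather than merely a rule on objects and morphisms --- this is exactly the coherence that \cite{ha} establishes through a delicate simplex-by-simplex induction keyed to the inert--active factorization on $\xF_{*}$. The cleanest way to package it is to import the explicit model of $\mathcal{C}^{\times}$ from \cite{ha}: with $\widetilde{\xF}_{*}$ denoting Lurie's auxiliary category (objects $(\angled{n}, i)$ with $i \in \angled{n}^{\circ}$, and a morphism to $(\angled{m}, j)$ over $\phi$ precisely when $\phi(i) = j$), one has $\Fun_{/\xF_{*}}(\mathcal{O}, \mathcal{C}^{\times}) \simeq \Fun(\mathcal{O} \times_{\xF_{*}} \widetilde{\xF}_{*}, \mathcal{C})$, under which the inert-cocartesian condition becomes the condition of inverting all morphisms lying over inert morphisms of $\mathcal{O}$. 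The remaining work is to analyse the localization of $\mathcal{O} \times_{\xF_{*}} \widetilde{\xF}_{*}$ at that class and to match it --- by restriction along a suitable functor --- with the Segal condition defining $\Mon_{\mathcal{O}}(\mathcal{C})$; naturality in $\mathcal{C}$ (and in $\mathcal{O}$) is manifest from the construction. Alternatively one might sidestep the model entirely by establishing a general statement about inert-cocartesian sections of cocartesian fibrations over algebraic patterns --- but in any presentation it is this coherence, and not the formal scaffolding around it, that is the real content of the proposition.
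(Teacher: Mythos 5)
Your object-level recipe for $\Psi$ is the right heuristic, but, as you yourself say, the entire content of the proposition is the coherence making it a functor of \icats{}, and your proposal never supplies it: both the ``standard argument comparing mapping spaces'' and the closing ``remaining work is to analyse the localization \ldots{} and to match it with the Segal condition'' are precisely the statement to be proved. Worse, the concrete device you propose for closing the gap is misquoted. The auxiliary category with objects $(\angled{n},i)$, $i \in \angled{n}^{\circ}$, and morphisms over $\phi$ exactly when $\phi(i)=j$ is the category Lurie uses to build the \emph{coCartesian} structure $\mathcal{C}^{\amalg}$ (\cite[\S 2.4.3]{ha}); it does not compute sections of $\mathcal{C}^{\times}$, and algebras for $\mathcal{C}^{\amalg}$ are not $\mathcal{O}$-monoids. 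For the cartesian structure the relevant category is $\Gamma^{\times}$, the category of inert morphisms out of the $\angled{n}$ (equivalently pairs $(\angled{n}, S)$ with $S \subseteq \angled{n}^{\circ}$), and even then the universal property identifies $\Fun_{/\xF_{*}}(\mathcal{O}, \overline{\mathcal{C}}^{\times})$, not $\Fun_{/\xF_{*}}(\mathcal{O}, \mathcal{C}^{\times})$, with $\Fun(\mathcal{O}\times_{\xF_{*}}\Gamma^{\times}, \mathcal{C})$: landing in the full subcategory $\mathcal{C}^{\times}$ imposes a fibrewise product condition in addition to the condition, coming from cocartesianness over inerts, of inverting certain morphisms. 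So the localization picture you sketch is not the right target even after substituting the correct auxiliary category.

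What is missing is the bridge that makes the coherence automatic. The paper's route: equip $\mathcal{O}\times_{\xF_{*}}\Gamma^{\times}$ with its cartesian pattern structure; check (\cref{lem:Algeq}) that under $\Fun_{/\xF_{*}}(\mathcal{O}, \overline{\mathcal{C}}^{\times}) \simeq \Fun(\mathcal{O}\times_{\xF_{*}}\Gamma^{\times}, \mathcal{C})$ the two conditions cutting out $\Alg_{\mathcal{O}/\xF_{*}}(\mathcal{C}^{\times})$ combine to exactly the monoid condition on $\mathcal{O}\times_{\xF_{*}}\Gamma^{\times}$; and then compare $\Mon_{\mathcal{O}\times_{\xF_{*}}\Gamma^{\times}}(\mathcal{C})$ with $\Mon_{\mathcal{O}}(\mathcal{C})$ not by writing a functor by hand but by restriction and right Kan extension along the fully faithful $i_{\mathcal{O}}\colon O \mapsto (O, \id_{|O|})$, which has unique lifting of active morphisms, so that \cite[Proposition 6.3]{patterns} makes $i_{\mathcal{O}}^{*} \dashv i_{\mathcal{O},*}$ restrict to an adjoint equivalence on monoids (\cref{lem:Moneq}), with essential surjectivity checked by a short Segal-condition argument. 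Some such Kan-extension (or equivalent) mechanism is indispensable; without it your $\Psi$ remains a rule on objects and morphisms, and the proof is incomplete.
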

Here the cartesian symmetric monoidal \icat{} $\mathcal{C}^{\times}$
is defined as in \cite{ha}, and before we prove the proposition we
need to recall this definition, which we phrase using the terminology
of algebraic patterns:
\begin{definition}\label{def pattern structure}
  Let $\Gamma^\times$ (cf.~\cite[Notation 2.4.1.2]{ha}) denote the full
  subcategory of $\mathbb F_*^{\Delta^1}$ spanned by the inert morphisms in $\mathbb F_*$.   Let $\ev_{0},\ev_{1} \colon \Gamma^{\times} \to \xF_{*}$ denote the
  functors given by evaluation at $0$ and $1$, respectively.
\end{definition}

\begin{lemma}
  $\ev_{0} \colon \Gamma^{\times} \to \xF_{*}$ is a cartesian fibration.
\end{lemma}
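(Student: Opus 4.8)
The plan is to show directly that the functor $\ev_{0} \colon \Gamma^{\times} \to \xF_{*}$ has the requisite cartesian lifts, by exhibiting an explicit candidate for the cartesian morphism over any given arrow of $\xF_{*}$ and checking the universal property. Recall that an object of $\Gamma^{\times}$ is an inert morphism $\alpha \colon \angled{n} \intto \angled{k}$ in $\xF_{*}$, and a morphism in $\Gamma^{\times}$ from $\alpha \colon \angled{n}\intto\angled{k}$ to $\beta\colon\angled{m}\intto\angled{l}$ is a commutative square in $\xF_{*}$ with left vertical $\angled{n}\to\angled{m}$ and right vertical $\angled{k}\to\angled{l}$. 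Given such an object $\beta$ lying over $\angled{m}$ and a morphism $\phi \colon \angled{n} \to \angled{m}$ in $\xF_{*}$, I would form the inert–active–type factorization: since $\beta$ is inert, its restriction along the composite $\angled{n}\xto{\phi}\angled{m}\xintto{\beta}\angled{l}$ admits an inert–active factorization $\angled{n}\xintto{\alpha}\angled{k}\xto{\gamma}\angled{l}$, and the pair $(\phi,\gamma)$ (together with the evident square) defines a morphism $\alpha\to\beta$ in $\Gamma^{\times}$ lying over $\phi$.

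The **main step** is to verify that this morphism is $\ev_{0}$-cartesian. Unwinding, for an object $\alpha'\colon\angled{n'}\intto\angled{k'}$ of $\Gamma^{\times}$ with $\ev_{0}(\alpha')=\angled{n'}$, a map $\alpha'\to\beta$ over a factorization $\angled{n'}\xto{\psi}\angled{n}\xto{\phi}\angled{m}$ amounts to a map $\angled{k'}\to\angled{l}$ making the big square commute; I need this to be equivalent to the data of a map $\alpha'\to\alpha$ over $\psi$, i.e.\ a map $\angled{k'}\to\angled{k}$ making the relevant square commute. This comes down to a lifting property: since $\alpha'$ is inert, $\alpha$ is inert, and $\angled{k}\xto{\gamma}\angled{l}$ is active (with $\angled{n}\xintto\alpha\angled{k}\xto\gamma\angled{l}$ the chosen factorization), the unique-factorization axiom of the inert–active factorization system on $\xF_{*}$ guarantees that a map $\angled{k'}\to\angled{l}$ of the required form lifts uniquely through $\gamma$ to a map $\angled{k'}\to\angled{k}$ of the required form. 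This is the place where the delicate combinatorics of pointed finite sets enters, but it is exactly the content of \cite[Proposition 2.1.2.5]{ha} (which was already invoked above to get the factorization system on $\mathcal{V}^{\otimes}$) applied to $\xF_{*}$ itself; alternatively one can argue by hand since $\alpha$ is an isomorphism away from the basepoint.

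Finally, I would note the existence of the required cartesian lifts is enough: $\ev_{0}$ is a functor between ordinary $1$-categories (or one works with their nerves), and a functor is a cartesian fibration precisely when every morphism in the base admits a cartesian lift against every object over its target. I expect the \textbf{only real obstacle} to be bookkeeping in the universal-property check — keeping straight which squares must commute and invoking the uniqueness of inert–active factorizations in $\xF_{*}$ in the right form — rather than anything conceptually subtle; in fact this lemma is essentially \cite[Lemma 2.4.1.4(?)]{ha} rephrased, and the cleanest writeup may simply be to cite Lurie after setting up the translation to pattern-theoretic language.
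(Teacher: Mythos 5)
Your proof is correct and rests on the same mechanism as the paper's: the paper's one-line argument invokes the uniqueness of inert--active factorizations (via the dual of the proof of \cite[Proposition 7.2]{patterns}), which is exactly what you make explicit by constructing the lift of $\phi$ at $\beta$ from the inert--active factorization of $\beta\circ\phi$ and verifying cartesianness through the unique lifting (orthogonality) property of inert against active maps in $\xF_{*}$. So the only difference is that you write out the details the paper delegates to a citation.
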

\begin{proof}
  This follows from the uniqueness of inert--active factorizations, by
  the dual of the proof of \cite[Proposition 7.2]{patterns}.
\end{proof}

\begin{defn}
  We can apply the construction of \cite[Corollary 3.2.2.12]{ht} to
  the cartesian fibration $\ev_{0}$ and the cocartesian fibration
  $\mathcal{C} \times \xF_{*} \to \xF_{*}$ to obtain a cocartesian
  fibration $\overline{\mathcal{C}}^{\times} \to \xF_{*}$
  with the universal property that there is a natural equivalence
  \[ \Map_{/\xF_{*}}(K, \overline{\mathcal{C}}^{\times}) \simeq \Map(K
    \times_{\xF_{*}} \Gamma^{\times}, \mathcal{C}) \]
  for any \icat{} $K$ over $\xF_{*}$. In particular the \icat{} 
  $\overline{\mathcal{C}}^{\times}$ is
  given
  fibrewise over $\angled{n}$ by
  $\Fun(\Gamma^{\times}_{\angled{n}}, \mathcal{C})$. (And by the dual of
  \cite[Proposition 7.3]{freepres} this is indeed the corresponding
  functor.) If
  $\mathcal{C}$ is an \icat{} with finite products, we define
  $\mathcal{C}^{\times}$ to be the full subcategory of
  $\overline{\mathcal{C}}^{\times}$ whose objects over
  $\angled{n}$ are
  the functors $F \colon \Gamma^{\times}_{\angled{n}} \to
  \mathcal{C}$ such that for every object $\phi \colon \angled{n} \to \angled{m}$,
  the map $F(\phi) \to \prod_{i=1}^{m} F(\rho_{i}\phi)$ is an equivalence.
\end{defn}

\begin{propn}[Lurie, {\cite[Proposition 2.4.1.5]{ha}}]
  If $\mathcal{C}$ is an \icat{} with products, then the
  restricted functor $\mathcal{C}^{\times} \to \xF_{*}$ is a symmetric monoidal \icat{}. \qed
\end{propn}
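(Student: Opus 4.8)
The plan is to verify directly that $\mathcal{C}^{\times} \to \xF_{*}$ is a cocartesian fibration whose straightening $\xF_{*} \to \CatI$ is an $\xF_{*}^{\flat}$-monoid, i.e.\ a cocartesian fibration satisfying the Segal condition. Two things need checking: \textbf{(a)} the full subcategory $\mathcal{C}^{\times} \subseteq \overline{\mathcal{C}}^{\times}$ is closed under cocartesian pushforward along morphisms of $\xF_{*}$ --- which then forces $\mathcal{C}^{\times} \to \xF_{*}$ to be a cocartesian fibration, the cocartesian lifts being inherited from the inner fibration $\overline{\mathcal{C}}^{\times}$ --- and \textbf{(b)} for every $n$ the functor $\mathcal{C}^{\times}_{\angled{n}} \to \prod_{i=1}^{n} \mathcal{C}^{\times}_{\angled{1}}$ induced by $\rho_{1},\dots,\rho_{n}$ is an equivalence. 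For both it helps to first unwind the combinatorics of $\Gamma^{\times}_{\angled{n}}$: its objects, the inert maps out of $\angled{n}$, are identified with the (pointed) subsets $S$ of $\{1,\dots,n\}$, and there is a unique morphism $S \to T$ in $\Gamma^{\times}_{\angled{n}}$ precisely when $T \subseteq S$. The full subcategory on the singletons is then discrete with $n$ objects, and a short comma-category computation shows that a functor $F \colon \Gamma^{\times}_{\angled{n}} \to \mathcal{C}$ is right Kan extended from the singletons \IFF{} the canonical map $F(S) \to \prod_{s \in S} F(\{s\})$ is an equivalence for all $S$ --- that is, \IFF{} $F$ lies in $\mathcal{C}^{\times}_{\angled{n}}$. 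Hence restriction to singletons gives $\mathcal{C}^{\times}_{\angled{n}} \simeq \Fun(\{1,\dots,n\}, \mathcal{C}) \simeq \mathcal{C}^{n}$ (in particular $\mathcal{C}^{\times}_{\angled{1}} \simeq \mathcal{C}$ and $\mathcal{C}^{\times}_{\angled{0}} \simeq \ast$).

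Next I would pin down the cocartesian pushforward in $\overline{\mathcal{C}}^{\times}$. By (the dual of) \cite[Proposition 7.3]{freepres}, already invoked above, the functor $\xF_{*} \to \CatI$ classifying $\overline{\mathcal{C}}^{\times}$ sends $\angled{n}$ to $\Fun(\Gamma^{\times}_{\angled{n}}, \mathcal{C})$ and a morphism $\psi \colon \angled{n} \to \angled{m}$ to precomposition with the transition functor $\psi^{*} \colon \Gamma^{\times}_{\angled{m}} \to \Gamma^{\times}_{\angled{n}}$ of the cartesian fibration $\ev_{0}$. Unwinding the latter via the inert--active factorization in $\xF_{*}$ (exactly as in the proof that $\ev_{0}$ is cartesian) shows that on objects $\psi^{*}$ is given by taking preimages: $\psi^{*}(T) = \psi^{-1}(T) \subseteq \{1,\dots,n\}$ for $T \subseteq \{1,\dots,m\}$. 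So the cocartesian pushforward of $F \in \overline{\mathcal{C}}^{\times}_{\angled{n}}$ along $\psi$ is $\psi_{!}F = F \circ \psi^{*}$, with $(\psi_{!}F)(T) = F(\psi^{-1}(T))$.

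With this both remaining points become short computations with finite products indexed by subsets. For \textbf{(a)}: if $F \in \mathcal{C}^{\times}_{\angled{n}}$, then for any $T \subseteq \{1,\dots,m\}$, using $\psi^{-1}(T) = \bigsqcup_{i \in T} \psi^{-1}(\{i\})$ together with the Segal condition on $F$ we obtain a chain of equivalences
\[ (\psi_{!}F)(T) = F\bigl(\psi^{-1}(T)\bigr) \;\simeq\; \prod_{j \in \psi^{-1}(T)} F(\{j\}) \;\simeq\; \prod_{i \in T} \Bigl( \prod_{j \in \psi^{-1}(\{i\})} F(\{j\}) \Bigr) \;\simeq\; \prod_{i \in T} F\bigl(\psi^{-1}(\{i\})\bigr) = \prod_{i \in T} (\psi_{!}F)(\{i\}), \]
whose composite one checks is the canonical comparison map; hence $\psi_{!}F \in \mathcal{C}^{\times}_{\angled{m}}$, and $\mathcal{C}^{\times} \to \xF_{*}$ is a cocartesian fibration. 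For \textbf{(b)}: under the identifications $\mathcal{C}^{\times}_{\angled{n}} \simeq \mathcal{C}^{n}$ and $\mathcal{C}^{\times}_{\angled{1}} \simeq \mathcal{C}$, the functor $F$ with $F(\{i\}) = c_{i}$ is sent by $(\rho_{i})_{!}$ to $F(\rho_{i}^{-1}(\{1\})) = F(\{i\}) = c_{i}$, so the Segal map is the identity of $\mathcal{C}^{n}$, in particular an equivalence.

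The one genuinely delicate step is the explicit description of the cocartesian pushforward in the second paragraph --- identifying $\psi^{*}$ with ``preimage of subsets'' --- which requires carefully threading the universal property of $\overline{\mathcal{C}}^{\times}$ through the cartesian structure of $\ev_{0}$ and the inert--active factorization in $\xF_{*}$. Once that bookkeeping is in place, the closure statement \textbf{(a)} and the Segal condition \textbf{(b)} are routine.
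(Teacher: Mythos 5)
Your argument is correct, and there is nothing in the paper to compare it against: the paper states this result with a \qed, quoting \cite[Proposition 2.4.1.5]{ha}, and gives no proof of its own. Your verification is essentially a self-contained reconstruction of Lurie's argument, and all the key points check out: the fibre $\Gamma^{\times}_{\angled{n}}$ of $\ev_{0}$ is equivalent to the poset of subsets of $\{1,\dots,n\}$ with a unique map $S \to T$ \IFF{} $T \subseteq S$, so the right-Kan-extension description of the Segal condition gives $\mathcal{C}^{\times}_{\angled{n}} \simeq \mathcal{C}^{n}$ (this identification via right Kan extension is a clean touch); the cocartesian pushforward in $\overline{\mathcal{C}}^{\times}$ is precomposition with the cartesian transition functor of $\ev_{0}$, which by the inert--active factorization acts on subsets by $T \mapsto \psi^{-1}(T)$ --- this is exactly the description of cocartesian edges that the paper itself later relies on (quoting \cite[Proposition 2.4.1.5.(2)]{ha}) in the proof of \cref{lem:Algeq}; and the closure of $\mathcal{C}^{\times}$ under these pushforwards together with the Segal condition then reduce to the finite-product manipulations you carry out. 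The only step you leave implicit --- that the composite of your chain of equivalences agrees with the canonical comparison map $(\psi_{!}F)(T) \to \prod_{i \in T}(\psi_{!}F)(\{i\})$ --- is a routine naturality check, since all maps involved are induced by inclusions of subsets and compatible product projections, and a two-out-of-three argument finishes it.
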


\begin{remark}\label{rmk:mortimes}
  From the definition it follows that any functor $g \colon
  \mathcal{C} \to \mathcal{D}$ induces a natural morphism of
  cocartesian fibrations $\overline{g}^{\times} \colon
  \overline{\mathcal{C}}^{\times} \to
  \overline{\mathcal{D}}^{\times}$; if $g$ preserves products, then
  this restricts to a natural symmetric monoidal functor $g^{\times}
  \colon \mathcal{C}^{\times} \to \mathcal{D}^{\times}$.
\end{remark}

\begin{defn}
  We say a morphism
  \[
    \begin{tikzcd}
      \langle m\rangle \arrow[r] \arrow[d, inert] & \langle m'\rangle \arrow[d, inert]\\
      \langle n\rangle\arrow[r] & \langle n'\rangle
    \end{tikzcd}
  \]
  in $\Gamma^{\times}$ is inert or active if the horizontal maps in the square
  are inert or active, respectively.
\end{defn}

\begin{lemma}
  The inert and active morphisms determine a factorization system on $\Gamma^{\times}$.
\end{lemma}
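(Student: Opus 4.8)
The plan is to reduce everything to the inert--active factorization system on $\xF_{*}$. The one genuinely combinatorial ingredient is a cancellation property for inert maps in $\xF_{*}$: if $\phi \colon \angled{m} \to \angled{k}$ is inert and $\psi \colon \angled{k} \to \angled{l}$ is a morphism with $\psi\phi$ inert, then $\psi$ is inert. This I would prove by a fibre count: for $i \neq 0$ we have $\psi^{-1}(i) \subseteq \{1,\dots,k\}$ since $\psi(0) = 0$, and because $\phi$ is inert the fibre $(\psi\phi)^{-1}(i) = \phi^{-1}(\psi^{-1}(i))$ has the same cardinality as $\psi^{-1}(i)$; as $\psi\phi$ is inert this cardinality is $1$, so $\psi$ is inert.

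As $\xF_{*}$, and hence $\Gamma^{\times}$, is an ordinary category, it remains to check that the inert and active squares contain all isomorphisms, are closed under composition, and give essentially unique inert-then-active factorizations. The first two points follow immediately from the corresponding statements in $\xF_{*}$ applied to the two horizontal maps of a square. For existence of factorizations, I would start from a morphism of $\Gamma^{\times}$, presented as a commutative square in $\xF_{*}$ with inert vertical maps $\alpha \colon \angled{m} \intto \angled{n}$, $\alpha' \colon \angled{m'} \intto \angled{n'}$ and horizontal maps $f, g$, and factor $f = f''f'$ and $g = g''g'$ in $\xF_{*}$ with $f' \colon \angled{m} \intto \angled{k}$, $g' \colon \angled{n} \intto \angled{l}$ inert and $f'', g''$ active. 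The square with top edge $f'$, left edge $g'\alpha$, right edge $\alpha'f''$ and bottom edge $g''$ commutes because $\alpha'f''f' = \alpha'f = g\alpha = g''g'\alpha$, so since $f'$ is inert and $g''$ is active it admits a unique diagonal $\beta \colon \angled{k} \to \angled{l}$ with $\beta f' = g'\alpha$ and $g''\beta = \alpha'f''$. As $\beta f' = g'\alpha$ is inert and $f'$ is inert, the cancellation property forces $\beta$ to be inert, so $\beta$ defines an object of $\Gamma^{\times}$, and the two identities say precisely that the original square is the composite of the inert square with horizontal maps $f', g'$ (from $\alpha$ to $\beta$) followed by the active square with horizontal maps $f'', g''$ (from $\beta$ to $\alpha'$). (This amounts to the standard fact that $\xF_{*}^{\Delta^{1}}$ inherits the factorization system pointwise, together with the observation that $\Gamma^{\times}$ is closed under the resulting factorizations --- which is precisely where the cancellation lemma is used.)

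For essential uniqueness it suffices to show that inert morphisms are left orthogonal to active morphisms in $\Gamma^{\times}$. Given a lifting problem in $\Gamma^{\times}$ of an inert square $\iota$ against an active square $\rho$, I would apply the functors $\ev_{0}, \ev_{1} \colon \Gamma^{\times} \to \xF_{*}$ to obtain, for $j = 0, 1$, a lifting problem in $\xF_{*}$ of the inert map $\ev_{j}(\iota)$ against the active map $\ev_{j}(\rho)$, which has a unique solution $d_{j}$. A short diagram chase --- using the commutativity of the four squares making up the lifting problem and once more the uniqueness of diagonals in $\xF_{*}$ --- then shows that $d_{0}$ and $d_{1}$ are compatible with the inert structure maps of the two objects of $\Gamma^{\times}$ at which the diagonal must be defined, so together they give the unique solution in $\Gamma^{\times}$. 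The main obstacle I anticipate is bookkeeping rather than ideas: verifying that the intermediate object $\beta$ really lies in $\Gamma^{\times}$ (the only non-formal step, handled by the cancellation lemma) and checking the compatibility of $d_{0}$ and $d_{1}$ both require keeping careful track of which maps are inert and which active.
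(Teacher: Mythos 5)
Your proof is correct and takes essentially the same approach as the paper: both reduce everything to the pointwise inert--active factorization system on $\xF_{*}$, the only substantive point being that the middle vertical map of the factored square is again inert, so that $\Gamma^{\times}$ is closed under the pointwise factorization. The paper establishes this by comparing two inert--active factorizations of the composite $\angled{m} \to \angled{n'}$, whereas you obtain the middle map as the unique diagonal filler and then apply the cancellation property of inert maps (and you additionally spell out the orthogonality in $\Gamma^{\times}$, which the paper leaves implicit); these are minor variants of the same argument, and your sketched diagram chase for the compatibility of $d_{0}$ and $d_{1}$ does go through.
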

\begin{proof}
  Given a morphism as above, then by the factorization system on
  $\mathbb F_*^\flat$ we get horizontal inert-active factorizations
    \[
    \begin{tikzcd}
    \langle m\rangle \arrow[r, inert] \arrow[d, inert] &\langle m''\rangle \arrow[r, active]\arrow[d, dotted, inert]& \langle m'\rangle\arrow[d, inert]\\
    \langle n \rangle \arrow[r, inert] &\langle n'' \rangle \arrow[r, active]&\langle n' \rangle.
    \end{tikzcd}
    \]
    The existence of a factorization system on $\Gamma^\times$ is
    equivalent to the existence of an inert morphism indicated by the
    dotted arrow.  The map
    $\langle m''\rangle\actto \langle m' \rangle\intto\langle n'
    \rangle$ factors into
    $\langle m''\rangle \intto\langle q\rangle\actto\langle n'
    \rangle$. The essential uniqueness of factorizations implies that
    $\langle m\rangle \intto \langle n \rangle\intto\langle n''
    \rangle \actto\langle n' \rangle$ coincides with
    $\langle m\rangle \intto \langle m''\rangle \intto \langle
    q\rangle \actto\langle n' \rangle$. Hence, there is an
    equivalence $\langle n'' \rangle \simeq \langle q\rangle$ which
    proves the existence of the dotted morphism in the diagram above.
\end{proof}

\begin{defn}
  We give $\Gamma^{\times}$ an algebraic pattern structure using this
  factorization system, with $\id_{\langle 1\rangle}$ as the only
  elementary object.
\end{defn}
  
\begin{remark}\label{rem morphism}
  It is clear from the definition of $\Gamma^\times$ that the evaluations
  at $0$ and $1$ give morphisms of algebraic patterns
  $\ev_{0},\ev_{1}\colon \Gamma^\times\to \mathbb F_*^\flat$. Moreover, the evaluation at
  $1$ exhibits $\Gamma^{\times}$ as a cartesian pattern.
\end{remark}

\begin{remark}
  If $\mathcal{O}$ is a
  cartesian pattern, then we can equip the pullback
  $\xxO\times_{\mathbb F_*} \Gamma^\times$ over evaluation at $\{0\}$
  with a canonical pattern structure (which according to
  \cite[Corollary 5.5]{patterns} gives the fibre product in the
  \icat{} of algebraic patterns). Here a morphism in
  $\xxO\times_{\mathbb F_*}\Gamma^\times$ that is given by $O\to O'$
  and a commutative square
  \[
    \begin{tikzcd}
      {|O|} \arrow[r] \arrow[d, inert] & {|O'|} \arrow[d, inert]\\
      \langle n\rangle\arrow[r] & \langle n'\rangle
    \end{tikzcd}
  \]
  is inert or active if $O\to O'$ and the horizontal maps in the
  square are inert or active, respectively, and the elementary objects
  are the pairs $(E, \id_{\angled{1}})$ with $E \in
  \mathcal{O}^{\el}$. The composite
  \[ \mathcal{O} \times_{\xF_{*}} \Gamma^{\times} \to \Gamma^{\times}
    \xto{\ev_{1}} \xF_{*} \]
  exhibits $\mathcal{O} \times_{\xF_{*}} \Gamma^{\times}$ as a
  cartesian pattern.
\end{remark}

\begin{definition}
  Let $i \colon \xF_{*} \to \Gamma^{\times}$ be the functor that takes
  $\angled{n}$ to $\id_{\angled{n}}$ (given by composition with
  $\Delta^{1} \to \Delta^{0}$); this is  fully faithful and a
  morphism of cartesian patterns. If $\mathcal{O}$ is a cartesian
  pattern, we define
  $i_{\mathcal{O}} \colon \mathcal{O} \to
  \mathcal{O}\times_{\xF^{*}}\Gamma^{\times}$ by pulling back $i$, so
  that $i_{\mathcal{O}}$ takes $O\in \xxO$ to $(O, \id_{|O|})$. This
  is again fully faithful, and since the target is a fibre product of
  patterns it is also a morphism of cartesian patterns.
\end{definition}  
  
\begin{remark}\label{rem active lifting}
  An active morphism
  $(O, |O|\intto \langle n\rangle)\actto i_{\mathcal{O}}(O')\simeq (O',
  |O'| \xlongequal{\phantom{i}} |O'|)$
  is given by an active map $O \actto O'$ together with a commutative
  square
  \[
    \begin{tikzcd}
      {|O|} \arrow[r, active] \arrow[d, inert] & {|O'|}\hphantom{.} \arrow[d, equal]\\
      \langle n\rangle\arrow[r, active] & {|O'|}.
    \end{tikzcd}
  \]
  The uniqueness of the inert--active factorization then implies that
  $|O| \cong \langle n\rangle$, and hence $i_{\mathcal{O}}$ has unique
  lifting of active morphisms in the sense of \cite[Definition
  6.1]{patterns}.
\end{remark}

\begin{lemma}\label{lem:Moneq}
  Let $\mathcal{O}$ be a cartesian pattern and $\mathcal{C}$ an
  \icat{} with finite products. Composition with $i_{\mathcal{O}}$ and right Kan
  extension along it restrict to an adjoint equivalence
  \[ i_{\mathcal{O}}^{*} : \Mon_{\mathcal{O} \times_{\xF_{*}}
      \Gamma^{\times}}(\mathcal{C})  \xrightleftarrows[\sim]{\sim}   
    \Mon_{\mathcal{O}}(\mathcal{C}) : i_{\mathcal{O},*}. \]
\end{lemma}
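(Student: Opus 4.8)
The plan is to deduce the adjoint equivalence from a standard fact about fully faithful functors together with an explicit computation of the right Kan extension along $i_{\mathcal{O}}$, restricted to the subcategories of monoids. Write $\mathcal{P}:=\mathcal{O}\times_{\xF_{*}}\Gamma^{\times}$. Since $i_{\mathcal{O}}$ is fully faithful, the restriction functor $i_{\mathcal{O}}^{*}\colon\Fun(\mathcal{P},\mathcal{C})\to\Fun(\mathcal{O},\mathcal{C})$ has, wherever the relevant pointwise limits exist, a right adjoint $i_{\mathcal{O},*}$ computed by right Kan extension along $i_{\mathcal{O}}$, and the unit $\id\to i_{\mathcal{O}}^{*}i_{\mathcal{O},*}$ is then an equivalence. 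By \cref{lem:monoidpb}, $i_{\mathcal{O}}^{*}$ carries $\mathcal{P}$-monoids to $\mathcal{O}$-monoids. So it is enough to prove: (a) for every $\mathcal{O}$-monoid $G$ the right Kan extension $i_{\mathcal{O},*}G$ is everywhere defined and is a $\mathcal{P}$-monoid, and (b) every $\mathcal{P}$-monoid is right Kan extended from $\mathcal{O}$ along $i_{\mathcal{O}}$, so that the counit $i_{\mathcal{O},*}i_{\mathcal{O}}^{*}\to\id$ restricts to an equivalence on $\mathcal{P}$-monoids; combined with the unit equivalence this gives the adjoint equivalence.

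Both (a) and (b) reduce to the following computation: for an object $(O,\alpha\colon|O|\intto\langle n\rangle)$ of $\mathcal{P}$ and an $\mathcal{O}$-monoid $G$, the limit of $G$ over the comma \icat{} $(O,\alpha)/i_{\mathcal{O}}$ exists and the limit projections identify it with $\prod_{j=1}^{n}G(O_{\alpha^{-1}(j)})$, where $\rho_{\alpha^{-1}(j)}^{O}\colon O\to O_{\alpha^{-1}(j)}$ is the inert morphism over $\rho_{\alpha^{-1}(j)}$ and $\alpha^{-1}(j)\in|O|$ is the unique element with $\alpha(\alpha^{-1}(j))=j$ (using that $\alpha$ is inert). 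I would prove this in two steps. First, the inert--active factorization system on $\mathcal{P}$ together with the unique lifting of active morphisms for $i_{\mathcal{O}}$ from \cref{rem active lifting}: given any morphism $(O,\alpha)\to i_{\mathcal{O}}(O')$, factoring it as an inert map followed by an active one, uniqueness of factorizations forces the active part --- an active morphism whose target lies in the image of $i_{\mathcal{O}}$ --- to have its source in the image of $i_{\mathcal{O}}$ as well, and hence to be $i_{\mathcal{O}}$ of an active morphism of $\mathcal{O}$; this shows that the full subcategory of $(O,\alpha)/i_{\mathcal{O}}$ spanned by the inert morphisms $(O,\alpha)\intto i_{\mathcal{O}}(O'')$ is coinitial, so the limit may be taken over it. Second, since $G$ is a monoid, $G|_{\mathcal{O}^{\xint}}$ is right Kan extended from $\mathcal{O}^{\el}$ by \cref{OintmonRKE}, and $\mathcal{O}^{\el}$ is an \igpd{} by \cref{rem:Oel}; unfolding each value $G(O'')$ into its elementary factors accordingly, the remaining limit is taken over a category that is coinitial over the discrete set $\{1,\dots,n\}$, with composite to $\mathcal{O}^{\el}$ the functor $j\mapsto O_{\alpha^{-1}(j)}$, whence the limit is the asserted finite product. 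This last coinitiality statement only involves inert morphisms and is governed, via the defining equivalence $\mathcal{O}^{\el}_{O/}\simeq\xF_{*,|O|/}^{\el}$ of a cartesian pattern, by the corresponding bookkeeping in $\xF_{*}$.

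Granting this, (a) follows at once: the limit defining $i_{\mathcal{O},*}G$ at any object is a finite product, which exists in $\mathcal{C}$, and the value at the $j$-th elementary quotient of $(O,\alpha)$ --- which is $i_{\mathcal{O}}(O_{\alpha^{-1}(j)})$ --- equals $G(O_{\alpha^{-1}(j)})$ by the unit equivalence, so the Segal map of $i_{\mathcal{O},*}G$ at $(O,\alpha)$ is precisely the identification above. For (b), applying the computation to $i_{\mathcal{O}}^{*}F$ shows that $i_{\mathcal{O},*}i_{\mathcal{O}}^{*}F$ has value $\prod_{j}F\big(i_{\mathcal{O}}(O_{\alpha^{-1}(j)})\big)$ at $(O,\alpha)$, and the comparison map out of $F(O,\alpha)$ is exactly the Segal map of the $\mathcal{P}$-monoid $F$, an equivalence. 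The main obstacle is the final coinitiality statement in the limit computation --- that, after reducing to inert morphisms and unfolding into elementary factors, the comma \icat{} contributes exactly these $n$ factors and nothing more --- which takes some care because of the interplay between the factorization system on $\mathcal{P}$ and the cartesian pattern structure, although it introduces no idea beyond the combinatorial bookkeeping already needed for $\xF_{*}$ itself.
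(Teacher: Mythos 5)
Your proposal is correct in outline, and its skeleton (restrict the adjunction $i_{\mathcal{O}}^{*}\dashv i_{\mathcal{O},*}$ to monoids, use full faithfulness of $i_{\mathcal{O}}$, then check the remaining essential surjectivity/unit condition) matches the paper's, but the execution is genuinely different. The paper never computes the right Kan extension pointwise: it quotes \cite[Proposition 6.3]{patterns} --- right Kan extension along a functor with unique lifting of active morphisms (which is exactly \cref{rem active lifting}) preserves Segal objects --- to see that $i_{\mathcal{O},*}$ carries monoids to monoids, and then proves essential surjectivity by a purely formal argument: for a monoid $M$ on $\mathcal{O}\times_{\xF_{*}}\Gamma^{\times}$ it compares the Segal maps of $M$ and of $i_{\mathcal{O},*}i_{\mathcal{O}}^{*}M$, using that the elementary objects $(O_{i},\id_{\angled{1}})$ lie in the image of $i_{\mathcal{O}}$, where full faithfulness gives the equivalence. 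You instead reprove the relevant special case of that cited proposition by hand, via the explicit formula $(i_{\mathcal{O},*}G)(O,\alpha)\simeq\prod_{j}G(O_{\alpha^{-1}(j)})$, and then read off both preservation of monoids and the fact that every monoid is right Kan extended. This buys you a pointwise description of $i_{\mathcal{O},*}$, but at the cost that the crucial step --- first the initiality of the inert part of the comma category (your factorization argument is the right idea, though one should note the inert--active factorization furnishes a terminal object of the relevant slice categories, not merely an object), and then the ``unfolding'' of the Segal condition reducing the limit to the $n$ elementary factors --- is exactly the technical content the paper outsources to \cite{patterns}, and in your write-up it is only sketched as ``combinatorial bookkeeping.'' Having verified unique lifting of active morphisms, you could simply invoke \cite[Proposition 6.3]{patterns} at that point and shorten your argument to essentially the paper's. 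Two cosmetic slips: for the adjunction $i_{\mathcal{O}}^{*}\dashv i_{\mathcal{O},*}$ the map that full faithfulness makes invertible is the counit $i_{\mathcal{O}}^{*}i_{\mathcal{O},*}\to\id$, and the map you must show is an equivalence on monoids is the unit $\id\to i_{\mathcal{O},*}i_{\mathcal{O}}^{*}$ (you have the names and directions swapped); this does not affect the substance of your argument.
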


\begin{proof}
  Since $i_{\mathcal{O}}$ is a morphism of cartesian patterns, the
  functor $i_{\mathcal{O}}^{*}$ restricts to the full subcategories of
  monoids. Moreover, since $i_{\mathcal{O}}$ has unique lifting of
  active morphisms, its right adjoint $i_{\mathcal{O},*}$ also
  restricts to monoids by \cite[Proposition 6.3]{patterns}, and it is
  fully faithful since $i_{\mathcal{O}}$ is fully faithful. It remains
  to show that $i_{\mathcal{O},*}$ is essentially surjective on
  monoids. Let
  $M \colon \mathcal{O} \times_{\xF_{*}} \Gamma^{\times} \to
  \mathcal{C}$ be a monoid, and let
  $(O, j\colon |O| \intto \angled{n})$ be an object of
  $\mathcal{O} \times_{\xF_{*}} \Gamma^{\times}$. We must show that
  the canonical map $M(O,j) \to
  (i_{\mathcal{O},*}i_{\mathcal{O}}^{*}M)(O,j)$ is an equivalence. But
  we have a commutative square
  \[
    \begin{tikzcd}
      M(O,j) \arrow{r} \arrow{d} &
      (i_{\mathcal{O},*}i_{\mathcal{O}}^{*}M)(O,j) \arrow{d} \\
      \prod_{i=1}^{n} M(O_{i},\id_{\angled{1}}) \arrow{r} &
      \prod_{i =1}^{n} (i_{\mathcal{O},*}i_{\mathcal{O}}^{*}M)(O_{i},\id_{\angled{1}}),
    \end{tikzcd}
  \]
  where the vertical maps are equivalences since $M$ and
  $i_{\mathcal{O},*}i_{\mathcal{O}}^{*}M$ are monoids. Moreover, the
  bottom horizontal map is an equivalence since $i_{\mathcal{O},*}$ is
  fully faithful and the objects $(O_{i},\id_{\angled{1}}) \simeq
  i_{\mathcal{O}}(O_{i})$ are in the image of $i_{\mathcal{O}}$. The
  top horizontal map is therefore also an equivalence.
\end{proof}

\begin{lemma}\label{lem:Algeq}
  Under the natural equivalence
  \[\Fun_{/\xF_{*}}(\mathcal{O}, \overline{\mathcal{C}}^{\times}) \simeq
    \Fun(\mathcal{O} \times_{\xF_{*}} \Gamma^{\times}, \mathcal{C}),\]
  the full subcategory $\Mon_{\mathcal{O} \times_{\xF_{*}}
    \Gamma^{\times}}(\mathcal{C})$ is identified with
  $\Alg_{\mathcal{O}/\xF_{*}}(\mathcal{C}^{\times})$.
\end{lemma}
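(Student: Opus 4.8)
The plan is to make the stated equivalence completely explicit and then check that, under it, the monoid condition on $M\colon\mathcal{O}\times_{\xF_{*}}\Gamma^{\times}\to\mathcal{C}$ is equivalent to the two conditions defining an algebra on the corresponding functor $F\colon\mathcal{O}\to\overline{\mathcal{C}}^{\times}$ over $\xF_{*}$. Unwinding the universal property of $\overline{\mathcal{C}}^{\times}$, the equivalence sends $F$ to the functor with $M(O,j)\simeq F(O)(j)$, where for an object $(O,j\colon|O|\intto\angled{n})$ we regard $F(O)$ as an object of $\overline{\mathcal{C}}^{\times}_{|O|}=\Fun(\Gamma^{\times}_{|O|},\mathcal{C})$ and $j$ as an object of $\Gamma^{\times}_{|O|}$; one should also record the images under $M$ of the basic morphisms $(O,j)\to(O,j')$ and $(O,j)\to(O',j')$ (the latter lying over inert or active maps $O\to O'$). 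Now $F$ lies in $\Alg_{\mathcal{O}/\xF_{*}}(\mathcal{C}^{\times})$ exactly when (a) each $F(O)$ lies in the full subcategory $\mathcal{C}^{\times}_{|O|}\subseteq\overline{\mathcal{C}}^{\times}_{|O|}$, and (b) $F$ carries inert morphisms of $\mathcal{O}$ to cocartesian morphisms of $\mathcal{C}^{\times}$ — equivalently, since $\mathcal{C}^{\times}\hookrightarrow\overline{\mathcal{C}}^{\times}$ preserves (hence reflects) cocartesian edges, to cocartesian morphisms of $\overline{\mathcal{C}}^{\times}$. By the definition of $\mathcal{C}^{\times}$, condition (a) says precisely that $M(O,\phi)\to\prod_{l=1}^{m}M(O,\rho_{l}\phi)$ is an equivalence for every $O$ and every $\phi\colon|O|\intto\angled{m}$ in $\Gamma^{\times}_{|O|}$. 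For condition (b) I would use the description of $\overline{\mathcal{C}}^{\times}$ from \cite[Corollary 3.2.2.12]{ht}: the cocartesian pushforward along a map $\chi\colon\angled{a}\to\angled{b}$ of $\xF_{*}$ is restriction along the cartesian pullback functor $\chi^{*}\colon\Gamma^{\times}_{\angled{b}}\to\Gamma^{\times}_{\angled{a}}$, which on the subcategory $\mathcal{C}^{\times}_{\angled{a}}\simeq\mathcal{C}^{a}$ recovers the cartesian symmetric monoidal structure of \cite[Proposition 2.4.1.5]{ha}; in particular $\rho_{k}$ acts by the $k$-th projection.

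Next I would record the combinatorics of the cartesian pattern $\mathcal{O}\times_{\xF_{*}}\Gamma^{\times}$ (whose map to $\xF_{*}$ is $\ev_{1}$): its elementary objects are the pairs $(E,\id_{\angled{1}})$ with $E\in\mathcal{O}^{\el}$, an object $(O,j\colon|O|\intto\angled{n})$ lies over $\angled{n}$, and the essentially unique inert morphism onto its $i$-th elementary quotient is the composite
\[ (O,j)\longrightarrow(O,\rho_{i}j)\longrightarrow(O_{k_{i}},\id_{\angled{1}}), \]
where $k_{i}$ is the unique index with $j(k_{i})=i$ — so that $\rho_{i}j=\rho^{|O|}_{k_{i}}$ — the first map being the identity on the $\mathcal{O}$-coordinate and the second being $\rho^{O}_{k_{i}}$ on it. In particular the objects of $\mathcal{O}\times_{\xF_{*}}\Gamma^{\times}$ lying over $\angled{1}$ are exactly the $(P,\rho^{|P|}_{k})$ for $P\in\mathcal{O}$ and $1\le k\le|P|$, and each has the single elementary quotient $(P_{k},\id_{\angled{1}})$, reached by the inert morphism with $\mathcal{O}$-coordinate $\rho^{P}_{k}$.

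With these identifications in hand the verification is a short diagram chase. If $M$ is a monoid, then the monoid condition at $(P,\rho^{|P|}_{k})$ reads $M(P,\rho^{|P|}_{k})\isoto M(P_{k},\id_{\angled{1}})$, which — by the pushforward description above, together with (a), which is deduced in the same way — is exactly the assertion that $F(\rho^{P}_{k})$ is cocartesian; since (given (a)) cocartesianness of $F(\psi)$ for an inert $\psi\colon O\intto O'$ can be checked after evaluating on the projections $\rho^{|O'|}_{j}$, and the resulting comparisons are identified, via the monoid conditions at $(O,\rho^{|O|}_{\ell_{j}})$ and $(O',\rho^{|O'|}_{j})$ and essential uniqueness of inert maps to elementary objects in $\mathcal{O}$, with equivalences $M(O_{\ell_{j}},\id_{\angled{1}})\simeq M(O'_{j},\id_{\angled{1}})$, condition (b) follows. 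Condition (a) follows because the monoid comparison at a general $(O,\phi)$ factors as the map in (a) for $\phi$ followed by a product of the monoid comparisons at the objects $(O,\rho_{l}\phi)$, all of which are equivalences when $M$ is a monoid. Conversely, assuming (a) and (b): condition (b) together with the cartesian structure of $\mathcal{C}^{\times}$ gives the monoid condition at every object lying over $\angled{1}$, and then (a) together with this case gives it at an arbitrary $(O,j)$ by composing the two legs of the factorization above. The one genuine subtlety is pinning down the cocartesian morphisms of $\overline{\mathcal{C}}^{\times}$ and their compatibility with the full subcategory $\mathcal{C}^{\times}$; once that is settled, everything else is bookkeeping with inert--active factorizations in $\xF_{*}$ and the essential uniqueness of inert maps in a cartesian pattern.
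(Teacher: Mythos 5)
Your proposal is correct and follows essentially the same route as the paper: you translate membership in $\Alg_{\mathcal{O}/\xF_{*}}(\mathcal{C}^{\times})$ into the two conditions (factoring through $\mathcal{C}^{\times}$, and sending inert morphisms to cocartesian edges) on the corresponding functor $M$, and then compare these with the monoid condition by factoring the Segal comparison maps through the objects of $\mathcal{O}\times_{\xF_{*}}\Gamma^{\times}$ lying over $\angled{1}$, using essential uniqueness of inert maps to elementary objects. The only cosmetic difference is that you verify cocartesianness componentwise at the $\rho_{j}$'s via the explicit pushforward description of $\overline{\mathcal{C}}^{\times}$ and the (true, but worth a line of justification) compatibility of cocartesian edges between $\mathcal{C}^{\times}$ and $\overline{\mathcal{C}}^{\times}$, whereas the paper states its condition (2) for all objects of $\Gamma^{\times}_{|O|}$ at once and cites Lurie's description of cocartesian morphisms in $\mathcal{C}^{\times}$ directly.
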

\begin{proof}
  By definition, a functor
  $F \colon \mathcal{O} \to \overline{\mathcal{C}}^{\times}$ over
  $\xF_{*}$ lies in $\Alg_{\mathcal{O}/\xF_{*}}(\mathcal{C}^{\times})$
  \IFF{} $F$ factors through the full subcategory
  $\mathcal{C}^{\times}$, and $F$ takes inert morphisms to cocartesian
  morphisms. We can reformulate these requirements in terms of the
  corresponding functor
  $F' \colon \mathcal{O} \times_{\xF_{*}} \Gamma^{\times} \to
  \mathcal{C}$ as the following pair of conditions:
  \begin{enumerate}[(1)]
  \item The map
    \[ F'(O,\phi) \to \prod_{i = 1}^{n} F'(O,\rho_{i}\phi) \] is an
    equivalence for every object
    $(O, \phi \colon |O| \intto \angled{n})$.
  \item For every inert map $\psi \colon O' \intto O$
    in
    $\mathcal{O}$ the morphism
    \[ F'(O', \phi|\psi|) \to F'(O,\phi)\]
    is an equivalence.
  \end{enumerate}
  By the definition of $\mathcal{C}^{\times}$, condition (1) holds
  \IFF{} $F$ factors through $\mathcal{C}^{\times}$, while the
  description of cocartesian morphisms in $\xcc^\times$ in
  \cite[Proposition 2.4.1.5.(2)]{ha} shows that $F$ takes inert
  morphisms to cocartesian morphisms if and only if condition (2) holds.

  On the other hand, $F'$ is a monoid if for every object $(O,\phi)$
  the map
  \[ F'(O,\phi) \to \prod_{i=1}^{n} F'(O_{\phi^{-1}i},\id_{\angled{1}}) \]
  is an equivalence. To see that this is equivalent to the first
  pair of conditions, observe that we have commutative triangles
  \[
    \begin{tikzcd}
      F'(O,\phi) \arrow{rr} \arrow{dr} &  & \prod_{i = 1}^{n}
      F'(O,\rho_{i}\phi) \arrow{dl} \\
      & \prod_{i=1}^{n} F'(O_{\phi^{-1}i}, \id_{\angled{1}}),
    \end{tikzcd}
  \]
  \[
    \begin{tikzcd}
      F'(O',\phi|\psi|) \arrow{rr} \arrow{dr} &  & F'(O,\phi) \arrow{dl} \\
      & \prod_{i=1}^{n} F'(O_{\phi^{-1}i}, \id_{\angled{1}}).
    \end{tikzcd}
  \]
  First suppose conditions (1) and (2) hold. In the first triangle, the
  horizontal map is an equivalence by (1), while the right diagonal
  map is an equivalence by (2) using the identity
  $\rho_i\phi=\id_{\angled{1}}|\rho_i^\xxO|$. Hence the left diagonal
  map is also an equivalence, which shows that $F'$ is a monoid.

  Conversely, if $F'$ is a monoid, then both diagonal maps in the
  second triangle are equivalences (using the equivalences
  $O'_{(\phi|\psi|)^{-1}i}\simeq O_{\phi^{-1}i}$). Thus the horizontal
  map is also an equivalence, which gives condition (2). In the first
  triangle we then have that the left diagonal map is an equivalence
  since $F'$ is a monoid, and the right diagonal map is an
  equivalence by condition (2) applied to the inert maps
  $\rho_{i}^{O}$. Thus the horizontal map in the first triangle is an
  equivalence, which gives condition (1).
\end{proof}

\begin{proof}[Proof of Proposition~\ref{propn:moniscartalg}]
	Combine Lemmas~\ref{lem:Moneq} and \ref{lem:Algeq}. 
\end{proof}

\begin{remark}\label{rmk:monalgnat}
  If $f \colon \mathcal{O} \to \mathcal{P}$ is a morphism of cartesian
  patterns, then it is clear from the proof that the equivalence of
  \cref{propn:moniscartalg} is natural. Thus if $\mathcal{C}$ is an
  \icat{} with finite products, composition with $f$ gives a
  commutative square
  \[
    \begin{tikzcd}
      \Mon_{\mathcal{P}}(\mathcal{C}) \arrow{r}{\sim} \arrow{d}{f^{*}}
      & \Alg_{\mathcal{P}/\xF_{*}}(\mathcal{C}^{\times})
      \arrow{d}{f^{*}} \\
      \Mon_{\mathcal{O}}(\mathcal{C}) \arrow{r}{\sim} & \Alg_{\mathcal{O}/\xF_{*}}(\mathcal{C}^{\times})
    \end{tikzcd}
  \]
  Moreover, if $\mathcal{D}$ is another \icat{} with finite products
  and $g \colon \mathcal{C} \to \mathcal{D}$ is a product-preserving
  functor, then $g$ and the symmetric monoidal functor $g^{\times}
  \colon \mathcal{C}^{\times} \to \mathcal{D}^{\times}$ from
  \cref{rmk:mortimes} fit in a commutative square
  \[
    \begin{tikzcd}
      \Mon_{\mathcal{O}}(\mathcal{C}) \arrow{r}{\sim} \arrow{d}{g_{*}}
      & \Alg_{\mathcal{O}/\xF_{*}}(\mathcal{C}^{\times})
      \arrow{d}{g^{\times}_{*}} \\
      \Mon_{\mathcal{O}}(\mathcal{D}) \arrow{r}{\sim} & \Alg_{\mathcal{O}/\xF_{*}}(\mathcal{D}^{\times}),
    \end{tikzcd}
  \]
  which combines with composition with $f$ to give a natural
  commutative cube.
\end{remark}

\section{Day Convolution over Cartesian Patterns}\label{sec:dayconv}

In this section we introduce Day convolution for presheaves of spaces
on $\mathcal{O}$-monoidal \icats{}, where $\mathcal{O}$ is any
cartesian pattern. We generalize a simple and elegant approach to Day
convolution for presheaves due to Heine~\cite[\S 6.1]{HeineThesis}. Other constructions of Day
convolution (which work for functors to more general targets than just
spaces) are due to Glasman \cite{GlasmanDay} and Lurie~\cite{ha}. We
begin by describing a useful model of the universal cocartesian
fibration, using right fibrations:
\begin{notation}
  Let $\RFib$ denote the full subcategory of
  $\CatI^{\Delta^{1}}$ spanned by the right fibrations.
\end{notation}

\begin{propn}\label{propn:RFibfib}
  The functor $\ev_{1} \colon \RFib \to \CatI$, given by evaluation at
  $1 \in \Delta^{1}$, is a cartesian and cocartesian
  fibration, and the corresponding contravariant functor is equivalent to
  \[\PSh(\blank) := \Fun((\blank)^{\op},\mathcal{S}) \colon \CatI^{\op} \to \LCatI.\]
\end{propn}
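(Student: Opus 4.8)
The plan is to deduce this from the straightening--unstraightening equivalence for right fibrations, together with the standard recognition criterion for when a cartesian fibration is also cocartesian.

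First I would recall that, since $\CatI$ has pullbacks, the evaluation $\ev_{1}\colon\CatI^{\Delta^{1}}\to\CatI$ is a cartesian fibration whose $\ev_{1}$-cartesian morphisms are exactly the pullback squares. Right fibrations are stable under base change, so the pullback of a right fibration along any functor $f\colon B\to B'$ is again a right fibration; hence the full subcategory $\RFib\subseteq\CatI^{\Delta^{1}}$ is closed under these cartesian lifts, and (being full) $\ev_{1}|_{\RFib}$ is itself a cartesian fibration with the same cartesian morphisms. Its fibre over $B\in\CatI$ is the \icat{} $\RFib_{/B}$ of right fibrations over $B$; the unstraightening equivalence \cite[\S 2.2.1]{ht} identifies this, naturally in $B$, with $\PSh(B)=\Fun(B^{\op},\mathcal{S})$, and under this identification the cartesian pushforward $E'\mapsto B\times_{B'}E'$ along $f\colon B\to B'$ goes over to restriction along $f^{\op}$, i.e.\ to $f^{*}\colon\PSh(B')\to\PSh(B)$. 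This exhibits $\ev_{1}\colon\RFib\to\CatI$ as the cartesian fibration classified by the contravariant functor $\PSh(\blank)\colon\CatI^{\op}\to\LCatI$.

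To see that $\ev_{1}$ is also a cocartesian fibration I would then invoke the fact that a cartesian fibration is a cocartesian fibration precisely when each of its transition functors admits a left adjoint: here, for every $f\colon B\to B'$, the functor $f^{*}\colon\PSh(B')\to\PSh(B)$ has the left adjoint $f_{!}$ given by left Kan extension along $f^{\op}$. (Concretely, the cocartesian pushforward of a right fibration $E\to B$ along $f$ is the right fibration over $B'$ classified by $f_{!}$ of the presheaf classifying $E\to B$, with the cocartesian square coming from the unit of $f_{!}\dashv f^{*}$.)

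The main thing to be careful about is the identification in the middle step: one must check that the fibrewise unstraightening equivalence $\RFib_{/B}\simeq\PSh(B)$ is natural in $B$ and compatible with base change, so that the classifying functor really is $\PSh(\blank)$ rather than just something pointwise equivalent to it. Everything else---the recognition criterion for cocartesian fibrations in the $\infty$-categorical setting, and the size bookkeeping needed to land in $\LCatI$ since presheaf \icats{} are large---is routine.
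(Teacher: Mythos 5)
Your outline follows the same route as the paper: $\ev_{1}$ on $\CatI^{\Delta^{1}}$ is a cartesian fibration with cartesian edges the pullback squares, $\RFib$ is closed under these because right fibrations are stable under base change, and cocartesianness follows from \cite[Corollary 5.2.2.5]{ht} since each pullback functor $f^{*}\colon\PSh(\mathcal{B}')\to\PSh(\mathcal{B})$ admits the left adjoint $f_{!}$ given by left Kan extension. All of that matches the paper's proof.

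However, the step you defer --- checking that the fibrewise equivalences $\RFib_{\mathcal{B}}\simeq\PSh(\mathcal{B})$ assemble into an identification of the \emph{classifying functor} of the cartesian fibration with $\PSh(\blank)$ --- is not a routine compatibility check; it is where essentially all of the work in this proposition lies, and your proposal gives no argument for it. The subtlety is that you have two a priori different functors $\CatI^{\op}\to\LCatI$: the one classified by the cartesian fibration $\ev_{1}\colon\RFib\to\CatI$, and the functor $\PSh(\blank)$ obtained from straightening, and knowing they agree objectwise (and even on each morphism separately) does not produce an equivalence of functors. The paper resolves this with two specific inputs: first, a variant of \cite[Corollary A.32]{freepres}, which uses the pseudo-naturality of straightening at the model-categorical level (strict pullbacks for the covariant model structures) to produce a natural equivalence $\RFib_{(\blank)}\isoto\Fun((\blank)^{\op},\mathcal{S})$; second, Hinich's theorem \cite{hinloc}, which identifies the functor classified by the cartesian fibration of \icats{} induced by a cartesian fibration of relative categories (here $\Fun(\Delta^{1},\sSet)_{\RFib}\to\sSet$, evaluation at $1$) with the localization of the strictly associated functor, and hence with $\RFib_{(\blank)}$. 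Without some argument of this kind (or an appeal to a universal property pinning down the classifying functor), your proof establishes that $\ev_{1}$ is a cartesian and cocartesian fibration whose fibres and transition functors look like those of $\PSh(\blank)$, but not the stated equivalence of functors.
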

\begin{proof}
  The \icat{} $\CatI$ has pullbacks, so the functor
  $\ev_{1} \colon \Fun(\Delta^{1}, \CatI) \to \CatI$ is a cartesian
  fibration, with cartesian morphisms given by pullback squares. Since
  the pullback of a right fibration is again a right fibration, 
  the full subcategory $\RFib$ inherits cartesian morphisms from
  $\Fun(\Delta^{1}, \CatI)$, which implies that $\ev_{1}\colon \RFib
  \to \CatI$ is a cartesian fibration. To show that it is also a
  cocartesian fibration it then suffices by \cite[Corollary
  5.2.2.5]{ht} to check that for any morphism
  $f \colon \mathcal{C}' \to \mathcal{C}$ in $\CatI$, the cartesian
  pullback functor
  \[ f^{*} \colon \RFib_{\mathcal{C}} \to \RFib_{\mathcal{C}'} \] has
  a left adjoint. Under the straightening equivalence, this functor
  corresponds to the functor
  $f^{*} \colon \PSh(\mathcal{C}) \to \PSh(\mathcal{C}')$ given by
  composition with $f^{\op}$, which indeed has a left adjoint given by
  left Kan extension along $f^{\op}$.

  To identify the corresponding functor we use the naturality of the
  straightening equivalence, as discussed in \cite[Appendix
  A]{freepres}. By a variant of \cite[Corollary A.32]{freepres}, the
  pseudo-naturality of straightening on the model category level
  induces a natural equivalence
  \[\RFib_{(\blank)} \isoto \Fun((\blank)^{\op}, \mathcal{S}).\]
  Here the functoriality in the domain is induced by
  strict pullbacks on the model
  category level using the covariant model structures on slices of
  simplicial sets \cite[\S 2.1]{ht}. 

  It thus suffices to identify this functor with that corresponding to
  our cartesian fibration. We can model $\RFib$ as a relative category
  by the full subcategory $\Fun(\Delta^{1}, \sSet)_{\RFib}$ of
  $\Fun(\Delta^{1}, \sSet)$ spanned by the right fibrations between
  quasicategories in the sense of \cite{ht}, with the weak equivalences inherited from
  the Joyal model structure on $\sSet$. Evaluation at $1$ gives a
  cartesian fibration of relative categories
  $\Fun(\Delta^{1}, \sSet)_{\RFib} \to \sSet$, whose associated
  functor from $\sSet$ to relative categories induces on the
  $\infty$-level the functor $\RFib_{(\blank)}$ we considered
  above. In this situation Hinich~\cite{hinloc} has shown that the
  functor associated to the induced cartesian fibration of \icats{} is
  that obtained from the associated functor to relative categories by
  inverting weak equivalences.
\end{proof}

\begin{defn}
  Let $\RSl$ denote
  the full subcategory of $\RFib$ spanned by the right fibrations given by slice categories, \ie{} right fibrations of
  the form $\mathcal{C}_{/c} \to \mathcal{C}$.
\end{defn}

\begin{cor}\label{cor:RSlYoneda}
  $\ev_{1} \colon \RSl \to \CatI$ is a cocartesian fibration, and the
  inclusion $\RSl \to \RFib$ preserves cocartesian morphisms. The
  corresponding functor $\CatI \to \CatI$ is the identity, and the
  inclusion corresponds to the Yoneda embeddings $\mathcal{C}
  \hookrightarrow \PSh(\mathcal{C})$.
\end{cor}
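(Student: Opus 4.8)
The plan is to deduce the statement from \cref{propn:RFibfib} by checking that the full subcategory $\RSl \subseteq \RFib$ is closed under $\ev_{1}$-cocartesian pushforwards, and then reading off the classifying data. Recall from (the proof of) \cref{propn:RFibfib} that for a functor $f \colon \mathcal{C}' \to \mathcal{C}$ the $\ev_{1}$-cocartesian pushforward in $\RFib$ is given, under the straightening equivalence $\RFib_{\mathcal{C}} \simeq \PSh(\mathcal{C})$, by left Kan extension $f_{!} \colon \PSh(\mathcal{C}') \to \PSh(\mathcal{C})$ along $f^{\op}$, and that under this equivalence the slice right fibration $\mathcal{C}_{/c} \to \mathcal{C}$ corresponds to the representable presheaf $\Map_{\mathcal{C}}(\blank, c)$. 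Thus $\RSl$ corresponds fibrewise to the full subcategory of $\PSh(\mathcal{C})$ on the representables.

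The computational heart of the argument is then the elementary observation that $f_{!}$ takes representables to representables. Since $f_{!}$ is left adjoint to restriction $f^{*}$, and $f^{*}\Map_{\mathcal{C}}(\blank, c) \simeq \Map_{\mathcal{C}}(f(\blank), c)$, for every $P \in \PSh(\mathcal{C})$ we have natural equivalences
\[ \Map_{\PSh(\mathcal{C})}(f_{!}\Map_{\mathcal{C}'}(\blank, c'), P) \simeq \Map_{\PSh(\mathcal{C}')}(\Map_{\mathcal{C}'}(\blank, c'), f^{*}P) \simeq P(f(c')), \]
so the Yoneda lemma gives $f_{!}\Map_{\mathcal{C}'}(\blank, c') \simeq \Map_{\mathcal{C}}(\blank, f(c'))$; equivalently, the $\ev_{1}$-cocartesian pushforward of $\mathcal{C}'_{/c'} \to \mathcal{C}'$ along $f$ is $\mathcal{C}_{/f(c')} \to \mathcal{C}$, which again lies in $\RSl$.

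With this in hand I would invoke the standard fact that a full subcategory of the total space of a cocartesian fibration which is closed under cocartesian pushforwards is again a cocartesian fibration over the same base, with the same cocartesian morphisms, so that the inclusion preserves cocartesian morphisms (this is immediate from the definitions; see the treatment of cocartesian fibrations in \cite{ht}). Applied to $\RSl \subseteq \RFib$ and $\ev_{1}$, this yields that $\ev_{1} \colon \RSl \to \CatI$ is a cocartesian fibration and that $\RSl \hookrightarrow \RFib$ preserves cocartesian morphisms. To identify the classifying data: the fibre $\RSl_{\mathcal{C}}$ is the essential image of the Yoneda embedding $y_{\mathcal{C}} \colon \mathcal{C} \hookrightarrow \PSh(\mathcal{C})$, hence equivalent to $\mathcal{C}$ via $y_{\mathcal{C}}$, and by the previous paragraph the pushforward $\RSl_{\mathcal{C}'} \to \RSl_{\mathcal{C}}$ along $f$ corresponds under these equivalences to $f$ itself. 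Hence $\ev_{1} \colon \RSl \to \CatI$ is classified by the identity functor $\CatI \to \CatI$, and the inclusion $\RSl \hookrightarrow \RFib$, being a morphism of cocartesian fibrations over $\CatI$, is classified by the natural transformation from $\id_{\CatI}$ to the functor $\mathcal{C} \mapsto \PSh(\mathcal{C})$ (with its covariant, left-Kan-extension functoriality) whose component at $\mathcal{C}$ is $y_{\mathcal{C}}$.

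I expect the only genuinely delicate point to be the compatibility asserted in the first paragraph: that the straightening equivalence of \cref{propn:RFibfib} identifies the right fibration $\mathcal{C}_{/c} \to \mathcal{C}$ with the representable presheaf $\Map_{\mathcal{C}}(\blank, c)$ — in other words, that unstraightening a representable presheaf produces the corresponding slice \icat{}. This is classical and is really the assertion that the Yoneda embedding is compatible with un/straightening; combined with the naturality of straightening used in the proof of \cref{propn:RFibfib}, it in fact already subsumes the content of the second paragraph. I would either cite this or, to keep the argument self-contained, verify it directly in the relative-category model $\Fun(\Delta^{1},\sSet)_{\RFib}$ employed there, where it reduces to the familiar fact that the right fibration classified by $\Map_{\mathcal{C}}(\blank, c)$ is $\mathcal{C}_{/c} \to \mathcal{C}$.
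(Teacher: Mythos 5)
Your proposal is correct and follows essentially the same route as the paper: identify slices with representables under straightening, check that the cocartesian pushforward $f_{!}$ (left Kan extension) sends representables to representables so that $\RSl$ inherits cocartesian morphisms from $\RFib$, and then identify the classifying functor via the naturality of the Yoneda embedding. The only cosmetic difference is that you derive $f_{!}\Map_{\mathcal{C}'}(\blank,c') \simeq \Map_{\mathcal{C}}(\blank,f(c'))$ from the adjunction $f_{!} \dashv f^{*}$ and the Yoneda lemma, whereas the paper quotes the commutative square expressing naturality of the Yoneda embedding with respect to $f_{!}$ — the same fact in different packaging.
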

\begin{proof}
  Under the straightening equivalence $\RFib_{\mathcal{C}} \simeq
  \PSh(\mathcal{C})$, the slice fibration
  $\mathcal{C}_{/c} \to \mathcal{C}$ corresponds to the representable
  presheaf $\Map_{\mathcal{C}}(\blank, c)$. Moreover, for a functor
  $f \colon \mathcal{C} \to \mathcal{C}'$ the cocartesian pushforward
  $f_{!} \colon \RFib_{\mathcal{C}} \to \RFib_{\mathcal{C}'}$
  corresponds to the functor $f_{!} \colon \PSh(\mathcal{C}) \to
  \PSh(\mathcal{C}')$ given by left Kan extension along
  $f^{\op}$. This lives in a commutative square
  \[
    \begin{tikzcd}
      \mathcal{C} \arrow[hookrightarrow]{r}  \arrow{d}[swap]{f} &
      \PSh(\mathcal{C}) \arrow{d}{f_{!}} \\
      \mathcal{C}' \arrow[hookrightarrow]{r} & 
      \PSh(\mathcal{C}')
    \end{tikzcd}
  \]
  where the horizontal maps are the Yoneda embeddings. In particular,
  $f_{!}$ takes the representable presheaf $\Map_{\mathcal{C}}(\blank,
  c)$ to $\Map_{\mathcal{C}'}(\blank, f(c))$. In terms of right
  fibrations, this means
  \[ f_{!}(\mathcal{C}_{/c}\to \mathcal{C}) \simeq
    (\mathcal{C}'_{/f(c)} \to \mathcal{C}');\] thus $\RSl$ inherits
  cocartesian morphisms from $\RFib$, and this makes
  $\ev_{1} \colon \RSl \to \CatI$ a cocartesian fibration.  The
  corresponding functor $\CatI \to \LCatI$ is equivalent to that
  taking $\mathcal{C}$ to the full subcategory of $\PSh(\mathcal{C})$
  spanned by the representable presheaves; the naturality of the
  Yoneda embedding shows that this is equivalent to the identity of $\CatI$, as
  required, and that the inclusion $\RSl \hookrightarrow \RFib$ corresponds
  to the Yoneda embedding.
\end{proof}

\begin{cor}\label{cor:RSluniv}
  Suppose $\pi \colon \mathcal{F} \to \mathcal{B}$ is the cocartesian fibration
  corresponding to a functor $F \colon \mathcal{B} \to \CatI$. Then
  there is a pullback square of \icats{}
  \[
    \begin{tikzcd}
      \mathcal{F} \arrow{r} \arrow{d}[swap]{\pi} & \RSl \arrow{d}{\ev_{1}} \\
      \mathcal{B} \arrow{r}{F} & \CatI.
    \end{tikzcd}
  \]
  In other words, $\ev_{1} \colon \RSl \to \CatI$ is the universal
  cocartesian fibration.
\end{cor}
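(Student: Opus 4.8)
The plan is to deduce the pullback square directly from \cref{cor:RSlYoneda}, using the compatibility of straightening with base change. Recall that if $\mathcal{E} \to \mathcal{C}$ is a cocartesian fibration with straightening $G \colon \mathcal{C} \to \CatI$ and $H \colon \mathcal{C}' \to \mathcal{C}$ is any functor, then the pullback $\mathcal{C}' \times_{\mathcal{C}} \mathcal{E} \to \mathcal{C}'$ is again a cocartesian fibration, with straightening the composite $G \circ H$; this is a standard property of the straightening--unstraightening equivalence (cf.~\cite[\S 3.2]{ht}). By \cref{cor:RSlYoneda} the cocartesian fibration $\ev_{1} \colon \RSl \to \CatI$ has straightening the identity functor $\id_{\CatI}$. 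Applying the above to $H = F \colon \mathcal{B} \to \CatI$, we conclude that $\mathcal{B} \times_{\CatI} \RSl \to \mathcal{B}$ is a cocartesian fibration whose straightening is $\id_{\CatI} \circ F \simeq F$.

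On the other hand, by hypothesis the cocartesian fibration $\pi \colon \mathcal{F} \to \mathcal{B}$ has straightening $F$. Since straightening induces an equivalence between the \icat{} of cocartesian fibrations over $\mathcal{B}$ and $\Fun(\mathcal{B}, \CatI)$, we obtain an equivalence $\mathcal{F} \simeq \mathcal{B} \times_{\CatI} \RSl$ of cocartesian fibrations over $\mathcal{B}$. Unwinding the identifications, this equivalence is compatible with the canonical projections to $\RSl$, so the square in the statement is a pullback. Because these identifications are natural in $\mathcal{B}$ and $F$ (and the classifying map of any pullback square of this form is forced to be $F$ up to equivalence), this is exactly the assertion that $\ev_{1} \colon \RSl \to \CatI$ is the universal cocartesian fibration.

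I do not expect a genuine obstacle here, as the statement is essentially a reformulation of \cref{cor:RSlYoneda} through the straightening equivalence. The one point requiring a little care is a size issue: \cref{cor:RSlYoneda} is most naturally phrased using a functor with values in $\LCatI$, but for $\mathcal{C}$ a small \icat{} the slice $\mathcal{C}_{/c}$ is again small, so the relevant fibres genuinely lie in $\CatI$ and the whole argument may be carried out with small \icats{}; alternatively one works throughout with $\LCatI$ and the correspondingly enlarged versions of $\RSl$ and $\RFib$.
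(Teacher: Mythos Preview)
Your argument is correct and is essentially the same as the paper's: both deduce the result from \cref{cor:RSlYoneda} by noting that pullback of cocartesian fibrations corresponds to composition of the associated functors, so $F^{*}\RSl \to \mathcal{B}$ classifies $\id_{\CatI} \circ F \simeq F$. The paper compresses this to one sentence, while you spell out the straightening equivalence and add a remark on size issues, but the underlying idea is identical.
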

\begin{proof}
  Pullback of cocartesian fibrations corresponds to composition of
  functors, so $F^{*}\RSl \to \mathcal{B}$ is the cocartesian
  fibration for $\id \circ F \simeq F$. 
\end{proof}

Our next goal is to prove an  $\mathcal{O}$-monoidal version of
\cref{cor:RSluniv}, which needs some preliminaries.

\begin{remark}
  The \icats{} $\RFib$ and $\RSl$ are both closed under
  cartesian products as full subcategories of $\Fun(\Delta^{1}, \CatI)$. These \icats{}
  therefore have cartesian products, and the functor to $\CatI$ given
  by evaluation at $1$ in $\Delta^{1}$ preserves products. We
  hence have induced symmetric monoidal functors
  \[ \ev_{1}^{\times} \colon \RFib^{\times}, \RSl^{\times} \to
    \CatI^{\times}.\]
\end{remark}
Our next goal is to show that these functors are both cocartesian
fibrations, and indeed exhibit $\RFib^{\times}$ and $\RSl^{\times}$ as
$\CatI^{\times}$-monoidal \icats{}. To see this we apply the following
general criterion:
\begin{propn}\label{propn:moncocart}
  Let $\mathcal{O}$ be a cartesian pattern. Suppose
  $F \colon \mathcal{C}^{\otimes} \to \mathcal{D}^{\otimes}$ is an
  $\mathcal{O}$-monoidal functor between $\mathcal{O}$-monoidal
  \icats{} such that
  \begin{enumerate}[(1)]
  \item for every $E \in \mathcal{O}^{\el}$, the functor on fibres
    $F_{E} \colon \mathcal{C}_{E} \to \mathcal{D}_{E}$ is a
    cocartesian fibration,
  \item for every active morphism $\phi \colon O \actto E$ in
    $\mathcal{O}$ with $E  \in \mathcal{O}^{\el}$, in the
    commutative square
    \[
      \begin{tikzcd}
        \prod_{i} \mathcal{C}_{O_{i}}
        \arrow{r}{\phi_{!}^{\mathcal{C}}} \arrow{d}[swap]{\prod_{i} F_{O_{i}}} &
        \mathcal{C}_{E} \arrow{d}{F_{E}} \\
        \prod_{i} \mathcal{D}_{O_{i}}  \arrow{r}{\phi_{!}^{\mathcal{D}}} & \mathcal{D}_{E},
      \end{tikzcd}
      \]
    the functor $\phi_{!}^{\mathcal{C}}$ takes $\prod_{i}F_{O_{i}}$-cocartesian
    morphisms (\ie{} tuples of $F_{O_{i}}$-cocartesian morphisms) to $F_{E}$-cocartesian morphisms.
  \end{enumerate}
  Then:
  \begin{enumerate}[(i)]
  \item $F$ is a
    cocartesian fibration that exhibits $\mathcal{C}^{\otimes}$ as a
    $\mathcal{D}^{\otimes}$-monoidal \icat{}.
  \item Given a morphism $\psi \colon O(D_{i}) \to O'(D'_{j})$ in
    $\mathcal{D}^{\otimes}$ lying over $\phi \colon O \to O'$ in
    $\mathcal{O}$ and an object $O(C_{i})$ in $\mathcal{C}^{\otimes}$
    over $O(D_{i})$, the cocartesian morphism over $\psi$ with this
    domain is the composite
    \[ O(C_{i}) \to \phi_{!}O(C_{i}) \to \psi'_{!}\phi_{!}O(C_{i}) \]
    where $O(C_{i}) \to \phi_{!}O(C_{i})$ is cocartesian over $\phi$,
    the map $\psi$ factors uniquely (since $F$ preserves cocartesian
    morphisms) as
    \[ O(D_{i}) \to F(\phi_{!}O(C_{i})) \xto{\psi'} O'(D'_{j}), \]
    where $\psi'$ lies in
    the fibre $\mathcal{D}^{\otimes}_{O'}$ and $\phi_{!}O(C_{i})
    \to \psi'_{!}\phi_{!}O(C_{i})$ is $F_{O'}$-cocartesian over $\psi'$.
  \item If $f \colon \mathcal{P} \to \mathcal{D}^{\otimes}$ is a
  morphism of cartesian patterns, we have a natural cartesian square
  \[
    \begin{tikzcd}
      \Alg_{\mathcal{P}/\mathcal{D}^{\otimes}}(\mathcal{C}^{\otimes})
      \arrow{r} \arrow{d} &
      \Alg_{\mathcal{P}/\mathcal{O}}(\mathcal{C}^{\otimes}) \arrow{d} \\
      \{f\} \arrow{r} & \Alg_{\mathcal{P}/\mathcal{O}}(\mathcal{D}^{\otimes})
    \end{tikzcd}
    \]
  \end{enumerate}
\end{propn}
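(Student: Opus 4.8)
The plan is to deduce all three parts from a single fibrewise recognition criterion for cocartesian fibrations. Write $\pi^{\mathcal{C}}\colon\mathcal{C}^{\otimes}\to\mathcal{O}$ and $\pi^{\mathcal{D}}\colon\mathcal{D}^{\otimes}\to\mathcal{O}$ for the two structure maps; by hypothesis $F$ is a functor over $\mathcal{O}$ carrying $\pi^{\mathcal{C}}$-cocartesian morphisms to $\pi^{\mathcal{D}}$-cocartesian ones. I would use the standard criterion (which can be checked directly from the definition of cocartesian morphism, or extracted from the Grothendieck construction, \cf{} \cite{ht}) that such an $F$ is a cocartesian fibration provided (a) for every $O\in\mathcal{O}$ the functor on fibres $F_{O}\colon\mathcal{C}^{\otimes}_{O}\to\mathcal{D}^{\otimes}_{O}$ is a cocartesian fibration, and (b) for every morphism $\phi\colon O\to O'$ in $\mathcal{O}$ the pushforward $\phi_{!}^{\mathcal{C}}\colon\mathcal{C}^{\otimes}_{O}\to\mathcal{C}^{\otimes}_{O'}$ carries $F_{O}$-cocartesian morphisms to $F_{O'}$-cocartesian morphisms; moreover, under (a) and (b) a morphism of $\mathcal{C}^{\otimes}$ over $\phi\colon O\to O'$ is $F$-cocartesian \IFF{} in its factorization $x\to\phi_{!}x\to x'$ (first map $\pi^{\mathcal{C}}$-cocartesian over $\phi$, second map in the fibre $\mathcal{C}^{\otimes}_{O'}$) the second map is $F_{O'}$-cocartesian. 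Granting this, part (ii) is immediate: applying $F$ to $O(C_{i})\to\phi_{!}O(C_{i})$ gives a $\pi^{\mathcal{D}}$-cocartesian morphism over $\phi$, so $\psi$ factors through it via $\psi'$ in $\mathcal{D}^{\otimes}_{O'}$, and the displayed composite is precisely the factorization above of the $F$-cocartesian lift of $\psi$. So the work is to verify (a) and (b).

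For (a): since $\mathcal{C}^{\otimes}$ and $\mathcal{D}^{\otimes}$ are $\mathcal{O}$-monoidal, for $O$ lying over $\angled{n}$ the cocartesian pushforwards along the $\rho_{i}^{O}$ identify $F_{O}$ with $\prod_{i=1}^{n}F_{O_{i}}\colon\prod_{i}\mathcal{C}_{O_{i}}\to\prod_{i}\mathcal{D}_{O_{i}}$, where each $O_{i}$ is elementary; each $F_{O_{i}}$ is a cocartesian fibration by hypothesis~(1), and a finite product of cocartesian fibrations is again one, with cocartesian morphisms the tuples of cocartesian morphisms. The same computation shows that for $\overline{D}=O(D_{i})$ the fibre $\mathcal{C}^{\otimes}_{\overline{D}}$ of $F$ is $\prod_{i}(F_{O_{i}})^{-1}(D_{i})\simeq\prod_{i}\mathcal{C}^{\otimes}_{\overline{D}_{i}}$, compatibly with the cocartesian pushforwards along the $\rho_{i}^{\overline{D}}$ (using that $\mathcal{O}^{\el}_{O/}$ is a discrete set); so once $F$ is known to be a cocartesian fibration it is in fact $\mathcal{D}^{\otimes}$-monoidal, which together with the cocartesian-morphism description gives part (i).

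For (b), which is where the hypotheses really enter, I would factor $\phi$ into an inert morphism followed by an active one and, using functoriality of cocartesian pushforward, treat the two classes separately. For an inert $\iota\colon O\intto O'$, the composites $\rho_{k}^{O'}\iota$ are inert maps into elementary objects, so (as $\mathcal{O}$ is a cartesian pattern) they determine an injection of $\mathcal{O}^{\el}_{O'/}$ into $\mathcal{O}^{\el}_{O/}$ under which $\iota_{!}^{\mathcal{C}}$ becomes, via the product decompositions of the fibres, the projection onto the corresponding sub-tuple of coordinates; this visibly carries tuples of $F_{O_{i}}$-cocartesian morphisms to tuples of $F_{O'_{k}}$-cocartesian morphisms. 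For an active $\alpha\colon O\actto O'$, by the Segal condition on $\mathcal{D}^{\otimes}$ it suffices to check the property for each composite $\rho_{j}^{O'}\alpha\colon O\actto O'_{j}$; factoring $\rho_{j}^{O'}\alpha$ as $O\intto O^{(j)}\actto O'_{j}$, the inert part is covered by the previous case while the active part $O^{(j)}\actto O'_{j}$, whose target is elementary, is exactly the situation of hypothesis~(2), so its pushforward carries tuples of $F$-cocartesian morphisms to $F_{O'_{j}}$-cocartesian ones. Composing yields (b). The main obstacle in this part is the delicate bookkeeping with iterated inert--active factorizations, but no idea beyond hypothesis~(2) is needed; this and the invocation of the recognition criterion are the two points I would spell out most carefully.

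Finally, for (iii) I would begin from the evident pullback square of functor \icats{}
\[
\begin{tikzcd}
\Fun_{/\mathcal{D}^{\otimes}}(\mathcal{P},\mathcal{C}^{\otimes}) \arrow{r}\arrow{d} & \Fun_{/\mathcal{O}}(\mathcal{P},\mathcal{C}^{\otimes}) \arrow{d}{F\circ\blank} \\
\{f\} \arrow{r} & \Fun_{/\mathcal{O}}(\mathcal{P},\mathcal{D}^{\otimes})
\end{tikzcd}
\]
coming from the identification of a functor over $\mathcal{D}^{\otimes}$ along $f$ with a functor over $\mathcal{O}$ together with an equivalence of its composite with $f$. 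Since $f$ is a morphism of cartesian patterns it lies in $\Alg_{\mathcal{P}/\mathcal{O}}(\mathcal{D}^{\otimes})$, and all four corners of the square in the statement are cut out of these corners by inert-preservation conditions; so it remains to see that for a functor $A\colon\mathcal{P}\to\mathcal{C}^{\otimes}$ with $F\circ A=f$ and an inert $\iota$ in $\mathcal{P}$, the morphism $A(\iota)$ is $F$-cocartesian \IFF{} it is $\pi^{\mathcal{C}}$-cocartesian. As $f$ preserves inert morphisms, $f(\iota)$ is inert in $\mathcal{D}^{\otimes}$, \ie{} $\pi^{\mathcal{D}}$-cocartesian over an inert morphism of $\mathcal{O}$; in the factorization of part (ii) for such a target the fibre component $\psi'$ is then an equivalence, so the $F$-cocartesian and $\pi^{\mathcal{C}}$-cocartesian lifts of $f(\iota)$ with a given domain agree, and both implications follow (using that a cocartesian lift of an equivalence is an equivalence and that $F$ preserves $\pi^{\mathcal{C}}$-cocartesian morphisms). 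Tracing through the pullbacks then shows the square of $\Alg$'s is cartesian.
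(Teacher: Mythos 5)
Your proposal is correct and follows essentially the same route as the paper: the same fibrewise recognition criterion for cocartesian fibrations (the paper cites the dual of Lemma A.1.8 of \cite{cois} for this), the same reduction of the pushforward condition to hypothesis~(2) via inert projections and active maps with elementary target, and the same pullback-of-functor-categories argument for (iii) using that, over inert morphisms of $\mathcal{D}^{\otimes}$, $F$-cocartesian and $\pi^{\mathcal{C}}$-cocartesian morphisms coincide. No substantive differences to report.
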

\begin{proof}
  To prove that $F$ is a cocartesian fibration we use the criterion of (the dual of) \cite[Lemma A.1.8]{cois}, which requires us
  to check that
  \begin{enumerate}[(a)]
  \item for every $O \in \mathcal{O}$, the functor
    $F_{O} \colon \mathcal{C}^{\otimes}_{O} \to \mathcal{D}^{\otimes}_{O}$ is a
    cocartesian fibration,
  \item for every morphism $\phi \colon O \to O'$ in $\mathcal{O}$,
    in the commutative square
    \[
      \begin{tikzcd}
        \mathcal{C}^{\otimes}_{O} \arrow{r}{\phi_{!}^{\mathcal{C}}} \arrow{d}{F_{O}}&
        \mathcal{C}^{\otimes}_{O'} \arrow{d}{F_{O'}}\\
        \mathcal{D}^{\otimes}_{O} \arrow{r}{\phi_{!}^{\mathcal{D}}} & \mathcal{D}^{\otimes}_{O'},
      \end{tikzcd}
    \]
    the functor $\phi_{!}^{\mathcal{C}}$ takes $F_{O}$-cocartesian
    morphisms to $F_{O'}$-cocartesian morphisms.    
  \end{enumerate}
  Condition (a) is clear, since $F_{O}$ is equivalent to the product
  \[ \prod_{i} F_{O_{i}} \colon \prod_{i} \mathcal{C}_{O_{i}} \to
    \prod_{i} \mathcal{D}_{O_{i}},\] where each $F_{O_{i}}$ is by
  assumption a cocartesian fibration. Moreover, a morphism in
  $\mathcal{C}^{\otimes}_{O}$ is $F_{O}$-cocartesian \IFF{} under the
  equivalence with $\prod_{i} \mathcal{C}^{O_{i}}$ it corresponds to a
  tuple of $F_{O_{i}}$-cocartesian morphisms. If $\phi$ is an inert
  morphism in $\mathcal{O}$, then $\phi_{!}^{\mathcal{C}}$ corresponds
  to a projection to some factors in this product, hence condition (b)
  is immediate for inert morphisms. Using the factorization system we
  can then reduce condition (b) to the case of an active morphism
  $O \actto E$ with $E \in \mathcal{O}^{\el}$, where it holds by
  assumption.  The description of the cocartesian morphisms in the
  proof of \cite[Lemma A.1.8]{cois} also gives (ii).

  To see that $F$ exhibits $\mathcal{C}^{\otimes}$ as
  $\mathcal{D}^{\otimes}$-monoidal, observe that for $O \in
  \mathcal{O}$ the commutative square
  \[
    \begin{tikzcd}
      \mathcal{C}^{\otimes}_{O} \arrow{r}{\sim} \arrow{d}{F_{O}} & \prod_{i}
      \mathcal{C}_{O_{i}} \arrow{d}{F_{O_{i}}} \\
      \mathcal{D}^{\otimes}_{O} \arrow{r}{\sim} & \prod_{i} \mathcal{D}_{O_{i}}
    \end{tikzcd}
  \]
  is cartesian, since the horizontal maps are equivalences. For any
  $O(D_{i}) \in \mathcal{D}^{\otimes}_{O}$ we therefore have an
  equivalence on fibres
  \[ \mathcal{C}^{\otimes}_{O(D_{i})} \isoto \prod_{i}
    \mathcal{C}_{D_{i}},\]
  as required.

  To prove (iii), observe that we have a pullback square
  \[
    \begin{tikzcd}
      \Fun_{/\mathcal{D}^{\otimes}}(\mathcal{P},
      \mathcal{C}^{\otimes}) \arrow{d} \arrow{r} &
      \Fun_{/\mathcal{O}}(\mathcal{P}, \mathcal{C}^{\otimes})
      \arrow{d} \\
      \{f\} \arrow{r} & \Fun_{/\mathcal{O}}(\mathcal{P}, \mathcal{D}^{\otimes}).
    \end{tikzcd}
  \]
  This restricts to the full subcategories of algebras because (ii)
  implies that a functor from $\mathcal{P} \to \mathcal{C}^{\otimes}$
  over $\mathcal{D}^{\otimes}$ is an algebra \IFF{} the underlying
  functor over $\mathcal{O}$ is an algebra (since every inert morphism
  in $\mathcal{C}^{\otimes}$ is $F$-cocartesian over an inert
  morphisms in $\mathcal{D}^{\otimes}$).
\end{proof}

\begin{cor}\label{cor:cartcoc}
  Suppose $\pi \colon \mathcal{E} \to \mathcal{B}$ is a cocartesian
  fibration such that
  \begin{enumerate}[(1)]
  \item   $\mathcal{E}$ and $\mathcal{B}$ have finite
    products,
  \item $\pi$ preserves these,
  \item if $e_{i} \to e_{i}'$ ($i = 1,2$) are $\pi$-cocartesian morphisms in
    $\mathcal{E}$ then $e_{1} \times e_{2} \to e_{1}' \times e_{2}'$
    is again $\pi$-cocartesian.
  \end{enumerate}
  Then the induced symmetric monoidal functor
  $\pi^{\times} \colon \mathcal{E}^{\times} \to \mathcal{B}^{\times}$
  is a cocartesian fibration that exhibits $\mathcal{E}^{\times}$ as
  $\mathcal{B}^{\times}$-monoidal. \qed
\end{cor}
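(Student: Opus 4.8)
The plan is to deduce this directly from \cref{propn:moncocart}, applied to the cartesian pattern $\mathcal{O} = \xF_{*}^{\flat}$ with $\mathcal{C}^{\otimes} = \mathcal{E}^{\times}$, $\mathcal{D}^{\otimes} = \mathcal{B}^{\times}$ and $F = \pi^{\times}$. Condition (1) guarantees that $\mathcal{E}^{\times}$ and $\mathcal{B}^{\times}$ are defined and are symmetric monoidal \icats{} by \cite[Proposition 2.4.1.5]{ha}, \ie{} $\xF_{*}^{\flat}$-monoidal \icats{}; by condition (2) together with \cref{rmk:mortimes} the induced functor $\pi^{\times} \colon \mathcal{E}^{\times} \to \mathcal{B}^{\times}$ is symmetric monoidal, and a symmetric monoidal functor between cartesian symmetric monoidal \icats{} is precisely an $\xF_{*}^{\flat}$-monoidal functor. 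So we are in the situation of \cref{propn:moncocart}, and it remains to check hypotheses (1) and (2) there.

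Since $\xF_{*}^{\flat}$ has $\angled{1}$ as its only elementary object and $\mathcal{E}^{\times}_{\angled{1}} \simeq \mathcal{E}$, $\mathcal{B}^{\times}_{\angled{1}} \simeq \mathcal{B}$ with $\pi^{\times}_{\angled{1}} \simeq \pi$, hypothesis (1) of \cref{propn:moncocart} is exactly the assumption that $\pi$ is a cocartesian fibration. For hypothesis (2), the active maps with elementary target in $\xF_{*}$ are, up to the symmetric group action on the source, the unique active maps $\angled{n} \actto \angled{1}$ for $n \geq 0$. Under the equivalences $\mathcal{E}^{\times}_{\angled{n}} \simeq \mathcal{E}^{n}$ induced by the $\rho_{i}$ (and likewise for $\mathcal{B}$), the cocartesian pushforward along $\angled{n} \actto \angled{1}$ is the $n$-fold cartesian product functor $\mathcal{E}^{n} \to \mathcal{E}$, by the description of the cartesian symmetric monoidal structure recalled above; moreover a morphism of $\mathcal{E}^{\times}_{\angled{n}} \simeq \mathcal{E}^{n}$ is $\pi^{n}$-cocartesian precisely when it is a tuple of $\pi$-cocartesian morphisms of $\mathcal{E}$. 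For $n = 0$ the source \icat{} is terminal and there is nothing to prove, while for $n \geq 1$ an $n$-fold iteration of condition (3) shows that the cartesian product of $n$ such morphisms is again $\pi$-cocartesian; this is precisely hypothesis (2).

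Given these verifications, \cref{propn:moncocart}(i) yields that $\pi^{\times}$ is a cocartesian fibration exhibiting $\mathcal{E}^{\times}$ as a $\mathcal{B}^{\times}$-monoidal \icat{}, which is the assertion. The step I expect to require the most care is the identification of the active pushforwards in $\mathcal{E}^{\times}$ and $\mathcal{B}^{\times}$ with iterated cartesian products, together with the bookkeeping that condition (3), stated for binary products, extends by induction to $n$-ary products for all $n \geq 0$ (the case $n = 0$, \ie{} the monoidal unit, being automatic); everything else is a direct translation of the hypotheses through the equivalences $\mathcal{E}^{\times}_{\angled{n}} \simeq \mathcal{E}^{n}$.
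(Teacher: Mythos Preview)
Your proposal is correct and is exactly the approach the paper intends: the corollary is marked with \qed in the paper precisely because it is an immediate specialization of \cref{propn:moncocart} to $\mathcal{O} = \xF_{*}^{\flat}$, and you have spelled out the verification of its hypotheses accurately.
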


\begin{lemma}\label{lem:prodrfib}
  Suppose
  $\mathcal{E} \to \mathcal{A}$ and $\mathcal{F} \to \mathcal{B}$ are
  right fibrations corresponding to functors
  $E \colon \mathcal{A}^{\op} \to \mathcal{S}$,
  $F \colon \mathcal{B}^{\op} \to \mathcal{S}$. Then the right
  fibration
  $\mathcal{E} \times \mathcal{F} \to \mathcal{A} \times \mathcal{B}$
  corresponds to the functor
  $E \times F \colon \mathcal{A}^{\op} \times \mathcal{B}^{\op} \to
  \mathcal{S}$ (taking $(a,b)$ to $E(a) \times F(b)$).
\end{lemma}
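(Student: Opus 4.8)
The plan is to reduce everything to two formal properties of the straightening equivalence of \cref{propn:RFibfib}: its naturality in the base \icat{}, and the fact that, being an equivalence of \icats{}, it preserves products.

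First I would record that a product of right fibrations is again a right fibration and exhibit it as an iterated pullback. Write $p_{\mathcal{A}} \colon \mathcal{A} \times \mathcal{B} \to \mathcal{A}$ and $p_{\mathcal{B}} \colon \mathcal{A} \times \mathcal{B} \to \mathcal{B}$ for the projections. By the proof of \cref{propn:RFibfib}, cartesian morphisms in $\RFib$ over $\ev_{1}$ are pullback squares, so the cartesian pullback of $\mathcal{E} \to \mathcal{A}$ along $p_{\mathcal{A}}$ is the right fibration $p_{\mathcal{A}}^{*}\mathcal{E} \simeq \mathcal{E} \times \mathcal{B} \to \mathcal{A} \times \mathcal{B}$, and likewise $p_{\mathcal{B}}^{*}\mathcal{F} \simeq \mathcal{A} \times \mathcal{F} \to \mathcal{A} \times \mathcal{B}$. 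There is an evident equivalence over $\mathcal{A} \times \mathcal{B}$
\[ \mathcal{E} \times \mathcal{F} \simeq (\mathcal{E} \times \mathcal{B}) \times_{\mathcal{A} \times \mathcal{B}} (\mathcal{A} \times \mathcal{F}) = (p_{\mathcal{A}}^{*}\mathcal{E}) \times_{\mathcal{A} \times \mathcal{B}} (p_{\mathcal{B}}^{*}\mathcal{F}). \]
Since $\RFib_{\mathcal{C}}$ is a full subcategory of $\CatI_{/\mathcal{C}}$ that is closed under fibre products over $\mathcal{C}$ (a pullback of a right fibration, followed by a right fibration, is a right fibration), fibre products over $\mathcal{C}$ compute products in $\RFib_{\mathcal{C}}$.

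Then I would transport this through straightening. By the naturality of $\RFib_{(\blank)} \simeq \PSh(\blank)$ from \cref{propn:RFibfib}, the cartesian pullback functor $p_{\mathcal{A}}^{*} \colon \RFib_{\mathcal{A}} \to \RFib_{\mathcal{A} \times \mathcal{B}}$ corresponds to restriction along $p_{\mathcal{A}}^{\op}$, so $p_{\mathcal{A}}^{*}\mathcal{E}$ is classified by $(a,b) \mapsto E(a)$, and similarly $p_{\mathcal{B}}^{*}\mathcal{F}$ by $(a,b) \mapsto F(b)$. Moreover the equivalence $\RFib_{\mathcal{A} \times \mathcal{B}} \simeq \PSh(\mathcal{A} \times \mathcal{B})$ preserves products, and products of presheaves are computed pointwise; hence the fibre product over $\mathcal{A} \times \mathcal{B}$ displayed above is classified by the pointwise product $(a,b) \mapsto E(a) \times F(b)$, which is exactly $E \times F$. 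I do not expect any serious obstacle here: the only point requiring attention is the opposite-category bookkeeping in the naturality statement for straightening, which is precisely what \cref{propn:RFibfib} supplies; the remaining manipulations are formal.
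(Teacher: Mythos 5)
Your proposal is correct and follows essentially the same route as the paper: pull back $\mathcal{E}$ and $\mathcal{F}$ along the projections (classified by composition/restriction), identify $\mathcal{E}\times\mathcal{F}$ with their fibre product over $\mathcal{A}\times\mathcal{B}$, and conclude via the fact that straightening preserves limits, so the classifying functor is the pointwise product $E\times F$. No gaps to report.
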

\begin{proof}
  Consider the right fibrations
  \[\mathcal{A} \times \mathcal{E}, \mathcal{F} \times \mathcal{B} \to
    \mathcal{A} \times \mathcal{B}.\]
  These are the pullbacks of $\mathcal{E}$ and $\mathcal{F}$ along the
  projections from $\mathcal{A} \times \mathcal{B}$ to $\mathcal{A}$
  and $\mathcal{B}$, respectively. Since pullbacks of right fibrations
  correspond to compositions of functors, the associated functors are therefore
  \[ \mathcal{A}^{\op} \times \mathcal{B}^{\op} \to \mathcal{A}^{\op}
    \xto{E} \mathcal{S},\]
  \[ \mathcal{A}^{\op} \times \mathcal{B}^{\op} \to \mathcal{B}^{\op}
    \xto{F} \mathcal{S},\]
  respectively. Now we can identify $\mathcal{E} \times \mathcal{F}$
  with $(\mathcal{E}\times \mathcal{B}) \times_{\mathcal{A} \times
    \mathcal{B}} (\mathcal{A} \times \mathcal{F})$, which is a product
  of right fibrations over $\mathcal{A} \times \mathcal{B}$. Since
  straightening preserves limits, the corresponding functor is the
  product $E \times F$ as required.
\end{proof}

\begin{cor}\label{cor:RFibtimescoc}
  The functors
  \[ \ev^{\times}_{1} \colon \RSl^{\times}, \RFib^{\times} \to
    \CatI^{\times} \]
  are cocartesian fibrations that exhibit their domains as
  $\CatI^{\times}$-monoidal \icats{}. The inclusion $\RSl^{\times}
  \hookrightarrow \RFib^{\times}$ is $\CatI^{\times}$-monoidal (\ie{}
  preserves cocartesian morphisms over $\CatI^{\times}$).
\end{cor}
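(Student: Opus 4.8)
The plan is to deduce this from \cref{cor:cartcoc} applied to the two cocartesian fibrations $\ev_{1} \colon \RFib \to \CatI$ of \cref{propn:RFibfib} and $\ev_{1} \colon \RSl \to \CatI$ of \cref{cor:RSlYoneda}. Conditions (1) and (2) of \cref{cor:cartcoc} are precisely the observations recorded just above its statement: $\RFib$ and $\RSl$ are closed under finite products inside $\Fun(\Delta^{1}, \CatI)$, and $\ev_{1}$ preserves these products. So the only real work is to verify condition (3), that a product of two $\ev_{1}$-cocartesian morphisms is again $\ev_{1}$-cocartesian; \cref{cor:cartcoc} will then give that $\ev^{\times}_{1}$ exhibits $\RFib^{\times}$ and $\RSl^{\times}$ as $\CatI^{\times}$-monoidal \icats{}, which is the first assertion.

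For $\RFib$ I would pass through the straightening equivalence of \cref{propn:RFibfib}: a cocartesian morphism over $f_{i} \colon \mathcal{A}_{i} \to \mathcal{A}_{i}'$ corresponds to a unit morphism $E_{i} \to f_{i}^{*}(f_{i})_{!}E_{i}$ for left Kan extension, and by \cref{lem:prodrfib} a product of the underlying right fibrations corresponds to the external product $E_{1} \boxtimes E_{2} \colon \mathcal{A}_{1}^{\op} \times \mathcal{A}_{2}^{\op} \to \mathcal{S}$, $(a_{1},a_{2}) \mapsto E_{1}(a_{1}) \times E_{2}(a_{2})$. Condition (3) then amounts to the statement that the canonical comparison map
\[ (f_{1} \times f_{2})_{!}(E_{1} \boxtimes E_{2}) \longrightarrow (f_{1})_{!}E_{1} \boxtimes (f_{2})_{!}E_{2} \]
is an equivalence. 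Both sides are functors $\PSh(\mathcal{A}_{1}) \times \PSh(\mathcal{A}_{2}) \to \PSh(\mathcal{A}_{1}' \times \mathcal{A}_{2}')$ that preserve colimits separately in each variable: left Kan extension is a left adjoint, and the external product is assembled from the restrictions along the two projections together with the cartesian product of presheaves, which is colimit-preserving in each variable since presheaf \icats{} are cartesian closed. Since each $(f_{i})_{!}$ carries a representable to a representable and the external product of the representables at $a_{1}$ and $a_{2}$ is the representable at $(a_{1}, a_{2})$, the two functors agree on pairs of representables, and as representables generate the presheaf \icats{} under colimits the comparison map is an equivalence in general. I expect this external-product-of-left-Kan-extensions identity to be the only step needing any real argument; everything else is bookkeeping.

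For $\RSl$ I would instead use that a product of slices is again a slice, $(\mathcal{A}_{1})_{/a_{1}} \times (\mathcal{A}_{2})_{/a_{2}} \simeq (\mathcal{A}_{1} \times \mathcal{A}_{2})_{/(a_{1}, a_{2})}$ over $\mathcal{A}_{1} \times \mathcal{A}_{2}$. By \cref{cor:RSlYoneda} the $\ev_{1}$-cocartesian morphisms of $\RSl$ over $f \colon \mathcal{C} \to \mathcal{C}'$ are exactly the maps $(\mathcal{C}_{/c} \to \mathcal{C}) \to (\mathcal{C}'_{/f(c)} \to \mathcal{C}')$, so a product of two of these is again of that form and hence $\ev_{1}$-cocartesian, which is condition (3) for $\RSl$. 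For the final assertion, that $\RSl^{\times} \hookrightarrow \RFib^{\times}$ preserves cocartesian morphisms over $\CatI^{\times}$, I would combine the fact that $\RSl \hookrightarrow \RFib$ already preserves cocartesian morphisms over $\CatI$ (\cref{cor:RSlYoneda}) with the observation that the ``multiplication'' functors $\prod_{i} \PSh(\mathcal{C}_{i}) \to \PSh(\prod_{i} \mathcal{C}_{i})$ given by external product send tuples of representables to representables. By the explicit description of the cocartesian morphisms of such a monoidal functor in \cref{propn:moncocart}(ii) — each is a composite of a piece cocartesian over $\CatI$ and a piece given by one of these multiplication maps — any cocartesian morphism of $\RFib^{\times}$ whose source lies in $\RSl^{\times}$ already lies in $\RSl^{\times}$, so the inclusion is $\CatI^{\times}$-monoidal.
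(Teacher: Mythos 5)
Your proof is correct and follows essentially the same route as the paper: apply \cref{cor:cartcoc}, reduce condition (3) via straightening and \cref{lem:prodrfib} to the identity $(f_{1}\times f_{2})_{!}(E_{1}\boxtimes E_{2})\simeq (f_{1})_{!}E_{1}\boxtimes (f_{2})_{!}E_{2}$ (the paper verifies this by the pointwise colimit formula for left Kan extensions and commutation of products in $\mathcal{S}$ with colimits in each variable, you by cocontinuity in each variable plus agreement on representables, which amounts to the same computation), and deduce the last claim from the description of cocartesian morphisms in \cref{propn:moncocart}(ii). One minor remark: your separate slice argument for $\RSl$ is slightly loose as phrased --- a morphism $(\mathcal{C}_{/c}\to\mathcal{C})\to(\mathcal{C}'_{/f(c)}\to\mathcal{C}')$ over $f$ is not cocartesian merely by virtue of having this form (it must correspond to an equivalence $f_{!}\,y(c)\simeq y(f(c))$) --- but this is harmless, since the cocartesian morphisms of $\RSl$ are inherited from $\RFib$ (\cref{cor:RSlYoneda}), so your $\RFib$ computation already covers $\RSl$, exactly as the paper argues.
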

\begin{proof}
  The only non-obvious condition in \cref{cor:cartcoc} is that
  products of cocartesian morphisms in $\RFib$ are again cocartesian
  (since the cocartesian morphisms in $\RSl$ are inherited from
  $\RFib$, it is enough to consider the case of $\RFib$).

  Suppose 
  $\mathcal{E} \to \mathcal{A}$ and $\mathcal{F} \to \mathcal{B}$ are
  right fibrations corresponding to functors
  $E \colon \mathcal{A}^{\op} \to \mathcal{S}$,
  $F \colon \mathcal{B}^{\op} \to \mathcal{S}$. Then the product
  $\mathcal{E} \times \mathcal{F} \to \mathcal{A} \times \mathcal{B}$
  corresponds to $E \times F$ by \cref{lem:prodrfib}.
  Given $\alpha \colon \mathcal{A} \to \mathcal{A}'$ and
  $\beta \colon \mathcal{B} \to \mathcal{B}'$, the cocartesian
  pushforward
  $(\alpha \times \beta)_{!}(\mathcal{E} \times \mathcal{F})$
  corresponds to the left Kan extension of $E \times F$ along
  $(\alpha \times \beta)^{\op}$, given by
  \[ (x,y) \in (\mathcal{A}' \times \mathcal{B}')^{\op} \mapsto
    \colim_{(a,b) \in (\mathcal{A}_{x/} \times
      \mathcal{B}_{y}/)^{\op}} E(a) \times F(b).\]
  Since the product in $\mathcal{S}$ commutes with colimits in each
  variable, this is equivalent to
  \[ (x,y) \in (\mathcal{A}' \times \mathcal{B}')^{\op} \mapsto
    \left(\colim_{a \in (\mathcal{A}_{x/})^{\op}} E(a) \right) \times
    \left( \colim_{b \in (\mathcal{B}_{y/})^{\op}} F(b) \right)
  \]
  which by \cref{lem:prodrfib} is the functor corresponding to $\alpha_{!}\mathcal{E} \times
  \beta_{!}\mathcal{F}$, as required. 
  
  For the last claim, note that cocartesian
  morphisms in $\RSl$ are inherited from $\RFib$.
  The description of cocartesian morphisms in \cref{propn:moncocart}(ii)
  therefore shows that the 
  inclusion $\RSl^{\times} \hookrightarrow \RFib^{\times}$ preserves
  cocartesian morphisms over $\CatI^{\times}$.
\end{proof}

\begin{remark}\label{rmk:RFibxcoc}
  Using \cref{propn:moncocart}(ii) we can describe the cocartesian
  morphisms in $\RFib^{\times}$ as follows: Given a morphism
  $(\mathcal{C}_{1},\ldots,\mathcal{C}_{n}) \to \mathcal{D}$ in
  $\CatI^{\times}$, which corresponds to a functor
  $\Phi \colon \mathcal{C}_{1} \times \cdots \times \mathcal{C}_{n}
  \to \mathcal{D}$, and an object of $\RFib^{\times}$ over
  $(\mathcal{C}_{1},\ldots,\mathcal{C}_{n})$, which we can identify
  with a family of presheaves
  $F_{i} \colon \mathcal{C}_{i}^{\op} \to \mathcal{S}$,
  ($i = 1,\ldots,n$), the cocartesian morphism over $\Phi$ takes this to the left Kan extension
  $\Phi_{!}(\prod_{i}F_{i}) \colon \mathcal{D}^{\op} \to \mathcal{S}$
  along $\Phi^{\op}$ of the product
  \[ \prod_{i} F_{i} \colon \prod_{i} \mathcal{C}_{i}^{\op}
    \xto{(F_{i})} \mathcal{S}^{\times n} \xto{\times} \mathcal{S}.\]
\end{remark}

\begin{propn}\label{propn:RSlunivmon}
  Let $\mathcal{O}$ be a cartesian pattern, and suppose
  $\pi \colon \mathcal{C}^{\otimes} \to \mathcal{O}$ is an
  $\mathcal{O}$-monoidal \icat{}, corresponding to an
  $\mathcal{O}$-monoid $M$ in $\CatI$. Let $A \colon \mathcal{O} \to
  \CatI^{\times}$ be the corresponding $\mathcal{O}$-algebra under the
  equivalence of \cref{propn:moniscartalg}.
  Then there is a pullback square
  \[
    \begin{tikzcd}
      \mathcal{C}^{\otimes} \arrow{r} \arrow{d} & \RSl^{\times}
      \arrow{d}{\ev_{1}^{\times}} \\
      \mathcal{O} \arrow{r}{A} & \CatI^{\times}.
    \end{tikzcd}
  \]
\end{propn}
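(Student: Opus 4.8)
The plan is to deduce this $\mathcal{O}$-monoidal statement from the non-monoidal universal property in \cref{cor:RSluniv} together with the identification of $\mathcal{O}$-algebras in $\CatI^{\times}$ with $\mathcal{O}$-monoids from \cref{propn:moniscartalg}, exactly parallel to how \cref{cor:RSluniv} was deduced from \cref{cor:RSlYoneda}. The key point is that we now have, by \cref{cor:RFibtimescoc}, the $\CatI^{\times}$-monoidal \icat{} $\RSl^{\times}$, and the functor $\ev_{1}^{\times} \colon \RSl^{\times} \to \CatI^{\times}$ is the universal \emph{$\CatI^{\times}$-monoidal} cocartesian fibration, in the sense that pulling it back along an $\mathcal{O}$-algebra $A \colon \mathcal{O} \to \CatI^{\times}$ recovers the $\mathcal{O}$-monoidal \icat{} classified by $A$.

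First I would make precise what ``$A$'' is: the $\mathcal{O}$-monoid $M \colon \mathcal{O} \to \CatI$ underlying $\pi$ corresponds under \cref{propn:moniscartalg} (and \cref{rmk:monalgnat}) to an $\mathcal{O}$-algebra $A \colon \mathcal{O} \to \CatI^{\times}$ over $\xF_{*}$, and the $\mathcal{O}$-monoidal \icat{} $\mathcal{C}^{\otimes} \to \mathcal{O}$ is the cocartesian fibration straightening to $M$. I would then form the pullback $A^{*}\RSl^{\times} \to \mathcal{O}$ and show it is equivalent to $\mathcal{C}^{\otimes}$ over $\mathcal{O}$. Since $\ev_{1}^{\times}$ is $\CatI^{\times}$-monoidal and a cocartesian fibration, the pullback $A^{*}\RSl^{\times} \to \mathcal{O}$ is again a cocartesian fibration, and by \cref{propn:moncocart}(ii) its cocartesian morphisms are computed fibrewise from the cocartesian morphisms in $\RSl^{\times}$ described in \cref{cor:RFibtimescoc} (i.e.\ via left Kan extensions of products of representables). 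Fibrewise over $O \in \mathcal{O}$, the fibre of $A^{*}\RSl^{\times}$ is $\prod_{i}\RSl_{M(O_{i})} \simeq \prod_{i} M(O_{i})$, using \cref{cor:RSlYoneda} and the fact that $A(O) \in \CatI^{\times}$ over $\angled{n}$ is the tuple $(M(O_{1}),\ldots,M(O_{n}))$; this matches the fibre $\mathcal{C}^{\otimes}_{O}$ since $\pi$ is $\mathcal{O}$-monoidal.

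The cleanest way to package this is to observe that the square in the statement is obtained by pulling back the square
\[
  \begin{tikzcd}
    \RSl^{\times} \arrow{r}{=} \arrow{d}{\ev_{1}^{\times}} & \RSl^{\times} \arrow{d}{\ev_{1}^{\times}}\\
    \CatI^{\times} \arrow{r}{\id} & \CatI^{\times}
  \end{tikzcd}
\]
along $A \colon \mathcal{O} \to \CatI^{\times}$: indeed, pullback of cocartesian fibrations corresponds to composition of classifying functors, so $A^{*}\RSl^{\times} \to \mathcal{O}$ is the cocartesian fibration classified by the composite of $A$ with the functor $\CatI^{\times} \to \LCatI$ classifying $\ev_{1}^{\times}$. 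By \cref{cor:RSlYoneda} and the fact (from \cref{cor:RFibtimescoc} and \cref{propn:moncocart}) that the fibrewise pushforwards of $\ev_{1}^{\times}$ are built from those of $\ev_{1} \colon \RSl \to \CatI$, which is the identity of $\CatI$ by \cref{cor:RSlYoneda}, the classifying functor of $\ev_{1}^{\times}$ restricted along $A$ recovers $M$; hence $A^{*}\RSl^{\times} \simeq \mathcal{C}^{\otimes}$ over $\mathcal{O}$, as a cocartesian fibration, which is what the pullback square asserts.

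The main obstacle is the bookkeeping in the previous paragraph: one must be careful that the ``classifying functor of $\ev_{1}^{\times}$'' really does, after restriction along an $\mathcal{O}$-algebra $A$ corresponding to an $\mathcal{O}$-monoid $M$, reproduce $M$ and not merely something with the same fibres. This is where \cref{propn:moncocart}(ii) and the explicit description of cocartesian morphisms in $\RFib^{\times}$ in \cref{rmk:RFibxcoc} are essential: they guarantee that the $\mathcal{O}$-monoidal structure transported from $\RSl^{\times}$ along $A$ agrees with the one on $\mathcal{C}^{\otimes}$ on the level of pushforward functors along active maps, not just on fibres. Once that compatibility is in hand, the equivalence $A^{*}\RSl^{\times} \simeq \mathcal{C}^{\otimes}$ over $\mathcal{O}$, and hence the pullback square, follows from the uniqueness of cocartesian fibrations with a prescribed classifying functor.
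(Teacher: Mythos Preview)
Your approach tries to form the pullback $A^{*}\RSl^{\times}$ first and then identify it with $\mathcal{C}^{\otimes}$ by showing that the classifying functor $N \colon \CatI^{\times} \to \LCatI$ of $\ev_{1}^{\times}$ satisfies $N \circ A \simeq M$. The paper goes the other way: it first produces a specific functor $\overline{A} \colon \mathcal{C}^{\otimes} \to \RSl^{\times}$ and then checks that the resulting commutative square is cartesian. Concretely, the paper starts from the non-monoidal pullback of \cref{cor:RSluniv}, observes that the induced map $\overline{M} \colon \mathcal{C}^{\otimes} \to \RSl$ is itself a $\mathcal{C}^{\otimes}$-monoid (because $(\mathcal{C}^{\otimes}_{O})_{/X} \simeq \prod_{i} \mathcal{C}_{O_{i}/X_{i}}$), and then applies the \emph{naturality} of the monoid-to-algebra equivalence (\cref{rmk:monalgnat}) to the product-preserving map $\ev_{1}$ to obtain a commutative square with $\overline{A} \colon \mathcal{C}^{\otimes} \to \RSl^{\times}$ on top. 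Cartesianness is then a fibrewise check once one verifies $\overline{A}$ preserves cocartesian morphisms.

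The gap in your route is the final inference. You argue that $N \circ A$ and $M$ agree on fibres and on pushforward functors along active maps, and conclude that they are equivalent as functors $\mathcal{O} \to \CatI$. In the $\infty$-categorical setting this is not sufficient: agreement on objects and on the action on morphisms does not produce a coherent natural equivalence. The references you cite (\cref{propn:moncocart}(ii), \cref{rmk:RFibxcoc}) describe individual cocartesian lifts and do not furnish the required higher coherences. What makes the paper's argument go through is precisely that the top map $\overline{A}$ is \emph{constructed} from $\overline{M}$ via an already-established natural equivalence, so no ad hoc identification of classifying functors is needed; once a map of cocartesian fibrations over $\mathcal{O}$ is in hand, being a pullback \emph{is} a fibrewise condition, which is the easy direction. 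Your strategy could be salvaged by proving a lemma to the effect that the $\CatI^{\times}$-monoid classifying $\ev_{1}^{\times}$ corresponds, under \cref{propn:moniscartalg}, to the identity $\CatI$-monoid, but that statement is essentially the content of the proposition and is not provided by the cited results.
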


\begin{remark}
  We can interpret this as exhibiting the pullback $\mathcal{O} \times_{\xF_{*}}
  \RSl^{\times} \to \mathcal{O} \times_{\xF_{*}} \CatI^{\times}$ as
  the universal cocartesian fibration of $\mathcal{O}$-monoidal
  \icats{}.
\end{remark}

\begin{proof}[Proof of \cref{propn:RSlunivmon}]
  By \cref{cor:RSluniv} we have a pullback square
  \[
    \begin{tikzcd}
      \xcc^\otimes \arrow[r, "\overline{M}"] \arrow[d] & \RSl\arrow[d]\\
      \xxO \arrow[r, "M"] & \CatI,
    \end{tikzcd}\]
  where $M$ is by assumption an $\mathcal{O}$-monoid. The functor
  $\overline{M}$ takes an object $X \simeq O(X_{1},\ldots,X_{n}) \in
  \mathcal{C}^{\otimes}$ to the slice
  $(\mathcal{C}^{\otimes}_{O})_{/X}$. But the equivalence
  $\mathcal{C}^{\otimes}_{O} \simeq
  \prod_{i=1}^{n}\mathcal{C}_{O_{i}}$ induces an equivalence
  \[ (\mathcal{C}^{\otimes}_{O})_{/X} \simeq \prod_{i=1}^{n}
    \mathcal{C}_{O_{i}/X_{i}}. \]
  This shows that the functor $\overline{M}$ is a
  $\mathcal{C}^{\otimes}$-monoid. By the naturality of the equivalence
  between monoids and algebras, as discussed in
  \cref{rmk:monalgnat}, our pullback square therefore corresponds to a
  commutative square
\[
\begin{tikzcd}
\xcc^\otimes \arrow[r, "\overline{A}"] \arrow[d] & \RSl^\times\arrow[d]\\
\xxO \arrow[r, "A"] & \CatI^\times,
\end{tikzcd}\]
where $\overline{A}$ is the $\mathcal{C}^{\otimes}$-algebra
corresponding to $\overline{M}$. It then remains to verify that this
square is cartesian.

Here the vertical maps are cocartesian
fibrations, and we first observe that $\overline{A}$ preserves cocartesian
morphisms. Since $\overline{A}$ is an algebra it suffices to check
this in the
case of a cocartesian morphism $\overline{\phi} \colon O(C_{1}, \ldots, C_{n}) \to C'$ over an
active morphism $\phi \colon O \actto E$ with $E \in \mathcal{O}^{\el}$.
Here $C' \simeq M(\phi)(O(C_{1},\ldots,C_{n}))$, and
$\overline{A}(\overline{\phi})$ is by construction the morphism
\[ (\mathcal{C}_{O_{1}/C_{1}},\ldots,\mathcal{C}_{O_{n}/C_{n}}) \to
  \mathcal{C}_{E/C'} \]
corresponding to the functor
\[ \overline{M}(\overline{\phi}) \colon \prod_{i}
  \mathcal{C}_{O_{i}/C_{i}} \simeq
  \mathcal{C}^{\otimes}_{O/O(C_{1},\ldots,C_{n})} \to
  \mathcal{C}_{E/C'}.\]
By definition of $\overline{M}$ as a pullback it preserves cocartesian
morphisms, so this is a cocartesian
morphism in $\RSl$. The description of cocartesian morphisms in
$\RSl^{\times}$ in \cref{rmk:RFibxcoc} now implies that
$A(\overline{\phi})$ is therefore cocartesian in $\RSl^{\times}$, as required.

To prove that the square is cartesian it now suffices see that it
induces equivalences on fibres over all $O \in \mathcal{O}$. But since
$\overline{A}$ preserves cocartesian morphisms, we have for $O \in
\mathcal{O}$ a commutative square
\[
  \begin{tikzcd}
    \mathcal{C}^{\otimes}_{O} \arrow{r} \arrow{d}{\sim} &
    \RSl^{\times}_{A(O)} \arrow{d}{\sim} \\
    \prod_{i}\mathcal{C}_{O_{i}} \arrow{r} & \prod_{i} \RSl_{M(O_{i})}
  \end{tikzcd}
  \]
  where the vertical maps are equivalences since
  $\mathcal{C}^{\otimes}$ is $\mathcal{O}$-monoidal and
  $\RSl^{\times}$ is $\CatI^{\times}$-monoidal, while the bottom
  horizontal map is an equivalence since $\mathcal{C}^{\otimes}$ is
  pulled back from $\RSl$ along $M$.
\end{proof}

\begin{definition}\label{def Day}
  Suppose $\mathcal{C}^{\otimes}$ is an $\mathcal{O}$-monoidal
  \icat{}. By Proposition~\ref{propn:moniscartalg} this corresponds to
  an $\mathcal{O}$-algebra $C \colon \mathcal{O} \to
  \CatI^{\times}$. 
  The \emph{(contravariant) Day convolution} of
  $\mathcal{C}^{\otimes}$ is the
  $\mathcal{O}$-monoidal \icat{} given by the pullback
  \[
    \begin{tikzcd}
      \PSh_{\mathcal{O}}(\mathcal{C})^{\otimes} \arrow{r}
      \arrow{d} & \RFib^{\times} \arrow{d} \\
      \mathcal{O} \arrow{r}{C} & \CatI^{\times}.
    \end{tikzcd}
  \]
\end{definition}

\begin{remark}
  Using \cref{rmk:RFibxcoc} we can describe the cocartesian morphisms 
  in $\PSh_{\mathcal{O}}(\mathcal{C})^{\otimes}$: given $\phi \colon O
  \to E$ in $\mathcal{O}$ active with $E \in \mathcal{O}^{\el}$ and
  $F_{i} \in \PSh(\mathcal{C}_{O_{i}})$, we take the left Kan extension
  along
  \[\phi^{\op}_{!} \colon \prod \mathcal{C}_{O_{i}}^{\op} \simeq
    (\mathcal{C}^{\otimes}_{O})^{\op} \to \mathcal{C}_{E}^{\op} \]
  of the product
  \[\prod_{i}F_{i} \colon \prod_{i}\mathcal{C}_{O_{i}}^{\op} \xto{(F_{i})}
    \prod_{i}\mathcal{S} \xto{\times} \mathcal{S}.\]
\end{remark}

\begin{proposition}\label{prop:Day}
  The Yoneda embedding gives a natural $\mathcal{O}$-monoidal functor
  $\mathcal{C}^{\otimes} \to
  \PSh_{\mathcal{O}}(\mathcal{C})^{\otimes}$.
\end{proposition}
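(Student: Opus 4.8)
The plan is to obtain the functor by base change from the $\CatI^{\times}$-monoidal inclusion $j \colon \RSl^{\times} \hookrightarrow \RFib^{\times}$ of \cref{cor:RFibtimescoc}. Let $C \colon \mathcal{O} \to \CatI^{\times}$ be the $\mathcal{O}$-algebra corresponding to $\mathcal{C}^{\otimes}$ under \cref{propn:moniscartalg}. By \cref{propn:RSlunivmon} there is a pullback square exhibiting $\mathcal{C}^{\otimes}$ as $\mathcal{O} \times_{\CatI^{\times}} \RSl^{\times}$, while by \cref{def Day} the Day convolution $\PSh_{\mathcal{O}}(\mathcal{C})^{\otimes}$ is $\mathcal{O} \times_{\CatI^{\times}} \RFib^{\times}$, using the same functor $C$ in both cases. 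Pulling back $j$ along $C$ therefore yields a functor $C^{*}j \colon \mathcal{C}^{\otimes} \to \PSh_{\mathcal{O}}(\mathcal{C})^{\otimes}$ over $\mathcal{O}$ (indeed over $\CatI^{\times}$).

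First I would check that $C^{*}j$ is an $\mathcal{O}$-monoidal functor. Since $\RSl^{\times}, \RFib^{\times} \to \CatI^{\times}$ are cocartesian fibrations and $\mathcal{C}^{\otimes}, \PSh_{\mathcal{O}}(\mathcal{C})^{\otimes}$ are their pullbacks along $C$, the maps to $\mathcal{O}$ are cocartesian fibrations; and as $j$ preserves cocartesian morphisms over $\CatI^{\times}$ by \cref{cor:RFibtimescoc}, so does its base change $C^{*}j$ over $\mathcal{O}$, which is exactly the condition to be $\mathcal{O}$-monoidal. Next I would identify $C^{*}j$ with the Yoneda embedding. Restricting along $\mathcal{O}^{\el} \hookrightarrow \mathcal{O}$, the fibre of $C^{*}j$ over $E \in \mathcal{O}^{\el}$ is the fibre of $j$ over $C(E) \simeq \mathcal{C}_{E}$, which by \cref{cor:RSlYoneda} is the Yoneda embedding $\mathcal{C}_{E} \hookrightarrow \PSh(\mathcal{C}_{E})$; more globally, on $\mathcal{V} = \mathcal{C}^{\otimes}_{/\el}$ the functor $C^{*}j$ restricts to the fibrewise Yoneda embedding into the relative presheaf \icat{} $\PSh_{\mathcal{O}}(\mathcal{C})^{\otimes}_{/\el} \to \mathcal{O}^{\el}$.

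Finally, for naturality I would observe that every ingredient is natural in $\mathcal{C}^{\otimes}$, equivalently in the corresponding $\mathcal{O}$-monoid $M$ in $\CatI$ (or $\mathcal{O}$-algebra $C$): the identifications of $\mathcal{C}^{\otimes}$ and $\PSh_{\mathcal{O}}(\mathcal{C})^{\otimes}$ as pullbacks along $C$ are natural in $C$ by construction, $j$ is a fixed $\CatI^{\times}$-monoidal functor, base change is functorial, and the naturality of the Yoneda embedding is recorded in \cref{cor:RSlYoneda,cor:RFibtimescoc}. The one point requiring care — and the only real obstacle, since it is coherence bookkeeping rather than mathematics — is assembling the pointwise functors $C^{*}j$ into a single natural transformation over the \icat{} $\Mon_{\mathcal{O}}(\CatI)$ of $\mathcal{O}$-monoidal \icats{}. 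The cleanest way to do this is to work with the universal example: pulling $j$ back along the second projection $\mathcal{O} \times_{\xF_{*}} \CatI^{\times} \to \CatI^{\times}$ produces a single monoidal functor over $\mathcal{O} \times_{\xF_{*}} \CatI^{\times}$ out of which every $C^{*}j$ is recovered by further base change along $(\id, C) \colon \mathcal{O} \to \mathcal{O} \times_{\xF_{*}} \CatI^{\times}$, making the dependence on $\mathcal{C}^{\otimes}$ manifestly functorial.
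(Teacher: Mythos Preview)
Your argument is correct and is essentially the paper's own proof: pull back the $\CatI^{\times}$-monoidal inclusion $\RSl^{\times} \hookrightarrow \RFib^{\times}$ from \cref{cor:RFibtimescoc} along the algebra $C$, invoke \cref{propn:RSlunivmon} and \cref{def Day} to identify the two pullbacks, and use \cref{cor:RSlYoneda} to see that on fibres over $\mathcal{O}^{\el}$ this is the Yoneda embedding. Your treatment is simply more explicit about why base change preserves cocartesian morphisms and about the naturality bookkeeping, points the paper leaves implicit.
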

\begin{proof}
  By \cref{cor:RFibtimescoc} the inclusion $\RSl\hookrightarrow \RFib$
  induces a $\CatI^{\times}$-monoidal functor $\RSl^{\times}
  \hookrightarrow \RFib^{\times}$. 
  Pulling this back along the $\mathcal{O}$-algebra $C \colon
  \mathcal{O} \to \CatI^{\times}$ corresponding to
  $\mathcal{C}^{\otimes}$ we get using \cref{propn:RSlunivmon} an
  $\mathcal{O}$-monoidal functor $\mathcal{C}^{\otimes} \to 
  \PSh_{\mathcal{O}}(\mathcal{C})^{\otimes}$. Over $E \in
  \mathcal{O}^{\el}$ it follows from \cref{cor:RSlYoneda} that this
  functor is given by the Yoneda embedding $\mathcal{C}_{E}
  \hookrightarrow \PSh(\mathcal{C}_{E})$.
\end{proof}

\begin{notation}
  Suppose $\mathcal{C}^{\otimes} \to \mathcal{O}$ is an
  $\mathcal{O}$-monoidal \icat{}, corresponding to an
  $\mathcal{O}$-monoid $M \colon \mathcal{O} \to \CatI$. Since
  $(\blank)^{\op}$ is an automorphism of $\CatI$, the composite 
  \[ \mathcal{O} \xto{M} \CatI \xto{(\blank)^{\op}} \CatI \]
  is also an $\mathcal{O}$-monoid. We write $\mathcal{C}^{\op,\otimes}
  \to \mathcal{O}$ for the corresponding cocartesian fibration. (Note
  that if we write $\mathcal{C}_{\otimes} \to \mathcal{O}^{\op}$ for
  the \emph{cartesian} fibration for $M$, then
  $\mathcal{C}^{\op,\otimes} \simeq (\mathcal{C}_{\otimes})^{\op}$.)
  To avoid confusion we will avoid the notation $\mathcal{C}^{\op}$
  for the pullback of $\mathcal{C}^{\op,\otimes}$ to
  $\mathcal{O}^{\el}$ (since this is not the opposite \icat{} of
  $\mathcal{C}$, but the fibrewise opposite) and instead write
  $\mathcal{C}^{\op,\otimes}_{/\el}$.
\end{notation}

We can now prove the universal property for mapping into the Day convolution:
\begin{propn}\label{propn:AlgMonPsh}
  Suppose $\xcc^\otimes\to \xxO$ is an $\xxO$-monoidal $\infty$-category, then there is an equivalence
  \[\xAlg_{\xxO}(\name{P}_{\xxO}(\xcc)^\otimes)\simeq \xMon_{\xcc^{\op,\otimes}}(\xS).\]
\end{propn}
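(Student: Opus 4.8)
The plan is to unwind $\xAlg_{\xxO}(\PSh_{\xxO}(\xcc)^{\otimes})$ using the pullback definition of Day convolution and then recognise the resulting $\infty$-category of sections as $\xMon_{\xcc^{\op,\otimes}}(\xS)$ via the relation between right fibrations and presheaves set up in \cref{propn:RFibfib,propn:RSlunivmon}. Concretely, the first step is: by \cref{def Day} we have $\PSh_{\xxO}(\xcc)^{\otimes} = \xxO \times_{\CatI^{\times}} \RFib^{\times}$, where $\xxO \to \CatI^{\times}$ is the $\xxO$-algebra $C$ corresponding to $\xcc^{\otimes}$ under \cref{propn:moniscartalg}. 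Since the cocartesian morphisms in the pullback of a cocartesian fibration are exactly those whose image in the total space is cocartesian, a section $\xxO \to \PSh_{\xxO}(\xcc)^{\otimes}$ taking inert morphisms to cocartesian morphisms is the same datum as a functor $\widetilde{C} \colon \xxO \to \RFib^{\times}$ with $\ev_{1}^{\times} \circ \widetilde{C} \simeq C$ that sends inert morphisms of $\xxO$ to $\ev_{1}^{\times}$-cocartesian morphisms. Thus $\xAlg_{\xxO}(\PSh_{\xxO}(\xcc)^{\otimes})$ is the full subcategory of $\Fun_{/\CatI^{\times}}(\xxO, \RFib^{\times})$ (with $\xxO$ over $\CatI^{\times}$ via $C$) on the functors preserving inert morphisms. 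Using the description of $\RFib^{\times}$ and its cocartesian morphisms in \cref{cor:RFibtimescoc} and \cref{rmk:RFibxcoc}, such a $\widetilde{C}$ amounts concretely to a coherent family of presheaves $A_{E} \in \PSh(\xcc_{E})$ indexed by $E \in \xxO^{\el}$, together with, for each active map $\phi \colon O \actto E$ to an elementary object (with $O$ over $\langle n\rangle$), a structure map from the left Kan extension along $\phi^{\op}_{!} \colon \prod_{i}\xcc_{O_{i}}^{\op} \to \xcc_{E}^{\op}$ of $A_{O_{1}} \times \cdots \times A_{O_{n}}$ to $A_{E}$, all assembled compatibly along the remaining morphisms of $\xxO$.

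Next I would promote this object-level picture to an equivalence of $\infty$-categories. The two inputs are: (a) $\ev_{1} \colon \RFib \to \CatI$ is, as a cartesian (equivalently cocartesian) fibration, classified by $\PSh$ (\cref{propn:RFibfib}); and (b) by \cref{propn:RSlunivmon} we have $\xcc^{\otimes} \simeq \xxO \times_{\CatI^{\times}} \RSl^{\times}$, so that taking fibrewise opposites and using that $(\blank)^{\op}$ preserves products, $\xcc^{\op,\otimes}$ is the cocartesian fibration over $\xxO$ classified by $(\blank)^{\op} \circ M$, whose fibre over $E \in \xxO^{\el}$ is $\xcc_{E}^{\op}$. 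Now a functor $\widetilde{C} \colon \xxO \to \RFib^{\times}$ over $C$ straightens, fibrewise over $\xxO$ and using the naturality of straightening exactly as in the proof of \cref{propn:RFibfib} (via \cite[Appendix A]{freepres} and \cite{hinloc}), to a map $\xxP \to \xcc^{\otimes}$ over $\xxO$ whose fibre over each $O \in \xxO$ is a right fibration over $\xcc^{\otimes}_{O}$ and which preserves cocartesian morphisms over $\xxO$; a relative version of straightening then identifies such ``$\xxO$-families of right fibrations'' with functors out of the fibrewise opposite of $\xcc^{\otimes}$ over $\xxO$, that is, with functors $\xcc^{\op,\otimes} \to \xS$. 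Under this identification the structure maps over active morphisms from the previous paragraph correspond to the functoriality of a functor $\xcc^{\op,\otimes} \to \xS$ along active morphisms of $\xcc^{\op,\otimes}$ — whose cocartesian pushforwards are, by \cref{rmk:monalgnat} applied to $(\blank)^{\op}$, the opposites of the multiplications of $\xcc^{\otimes}$ — and, tracing through \cref{rmk:RFibxcoc}, the requirement that $\widetilde{C}$ preserve inert morphisms becomes exactly the requirement that the functor $\xcc^{\op,\otimes} \to \xS$ send each $X$ over $\langle n\rangle$ to the product of its values at the elementary pieces $X_{i}$, i.e.\ be an $\xcc^{\op,\otimes}$-monoid. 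Combining these gives the asserted equivalence.

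The main obstacle will be making the relative-straightening step precise and natural: one must coherently convert a $\CatI^{\times}$-family of tuples of presheaves, varying over $\xxO$, into a single functor on the total space $\xcc^{\op,\otimes}$, matching the monoidal functoriality of $\RFib^{\times}$ (left Kan extension of external products, \cref{rmk:RFibxcoc}) with the cocartesian pushforwards in $\xcc^{\op,\otimes}$. This is naturality bookkeeping of the same flavour as what already underlies \cref{propn:RFibfib} rather than a new idea, but it is where the real work lies; once it is in place the equivalence is a formal consequence of the pullback definitions and the universal properties already established.
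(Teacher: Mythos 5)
Your overall route---unwind the pullback defining $\PSh_{\mathcal{O}}(\mathcal{C})^{\otimes}$, convert sections into functors $\mathcal{C}^{\op,\otimes}\to\mathcal{S}$ by straightening, then match the inert condition against the monoid condition---is the same as the paper's, and both your first step and your final comparison of conditions (via cartesian squares of right fibrations) agree with what the paper does. But the step you defer as ``naturality bookkeeping'' is precisely the mathematical content, and as you have set it up it has a genuine gap. A section $\widetilde{C}\colon\mathcal{O}\to\RFib^{\times}$ over $C$ takes values in the fibres $\RFib^{\times}_{C(O)}\simeq\prod_i\PSh(\mathcal{C}_{O_i})$, so fibrewise it only ever produces presheaves on $\mathcal{C}^{\otimes}_O$ of external-product type (\cref{lem:prodrfib}), whereas a general functor $\mathcal{C}^{\op,\otimes}\to\mathcal{S}$ restricts on $(\mathcal{C}^{\otimes}_O)^{\op}$ to an arbitrary presheaf. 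The ambient categories $\Fun_{/\CatI^{\times}}(\mathcal{O},\RFib^{\times})$ and $\Fun(\mathcal{C}^{\op,\otimes},\mathcal{S})$ therefore do not match, and your proposed identification can only hold after restricting to algebras on one side and monoids on the other; a naive ``assemble, then impose conditions'' argument does not go through as stated. Moreover, the ``relative version of straightening'' you invoke for $\mathcal{O}$-families of right fibrations over the varying fibres $\mathcal{C}^{\otimes}_O$ is not a routine extension of \cref{propn:RFibfib} (which straightens over the fixed base $\CatI$); it is a parametrized straightening statement that would itself need proof, and your intermediate claim that the assembled map $\mathcal{P}\to\mathcal{C}^{\otimes}$ preserves cocartesian morphisms over $\mathcal{O}$ is unjustified for a general section.

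The paper resolves exactly this point by two reductions made \emph{before} any straightening: by \cref{propn:moncocart}(iii) (applicable to $\ev_1^{\times}$ thanks to \cref{cor:RFibtimescoc}) one has $\Alg_{\mathcal{O}/\CatI^{\times}}(\RFib^{\times})\simeq\{A\}\times_{\Alg_{\mathcal{O}/\xF_{*}}(\CatI^{\times})}\Alg_{\mathcal{O}/\xF_{*}}(\RFib^{\times})$, and then \cref{propn:moniscartalg} together with its naturality (\cref{rmk:monalgnat}) converts this to $\{M\}\times_{\Mon_{\mathcal{O}}(\CatI)}\Mon_{\mathcal{O}}(\RFib)$, a full subcategory of the plain functor category $\Fun_{/\CatI}(\mathcal{O},\RFib)$. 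At that point the relative straightening you need is a quotable result rather than new bookkeeping: since $\ev_{1}\colon\RFib\to\CatI$ is classified by $\Fun((\blank)^{\op},\mathcal{S})$, one has $\Fun_{/\CatI}(\mathcal{O},\RFib)\simeq\Fun(\mathcal{O}\times_{\CatI}\mathcal{E},\mathcal{S})$ by \cite[Proposition 7.3]{freepres}, where $\mathcal{E}\to\CatI$ is the cocartesian fibration for $(\blank)^{\op}$, and $\mathcal{O}\times_{\CatI}\mathcal{E}$ is $\mathcal{C}^{\op,\otimes}$ by definition. The final identification of monoid conditions is then as you sketch. So your plan can be completed, but only by adding either these two reductions (the paper's route) or a proof of the parametrized straightening you currently treat as a formality.
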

\begin{proof}
  Let $M\colon \mathcal{O} \to \CatI$ be the monoid corresponding to
  $\mathcal{C}^{\otimes}$, and let $A \colon \xxO\to \CatI^{\times}$
  be the corresponding algebra. Using the definition of
  $\PSh_{\mathcal{O}}(\mathcal{C})^{\otimes}$ as a pullback we then have natural equivalences
    \begin{align*}
      \xAlg_{\xxO}(\PSh_{\mathcal{O}}(\xcc)^\otimes)& \simeq \Alg_{\xxO/\CatI^{\times}}(\RFib^\times)
      && \text{(by pullback)} \\
                                                    & \simeq
                                                      \{A\}\times_{\Alg_{\xxO/\xF_{*}}(\CatI^{\times})}\Alg_{\xxO/\xF_{*}}(\RFib^\times)
      && \text{(by \cref{propn:moncocart}(iii))}     \\
                                                    & \simeq
                                                      \{M\}\times_{\Mon_{\xxO}(\CatI)}\Mon_{\xxO}(\RFib),
      && \text{(by \cref{propn:moniscartalg})}
    \end{align*}
  where the right-hand side is the full subcategory of
  $\Fun_{/\CatI}(\mathcal{O}, \RFib)$ spanned by the monoids. Since
  the cartesian fibration $\ev_{1} \colon \RFib \to \CatI$ corresponds
  to the functor $\Fun((\blank)^{\op}, \mathcal{S})$ by 
  \cite[Proposition 7.3]{freepres},
  we have an equivalence
  \[ \Fun_{/\CatI}(\xxO, \RFib) \simeq \Fun(\xxO\times_{\CatI} \mathcal{E},\xS),\] where 
  $\mathcal{E} \to \CatI$ is the cocartesian fibration
  corresponding to the functor $\op \colon \CatI \to \CatI$.
  By definition, the pullback $\xxO\times_{\CatI} \mathcal{E}$
  is precisely $\mathcal{C}^{\op,\otimes} \to \mathcal{O}$.
  We have therefore identified
  $\xAlg_{\xxO}(\PSh_{\mathcal{O}}(\mathcal{C})^\otimes)$ with a full subcategory
  of $\Fun(\mathcal{C}^{\op,\otimes}, \mathcal{S})$, and we need to
  check that this is precisely the full subcategory of
  $\mathcal{C}^{\op,\otimes}$-monoids.

  Under the equivalence of   \cite[Proposition 7.3]{freepres}, a
  functor $\phi \colon \mathcal{C}^{\op,\otimes} \to \mathcal{S}$ corresponds to
  the functor $\Phi \colon \mathcal{O} \to \RFib$ that takes $O \in \mathcal{O}$
  to the right fibration for the presheaf
  $\phi|_{\mathcal{C}^{\otimes}_{O}} \colon
  (\mathcal{C}^{\otimes}_{O})^{\op} \to \mathcal{S}$. We then observe
  that this gives an $\mathcal{O}$-monoid in $\RFib$ precisely when
  $\phi$ is a $\mathcal{C}^{\op,\otimes}$-monoid, using the
  commutative squares
  \[
    \begin{tikzcd}
      \Phi(O) \arrow{r} \arrow{d} & \prod_{i} \Phi(O_{i}) \arrow{d} \\
      \mathcal{C}^{\otimes}_{O} \arrow{r}{\sim} & \prod_{i}\mathcal{C}_{O_{i}}.
    \end{tikzcd}
  \]
  Here the vertical maps are right fibrations, so that the top
  horizontal map is an equivalence \IFF{} the square is cartesian,
  which is equivalent to the map on fibres being an equivalence for every
  $O(C_{i}) \in \mathcal{C}^{\otimes}_{O}$. The map on fibres we can
  identify with
  \[ \phi(O(C_{i})) \to \prod_{i} \phi(C_{i}),\]
  so all of these are equivalences precisely when $\phi$ is a $\mathcal{C}^{\op,\otimes}$-monoid.
\end{proof}

\begin{cor}
  Let $\mathcal{C}^{\otimes}$ be a small $\mathcal{O}$-monoidal
  \icat{}. We say a $\mathcal{C}^{\op,\otimes}$-monoid
  \[ M \colon \mathcal{C}^{\op,\otimes} \to \mathcal{S} \]
  is \emph{representable} if for every $E \in \mathcal{O}^{\el}$, the
  restriction
  \[ M|_{\mathcal{C}_{E}^{\op}} \colon \mathcal{C}_{E}^{\op} \simeq
    (\mathcal{C}^{\op,\otimes})_{E} \to \mathcal{S} \]
  is a representable presheaf. There is a natural equivalence
  \[ \Alg_{\mathcal{O}}(\mathcal{C}) \simeq \Mon^{\name{rep}}_{\mathcal{C}^{\op,\otimes}}(\mathcal{S}).\]
\end{cor}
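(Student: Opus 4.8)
The plan is to deduce this from \cref{prop:Day} and \cref{propn:AlgMonPsh}: the Day-convolution Yoneda embedding will exhibit $\Alg_{\mathcal{O}}(\mathcal{C})$ as a full subcategory of $\Alg_{\mathcal{O}}(\PSh_{\mathcal{O}}(\mathcal{C})^{\otimes}) \simeq \Mon_{\mathcal{C}^{\op,\otimes}}(\mathcal{S})$, and it then remains to identify this full subcategory with the representable monoids.

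First I would check that the $\mathcal{O}$-monoidal Yoneda embedding $y \colon \mathcal{C}^{\otimes} \to \PSh_{\mathcal{O}}(\mathcal{C})^{\otimes}$ of \cref{prop:Day} is fully faithful as a functor of total \icats{}. It is a functor over $\mathcal{O}$ between cocartesian fibrations that preserves cocartesian morphisms, and on the fibre over $O \in \mathcal{O}$ it is the product $\prod_{i}\mathcal{C}_{O_{i}} \to \prod_{i}\PSh(\mathcal{C}_{O_{i}})$ of Yoneda embeddings by \cref{cor:RSlYoneda}, hence fibrewise fully faithful. Computing mapping spaces in a cocartesian fibration via cocartesian pushforward along the base then shows that $y$ is fully faithful, and moreover that it \emph{reflects} cocartesian morphisms lying over morphisms of $\mathcal{O}$ (a morphism over $\phi \colon O \to O'$ is cocartesian iff the induced map out of the $\phi$-pushforward of its source is an equivalence in the fibre over $O'$, and $y$ detects such equivalences since it is fibrewise fully faithful and preserves pushforwards). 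Composition with $y$ therefore gives a fully faithful functor $\Fun_{/\mathcal{O}}(\mathcal{O}, \mathcal{C}^{\otimes}) \hookrightarrow \Fun_{/\mathcal{O}}(\mathcal{O}, \PSh_{\mathcal{O}}(\mathcal{C})^{\otimes})$, and since $y$ both preserves and reflects inert morphisms it restricts to a fully faithful functor $y_{*} \colon \Alg_{\mathcal{O}}(\mathcal{C}) \hookrightarrow \Alg_{\mathcal{O}}(\PSh_{\mathcal{O}}(\mathcal{C})^{\otimes})$, whose essential image consists exactly of those algebras $B$ such that $B(O)$ lies in the essential image of $y$ on the fibre over $O$ for every $O \in \mathcal{O}$; using $\PSh_{\mathcal{O}}(\mathcal{C})^{\otimes}_{O} \simeq \prod_{i}\PSh(\mathcal{C}_{O_{i}})$, this means every component of $B(O)$ is a representable presheaf.

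Next I would transport this condition through the chain of equivalences in the proof of \cref{propn:AlgMonPsh}. There an algebra $B \in \Alg_{\mathcal{O}}(\PSh_{\mathcal{O}}(\mathcal{C})^{\otimes})$ corresponds to the $\mathcal{C}^{\op,\otimes}$-monoid $\phi \colon \mathcal{C}^{\op,\otimes} \to \mathcal{S}$ whose restriction $\phi|_{\mathcal{C}_{E}^{\op}}$ is, for $E \in \mathcal{O}^{\el}$, the presheaf underlying $B(E) \in \PSh(\mathcal{C}_{E})$. Because $B$ is an algebra, each inert morphism $\rho_{i}^{O} \colon O \to O_{i}$ goes to a cocartesian morphism $B(O) \to B(O_{i})$, and the cocartesian pushforward of $\PSh_{\mathcal{O}}(\mathcal{C})^{\otimes}$ along $\rho_{i}^{O}$ is the projection onto the $i$-th factor of $\prod_{i}\PSh(\mathcal{C}_{O_{i}})$; hence the $i$-th component of $B(O)$ is $B(O_{i})$, with $O_{i} \in \mathcal{O}^{\el}$. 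Conversely every elementary object occurs this way (as $E = E_{1}$). Thus $B$ lies in the essential image of $y_{*}$ if and only if $B(E)$ is representable for all $E \in \mathcal{O}^{\el}$, which is precisely the condition that $\phi$ be representable in the sense of the statement.

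Combining the two steps, $y_{*}$ corestricts to an equivalence $\Alg_{\mathcal{O}}(\mathcal{C}) \simeq \Mon^{\rep}_{\mathcal{C}^{\op,\otimes}}(\mathcal{S})$; naturality follows from the naturality of the Yoneda embedding, of the Day convolution construction, and of the equivalence in \cref{propn:AlgMonPsh}. The one step that calls for genuine care is the first: verifying that the fibrewise Yoneda embeddings assemble into a fully faithful functor of total \icats{} that preserves and reflects inert morphisms, so that $y_{*}$ is fully faithful with the expected essential image. Everything afterwards is bookkeeping with the inert-morphism/cocartesian-pushforward dictionary already established in \cref{propn:moncocart}, \cref{rmk:RFibxcoc}, and the proof of \cref{propn:AlgMonPsh}.
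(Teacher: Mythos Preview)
Your proposal is correct and follows essentially the same approach as the paper: embed $\Alg_{\mathcal{O}}(\mathcal{C})$ into $\Alg_{\mathcal{O}}(\PSh_{\mathcal{O}}(\mathcal{C})^{\otimes})$ via the $\mathcal{O}$-monoidal Yoneda embedding, identify the essential image as those algebras whose value at each $E \in \mathcal{O}^{\el}$ is representable, and transport through \cref{propn:AlgMonPsh}. The paper's proof simply asserts that the inclusion identifies $\Alg_{\mathcal{O}}(\mathcal{C})$ with the expected full subcategory, whereas you spell out the supporting details (fibrewise full faithfulness of $y$, that $y$ preserves and reflects inert morphisms, and the reduction from all $O$ to elementary $E$ via the inert-pushforward decomposition).
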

\begin{proof}
  The $\mathcal{O}$-monoidal inclusion $\mathcal{C}^{\otimes}
  \hookrightarrow \PSh_{\mathcal{O}}(\mathcal{C})^{\otimes}$
  identifies $\Alg_{\mathcal{O}}(\mathcal{C})$ with the full
  subcategory of $\Alg_{\mathcal{O}}(\PSh_{\mathcal{O}}(\mathcal{C}))$
  spanned by the $\mathcal{O}$-algebras $A$ such that $A(E) \in
  \PSh(\mathcal{C}_{E})$ is representable for every $E \in
  \mathcal{O}^{\el}$. This full subcategory is identified with
  $\Mon^{\name{rep}}_{\mathcal{C}^{\op,\otimes}}(\mathcal{S})$ under
  the equivalence of \cref{propn:AlgMonPsh}.
\end{proof}

\begin{lemma}\label{lem:pbDC}
  Suppose $f \colon \mathcal{O} \to \mathcal{P}$ is a morphism of
  cartesian patterns and $\mathcal{C}^{\otimes} \to \mathcal{P}$ is a
  $\mathcal{P}$-monoidal \icat{}. Then there is a natural equivalence
  \[ \PSh_{\mathcal{O}}(f^{*}\mathcal{C})^{\otimes} \simeq
    f^{*}\PSh_{\mathcal{P}}(\mathcal{C})^{\otimes}\] of
  $\mathcal{O}$-monoidal \icats{}.
\end{lemma}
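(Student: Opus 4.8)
The plan is to exhibit both sides as pullbacks of the $\CatI^{\times}$-monoidal \icat{} $\RFib^{\times} \to \CatI^{\times}$ of \cref{cor:RFibtimescoc} and then invoke the pasting law for pullback squares. First I would identify the $\mathcal{O}$-algebra classifying the base change $f^{*}\mathcal{C}^{\otimes}$. Let $M \colon \mathcal{P} \to \CatI$ be the $\mathcal{P}$-monoid corresponding to $\mathcal{C}^{\otimes}$ and $C \colon \mathcal{P} \to \CatI^{\times}$ the associated $\mathcal{P}$-algebra under \cref{propn:moniscartalg}. Since pullback of cocartesian fibrations corresponds to composition of classifying functors, $f^{*}\mathcal{C}^{\otimes} \to \mathcal{O}$ is classified by the $\mathcal{O}$-monoid $M \circ f$; by the naturality of the monoid--algebra equivalence recorded in \cref{rmk:monalgnat}, the corresponding $\mathcal{O}$-algebra is the composite $C \circ f \colon \mathcal{O} \xto{f} \mathcal{P} \xto{C} \CatI^{\times}$. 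Hence, by \cref{def Day}, $\PSh_{\mathcal{O}}(f^{*}\mathcal{C})^{\otimes}$ is the pullback of $\RFib^{\times} \to \CatI^{\times}$ along $C \circ f$.

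Next I would form the diagram
\[
\begin{tikzcd}
\PSh_{\mathcal{O}}(f^{*}\mathcal{C})^{\otimes} \arrow{r} \arrow{d} & \PSh_{\mathcal{P}}(\mathcal{C})^{\otimes} \arrow{r} \arrow{d} & \RFib^{\times} \arrow{d} \\
\mathcal{O} \arrow{r}{f} & \mathcal{P} \arrow{r}{C} & \CatI^{\times},
\end{tikzcd}
\]
whose right-hand square is the pullback defining $\PSh_{\mathcal{P}}(\mathcal{C})^{\otimes}$ (\cref{def Day}) and whose outer rectangle is the pullback just identified. By the pasting law for pullbacks the left-hand square is cartesian as well, which is precisely the assertion that $\PSh_{\mathcal{O}}(f^{*}\mathcal{C})^{\otimes} \simeq f^{*}\PSh_{\mathcal{P}}(\mathcal{C})^{\otimes}$ as \icats{} over $\mathcal{O}$. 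Naturality of this equivalence, in $\mathcal{C}^{\otimes}$ and in $f$, is inherited from that of the pullbacks involved and of the equivalence of \cref{rmk:monalgnat}.

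Finally I would check that this identification is $\mathcal{O}$-monoidal, \ie{} compatible with the cocartesian morphisms over $\mathcal{O}$. Both $\mathcal{O}$-monoidal structures are pulled back from the $\CatI^{\times}$-monoidal structure on $\RFib^{\times}$: for $\PSh_{\mathcal{O}}(f^{*}\mathcal{C})^{\otimes} = \mathcal{O} \times_{\CatI^{\times}} \RFib^{\times}$ a morphism is cocartesian over $\mathcal{O}$ \IFF{} its image in $\RFib^{\times}$ is cocartesian over $\CatI^{\times}$, while for $f^{*}\PSh_{\mathcal{P}}(\mathcal{C})^{\otimes} = \mathcal{O} \times_{\mathcal{P}} \PSh_{\mathcal{P}}(\mathcal{C})^{\otimes}$ a morphism is cocartesian over $\mathcal{O}$ \IFF{} its image in $\PSh_{\mathcal{P}}(\mathcal{C})^{\otimes}$ is cocartesian over $\mathcal{P}$, \IFF{} again its image in $\RFib^{\times}$ is cocartesian over $\CatI^{\times}$. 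Since the equivalence above commutes with the projections to $\RFib^{\times}$, these two classes of morphisms coincide, so it upgrades to an equivalence of $\mathcal{O}$-monoidal \icats{}. I expect the only point requiring genuine care to be the identification of the classifying algebra of $f^{*}\mathcal{C}^{\otimes}$ with $C \circ f$; the rest is formal manipulation of pullback squares.
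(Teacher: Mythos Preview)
Your proof is correct and follows essentially the same route as the paper: both invoke the pasting law for pullback squares together with \cref{rmk:monalgnat} to identify the $\mathcal{O}$-algebra classifying $f^{*}\mathcal{C}^{\otimes}$ as $C \circ f$. The paper's version is slightly terser---it places $f^{*}\PSh_{\mathcal{P}}(\mathcal{C})^{\otimes}$ in the upper-left and omits your final check of cocartesian-morphism compatibility, which is in any case automatic for an equivalence over $\mathcal{O}$.
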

\begin{proof}
  By definition we have a commutative diagram
  \[
    \begin{tikzcd}
      f^{*}\PSh_{\mathcal{P}}(\mathcal{C})^{\otimes} \arrow{d}
      \arrow{r} & \PSh_{\mathcal{P}}(\mathcal{C})^{\otimes} \arrow{r} \arrow{d}
      & \RFib^{\times}  \arrow{d} \\
      \mathcal{O} \arrow{r}{f} & \mathcal{P} \arrow{r}{C} & \CatI^{\times},
    \end{tikzcd}
  \]
  where $C$ is the algebra corresponding to the $\mathcal{P}$-monoidal
  \icat{} $\mathcal{C}^{\otimes}$, and both squares are cartesian. 
  Then the composite square is also cartesian. On the other hand,
  by \cref{rmk:monalgnat} the composite $C \circ f$ is the
  $\mathcal{O}$-algebra corresponding to $f^{*}\mathcal{C}^{\otimes}$,
  and so the pullback of $\RFib^{\times}$ along $C \circ f$ is by
  definition $\PSh_{\mathcal{O}}(f^{*}\mathcal{C})^{\otimes}$.
\end{proof}

\begin{corollary}\label{cor:f*CopPSh}
  Let $f\colon \mathcal{O} \to \mathcal{P}$ be a morphism of cartesian
  pattern. We have natural equivalences of $\infty$-categories 
  \[
    \Alg_{\mathcal{O}/\mathcal{P}}(\PSh_{\mathcal{P}}(\mathcal{C})^{\otimes})
    \simeq \Mon_{f^{*}\mathcal{C}^{\op,\otimes}}(\xS). \]
\end{corollary}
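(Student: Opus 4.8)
The plan is to assemble the equivalence as a short chain, using base change along $f$, \cref{lem:pbDC}, and \cref{propn:AlgMonPsh}. First I would rewrite the left-hand side via the defining property of $\Alg_{\mathcal{O}/\mathcal{P}}$: since $\PSh_{\mathcal{P}}(\mathcal{C})^{\otimes}$ is a $\mathcal{P}$-monoidal \icat{} (being pulled back from the $\CatI^{\times}$-monoidal \icat{} $\RFib^{\times}$ along the $\mathcal{P}$-algebra $C$), base change along $f$ gives a natural equivalence
\[ \Alg_{\mathcal{O}/\mathcal{P}}(\PSh_{\mathcal{P}}(\mathcal{C})^{\otimes}) \simeq \Alg_{\mathcal{O}}(f^{*}\PSh_{\mathcal{P}}(\mathcal{C})^{\otimes}). \]
Next, \cref{lem:pbDC} identifies the $\mathcal{O}$-monoidal \icat{} $f^{*}\PSh_{\mathcal{P}}(\mathcal{C})^{\otimes}$ with $\PSh_{\mathcal{O}}(f^{*}\mathcal{C})^{\otimes}$, so the right-hand side becomes $\Alg_{\mathcal{O}}(\PSh_{\mathcal{O}}(f^{*}\mathcal{C})^{\otimes})$. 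Finally \cref{propn:AlgMonPsh}, applied to the $\mathcal{O}$-monoidal \icat{} $f^{*}\mathcal{C}^{\otimes}$, yields
\[ \Alg_{\mathcal{O}}(\PSh_{\mathcal{O}}(f^{*}\mathcal{C})^{\otimes}) \simeq \Mon_{(f^{*}\mathcal{C})^{\op,\otimes}}(\xS). \]

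It then remains only to identify $(f^{*}\mathcal{C})^{\op,\otimes}$ with the $f^{*}\mathcal{C}^{\op,\otimes}$ appearing in the statement, which is immediate from the construction of the fibrewise opposite: if $M \colon \mathcal{P} \to \CatI$ is the $\mathcal{P}$-monoid corresponding to $\mathcal{C}^{\otimes}$, then $f^{*}\mathcal{C}^{\otimes}$ corresponds to $M \circ f$, and post-composition with the automorphism $(\blank)^{\op}$ of $\CatI$ commutes with pre-composition with $f$; hence both $(f^{*}\mathcal{C})^{\op,\otimes}$ and $f^{*}(\mathcal{C}^{\op,\otimes})$ are the cocartesian fibration associated to $(\blank)^{\op} \circ M \circ f$. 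Chaining the three displayed equivalences then gives the claim. Equivalently, one can run the argument diagrammatically: the defining pullback square of $\PSh_{\mathcal{P}}(\mathcal{C})^{\otimes}$ over $C$ pastes with the pullback square defining $f^{*}$ to a single pullback square over $C \circ f$, which is exactly the defining square of $\PSh_{\mathcal{O}}(f^{*}\mathcal{C})^{\otimes}$ by \cref{lem:pbDC}, after which one invokes \cref{propn:AlgMonPsh} once.

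There is no genuine obstacle here: the corollary is a purely formal combination of the already-established \cref{lem:pbDC} and \cref{propn:AlgMonPsh} together with the definition of $\Alg_{\mathcal{O}/\mathcal{P}}$. The only point requiring a line of care is naturality in $f$, which holds because each of the three steps is natural — base change is a natural equivalence by the definition of $\Alg_{\mathcal{O}/\mathcal{P}}$, \cref{lem:pbDC} is stated as a natural equivalence, and the equivalence of \cref{propn:AlgMonPsh} is natural by inspection of its proof, which only uses the naturality of straightening and of the pullback constructions involved.
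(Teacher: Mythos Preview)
Your proof is correct and follows exactly the same route as the paper: base change along $f$, then \cref{lem:pbDC}, then \cref{propn:AlgMonPsh}, with the identification $(f^{*}\mathcal{C})^{\op,\otimes} \simeq f^{*}(\mathcal{C}^{\op,\otimes})$. The paper's proof is essentially a one-line version of what you wrote, so your added justification of the fibrewise-opposite identification and the remark on naturality only make the argument more explicit.
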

\begin{proof}
  By \cref{lem:pbDC} pulling back along $f$ gives natural equivalences
  \[ \xAlg_{\mathcal{O}/\mathcal{P}}(\PSh_{\mathcal{P}}(\mathcal{C})^{\otimes})
    \simeq
    \xAlg_{\mathcal{O}}(f^{*}\PSh_{\mathcal{P}}(\mathcal{C})^{\otimes})
    \simeq
    \xAlg_{\mathcal{O}}(\PSh_{\mathcal{O}}(f^{*}\mathcal{C})^{\otimes}).\]
  Since we also have $(f^{*}\mathcal{C})^{\op,\otimes} \simeq
  f^{*}(\mathcal{C}^{\op,\otimes})$, the result now follows from
  \cref{propn:AlgMonPsh}.
\end{proof}

\begin{remark}
  As a special case, for the inclusion $\mathcal{O}^{\xint} \to
  \mathcal{O}$ we get a commutative square of equivalences
  \[
    \begin{tikzcd}
      \Alg_{\mathcal{O}^{\xint}/\mathcal{O}}(\PSh_{\mathcal{O}}(\mathcal{C})^{\otimes})
      \arrow{r}{\sim} \arrow{d}{\sim} &
      \Mon_{\mathcal{C}^{\op,\otimes}_{/\xint}}(\mathcal{S}) \arrow{d}{\sim}
      \\
      \Fun_{/\mathcal{O}^{\el}}(\mathcal{O}^{\el},
      \PSh_{\mathcal{O}}(\mathcal{C})) \arrow{r}{\sim} &
      \Fun(\mathcal{C}^{\op,\otimes}_{/\el}, \mathcal{S})
    \end{tikzcd}
  \]
  where the top horizontal map is an equivalence by
  \cref{cor:f*CopPSh}, the right vertical map by \cref{OintmonRKE}, the
  left vertical map by \cref{lem:AlgOint}, and the bottom horizontal
  map by a trivial version of \cref{cor:f*CopPSh} or by
  \cite[Proposition 7.3]{freepres}.
\end{remark}

We will next prove that every $\mathcal{O}$-monoidal functor from a
small $\mathcal{O}$-monoidal \icat{} extends to the Day convolution,
provided the target is compatible with small colimits in the following
sense:
\begin{defn}
  If $\mathcal{K}$ is some class of \icats{}, we say that an
  $\mathcal{O}$-monoidal \icat{} $\mathcal{C}^{\otimes} \to
  \mathcal{O}$ is \emph{compatible with $\mathcal{K}$-colimits} if the
  \icats{} $\mathcal{C}_{E}$ for $E \in \mathcal{O}^{\el}$ have
  $\mathcal{K}$-shaped colimits, and for every active map $\phi \colon
  O \to E$ in $\mathcal{O}$ with $E \in \mathcal{O}^{\el}$, the
  functor
  \[ \phi_{!} \colon \prod_{i} \mathcal{C}_{O_{i}} \simeq
    \mathcal{C}^{\otimes}_{O} \to \mathcal{C}_{E} \]
  preserves $\mathcal{K}$-shaped colimits in each variable. If
  $\mathcal{K}$ is the class of all small \icats{}, we say that
  $\mathcal{C}^{\otimes}$ is \emph{compatible with (small) colimits} or
  \emph{cocontinuously $\mathcal{O}$-monoidal}.
\end{defn}

\begin{remark}
  For any small $\mathcal{O}$-monoidal \icat{}
  $\mathcal{C}^{\otimes}$, the Day convolution
  $\PSh_{\mathcal{O}}(\mathcal{C})^{\otimes}$ is compatible with small
  colimits. This is easy to see using the description of cocartesian
  morphisms in terms of products and left Kan extensions, since
  products in $\mathcal{S}$ preserve colimits in each variable.
\end{remark}

\begin{defn}
  Suppose $\mathcal{C}^{\otimes}$ and $\mathcal{D}^{\otimes}$ are
  $\mathcal{O}$-monoidal \icats{} that are compatible with small
  colimits. We say that an $\mathcal{O}$-monoidal functor
  $F \colon \mathcal{C}^{\otimes} \to \mathcal{D}^{\otimes}$ is
  \emph{cocontinuous} if the underlying functors
  $F_{E} \colon \mathcal{C}_{E} \to \mathcal{D}_{E}$ preserve small
  colimits for $E \in \mathcal{O}^{\el}$.
\end{defn}

\begin{propn}\label{propn:extPC}
  Let $\mathcal{C}^{\otimes}$ be a small $\mathcal{O}$-monoidal
  \icat{} and $\mathcal{V}^{\otimes}$ be an $\mathcal{O}$-monoidal
  \icat{} compatible with small colimits. Then every
  $\mathcal{O}$-monoidal functor
  $F\colon \xcc^\otimes \to \mathcal{V}^\otimes$ induces a cocontinuous
  $\mathcal{O}$-monoidal functor
  \[ F_{!} \colon \PSh_{\mathcal{O}}(\mathcal{C})^{\otimes} \to
    \mathcal{V}^{\otimes} \]
  such that the composite
  \[ \mathcal{C}^{\otimes} \xto{y^{\otimes}}
    \PSh_{\mathcal{O}}(\mathcal{C})^{\otimes} \xto{F_{!}}
    \mathcal{V}^{\otimes} \]
  is equivalent to $F$, and $F_{!,E} \colon \PSh(\mathcal{C}_{E}) \to
  \mathcal{V}_{E}$ for $E \in \mathcal{O}^{\el}$ is the unique
  cocontinuous functor extending $F_{E}$ along the Yoneda embedding of
  $\mathcal{C}_{E}$. If in addition $\mathcal{V}^{\otimes}$ is locally
  small, then $F_{!}$ has a lax $\mathcal{O}$-monoidal right adjoint
  $F^{*}$, given over $E \in \mathcal{O}^{\el}$ by the restricted
  Yoneda embedding \[F_{E}^{*} \colon
  \mathcal{V}_{E} \to \PSh(\mathcal{V}_{E}) \to \PSh(\mathcal{C}_{E}).\]
\end{propn}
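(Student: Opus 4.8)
The plan is to build $F_!$ explicitly --- first fibrewise over the elementary objects, then over the inert morphisms of $\mathcal{O}$, and finally over all of $\mathcal{O}$ --- and then to produce the lax right adjoint by a fibrewise application of the adjoint functor theorem. First I would work over $\mathcal{O}^{\el}$: since $\mathcal{V}^{\otimes}$ is compatible with small colimits, each $\mathcal{V}_E$ with $E \in \mathcal{O}^{\el}$ admits small colimits, so restriction along the Yoneda embedding gives a natural equivalence $\Fun^{L}(\PSh(\mathcal{C}_E), \mathcal{V}_E) \simeq \Fun(\mathcal{C}_E, \mathcal{V}_E)$. As $\mathcal{O}^{\el}$ is an \igpd{} (\cref{rem:Oel}), the assignments $E \mapsto \PSh(\mathcal{C}_E)$, $E \mapsto \mathcal{V}_E$ and $E \mapsto \Fun(\mathcal{C}_E, \mathcal{V}_E)$ are functors on $\mathcal{O}^{\el}$, and $F|_{\mathcal{O}^{\el}}$ picks out a point of the last one over $\mathcal{O}^{\el}$; transporting it across the natural equivalence yields cocontinuous functors $F_{!,E} \colon \PSh(\mathcal{C}_E) \to \mathcal{V}_E$ with $F_{!,E} \circ y_E \simeq F_E$, assembled into a natural transformation $\PSh(\mathcal{C}_{(-)}) \to \mathcal{V}_{(-)}$ of $\LCatI$-valued functors over $\mathcal{O}^{\el}$.

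Next I would promote this to $\mathcal{O}^{\xint}$: the $\mathcal{O}$-monoids in $\LCatI$ underlying $\PSh_{\mathcal{O}}(\mathcal{C})^{\otimes}$ and $\mathcal{V}^{\otimes}$ are right Kan extended from $\mathcal{O}^{\el}$ on $\mathcal{O}^{\xint}$, so by \cref{OintmonRKE} the transformation above extends uniquely to one over $\mathcal{O}^{\xint}$ still carrying cocartesian (inert) morphisms to cocartesian morphisms. It then remains to extend over active morphisms, which amounts to supplying coherently the comparison squares $F_{!,E} \circ \phi_{!}^{\PSh} \simeq \phi_{!}^{\mathcal{V}} \circ \prod_{i} F_{!,O_{i}}$ for active $\phi \colon O \actto E$ with $E$ elementary --- the factorization system together with the monoid condition on fibres reduces the general case to this one. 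Pointwise this is clear: both composites preserve small colimits separately in each variable $\PSh(\mathcal{C}_{O_{i}})$ --- the first because $\phi_{!}^{\PSh}$ is a pointwise left Kan extension of a product of presheaves and $F_{!,E}$ is cocontinuous, the second by compatibility of $\mathcal{V}^{\otimes}$ with colimits --- and they agree on tuples of representables, where $\phi_{!}^{\PSh}$ sends $(y(c_{i}))_{i}$ to $y(\phi_{!}^{\mathcal{C}}(c_{i}))$ by \cref{prop:Day} while $F$ is $\mathcal{O}$-monoidal; the multivariable universal property of presheaf \icats{} (a functor out of $\prod_{i}\PSh(\mathcal{C}_{O_{i}})$ that is cocontinuous in each variable is determined by its restriction along $\prod_i y_{O_i}$) then identifies the two composites. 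Turning this pointwise identification into an honest natural transformation over all of $\mathcal{O}$, i.e.\ producing all the higher coherences, is the main obstacle, and I expect to handle it by proving directly the corepresentability statement
\[ \Fun^{\otimes, L}_{\mathcal{O}}\!\bigl(\PSh_{\mathcal{O}}(\mathcal{C})^{\otimes}, \mathcal{V}^{\otimes}\bigr) \simeq \Fun^{\otimes}_{\mathcal{O}}\!\bigl(\mathcal{C}^{\otimes}, \mathcal{V}^{\otimes}\bigr) \]
for cocontinuous $\mathcal{O}$-monoidal functors out of the Day convolution, using \cref{propn:moncocart}, the description of $\PSh_{\mathcal{O}}(\mathcal{C})^{\otimes}$ as a pullback of $\RFib^{\times}$ from \cref{def Day}, and the fibrewise universal property above. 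Granting $F_!$, the remaining claims are immediate: cocontinuity is the fibrewise statement, the identification of $F_{!,E}$ is built in, and $F_! \circ y^{\otimes} \simeq F$ follows from \cref{prop:Day} since $F_{!,E} \circ y_E \simeq F_E$ for every elementary $E$.

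Finally, suppose $\mathcal{V}^{\otimes}$ is locally small. For $E \in \mathcal{O}^{\el}$ the presheaf $\Map_{\mathcal{V}_E}(F_E(-), v) \colon \mathcal{C}_E^{\op} \to \mathcal{S}$ is then well defined, and since $F_{!,E}$ is cocontinuous and every presheaf on $\mathcal{C}_E$ is a colimit of representables, one checks directly that $v \mapsto \Map_{\mathcal{V}_E}(F_E(-), v)$ --- equivalently, the restricted Yoneda embedding $F_E^{*} \colon \mathcal{V}_E \to \PSh(\mathcal{V}_E) \to \PSh(\mathcal{C}_E)$ --- is right adjoint to $F_{!,E}$. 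Because every fibre of $\mathcal{V}^{\otimes}$ and of $\PSh_{\mathcal{O}}(\mathcal{C})^{\otimes}$ is a finite product of such fibres over elementary objects, these adjoints exist fibrewise over all of $\mathcal{O}$ and, using that $F_!$ preserves cocartesian morphisms, assemble into a right adjoint $F^{*}$ of $F_!$ relative to $\mathcal{O}$ (cf.\ \cite[\S 7.3.2]{ht}); the Beck--Chevalley squares for $F^{*}$ over the inert morphisms of $\mathcal{O}$ are projections and hence invertible, so $F^{*}$ preserves inert morphisms, i.e.\ is lax $\mathcal{O}$-monoidal, as claimed.
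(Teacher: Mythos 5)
Your outline correctly isolates where the difficulty lies, but it does not resolve it. The fibrewise construction of $F_{!,E}$ over the \igpd{} $\mathcal{O}^{\el}$ and its extension over $\mathcal{O}^{\xint}$ are fine, and the pointwise comparison $F_{!,E}\circ\phi_{!}^{\PSh}\simeq\phi_{!}^{\mathcal{V}}\circ\prod_iF_{!,O_i}$ on representables is correct; but the whole content of the proposition is precisely to produce the coherent extension over the active morphisms, i.e.\ an actual functor $\PSh_{\mathcal{O}}(\mathcal{C})^{\otimes}\to\mathcal{V}^{\otimes}$ over $\mathcal{O}$ rather than a collection of fibrewise functors and unconnected comparison squares. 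At that point you defer to a corepresentability statement
$\Fun^{\otimes,L}_{\mathcal{O}}(\PSh_{\mathcal{O}}(\mathcal{C})^{\otimes},\mathcal{V}^{\otimes})\simeq\Fun^{\otimes}_{\mathcal{O}}(\mathcal{C}^{\otimes},\mathcal{V}^{\otimes})$ which you say you ``expect to handle'' using \cref{propn:moncocart} and \cref{def Day}, but you give no argument for it. That statement is the full universal property of Day convolution, which is strictly stronger than the proposition itself (the paper explicitly declines to prove it, remarking after the proof that uniqueness of the extension is not needed and only sketching it); so the plan as written replaces the hard step by a harder unproved claim, and this is a genuine gap.

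For contrast, the paper sidesteps the coherence problem fibrationally: $F$ corresponds to a map $\mathcal{O}\times\Delta^{1}\to\LCatI^{\times}$ of algebras, one pulls back $\LRFib^{\times}$ along it to get $\mathcal{E}_{+}\to\mathcal{O}\times\Delta^{1}$, passes to the full subcategory $\mathcal{E}$ whose fibres over $0$ and $1$ are $\PSh_{\mathcal{O}}(\mathcal{C})^{\otimes}$ and $\mathcal{V}^{\otimes}$, and checks that $\mathcal{E}\to\mathcal{O}\times\Delta^{1}$ is still cocartesian (the lift over $(\id_{E},0\to1)$ of a presheaf $\colim y(\phi(x))$ is the same colimit computed in $\mathcal{V}_{E}$, using compatibility with small colimits); unstraightening over $\Delta^{1}$ then yields $F_{!}$ with all coherences supplied automatically by the fibration formalism. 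If you want to salvage your route, you would have to carry out an argument of exactly this kind to prove your corepresentability claim, so there is no shortcut here. Your treatment of the right adjoint, by fibrewise adjoints from local smallness, a relative-adjunction criterion, and the observation that the mate squares over inert morphisms are projections, does match the paper's second half (note the criterion you want is \cite[Proposition 7.3.2.11]{ha}, or \cite[Lemma A.1.10]{cois} as the paper cites, not \cite{ht}), but it presupposes the $F_{!}$ you have not yet constructed.
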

\begin{proof}
  The $\mathcal{O}$-monoidal functor
  $F \colon \mathcal{C}^\otimes \to \mathcal{V}^\otimes$ corresponds
  under the equivalence of \cref{propn:moniscartalg} to a morphism of
  $\mathcal{O}$-algebras in $\LCatI^{\times}$, \ie{} a natural
  transformation
  $\phi \colon \mathcal{O} \times \Delta^{1} \to \LCatI^{\times}$ over
  $\xF_{*}$. We can pull back the cocartesian fibration $\LRFib^{\times}\to \LCatI^{\times}$ along this to obtain a
  cocartesian fibration
  $\mathcal{E}_{+} \to \mathcal{O} \times \Delta^{1}$. Let
  $\mathcal{E}$ be the full subcategory of $\mathcal{E}_{+}$
  containing those objects over $0$ that lie in
  $\PSh_{\mathcal{O}}(\mathcal{C})^{\otimes}$ and those objects over
  $1$ that lie in $\mathcal{V}^{\otimes}$. We claim that the
  restricted projection
  $\mathcal{E} \to \mathcal{O} \times \Delta^{1}$ is again cocartesian
  (but the inclusion into $\mathcal{E}_{+}$ does not preserve all
  cocartesian morphisms).  Given an object
  $\Phi \in \PSh(\mathcal{C}_{E})$, which we can write as a
  small colimit $\colim_{x \in \mathcal{I}}y(\phi(x))$ of representable
  presheaves, the cocartesian morphism in $\mathcal{E}_{+}$ over
  $(\id_{E},0\to 1)$ takes $\Phi \in \PSh(\mathcal{C}_{E})$ to the colimit
  $\colim_{x \in \mathcal{I}} y(F\phi(x))$ computed in (large) presheaves
  on $\mathcal{V}_{E}$. Considering maps in $\LPSh(\mathcal{V}_{E})$
  from this to representable presheaves, we see there is an initial
  one, given by the same colimit computed in the \icat{}
  $\mathcal{V}_{E}$. This gives a cocartesian morphism in
  $\mathcal{E}$ over $(\id_{E}, 0 \to 1)$, and combining this
  observation with the compatibility of $\mathcal{V}^{\otimes}$ with
  small colimits we see easily that $\mathcal{E}$ is a cocartesian
  fibration. Unstraightening this over $\Delta^{1}$ we
  get a commutative diagram
  \[
    \begin{tikzcd}
      \mathcal{P}_{\mathcal{O}}(\mathcal{C})^{\otimes} \arrow{dr}
      \arrow{rr}{F_{!}} & &
      \mathcal{V}^{\otimes} \arrow{dl} \\
       & \mathcal{O}
    \end{tikzcd}
  \]
  where $F_{!}$ preserves cocartesian morphisms, as required. To get
  the right adjoint we apply the criterion of \cite[Lemma
  A.1.10]{cois} to see that the composite
  $\mathcal{E} \to \mathcal{O} \times \Delta^{1} \to \Delta^{1}$ is
  also a cartesian fibration, and the cartesian morphisms lie over
  equivalences on $\mathcal{O}$. Since we already know that the
  composition $\mathcal{E} \to \mathcal{O}$ is a cocartesian
  fibration, the only thing to check is that for every $O \in
  \mathcal{O}$ the cocartesian fibration $\mathcal{E}_{O} \to \Delta^{1}$ is a
  cartesian fibration. By identifying this cocartesian fibration with the functor
  $\PSh_{\mathcal{O}}(\mathcal{C})^{\otimes}_{O} \to
  \mathcal{V}^{\otimes}_{O}$, it suffices to show that it has a right adjoint. Since this functor is equivalent to $\prod_{i} \PSh(\mathcal{C}_{O_{i}})
  \to \prod_{i} \mathcal{V}_{O_{i}}$ and products of adjoints are adjoints, we only need to see that each component $F_{O_{i},!}
  \colon \PSh(\mathcal{C}_{O_{i}}) \to \mathcal{V}_{O_{i}}$ has a
  right adjoint. If $\mathcal{V}_{O_{i}}$ is locally
  small, then $F_{O_{i},!}$ has a right adjoint $F^*_{O_{i}}\colon \xV_{O_{i}}\to \PSh(\xV_{O_{i}})\to \PSh(\xcc_{O_{i}})$ given by the composite of the Yoneda embedding and the precomposition with $F_{O_{i}}$. This gives a commutative diagram
  \[
    \begin{tikzcd}
      \mathcal{V}^{\otimes} \arrow{rr}{F^{*}} \arrow{dr} & &
      \PSh_{\mathcal{O}}(\mathcal{C})^{\otimes} \arrow{dl} \\
       & \mathcal{O},
    \end{tikzcd}
  \]
  and it only remains to show that $F^{*}$ preserves inert
  morphisms. Given an inert morphism $\phi \colon O \intto O'$
  we want to show that the canonical natural transformation
  \[ F_{O'}^{*}\phi_{!}\to \phi_{!}F_{O}^{*}, \]
  which arises as the mate transformation of the square
  \[
    \begin{tikzcd}
      \PSh_{\mathcal{O}}(\mathcal{C})^{\otimes}_{O} \arrow{d}[swap]{\phi_{!}}
      \arrow{r}{F_{O,!}} & \mathcal{V}^{\otimes}_{O}
        \arrow{d}{\phi_{!}} \\
        \PSh_{\mathcal{O}}(\mathcal{C})^{\otimes}_{O'} \arrow{r}{F_{O',!}} & \mathcal{V}^{\otimes}_{O'},
    \end{tikzcd}
  \]
  is an equivalence.
  We can identify this with the square
  \[
    \begin{tikzcd}[column sep=large]
      \prod_{i} \PSh(\mathcal{C}_{O_{i}}) \arrow{d}
      \arrow{r}{\prod_{i}F_{O_{i},!}} & \prod_{i} \mathcal{V}_{O_{i}}
        \arrow{d} \\
        \prod_{j} \PSh(\mathcal{C}_{O'_{j}})
        \arrow{r}{\prod_{j}F_{O'_{j},!}}  & \prod_{j}\mathcal{V}_{O'_{j}},
    \end{tikzcd}
  \]
  where the vertical maps are given by projections to the same subset
  of factors in the product. It is then clear that the mate square
  also commutes, since the right adjoint of $\prod_{i} F_{!,O_{i}}$ is
  the product $\prod_{i} F^{*}_{O_{i}}$.
\end{proof}

\begin{remark}
  The extension of $F \colon \mathcal{C}^{\otimes} \to
  \mathcal{V}^{\otimes}$ to
  $\PSh_{\mathcal{O}}(\mathcal{C})^{\otimes}$ is in fact unique. Since
  we do not need this universal property, we will only give a sketch of
  the argument: Given a cocontinuous $\mathcal{O}$-monoidal functor
  $\Phi \colon \PSh_{\mathcal{O}}(\mathcal{C})^{\otimes} \to
  \mathcal{V}^{\otimes}$, we can compose this with the Yoneda embedding
  $\mathcal{C}^{\otimes} \to
  \PSh_{\mathcal{O}}(\mathcal{C})^{\otimes}$; this data we can
  interpret as a 2-simplex in the \icat{} of large $\mathcal{O}$-monoidal
  \icats{}, which corresponds to a 2-simplex of algebras
  \[ \mathcal{O} \times \Delta^{2} \to \LCatI^{\times}.\] We can pull
  back $\LRFib^{\times}$ along this to obtain a cocartesian fibration
  $\mathcal{E}_{+} \to \mathcal{O} \times \Delta^{2}$. Then we
  consider the full subcategory $\mathcal{E}$ whose objects over $0$
  are those in $\PSh_{\mathcal{O}}(\mathcal{C})^{\otimes}$, whose
  objects over $1$ are those in
  $\PSh_{\mathcal{O}}(\mathcal{C})^{\otimes}$ (but now viewed inside
  $\LPSh_{\mathcal{O}}(\PSh_{\mathcal{O}}(\mathcal{C}))^{\otimes}$),
  and whose objects over $2$ are those in $\mathcal{V}^{\otimes}$. As
  before, we can check that
  $\mathcal{E} \to \mathcal{O} \times \Delta^{2}$ is a cocartesian
  fibration; its fibre over $\Delta^{\{0,2\}}$ corresponds to the
  extension $(\Phi|_{\mathcal{C}^{\otimes}})_{!}$, its fibre over
  $\Delta^{\{1,2\}}$ to $\Phi$, and using the universal property for
  mapping out of presheaves its fibre over $\Delta^{\{0,1\}}$
  is an $\mathcal{O}$-monoidal equivalence between two versions of
  $\PSh_{\mathcal{O}}(\mathcal{C})^{\otimes}$.
\end{remark}

\section{$\mathcal{O}$-Monoidal Localizations and Presentability of
  Algebras}\label{sec:monloc}
In this section we first discuss $\mathcal{O}$-monoidal localizations
and then consider \emph{presentably} $\mathcal{O}$-monoidal \icats{},
in the following sense:
\begin{defn}
  We say an $\mathcal{O}$-monoidal \icat{} $\mathcal{V}^{\otimes}$ is
  \emph{presentably $\mathcal{O}$-monoidal} if it is compatible with
  small colimits and the \icats{} $\mathcal{V}_{E}$ for
  $E \in \mathcal{O}^{\el}$ are all presentable. 
\end{defn}
Our main result is that every presentably $\mathcal{O}$-monoidal \icat{}
is an $\mathcal{O}$-monoidal localization of a Day convolution
$\mathcal{O}$-monoidal structure on a small full subcategory.  We apply
this to show that if $\mathcal{V}$ is presentably
$\mathcal{O}$-monoidal, then the \icat{}
$\Alg_{\mathcal{O}}(\mathcal{V})$ is presentable.

We begin by proving a general existence result for $\mathcal{O}$-monoidal
adjunctions, in the following sense:
\begin{defn}
  Consider a commutative triangle
  \[
    \begin{tikzcd}
      \mathcal{C} \arrow{rr}{G} \arrow{dr}[swap]{p} & & \mathcal{D}
      \arrow{dl}{q} \\
       & \mathcal{B}.
    \end{tikzcd}
  \]
  We say that $G$ has a left adjoint \emph{relative to $\mathcal{B}$}
  if $G$ has a left adjoint $F \colon \mathcal{D} \to \mathcal{C}$ and
  the unit map $d \to GFd$ maps to an equivalence in $\mathcal{B}$ for
  all $d \in \mathcal{D}$.
\end{defn}
\begin{remark}
  If $F$ is a left adjoint relative to $\mathcal{B}$ as above, then it
  follows that $pF(d) \simeq qGF(d) \simeq q(d)$, so that $F$ is a
  functor over $\mathcal{B}$. Moreover, the counit map $FGc \to c$
  also lies over an equivalence in $\mathcal{B}$: applying $p$
  is the same as applying $qG$, and the map
  $qGFGc \to qGc$ is an equivalence since the composite
  \[ qGc \to qGFGc \to qGc \] is the identity by one of the adjunction
  equivalences, and the first unit map is an equivalence by
  assumption. Thus $G$ has a left adjoint relative to $\mathcal{B}$
  \IFF{} it has a left adjoint in the $(\infty,2)$-category of
  \icats{} over $\mathcal{B}$.
\end{remark}

\begin{remark}\label{rmk:reladjcocart}
  If $F$ is a left adjoint relative to $\mathcal{B}$ as above, then
  $F$ takes any $q$-cocartesian morphism in $\mathcal{D}$ to a
  $p$-cocartesian morphism in $\mathcal{C}$: If $\delta \colon d \to
  d'$ is a morphism in $\mathcal{D}$, then the relative adjunction data lets us identify
  the following pair of commutative squares:
  \[
    \begin{tikzcd}
    \Map_{\mathcal{C}}(Fd', c) \arrow{r}{(F\delta)^{*}} \arrow{d}{p} &
    \Map_{\mathcal{C}}(Fd, c) \arrow{d}{p} \\
    \Map_{\mathcal{B}}(pFd', pc) \arrow{r}{(pF\delta)^{*}} &
    \Map_{\mathcal{B}}(pFd, pc)      
    \end{tikzcd}
    \qquad
      \begin{tikzcd}
      \Map_{\mathcal{D}}(d', Gc) \arrow{r}{\delta^{*}} \arrow{d}{q} &
      \Map_{\mathcal{D}}(d, Gc) \arrow{d}{q} \\
      \Map_{\mathcal{B}}(qd', qGc) \arrow{r}{(q\delta)^{*}} &
      \Map_{\mathcal{B}}(qd, qGc).
    \end{tikzcd}
    \]
    If $\delta$ is $q$-cocartesian then the second square is cartesian
    for any $c \in \mathcal{C}$, hence so is the first square, which
    says precisely that $F\delta$ is $p$-cocartesian.
\end{remark}

\begin{defn}
  Suppose $G \colon \mathcal{C}^{\otimes} \to \mathcal{D}^{\otimes}$
  is a lax $\mathcal{O}$-monoidal functor. We say that $G$ has an
  $\mathcal{O}$-monoidal left adjoint if it has a left adjoint
  relative to $\mathcal{O}$; this is then automatically
  $\mathcal{O}$-monoidal by \cref{rmk:reladjcocart}.
\end{defn}

\begin{propn}\label{propn:Omonladj}
  Consider a commutative triangle
  \[
    \begin{tikzcd}
      \mathcal{C}^{\otimes} \arrow{dr}[swap]{p} \arrow{rr}{G} & &
      \mathcal{D}^{\otimes} \arrow{dl}{q} \\
      & \mathcal{O},
    \end{tikzcd}
  \]
  where $p$ and $q$ are $\mathcal{O}$-monoidal \icats{} and $G$ is lax
  $\mathcal{O}$-monoidal. Suppose
  \begin{enumerate}[(1)]
  \item   $G_{E} \colon \mathcal{C}_{E} \to
    \mathcal{D}_{E}$ admits a left adjoint $F_{E}$ for all $E \in
    \mathcal{O}^{\el}$,
  \item for every active map $\phi \colon O \to E$
    in $\mathcal{O}$ with $E \in \mathcal{O}^{\el}$, the natural transformation
    \[ F_{E} \circ \phi_{!}^{\mathcal{C}} \to \phi_{!}^{\mathcal{D}}
      \circ \prod_{i} F_{O_{i}} \]
    of functors $\prod_{i} \mathcal{C}_{O_{i}} \to \mathcal{D}_{E}$,
    is an equivalence.
  \end{enumerate}
  Then the functor $G$ admits an $\mathcal{O}$-monoidal left adjoint $F$.
\end{propn}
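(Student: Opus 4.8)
The plan is to produce $F$ as a left adjoint relative to $\mathcal{O}$; it is then automatically $\mathcal{O}$-monoidal by \cref{rmk:reladjcocart}, so the entire content lies in constructing such a relative adjoint. Since $p$ and $q$ are cocartesian fibrations and $G$ is a functor over $\mathcal{O}$, I would invoke the standard fibrewise criterion for the existence of a left adjoint relative to a base (\cf~\cite[\S 7.3.2]{ha}): it suffices to check that (a) for every $O \in \mathcal{O}$ the functor on fibres $G_{O} \colon \mathcal{C}^{\otimes}_{O} \to \mathcal{D}^{\otimes}_{O}$ admits a left adjoint $F_{O}$, and (b) for every morphism $\alpha \colon O \to O'$ in $\mathcal{O}$ the Beck--Chevalley transformation $F_{O'} \circ \alpha_{!}^{\mathcal{D}} \to \alpha_{!}^{\mathcal{C}} \circ F_{O}$ is an equivalence. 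Here the latter is the mate, with respect to the adjunctions $F_{(\blank)} \dashv G_{(\blank)}$, of the canonical transformation $\alpha_{!}^{\mathcal{D}} \circ G_{O} \to G_{O'} \circ \alpha_{!}^{\mathcal{C}}$ filling the lax-naturality square, which measures the failure of $G$ to preserve the cocartesian morphism over $\alpha$.

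For (a): if $O$ lies over $\angled{n}$, the cocartesian pushforwards along the inert maps $\rho_{i}^{O} \colon O \to O_{i}$ give equivalences $\mathcal{C}^{\otimes}_{O} \simeq \prod_{i=1}^{n} \mathcal{C}_{O_{i}}$ and $\mathcal{D}^{\otimes}_{O} \simeq \prod_{i=1}^{n} \mathcal{D}_{O_{i}}$, and since $G$ is lax $\mathcal{O}$-monoidal it preserves these cocartesian morphisms, so under the equivalences $G_{O}$ is identified with $\prod_{i} G_{O_{i}}$. Each $O_{i}$ is an elementary object, so $G_{O_{i}}$ has a left adjoint $F_{O_{i}}$ by hypothesis (1); hence $G_{O}$ has the left adjoint $\prod_{i} F_{O_{i}}$, a product of left adjoints.

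For (b) I would reduce to a generating class of morphisms: mates are compatible with composition, so the Beck--Chevalley transformation for a composite is the pasting of those for its factors, and using the inert--active factorization it is enough to treat inert maps and active maps separately. If $\alpha$ is inert, then $G$ preserves the cocartesian morphism over it, so the lax-naturality square commutes and its mate is an equivalence. If $\alpha = \psi \colon \tilde{O} \actto O'$ is active with $O'$ over $\angled{m}$, then a transformation of functors into $\mathcal{C}^{\otimes}_{O'} \simeq \prod_{k=1}^{m} \mathcal{C}_{O'_{k}}$ is an equivalence as soon as it is after composing with each projection $(\rho_{k}^{O'})_{!}^{\mathcal{C}}$; applying compatibility of mates with composition once more, this composite is identified (up to the coherence $(\rho_{k}^{O'})_{!}^{\mathcal{C}} \circ \psi_{!}^{\mathcal{C}} \simeq (\rho_{k}^{O'}\psi)_{!}^{\mathcal{C}}$, and using that the inert case is already settled) with the Beck--Chevalley transformation for $\rho_{k}^{O'} \circ \psi \colon \tilde{O} \to O'_{k}$. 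Factoring $\rho_{k}^{O'}\psi$ as an inert map followed by an active map $\phi \colon O'' \actto O'_{k}$, whose target $O'_{k}$ is elementary, reduces this last transformation to the (settled) inert case and to the active-to-elementary case, which is precisely hypothesis (2). This gives (b), and hence the proposition.

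The main obstacle is the bookkeeping in step (b): one must check carefully that the transformation appearing in hypothesis (2) really is the Beck--Chevalley mate relevant to the fibrewise criterion (once the fibres are identified with the products $\prod_{i} \mathcal{C}_{O_{i}}$, $\prod_{i}\mathcal{D}_{O_{i}}$), and that whiskering and pasting of mates behave as claimed, so that the reduction along factorizations and projections goes through. The verification of (a) and of the inert case of (b) are routine consequences of lax $\mathcal{O}$-monoidality of $G$.
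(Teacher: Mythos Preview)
Your proposal is correct and follows essentially the same route as the paper: both apply the fibrewise criterion of \cite[Proposition 7.3.2.11]{ha}, verify (a) by identifying $G_{O}$ with the product $\prod_{i} G_{O_{i}}$, and for (b) reduce via the inert--active factorization to the inert case (immediate since $G$ preserves cocartesian morphisms over inert maps) and the active case, which is further decomposed using the inert maps $\rho_{k}^{O'}$ into active maps landing in elementary objects, where hypothesis~(2) applies. The only difference is cosmetic: the paper phrases the last reduction as ``$\phi_{!}^{\mathcal{C}}$ is a product $\prod_{i}\phi_{i,!}^{\mathcal{C}}$'' and concludes that the Beck--Chevalley map is a product of equivalences, whereas you track the same decomposition via pasting of mates.
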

\begin{proof}
  This is the $\mathcal{O}$-monoidal analogue of
  \cite[Corollary 7.3.2.12]{ha} and follows from the criterion for
  existence of relative left adjoints in \cite[Proposition
  7.3.2.11]{ha}. To apply this we must show the following conditions
  hold:
  \begin{enumerate}[(a)]
  \item For every $O \in \mathcal{O}$, the functor $G_{O}\colon
    \mathcal{C}^{\otimes}_{O} \to \mathcal{D}^{\otimes}_{O}$ admits a
    left adjoint $F_{O}$.
  \item for every morphism $\phi \colon O \to O'$ in $\mathcal{O}$ 
    the natural transfromation
    \[ F_{O'}\circ \phi_{!}^{\mathcal{C}} \to \phi_{!}^{\mathcal{D}}
      \circ F_{O} \]
    is an equivalence.
  \end{enumerate}
  Condition (a) follows from (1) since the functor $G_{O}$ is
  equivalent to the product $\prod_{i} G_{O_{i}} \colon \prod_{i}
  \mathcal{C}_{O_{i}} \to \prod_{i} \mathcal{D}_{O_{i}}$, which has
  left adjoint $\prod_{i} F_{O_{i}}$. Condition (b) is obvious for
  inert maps $\phi$ (since then $\phi_{!}^{\mathcal{C}}$ and
  $\phi_{!}^{\mathcal{D}}$ are both projections unto the same
  collection of factors in a product), so using the factorization
  system it is enough to consider $\phi$ active. But then we can write
  $\phi_{!}^{\mathcal{C}}$ as the product $\prod_{i}
  \phi_{i,!}^{\mathcal{C}}$ where $\phi_{i} \colon O_{i} \actto O'_{i}$
  comes from the factorization
  \[
    \begin{tikzcd}
      O \arrow[active]{r}{\phi} \arrow[inert]{d} & O' \arrow[inert]{d}{\rho_{i}^{O'}} \\
      O_{i} \arrow[active]{r}{\phi_{i}} & O'_{i},
    \end{tikzcd}
  \]
  where we know the natural transformation for $\phi_{i}$ is an
  equivalence by assumption (2). The map in question is therefore a
  product of maps we know are equivalences. 
\end{proof}

\begin{remark}\label{Algadj}
  For any morphism of cartesian patterns $f \colon \mathcal{P} \to
  \mathcal{O}$, a relative adjunction between $\mathcal{O}$-monoidal
  \icats{} as in \cref{propn:Omonladj} induces via composition an adjunction on
  \icats{} of algebras,
  \[ F_{*} : \Alg_{\mathcal{P}/\mathcal{O}}(\mathcal{D})
    \rightleftarrows \Alg_{\mathcal{P}/\mathcal{O}}(\mathcal{C}) :
    G_{*}. \]
  This follows from the 2-functoriality of
  $\Alg_{\mathcal{P}/\mathcal{O}}(\blank)$ as discussed in \cref{Alg2fun}.
\end{remark}

\begin{defn}
  An \emph{$\mathcal{O}$-monoidal localization} is an
  $\mathcal{O}$-monoidal functor
  $\mathcal{V}^{\otimes} \to \mathcal{U}^{\otimes}$ that has a right
  adjoint relative to $\mathcal{O}$ which is fully faithful.
\end{defn}
\begin{remark}
  Suppose $L \colon \mathcal{V}^{\otimes} \to
  \mathcal{U}^{\otimes}$ is an $\mathcal{O}$-monoidal localization
  with right adjoint $i$ relative to $\mathcal{O}$. Then $i$ is
  automatically a lax $\mathcal{O}$-monoidal functor. Indeed, more
  generally if $F \colon \mathcal{V}^{\otimes} \to
  \mathcal{U}^{\otimes}$ is any $\mathcal{O}$-monoidal functor that
  has a right adjoint $G$ relative to $\mathcal{O}$, then $G$ is lax
  $\mathcal{O}$-monoidal. This is because when $\phi \colon O \intto O'$ is inert,
  the cocartesian pushforward functor $\phi_{!} \colon
  \mathcal{V}^{\otimes}_{O} \to \mathcal{V}^{\otimes}_{O'}$ is given
  by projecting to some subset of the factors in
  $\mathcal{V}^{\otimes}_{O} \simeq \prod_{i \in |O|}
  \mathcal{V}_{O_{i}}$. The right adjoint $G_{O}$ of $F_{O} \simeq
  \prod_{i}F_{O_{i}}$ is given by the product $\prod_{i} G_{O_{i}}$,
  and so the canonical map
  \[ \phi_{!} G_{O} \to G_{O'} \phi_{!} \]
  is an equivalence in this case, since both sides are identified with $\prod_{j
    \in |O'|} G_{O'_{j}}$.
\end{remark}

\begin{remark}\label{rmk:algloc}
  Let $L \colon \mathcal{V}^{\otimes} \to
  \mathcal{U}^{\otimes}$ be an $\mathcal{O}$-monoidal localization
  with right adjoint $i\colon \mathcal{U}^{\otimes} \hookrightarrow
  \mathcal{V}^{\otimes}$. For any morphism of cartesian patterns $f
  \colon \mathcal{P} \to
  \mathcal{O}$, we get an induced localization of \icats{} of
  algebras: as in \cref{Algadj} we have an induced adjunction
  \[ L_{*} : \Alg_{\mathcal{P}/\mathcal{O}}(\mathcal{V})
    \rightleftarrows \Alg_{\mathcal{P}/\mathcal{O}}(\mathcal{U}) : i_{*},
  \]
  given by composition with $L$ and $i$, and the equivalence
  $L_{*}i_{*} \simeq (Li)_{*} \simeq \id$ implies that $i_{*}$ is
  fully faithful. Since a $\mathcal{P}$-algebra $A \colon \mathcal{O}
  \to \mathcal{V}^{\otimes}$ factors through the full subcategory
  $\mathcal{U}^{\otimes}$ \IFF{} $A(E) \in \mathcal{U}_{E}$ for every
  $E \in \mathcal{O}^{\el}$, we see that
  $\Alg_{\mathcal{P}/\mathcal{O}}(\mathcal{U})$ is the full subcategory
  of $\Alg_{\mathcal{P}/\mathcal{O}}(\mathcal{V})$ spanned by the
  algebras with this property. We can interpret this as the
  commutative square
  \[
    \begin{tikzcd}
      \Alg_{\mathcal{P}/\mathcal{O}}(\mathcal{U})
      \arrow[hookrightarrow]{r} \arrow{d} &
      \Alg_{\mathcal{P}/\mathcal{O}}(\mathcal{V}) \arrow{d} \\
      \Fun_{\mathcal{P}^{\el}/\mathcal{O}^{\el}}(\mathcal{U})
      \arrow[hookrightarrow]{r} &
      \Fun_{\mathcal{P}^{\el}/\mathcal{O}^{\el}}(\mathcal{V})
    \end{tikzcd}
  \]
  being cartesian.
\end{remark}

\begin{notation}
  Suppose $\mathcal{V}^{\otimes}$ is an $\mathcal{O}$-monoidal
  \icat{}. Given a collection of full subcategories $\mathcal{U}_{E}
  \subseteq \mathcal{V}_{E}$ for $E \in \mathcal{O}^{\el}$, the full
  subcategory $\mathcal{U}^{\otimes} \subseteq \mathcal{V}^{\otimes}$
  \emph{generated} by $(\mathcal{U}_{E})_{E \in \mathcal{O}^{\el}}$ is
  that spanned by the objects over $O \in \mathcal{O}$ that lie in the
  full subcategory of $\mathcal{V}^{\otimes}_{O}$ that is identified
  with $\prod_{i} \mathcal{U}_{O_{i}}$ under the equivalence
  $\mathcal{V}^{\otimes}_{O} \simeq
  \prod_{i}\mathcal{V}_{O_{i}}$. (Note that in general
  $\mathcal{U}^{\otimes}$ is not an $\mathcal{O}$-monoidal \icat{},
  though it is an $\mathcal{O}$-\iopd{}.)
\end{notation}

\begin{cor}\label{cor:Omonloc}
  Let $\mathcal{V}^{\otimes}$ be an $\mathcal{O}$-monoidal \icat{},
  and suppose given full subcategories $\mathcal{U}_{E} \subseteq
  \mathcal{V}_{E}$ for all $E \in \mathcal{O}^{\el}$ such that
  \begin{enumerate}[(1)]
  \item each inclusion
    $i_{E} \colon \mathcal{U}_{E} \hookrightarrow \mathcal{V}_{E}$ has
    a left adjoint $L_{E}$,
  \item for every
  active morphism $\phi \colon O \actto E$ in $\mathcal{O}$ with $E \in
  \mathcal{O}^{\el}$, the natural map
  \[ \phi_{!}^\xV O(X_{1},\ldots,X_{O_{n}}) \to
    \phi_{!}^\xV O(L_{O_{1}}X_{1},\ldots,L_{O_{n}}X_{n}) \] is taken to an
  equivalence by $L_{E}$.
  \end{enumerate}
  If $\mathcal{U}^{\otimes}\subseteq
  \mathcal{V}^{\otimes}$ is the full subcategory generated by $(\mathcal{U}_{E})_{E \in
    \mathcal{O}^{\el}}$, then:
  \begin{enumerate}[(i)]
  \item The restricted functor $\mathcal{U}^{\otimes} \to \mathcal{O}$
    is an $\mathcal{O}$-monoidal \icat{}.
  \item The inclusion $i \colon \mathcal{U}^{\otimes} \hookrightarrow
    \mathcal{V}^{\otimes}$ is lax $\mathcal{O}$-monoidal.
  \item The lax monoidal functor $i$ exhibits $\mathcal{U}^{\otimes}$
    as an $\mathcal{O}$-monoidal localization. In other words, the functor $i$ has a left adjoint $L \colon
    \mathcal{V}^{\otimes} \to \mathcal{U}^{\otimes}$ relative to
    $\mathcal{O}$ (which is then automatically an $\mathcal{O}$-monoidal functor).
  \end{enumerate}
\end{cor}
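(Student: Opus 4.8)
The plan is to establish parts (i), (ii), (iii) in that order: (iii) will follow from \cref{propn:Omonladj} applied to the inclusion $i$, which presupposes that $\mathcal{U}^{\otimes}$ is an $\mathcal{O}$-monoidal \icat{} and that $i$ is lax $\mathcal{O}$-monoidal, so these must be proved first.

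For (i), note that since $\mathcal{U}^{\otimes}$ is a full subcategory of $\mathcal{V}^{\otimes}$ and $\mathcal{V}^{\otimes} \to \mathcal{O}$ is an inner fibration, the restricted projection $p \colon \mathcal{U}^{\otimes} \to \mathcal{O}$ is again an inner fibration; hence it suffices to produce a $p$-cocartesian lift of every morphism $\phi \colon O \to O'$ in $\mathcal{O}$ with arbitrary prescribed source $\bar O \in \mathcal{U}^{\otimes}_{O}$. Writing $L_{O'} := \prod_{j} L_{O'_{j}} \colon \mathcal{V}^{\otimes}_{O'} \simeq \prod_{j}\mathcal{V}_{O'_{j}} \to \prod_{j}\mathcal{U}_{O'_{j}} \subseteq \mathcal{V}^{\otimes}_{O'}$ for the fibrewise reflector, I would take the candidate lift to be the composite $\alpha \colon \bar O \to \phi_{!}^{\mathcal{V}}\bar O \to L_{O'}\phi_{!}^{\mathcal{V}}\bar O$ of the $\mathcal{V}^{\otimes}$-cocartesian lift of $\phi$ with the localization unit. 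Using that $\mathcal{U}^{\otimes}$ is full in $\mathcal{V}^{\otimes}$, that $\bar O \to \phi_{!}^{\mathcal{V}}\bar O$ is $\mathcal{V}^{\otimes}$-cocartesian, and the identification $\Map_{\mathcal{V}^{\otimes}}^{\psi}(Z, \bar O'') \simeq \Map_{\mathcal{V}^{\otimes}_{O''}}(\psi_{!}^{\mathcal{V}}Z, \bar O'')$ between the space of maps lying over $\psi$ and maps in the fibre, the claim that $\alpha$ is $p$-cocartesian reduces to the following: for every $\bar O'' = O''(U''_{k}) \in \mathcal{U}^{\otimes}_{O''}$ and every $\psi \colon O' \to O''$, the functor $L_{O''_{k}} \circ (\psi_{k})_{!}^{\mathcal{V}}$ (where $\psi_{k} := \rho_{k}^{O''}\psi$) inverts the unit map $\phi_{!}^{\mathcal{V}}\bar O \to L_{O'}\phi_{!}^{\mathcal{V}}\bar O$. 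Factoring $\psi_{k}$ as an inert map followed by an active map $\tilde\psi_{k} \colon \tilde O_{k} \actto O''_{k}$ with $O''_{k}$ elementary, the inert pushforward acts on fibres by projecting onto a subset of the coordinates, carrying the unit transformation to the unit transformation in every coordinate of $\tilde O_{k}$; that $L_{O''_{k}} \circ (\tilde\psi_{k})_{!}^{\mathcal{V}}$ then inverts it is exactly hypothesis (2) applied to $\tilde\psi_{k}$. This produces the $p$-cocartesian lifts, so $p$ is a cocartesian fibration; its fibres are $\mathcal{U}^{\otimes}_{O} \simeq \prod_{i}\mathcal{U}_{O_{i}}$ by construction, and the pushforward along each $\rho_{i}^{O}$ is the corresponding projection, so the classifying functor $\mathcal{O} \to \CatI$ is an $\mathcal{O}$-monoid, proving (i).

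For (ii): when $\phi$ is inert, $\phi_{!}^{\mathcal{V}}$ is a projection that already preserves $\mathcal{U}^{\otimes}$, on which the reflector acts as the identity, so the $\mathcal{U}^{\otimes}$-cocartesian lift over $\phi$ is carried by $i$ to the $\mathcal{V}^{\otimes}$-cocartesian one; thus $i$ is a functor over $\mathcal{O}$ that preserves inert morphisms, i.e.\ a lax $\mathcal{O}$-monoidal functor. For (iii) I would apply \cref{propn:Omonladj} to $G = i$: its condition (1) is our hypothesis (1), while its condition (2) asks that the mate (along the adjunctions $L_{\bullet} \dashv i_{\bullet}$) of the lax-structure $2$-cell $\phi_{!}^{\mathcal{V}}i_{O} \Rightarrow i_{E}\phi_{!}^{\mathcal{U}}$ be an equivalence for every active $\phi \colon O \actto E$ with $E$ elementary; since $\phi_{!}^{\mathcal{U}} = L_{E}\circ\phi_{!}^{\mathcal{V}}\circ i_{O}$, unwinding identifies this mate with $L_{E}\circ\phi_{!}^{\mathcal{V}}$ applied to the unit transformation $\id \Rightarrow \prod_{i} i_{O_{i}}L_{O_{i}}$, which is invertible precisely by hypothesis (2). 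Hence $i$ admits a left adjoint $L$ relative to $\mathcal{O}$, which is automatically $\mathcal{O}$-monoidal, and since $i$ is fully faithful this exhibits $\mathcal{U}^{\otimes}$ as an $\mathcal{O}$-monoidal localization.

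The crux is the reduction in part (i) showing that the candidate morphisms $\alpha$ are $p$-cocartesian: this is the sole place where hypothesis (2) is used, and carrying it out requires some bookkeeping with inert–active factorizations and with the product decompositions of the fibres of $\mathcal{V}^{\otimes}$. Parts (ii) and (iii) are then essentially formal consequences of (i) together with \cref{propn:Omonladj}.
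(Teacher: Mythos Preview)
Your argument is correct and takes essentially the same route as the paper: the same candidate cocartesian lifts $\bar O \to L_{O'}\phi_{!}^{\mathcal{V}}\bar O$ for (i), the same observation that inert pushforwards are projections (giving (ii)), and the same appeal to \cref{propn:Omonladj} for (iii). The only minor difference is that the paper verifies cocartesianness by observing these lifts are locally cocartesian and invoking hypothesis~(2) to see they compose, whereas you unwind the universal property directly; note also that, contrary to your closing remark, hypothesis~(2) is not used solely in part~(i) --- you also invoke it when checking condition~(2) of \cref{propn:Omonladj} in part~(iii).
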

\begin{proof}
 For $O \in \mathcal{O}$, let $L_{O} \colon \mathcal{V}^{\otimes}_{O}
 \to \mathcal{U}^{\otimes}_{O}$ denote the functor corresponding to
 the product $\prod_{i} L_{O_{i}}$, which is left adjoint to the
 inclusion $i_{O}$. We claim that for $\phi \colon O \to O'$ in
 $\mathcal{O}$ and $O(U_{i}) \in \mathcal{U}^{\otimes}_{O}$, the
 morphism $O(U_{i}) \to L_{O'}\phi_{!}^{\mathcal{V}}O(U_{i})$ is
 cocartesian. It is easy to see that it is locally cocartesian, and
 then condition (2) implies that these locally cocartesian morphisms
 compose. This proves (i). If $\phi$ is inert, then we do not have to
 apply $L_{O'}$ (since $\phi_{!}^{\mathcal{V}}$ just projects to some
 factors in a product), so the inclusion $i$ preserves inert
 morphisms, which gives (ii). This means all the conditions of
 \cref{propn:Omonladj} hold for $i$, which gives (iii).
\end{proof}

Our next goal is to show that every presentably $\mathcal{O}$-monoidal
\icat{} can be described as an $\mathcal{O}$-monoidal localization of
a Day convolution. We start by briefly discussing the more general
case of \emph{accessibly} $\mathcal{O}$-monoidal \icats{}:
\begin{defn}
  We say an $\mathcal{O}$-monoidal \icat{} $\mathcal{V}^{\otimes}$ is
  \emph{accesibly $\mathcal{O}$-monoidal} if the \icats{}
  $\mathcal{V}_{E}$ for $E \in \mathcal{O}^{\el}$ are all accessible,
  and for every active map $\phi \colon O \to E$ with
  $E \in \mathcal{O}^{\el}$, the functor
  $\phi_{!}^{\mathcal{V}} \colon \prod_{i} \mathcal{V}_{O_{i}} \to
  \mathcal{V}_{E}$ is accessible. We say that $\mathcal{O}$ is
  \emph{$\kappa$-accessibly $\mathcal{O}$-monoidal} for some regular
  cardinal $\kappa$ if the \icats{} $\mathcal{V}_{E}$ are all
  $\kappa$-accessible and for every active map $\phi \colon O \to E$
  with $E \in \mathcal{O}^{\el}$, the functor
  $\phi_{!}^{\mathcal{V}} \colon \prod_{i} \mathcal{V}_{O_{i}} \to
  \mathcal{V}_{E}$ preserves $\kappa$-filtered colimits. We say that
  $\mathcal{V}^{\otimes}$ is \emph{$\kappa$-presentably
    $\mathcal{O}$-monoidal} for some regular cardinal $\kappa$ if
  $\mathcal{V}^{\otimes}$ is both presentably and $\kappa$-accessibly
  $\mathcal{O}$-monoidal.
\end{defn}
\begin{remark}
  If $\mathcal{V}^{\otimes}$ is accessibly (presentably)
  $\mathcal{O}$-monoidal,
  then we can always choose a regular cardinal $\kappa$ such that
  $\mathcal{V}^{\otimes}$ is $\kappa$-accessibly
  ($\kappa$-presentably) $\mathcal{O}$-monoidal.
\end{remark}

\begin{propn}\label{accessiblyOmon}
  Suppose $\mathcal{V}^{\otimes}$ is an accessibly
  $\mathcal{O}$-monoidal \icat{}. Then there exists a regular cardinal
  $\kappa$ such that $\mathcal{V}^{\otimes}$ is $\kappa$-accessibly
  $\mathcal{O}$-monoidal, the full subcategory
  $\mathcal{V}^{\kappa,\otimes}$ generated by the collection
  $(\mathcal{V}^{\kappa}_{E})_{E \in \mathcal{O}^{\el}}$ of
  $\kappa$-compact objects is an $\mathcal{O}$-monoidal \icat{}, and
  the inclusion
  $\mathcal{V}^{\kappa,\otimes} \hookrightarrow \mathcal{V}^{\otimes}$
  is $\mathcal{O}$-monoidal.
\end{propn}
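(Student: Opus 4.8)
The plan is to reduce everything to the choice of a single regular cardinal $\kappa$ with three properties: (i) every fibre $\mathcal{V}_{E}$, $E \in \mathcal{O}^{\el}$, is $\kappa$-accessible; (ii) every active pushforward $\phi_{!}^{\mathcal{V}} \colon \prod_{i} \mathcal{V}_{O_{i}} \to \mathcal{V}_{E}$ with $\phi \colon O \actto E$ and $E \in \mathcal{O}^{\el}$ preserves $\kappa$-filtered colimits; and (iii) each such $\phi_{!}^{\mathcal{V}}$ carries tuples of $\kappa$-compact objects to $\kappa$-compact objects (recall that the $\kappa$-compact objects of a finite product are exactly the tuples of $\kappa$-compact objects). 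Properties (i) and (ii) together are precisely the statement that $\mathcal{V}^{\otimes}$ is $\kappa$-accessibly $\mathcal{O}$-monoidal. Since there is only a small set of functors $\phi_{!}^{\mathcal{V}}$ up to equivalence ($\mathcal{O}$ being small), each an accessible functor between accessible \icats{}, the standard theory of accessible \icats{} and functors \cite[\S 5.4]{ht} provides arbitrarily large regular cardinals satisfying (i), (ii) and (iii) simultaneously for this whole collection; I would fix such a $\kappa$.

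Having fixed $\kappa$, the next step is to check, exactly as in the proof of \cref{propn:moncocart}, that the full subcategory $\mathcal{V}^{\kappa,\otimes} \subseteq \mathcal{V}^{\otimes}$ generated by $(\mathcal{V}^{\kappa}_{E})_{E \in \mathcal{O}^{\el}}$ is closed under cocartesian pushforward. For a morphism $f \colon O \to O'$ in $\mathcal{O}$, the functor $f_{!}^{\mathcal{V}} \colon \mathcal{V}^{\otimes}_{O} \simeq \prod_{i}\mathcal{V}_{O_{i}} \to \mathcal{V}^{\otimes}_{O'} \simeq \prod_{j}\mathcal{V}_{O'_{j}}$ has $j$-th component the pushforward along $\rho_{j}^{O'} \circ f$; writing the inert--active factorization $O \intto O_{S_{j}} \xrightarrow{\psi_{j}} O'_{j}$ (with $\psi_{j}$ active and $O'_{j} \in \mathcal{O}^{\el}$), this component is $\psi_{j,!}^{\mathcal{V}} \circ \operatorname{pr}_{S_{j}}$, where $\operatorname{pr}_{S_{j}}$ is the projection onto a sub-tuple of factors and $\psi_{j,!}^{\mathcal{V}}$ preserves $\kappa$-compact objects by (iii). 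Hence each component, and so $f_{!}^{\mathcal{V}}$, takes tuples of $\kappa$-compact objects to such tuples. Therefore the $\mathcal{V}^{\otimes}$-cocartesian lift of any morphism of $\mathcal{O}$ whose source lies in $\mathcal{V}^{\kappa,\otimes}$ remains in $\mathcal{V}^{\kappa,\otimes}$, and since this is a full subcategory the same morphism is cocartesian for the restricted functor $\mathcal{V}^{\kappa,\otimes} \to \mathcal{O}$; thus the latter is a cocartesian fibration and $\mathcal{V}^{\kappa,\otimes} \hookrightarrow \mathcal{V}^{\otimes}$ preserves cocartesian morphisms.

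It then remains to note that the functor $\mathcal{O} \to \CatI$ classifying $\mathcal{V}^{\kappa,\otimes}$ sends $O$ to the fibre $\prod_{i}\mathcal{V}^{\kappa}_{O_{i}}$ and, by the very definition of the generated subcategory, sends the comparison map $(\mathcal{V}^{\kappa,\otimes})_{O} \to \prod_{i}(\mathcal{V}^{\kappa,\otimes})_{O_{i}}$ to the identity of $\prod_{i}\mathcal{V}^{\kappa}_{O_{i}}$; so this functor is an $\mathcal{O}$-monoid, $\mathcal{V}^{\kappa,\otimes}$ is an $\mathcal{O}$-monoidal \icat{}, and the inclusion into $\mathcal{V}^{\otimes}$ is $\mathcal{O}$-monoidal. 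The only genuinely non-formal point is the first step: distilling from mere accessibility a single $\kappa$ that handles all the pushforward functors at once \emph{and} forces them to preserve compact objects. Once $\kappa$ is in hand, everything else is bookkeeping with the inert--active factorization, parallel to \cref{propn:moncocart} and \cref{cor:Omonloc}.
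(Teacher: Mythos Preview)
Your proof is correct and follows essentially the same approach as the paper. The only difference is in the level of detail at step (iii): where you invoke ``standard theory of accessible \icats{}'' as a black box, the paper spells out the two-cardinal argument explicitly---first choosing $\lambda$ so that everything is $\lambda$-accessible, then choosing $\kappa \gg \lambda$ so that $\phi_{!}^{\mathcal{V}}(\prod_{i}\mathcal{V}^{\lambda}_{O_{i}}) \subseteq \mathcal{V}^{\kappa}_{E}$, and finally using \cite[Lemma 2.6.11]{ChuHaugseng} (every $\kappa$-compact object is a $\kappa$-small $\lambda$-filtered colimit of $\lambda$-compacts) together with \cite[Corollary 5.3.4.15]{ht} to conclude that $\phi_{!}^{\mathcal{V}}$ preserves $\kappa$-compacts.
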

\begin{proof}
  We can choose a regular cardinal $\lambda$ such that
  $\mathcal{V}_{E}$ is $\lambda$-accessible for each $E \in
  \mathcal{O}^{\el}$ and the functor $\phi_{!}^{\mathcal{V}} \colon
  \prod_{i} \mathcal{V}_{O_{i}} \to \mathcal{V}_{E}$ preserves
  $\lambda$-filtered colimits in each variable. We can then choose a
  regular cardinal $\kappa \gg \lambda$ such that for every such
  active map $\phi$, we have
  \[ \phi_{!}^{\mathcal{V}}\left(\prod_{i}
      \mathcal{V}^{\lambda}_{O_{i}} \right) \subseteq
    \mathcal{V}_{E}^{\kappa}. \]
  By \cite[Lemma 2.6.11]{ChuHaugseng}, any object of
  $\mathcal{V}_{E}^{\kappa}$ is the colimit of a $\kappa$-small
  $\lambda$-filtered diagram in $\mathcal{V}_{E}^{\lambda}$. Since
  $\phi_{!}^{\mathcal{V}}$ preserves $\lambda$-filtered colimits in
  each variable, for $O(v_{i}) \in \mathcal{V}^{\kappa,\otimes}_{O}$
  we can write $\phi_{!}^{\mathcal{V}}(O(v_{i}))$ as a $\kappa$-small
  colimit of $\kappa$-compact objects, and hence this object is also
  $\kappa$-compact by \cite[Corollary 5.3.4.15]{ht}. The full
  subcategory $\mathcal{V}^{\kappa,\otimes}$ therefore inherits
  cocartesian morphisms from $\mathcal{V}^{\otimes}$, which means that
  it is an $\mathcal{O}$-monoidal \icat{} and the inclusion into
  $\mathcal{V}^{\otimes}$ is an $\mathcal{O}$-monoidal functor.
\end{proof}

\begin{cor}\label{cor:presOmonloc}
  Suppose $\mathcal{V}^{\otimes}$ is a presentably
  $\mathcal{O}$-monoidal \icat{}. Then there exists a regular cardinal
  $\kappa$ such that $\mathcal{V}^{\otimes}$ is an 
  $\mathcal{O}$-monoidal localization of
  $\PSh_{\mathcal{O}}(\mathcal{V}^{\kappa})^{\otimes}$.
\end{cor}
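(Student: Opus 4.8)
The plan is to construct the desired localization directly out of Propositions~\ref{accessiblyOmon} and \ref{propn:extPC}, and then verify that its right adjoint is fully faithful, which will reduce fibrewise to a standard fact about $\kappa$-accessible presentable \icats{}.

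First I would apply Proposition~\ref{accessiblyOmon} to fix a regular cardinal $\kappa$ such that $\mathcal{V}^{\otimes}$ is $\kappa$-accessibly $\mathcal{O}$-monoidal, the full subcategory $\mathcal{V}^{\kappa,\otimes} \subseteq \mathcal{V}^{\otimes}$ generated by the $\kappa$-compact objects $(\mathcal{V}^{\kappa}_{E})_{E \in \mathcal{O}^{\el}}$ is an $\mathcal{O}$-monoidal \icat{}, and the inclusion $j \colon \mathcal{V}^{\kappa,\otimes} \hookrightarrow \mathcal{V}^{\otimes}$ is $\mathcal{O}$-monoidal; since $\mathcal{V}^{\otimes}$ is presentably $\mathcal{O}$-monoidal, after enlarging $\kappa$ if necessary I may also assume each $\mathcal{V}_{E}$ is $\kappa$-accessible with $\mathcal{V}_{E}^{\kappa}$ its \icat{} of $\kappa$-compact objects. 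As the fibres $\mathcal{V}_{E}^{\kappa}$ are essentially small, $\mathcal{V}^{\kappa,\otimes}$ is a small $\mathcal{O}$-monoidal \icat{}, and $\mathcal{V}^{\otimes}$ is compatible with small colimits, so Proposition~\ref{propn:extPC} applied to $j$ produces a cocontinuous $\mathcal{O}$-monoidal functor
\[ j_{!} \colon \PSh_{\mathcal{O}}(\mathcal{V}^{\kappa})^{\otimes} \to \mathcal{V}^{\otimes}, \]
whose fibre over $E \in \mathcal{O}^{\el}$ is the unique cocontinuous functor $L_{E} \colon \PSh(\mathcal{V}_{E}^{\kappa}) \to \mathcal{V}_{E}$ extending the inclusion $\mathcal{V}_{E}^{\kappa} \hookrightarrow \mathcal{V}_{E}$ along the Yoneda embedding. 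Since each $\mathcal{V}_{E}$ is presentable, hence locally small, Proposition~\ref{propn:extPC} further provides a right adjoint $j^{*}$ to $j_{!}$ relative to $\mathcal{O}$ that is lax $\mathcal{O}$-monoidal and whose fibre over $E$ is the restricted Yoneda embedding $\mathcal{V}_{E} \to \PSh(\mathcal{V}_{E}^{\kappa})$.

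It then only remains to show that $j^{*}$ is fully faithful, since then $j_{!}$ exhibits $\mathcal{V}^{\otimes}$ as an $\mathcal{O}$-monoidal localization of $\PSh_{\mathcal{O}}(\mathcal{V}^{\kappa})^{\otimes}$, as required. Now $j^{*}$ is fully faithful exactly when the counit $j_{!}j^{*} \to \id$ is an equivalence, and this can be checked on the fibre over each $O \in \mathcal{O}$; since $j_{!}$ and $j^{*}$ are functors over $\mathcal{O}$ and $\PSh_{\mathcal{O}}(\mathcal{V}^{\kappa})^{\otimes}$ and $\mathcal{V}^{\otimes}$ are $\mathcal{O}$-monoidal, the fibre of the adjunction $j_{!} \dashv j^{*}$ over $O$ is identified, under the equivalences $\PSh_{\mathcal{O}}(\mathcal{V}^{\kappa})^{\otimes}_{O} \simeq \prod_{i} \PSh(\mathcal{V}_{O_{i}}^{\kappa})$ and $\mathcal{V}^{\otimes}_{O} \simeq \prod_{i} \mathcal{V}_{O_{i}}$, with the product of the adjunctions $L_{O_{i}} \dashv (\text{restricted Yoneda})$. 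Hence it suffices to prove that the restricted Yoneda embedding $\mathcal{V}_{E} \to \PSh(\mathcal{V}_{E}^{\kappa})$ is fully faithful for every $E \in \mathcal{O}^{\el}$, equivalently that $L_{E}$ exhibits an equivalence $\mathcal{V}_{E} \simeq \Ind_{\kappa}(\mathcal{V}_{E}^{\kappa})$ — and this holds because $\mathcal{V}_{E}$ is $\kappa$-accessible with \icat{} of $\kappa$-compact objects $\mathcal{V}_{E}^{\kappa}$, by the characterization of $\kappa$-accessible \icats{} in \cite[\S 5.3.5 and \S 5.4.2]{ht}.

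The main point — and the reason to route the argument through Proposition~\ref{accessiblyOmon} rather than choosing $\kappa$ by hand — is the simultaneous choice of a single regular cardinal that both makes every fibre $\mathcal{V}_{E}$ $\kappa$-accessible and makes the $\kappa$-compact objects closed under the active tensor operations, so that $\mathcal{V}^{\kappa,\otimes}$ is genuinely $\mathcal{O}$-monoidal; this is exactly what Proposition~\ref{accessiblyOmon} arranges. Given that, the remaining steps are routine: the fibrewise identification of $L_{E}$ with the canonical localization $\PSh(\mathcal{V}_{E}^{\kappa}) \to \Ind_{\kappa}(\mathcal{V}_{E}^{\kappa})$, and the bookkeeping that an $\mathcal{O}$-monoidal functor together with its relative adjoint is controlled fibrewise over the elementary objects. (Alternatively, one could apply Corollary~\ref{cor:Omonloc} to the full subcategories $\Ind_{\kappa}(\mathcal{V}_{E}^{\kappa}) \subseteq \PSh(\mathcal{V}_{E}^{\kappa})$ and then identify the resulting $\mathcal{O}$-monoidal \icat{} with $\mathcal{V}^{\otimes}$, but this comes down to the same computation.)
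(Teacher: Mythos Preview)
Your proof is correct and follows essentially the same approach as the paper: apply \cref{accessiblyOmon} to choose $\kappa$, apply \cref{propn:extPC} to obtain the extension $j_{!}$ and its lax right adjoint, and then verify the right adjoint is fully faithful. The only minor difference is in the last step: you reduce full faithfulness directly to the fibrewise counit at elementary objects and invoke the identification $\mathcal{V}_{E} \simeq \Ind_{\kappa}(\mathcal{V}_{E}^{\kappa})$, whereas the paper phrases the verification in terms of the comparison $L_{E}\phi_{!}^{\PSh}R_{O} \to \phi_{!}^{\mathcal{V}}$ for all active $\phi \colon O \actto E$ and argues via preservation of $\kappa$-filtered colimits; these amount to the same thing, your formulation being slightly more direct.
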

\begin{proof}
  By \cref{accessiblyOmon} we can choose a regular cardinal $\kappa$
  such that the full subcategory $\mathcal{V}^{\kappa,\otimes}$ is an
  $\mathcal{O}$-monoidal \icat{} and the inclusion $i \colon
  \mathcal{V}^{\kappa,\otimes} \hookrightarrow \mathcal{V}^{\otimes}$
  is $\mathcal{O}$-monoidal. Since $\mathcal{V}^{\otimes}$ is compatible
  with small colimits, by \cref{propn:extPC} there then exists a
  cocontinuous $\mathcal{O}$-monoidal functor
  \[ L\colon \PSh_{\mathcal{O}}(\mathcal{V}^{\kappa})^{\otimes} \to
    \mathcal{V}^{\otimes}\]
  extending $i$ along the Yoneda embedding
  $\mathcal{V}^{\kappa,\otimes} \hookrightarrow
  \PSh_{\mathcal{O}}(\mathcal{V}^{\kappa})^{\otimes}$, and this has a
  lax $\mathcal{O}$-monoidal right adjoint $R \colon \mathcal{V}^{\otimes} \to
  \PSh_{\mathcal{O}}(\mathcal{V}^{\kappa})^{\otimes}$. It remains to
  show that $R$ is fully faithful. This amounts to showing that for
  $\phi \colon O \to E$ in $\mathcal{O}$ active with $E \in
  \mathcal{O}^{\el}$ and $O(V_{i}) \in \mathcal{V}^{\otimes}_{O}$, the
  natural map
  \[
    L_{E}\phi_{!}^{\PSh_{\mathcal{O}}(\mathcal{V}^{\kappa})}R_{O}O(V_{i})
    \to \phi_{!}^{\mathcal{V}}O(V_{i}) \] is an equivalence. This
  follows since it is true when $O(V_{i})$ lies in
  $\mathcal{V}^{\kappa,\otimes}_{O}$ and all the functors preserve
  $\kappa$-filtered colimits in each variable. (For $R_{O}$ this is
  true since it is equivalent to the product of the restricted Yoneda
  embeddings
  $\mathcal{V}_{O_{i}} \hookrightarrow
  \PSh(\mathcal{V}_{O_{i}}^{\kappa})$, which tautologically preserve
  $\kappa$-filtered colimits.)
\end{proof}

\begin{remark}
  \cref{cor:presOmonloc} says in particular that for any presentably
  $\mathcal{O}$-monoidal \icat{} $\mathcal{V}^{\otimes}$ there exists
  a small $\mathcal{O}$-monoidal \icat{} $\mathcal{C}^{\otimes}$
  and an $\mathcal{O}$-monoidal localization
  \[ \PSh_{\mathcal{O}}(\mathcal{C})^{\otimes} \xto{L}
    \mathcal{V}^{\otimes}.\]
\end{remark}

Our next goal is to obtain another description of localizations of Day
convolutions, in terms of localizing at classes of maps compatible
with the $\mathcal{O}$-monoidal structure:
\begin{notation}
  Let $\mathcal{C}$ be a small \icat{} and $\mathbb{S}$ a set of
  maps in $\PSh(\mathcal{C})$. We write
  $\PSh_{\mathbb{S}}(\mathcal{C})$ for the full subcategory of
  $\PSh(\mathcal{C})$ spanned by the \emph{$\mathbb{S}$-local}
  objects, \ie{} the objects $\Phi$ such that
  $\Map_{\PSh(\mathcal{C})}(\blank, \Phi)$ takes the morphisms in
  $\mathbb{S}$ to equivalences.
\end{notation}

\begin{definition}
  Let $\mathcal{C}^{\otimes}$ be a small $\mathcal{O}$-monoidal
  \icat{}. A collection $\mathbb{S} = (\mathbb{S}_{E})_{E \in \mathcal{O}^{\el}}$
  of sets of morphisms $\mathbb{S}_{E}$ in $\mathcal{C}_{E}$ is
  \emph{compatible with the $\mathcal{O}$-monoidal structure} if for
  every active morphism $\phi \colon O
  \actto E$ in $\mathcal{O}$ with $E \in \mathcal{O}^{\el}$, the functor
  \[\phi_{!}^{\PSh_\xxO(\xcc)} \colon \prod_{i} \PSh(\mathcal{C}_{O_{i}}) \simeq
    \PSh_{\mathcal{O}}(\mathcal{C})^{\otimes}_{O} \to
    \PSh(\mathcal{C}_{E}) \]
  takes a morphism $(\id,\ldots,\id,s,\id,\ldots,\id)$ with $s
  \in \mathbb{S}_{O_{i}}$ into the strongly saturated
  class $\overline{\mathbb{S}}_{E}$ generated by $\mathbb{S}_{E}$. We
  then write $\PSh_{\mathcal{O},\mathbb{S}}(\mathcal{C})^{\otimes}$
  for the full subcategory generated by the collection of full subcategories
  $\PSh_{\mathbb{S}_{E}}(\mathcal{C}_{E})$ of $\mathbb{S}_{E}$-local
  objects.
\end{definition}

By \cref{cor:Omonloc} we get an $\mathcal{O}$-monoidal localization of
$\PSh_{\mathcal{O}}(\mathcal{C})^{\otimes}$:
\begin{corollary}\label{cor Sloc}
  Let $\mathcal{C}^{\otimes}$ be a small $\mathcal{O}$-monoidal \icat{}
  and $\mathbb{S}$ a collection of sets of morphisms compatible with the $\mathcal{O}$-monoidal
  structure. Then there is an $\xxO$-monoidal localization
  \[\PSh_{\mathcal{O}}(\xcc)^\otimes\xto{L_{\mathbb{S}}}
    \PSh_{\mathcal{O},\mathbb{S}}(\xcc)^\otimes\]
  left adjoint to the inclusion
  $\PSh_{\mathcal{O},\mathbb{S}}(\mathcal{C})^{\otimes}
  \hookrightarrow \PSh_{\mathcal{O}}(\mathcal{C})^{\otimes}$.
\end{corollary}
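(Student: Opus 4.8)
The plan is to deduce this directly from \cref{cor:Omonloc}, applied to the $\mathcal{O}$-monoidal \icat{} $\mathcal{V}^{\otimes} := \PSh_{\mathcal{O}}(\mathcal{C})^{\otimes}$ together with the full subcategories $\mathcal{U}_{E} := \PSh_{\mathbb{S}_{E}}(\mathcal{C}_{E}) \subseteq \PSh(\mathcal{C}_{E}) = \mathcal{V}_{E}$ for $E \in \mathcal{O}^{\el}$. By definition the full subcategory of $\mathcal{V}^{\otimes}$ generated by the collection $(\mathcal{U}_{E})_{E \in \mathcal{O}^{\el}}$ is precisely $\PSh_{\mathcal{O},\mathbb{S}}(\mathcal{C})^{\otimes}$, so once hypotheses (1) and (2) of \cref{cor:Omonloc} are checked, its conclusion gives at once that $\PSh_{\mathcal{O},\mathbb{S}}(\mathcal{C})^{\otimes} \to \mathcal{O}$ is an $\mathcal{O}$-monoidal \icat{} and that the inclusion has an $\mathcal{O}$-monoidal left adjoint $L_{\mathbb{S}}$ relative to $\mathcal{O}$, which is exactly the claimed $\mathcal{O}$-monoidal localization.

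Hypothesis (1) is the standard theory of localization at a small set of maps: since $\mathbb{S}_{E}$ is a set and $\PSh(\mathcal{C}_{E})$ is presentable, the inclusion $\PSh_{\mathbb{S}_{E}}(\mathcal{C}_{E}) \hookrightarrow \PSh(\mathcal{C}_{E})$ is a reflective localization, with left adjoint $L_{\mathbb{S}_{E}}$, by \cite[Proposition 5.5.4.15]{ht}. From that result we also record the fact that $L_{\mathbb{S}_{E}}$ inverts exactly the morphisms lying in the strongly saturated class $\overline{\mathbb{S}}_{E}$ generated by $\mathbb{S}_{E}$.

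For hypothesis (2), fix an active $\phi \colon O \actto E$ with $E \in \mathcal{O}^{\el}$ and objects $X_{i} \in \PSh(\mathcal{C}_{O_{i}})$. We factor the canonical morphism $O(X_{1},\ldots,X_{n}) \to O(L_{\mathbb{S}_{O_{1}}}X_{1},\ldots,L_{\mathbb{S}_{O_{n}}}X_{n})$ in $\mathcal{V}^{\otimes}_{O} \simeq \prod_{i}\PSh(\mathcal{C}_{O_{i}})$ as a composite of $n$ maps, the $i$-th of which changes only the $i$-th coordinate via the localization unit $X_{i} \to L_{\mathbb{S}_{O_{i}}}X_{i}$ (leaving the other coordinates at arbitrary fixed objects). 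Applying $\phi_{!}$ and using that the $L_{\mathbb{S}_{E}}$-local equivalences $\overline{\mathbb{S}}_{E}$ are closed under composition, it suffices to show $\phi_{!}$ carries each such partial map into $\overline{\mathbb{S}}_{E}$. Holding all coordinates but the $i$-th fixed, $\phi_{!}$ restricts to a functor $G \colon \PSh(\mathcal{C}_{O_{i}}) \to \PSh(\mathcal{C}_{E})$ that preserves small colimits, since the Day convolution $\PSh_{\mathcal{O}}(\mathcal{C})^{\otimes}$ is compatible with small colimits. Therefore $G^{-1}(\overline{\mathbb{S}}_{E})$ is again strongly saturated, and by the compatibility of $\mathbb{S}$ with the $\mathcal{O}$-monoidal structure it contains $\mathbb{S}_{O_{i}}$; hence it contains $\overline{\mathbb{S}}_{O_{i}}$. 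As the localization unit $X_{i} \to L_{\mathbb{S}_{O_{i}}}X_{i}$ lies in $\overline{\mathbb{S}}_{O_{i}}$, we conclude $G$ sends it into $\overline{\mathbb{S}}_{E}$, which establishes (2).

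I expect the only step needing care to be this last bootstrap in (2): the compatibility hypothesis only constrains $\phi_{!}$ on the generating morphisms $\mathbb{S}_{O_{i}}$, so one genuinely needs the stability of strongly saturated classes under preimage along colimit-preserving functors — together with the cocontinuity of Day convolution in each variable — in order to upgrade the constraint to the localization units, which is what \cref{cor:Omonloc}(2) requires. Everything else is a formal application of \cref{cor:Omonloc} and of standard localization theory.
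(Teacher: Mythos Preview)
Your proof is correct and follows essentially the same approach as the paper's: both apply \cref{cor:Omonloc} to the full subcategories $\PSh_{\mathbb{S}_{E}}(\mathcal{C}_{E})$, verifying (1) via standard accessible localization theory and (2) by factoring through one coordinate at a time. You are simply more explicit than the paper about the bootstrap in (2), spelling out that the preimage of $\overline{\mathbb{S}}_{E}$ under the cocontinuous functor $\phi_{!}(\blank,\ldots)$ is strongly saturated and hence contains $\overline{\mathbb{S}}_{O_{i}}$; the paper asserts the conclusion of this step without detailing it.
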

\begin{proof}
  It suffices to verify the two conditions in \cref{cor:Omonloc}. Since $\mathbb{S}_{E}$ is a set for every $E\in \xxO^\xel$, the
  inclusion $\PSh_{\mathbb{S}_{E}}(\mathcal{C}_{E}) \hookrightarrow
  \PSh(\mathcal{C}_{E})$ has a left adjoint $L_{\mathbb S, E}$, which exhibits
  $\PSh_{\mathbb{S}_{E}}(\mathcal{C}_{E})$ as the localization at the
  strongly saturated class of maps generated by $\mathbb{S}_{E}$. Hence, the first condition is satisfied, and for the second condition we observe that the compatibility of
  the maps in $\mathbb{S}$ with the $\mathcal{O}$-monoidal structure implies that for every active map $\phi\colon O\actto E$, the map
  $\phi_{!}^{\PSh_\xxO(\xcc)}\xxO(X_1, \ldots, X_i, \ldots, X_n)\to \phi_{!}^{\PSh_\xxO(\xcc)}\xxO(X_1, \ldots, L_{\mathbb S, O_i}X_i, \ldots, X_n)$ lies in $\overline{\mathbb S}_E$. In particular, the map $\phi_{!}^{\PSh_\xxO(\xcc)}\xxO(X_1,\ldots, X_n)\to\phi_{!}^{\PSh_\xxO(\xcc)}\xxO(L_{\mathbb S, O_1}X_1, \ldots, L_{\mathbb S, O_n}X_n)$ is taken to an equivalence by $L_{\mathbb S, E}$, which is the second condition of
  \cref{cor:Omonloc}.
\end{proof}

\begin{remark}
  Let us say an $\mathcal{O}$-monoidal localization is
  \emph{accessible} if the component of the fully faithful right
  adjoint at each object of $\mathcal{O}^{\el}$ is an accessible
  functor. Then it follows from the classification of accessible
  localizations of presheaf \icats{} in \cite[\S 5.5.4]{ht} that every
  accessible $\mathcal{O}$-monoidal localization of
  $\PSh_{\mathcal{O}}(\mathcal{C})^{\otimes}$ is of the form
  $\PSh_{\mathcal{O},\mathbb{S}}(\mathcal{C})^{\otimes}$ for some
  collection of sets of morphisms $\mathbb{S}$ compatible with the
  $\mathcal{O}$-monoidal structure.
\end{remark}

\begin{remark}
  Suppose $\mathcal{V}^{\otimes}$ is a presentably
  $\mathcal{O}$-monoidal \icat{} and $\kappa$ is a regular cardinal
  such that $\mathcal{V}^{\otimes}$ is $\kappa$-presentably
  $\mathcal{O}$-monoidal and the full subcategory
  $\mathcal{V}^{\kappa,\otimes}$ is $\mathcal{O}$-monoidal. Then
  $\mathcal{V}^{\otimes}$ is equivalent to
  $\PSh_{\mathcal{O},\mathbb{S}^{\kappa}}(\mathcal{V}^{\kappa})^{\otimes}$
  where $\mathbb{S}^{\kappa}_{E}$ consists of the maps
  \[ \colim_{\mathcal{I}} y(\phi) \to y(\colim_{\mathcal{I}} \phi) \]
  in $\PSh(\mathcal{V}^{\kappa}_{E})$ where $\phi \colon \mathcal{I}
  \to \mathcal{V}^{\kappa}_{E}$ ranges over a set of representatives
  of $\kappa$-small colimit diagrams.
\end{remark}

\begin{definition}\label{def S Segal presheaves}
  Let $\mathcal{C}^{\otimes}$ be a small $\mathcal{O}$-monoidal
  \icat{} and $\mathbb{S}$ a collection of sets of morphisms
  compatible with the $\mathcal{O}$-monoidal structure on
  $\PSh_{\mathcal{O}}(\mathcal{C})^{\otimes}$. We say that a $\mathcal{C}^{\op,\otimes}$-monoid
  $M \colon \mathcal{C}^{\op,\otimes} \to
  \mathcal{S}$ is \emph{$\mathbb{S}$-local}
  if for every $E \in \mathcal{O}^{\el}$ the
  restriction
  \[ M_{E} \colon \mathcal{C}_{E}^{\op} \simeq
    (\mathcal{C}^{\op,\otimes})_{E} \to \mathcal{S}\]
  is  $\mathbb{S}_{E}$-local. We write
  $\Mon_{\mathcal{C}^{\op,\otimes}}^{\mathbb{S}}(\mathcal{S})$ for the
  full subcategory of $\Mon_{\mathcal{C}^{\op,\otimes}}(\mathcal{S})$
  spanned by the $\mathbb{S}$-local monoids. 
\end{definition}

\begin{propn}\label{propn MonS}
  Let $\mathcal{C}^{\otimes}$ and $\mathbb{S}$ be as in \cref{def S
    Segal presheaves}. Then there is a natural equivalence 
  \[\Alg_\xxO(\PSh_{\mathcal{O},\mathbb{S}}(\mathcal{C})) \simeq
    \Mon_{\mathcal{C}^{\op,\otimes}}^{\mathbb S}(\mathcal{S}). \]
\end{propn}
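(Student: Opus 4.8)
The plan is to realise $\PSh_{\mathcal{O},\mathbb{S}}(\mathcal{C})^{\otimes}$ as an $\mathcal{O}$-monoidal localization of the Day convolution and then combine the resulting description of its algebras with the universal property of Day convolution recorded in \cref{propn:AlgMonPsh}.

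First I would invoke \cref{cor Sloc}, which, using that $\mathbb{S}$ is compatible with the $\mathcal{O}$-monoidal structure, produces an $\mathcal{O}$-monoidal localization $L_{\mathbb{S}} \colon \PSh_{\mathcal{O}}(\mathcal{C})^{\otimes} \to \PSh_{\mathcal{O},\mathbb{S}}(\mathcal{C})^{\otimes}$ whose fully faithful right adjoint is the inclusion. Applying \cref{rmk:algloc} with $f = \id_{\mathcal{O}}$, $\mathcal{V}^{\otimes} = \PSh_{\mathcal{O}}(\mathcal{C})^{\otimes}$ and $\mathcal{U}^{\otimes} = \PSh_{\mathcal{O},\mathbb{S}}(\mathcal{C})^{\otimes}$ (so that $\mathcal{U}_{E} = \PSh_{\mathbb{S}_{E}}(\mathcal{C}_{E}) \subseteq \PSh(\mathcal{C}_{E})$ for $E \in \mathcal{O}^{\el}$) identifies $\Alg_{\mathcal{O}}(\PSh_{\mathcal{O},\mathbb{S}}(\mathcal{C}))$ with the full subcategory of $\Alg_{\mathcal{O}}(\PSh_{\mathcal{O}}(\mathcal{C}))$ spanned by those $\mathcal{O}$-algebras $A$ such that $A(E)$ is $\mathbb{S}_{E}$-local for every $E \in \mathcal{O}^{\el}$.

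Next I would transport this along the equivalence $\Alg_{\mathcal{O}}(\PSh_{\mathcal{O}}(\mathcal{C})^{\otimes}) \simeq \Mon_{\mathcal{C}^{\op,\otimes}}(\mathcal{S})$ of \cref{propn:AlgMonPsh} and check that the ``pointwise over $\mathcal{O}^{\el}$'' conditions on the two sides correspond. Tracing the proof of \cref{propn:AlgMonPsh}: the equivalence sends a $\mathcal{C}^{\op,\otimes}$-monoid $M$ to the $\mathcal{O}$-monoid in $\RFib$ carrying $O \in \mathcal{O}$ to the right fibration of $M|_{(\mathcal{C}^{\otimes}_{O})^{\op}}$, so the corresponding $\mathcal{O}$-algebra $A$ has $A(E)$ equal, under the identifications $\PSh_{\mathcal{O}}(\mathcal{C})^{\otimes}_{E} \simeq \PSh(\mathcal{C}_{E})$ and $(\mathcal{C}^{\op,\otimes})_{E} \simeq \mathcal{C}_{E}^{\op}$, to the presheaf $M_{E} = M|_{\mathcal{C}_{E}^{\op}}$. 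Hence $A(E) \in \PSh_{\mathbb{S}_{E}}(\mathcal{C}_{E})$ for all $E$ precisely when each $M_{E}$ is $\mathbb{S}_{E}$-local, \ie{} precisely when $M$ is $\mathbb{S}$-local in the sense of \cref{def S Segal presheaves}. Restricting the equivalence of \cref{propn:AlgMonPsh} to these matching full subcategories then yields $\Alg_{\mathcal{O}}(\PSh_{\mathcal{O},\mathbb{S}}(\mathcal{C})) \simeq \Mon_{\mathcal{C}^{\op,\otimes}}^{\mathbb{S}}(\mathcal{S})$.

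The only real work is the last verification, which amounts to the bookkeeping that the equivalence of \cref{propn:AlgMonPsh} is compatible with passage to fibres over elementary objects and with the identifications $\PSh_{\mathcal{O}}(\mathcal{C})^{\otimes}_{E} \simeq \PSh(\mathcal{C}_{E})$ and $(\mathcal{C}^{\op,\otimes})_{E} \simeq \mathcal{C}_{E}^{\op}$; since that equivalence is assembled from the fibrewise straightening of right fibrations, this is essentially automatic, and naturality of the final equivalence is inherited from the naturality already built into \cref{propn:AlgMonPsh} and \cref{rmk:algloc}. I do not anticipate any genuine obstacle beyond making this matching precise.
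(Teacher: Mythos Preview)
Your proposal is correct and follows essentially the same route as the paper's proof: invoke \cref{rmk:algloc} (via the $\mathcal{O}$-monoidal localization of \cref{cor Sloc}) to identify $\Alg_{\mathcal{O}}(\PSh_{\mathcal{O},\mathbb{S}}(\mathcal{C}))$ with the full subcategory of $\Alg_{\mathcal{O}}(\PSh_{\mathcal{O}}(\mathcal{C}))$ of pointwise $\mathbb{S}$-local algebras, and then transport along \cref{propn:AlgMonPsh}. The paper's proof is just a terser statement of exactly these two steps; your additional paragraph unpacking why the elementary-fibre conditions match is correct bookkeeping that the paper leaves implicit.
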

\begin{proof}
  By \cref{rmk:algloc} the inclusion
  $\PSh_{\mathcal{O},\mathbb{S}}(\mathcal{C})^{\otimes}
  \hookrightarrow \PSh_{\mathcal{O}}(\mathcal{C})^{\otimes}$
  identifies $\Alg_\xxO(\PSh_{\mathcal{O},\mathbb{S}}(\mathcal{C}))$
  with the full subcategory of
  $\Alg_{\mathcal{O}}(\PSh_{\mathcal{O}}(\mathcal{C}))$ spanned by the
  algebras $A$ such that $A(E)$ lies in
  $\PSh_{\mathbb{S}_{E}}(\mathcal{C}_{E})$ for every $E \in \mathcal{O}^{\el}$.
  Under the equivalence
  \[\Alg_{\mathcal{O}}(\PSh_{\mathcal{O}}(\mathcal{C})) \simeq
    \Mon_{\mathcal{C}^{\op,\otimes}}(\mathcal{S})\]
  of \cref{propn:AlgMonPsh} this full subcategory is identified with
  $\Mon_{\mathcal{C}^{\op,\otimes}}^{\mathbb S}(\mathcal{S})$.
\end{proof}

\begin{remark}
  \cref{propn MonS} implies that the \icat{}
  $\Alg_\xxO(\PSh_{\mathcal{O},\mathbb{S}}(\mathcal{C}))$ is
  equivalent to the full subcategory of
  $\Fun(\mathcal{C}^{\op,\otimes}, \mathcal{S})$ spanned by objects
  that are local with respect to a set of maps. Thus
  $\Alg_\xxO(\PSh_{\mathcal{O},\mathbb{S}}(\mathcal{C}))$ is an
  accessible localization of a presheaf \icat{} and so is in
  particular a presentable \icat{}. Since every presentably
  $\mathcal{O}$-monoidal \icat{} is equivalent to one of this form by
  \cref{cor:presOmonloc}, we have proved the following:
\end{remark}
\begin{cor}
  Suppose $\mathcal{V}^{\otimes}$ is a presentably
  $\mathcal{O}$-monoidal \icat{}. Then the \icat{}
  $\Alg_{\mathcal{O}}(\mathcal{V})$ is presentable. \qed
\end{cor}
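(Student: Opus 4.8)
The plan is to assemble the results of this section into a short chain of equivalences, essentially spelling out the remark preceding the statement. First I would invoke \cref{cor:presOmonloc} to choose a regular cardinal $\kappa$ for which $\mathcal{V}^{\otimes}$ is an $\mathcal{O}$-monoidal localization of $\PSh_{\mathcal{O}}(\mathcal{V}^{\kappa})^{\otimes}$, where $\mathcal{V}^{\kappa,\otimes}$ is a small $\mathcal{O}$-monoidal \icat{}. As observed in the remarks following \cref{cor Sloc}, this localization is of the form $\PSh_{\mathcal{O},\mathbb{S}}(\mathcal{V}^{\kappa})^{\otimes}$ for the collection $\mathbb{S} = \mathbb{S}^{\kappa}$ of sets of maps built from the $\kappa$-small colimit diagrams in the \icats{} $\mathcal{V}^{\kappa}_{E}$, $E \in \mathcal{O}^{\el}$; in particular $\mathbb{S}$ is compatible with the $\mathcal{O}$-monoidal structure, so that \cref{propn MonS} applies.

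Next I would apply \cref{propn MonS} to obtain a natural equivalence
\[ \Alg_{\mathcal{O}}(\mathcal{V}) \simeq \Alg_{\mathcal{O}}(\PSh_{\mathcal{O},\mathbb{S}}(\mathcal{V}^{\kappa})) \simeq \Mon_{(\mathcal{V}^{\kappa})^{\op,\otimes}}^{\mathbb{S}}(\mathcal{S}). \]
The key point is that the right-hand side is a full subcategory of the functor \icat{} $\Fun((\mathcal{V}^{\kappa})^{\op,\otimes}, \mathcal{S})$ cut out by two families of locality conditions: the Segal conditions defining $(\mathcal{V}^{\kappa})^{\op,\otimes}$-monoids, which require that the comparison maps $\phi(O(C_{i})) \to \prod_{i} \phi(C_{i})$ be equivalences, and the $\mathbb{S}$-locality conditions at the objects of $\mathcal{O}^{\el}$. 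Since $\mathcal{V}^{\kappa,\otimes}$ is small there are only essentially set-many objects $O(C_{i})$ over which to impose Segal conditions, and each requirement ``$f$ is an equivalence'' can be rephrased as locality with respect to the single map $f$ in the usual way; combined with the fact that each $\mathbb{S}_{E}$ is a set by hypothesis, this shows that $\Mon_{(\mathcal{V}^{\kappa})^{\op,\otimes}}^{\mathbb{S}}(\mathcal{S})$ is the full subcategory of $\Fun((\mathcal{V}^{\kappa})^{\op,\otimes}, \mathcal{S})$ spanned by the objects local with respect to some essentially small set $T$ of morphisms.

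Finally, $\Fun((\mathcal{V}^{\kappa})^{\op,\otimes}, \mathcal{S})$ is a presheaf \icat{} on a small \icat{}, so this accessible localization is presentable by \cite[\S 5.5.4]{ht}, which gives the claim. I do not expect a genuine obstacle: the mathematical content is entirely contained in the results already proved in this section, and the only points requiring a sentence of justification are that the Segal conditions are localization conditions at a set of maps --- immediate once one knows the source $(\mathcal{V}^{\kappa})^{\op,\otimes}$ is small --- and the bookkeeping of the smallness hypotheses needed to invoke \cref{cor:presOmonloc} and \cref{propn MonS}, which is provided by \cref{accessiblyOmon}.
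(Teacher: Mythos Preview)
Your proposal is correct and follows essentially the same route as the paper: the corollary is stated with a \qed{} because its proof is contained in the preceding remark, which argues exactly as you do that by \cref{cor:presOmonloc} one may replace $\mathcal{V}^{\otimes}$ by some $\PSh_{\mathcal{O},\mathbb{S}}(\mathcal{C})^{\otimes}$, apply \cref{propn MonS}, and recognize $\Mon_{\mathcal{C}^{\op,\otimes}}^{\mathbb{S}}(\mathcal{S})$ as an accessible localization of the functor \icat{} $\Fun(\mathcal{C}^{\op,\otimes},\mathcal{S})$. Your additional sentence spelling out why the monoid condition is locality with respect to a set of maps is just an elaboration of what the paper leaves implicit.
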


\section{Extendability and Free Algebras}\label{sec:freealg}
In this section we recall the notion of \emph{extendability} for a
morphism $f \colon \mathcal{O} \to \mathcal{P}$ of cartesian patterns,
and show that if $\mathcal{V}$ is a presentably
$\mathcal{P}$-monoidal \icat{}, then the left adjoint
\[ f_{!} \colon \Alg_{\mathcal{O}}(\mathcal{V}) \to
  \Alg_{\mathcal{P}}(\mathcal{V}) \] to the functor given by
composition with $f$ can be described by an explicit colimit formula. In
particular, if $\mathcal{O}^{\xint} \to \mathcal{O}$ is extendable (in
which case we just say that $\mathcal{O}$ is extendable), then we get
an explicit formula for free $\mathcal{O}$-algebras.

\begin{defn}
  A morphism $f \colon \mathcal{O} \to \mathcal{P}$ of cartesian
  patterns has \emph{unique lifting of inert morphisms} if for every
  inert morphism $\phi \colon f(O) \intto P$ in $\mathcal{P}$ there is
  a unique lift to an inert morphism $\psi \colon O \intto O'$ in
  $\mathcal{O}$ such that $f(\psi) \simeq \phi$. In other words, the
  induced map of \igpds{}
  \[ (\mathcal{O}^{\xint}_{O/})^{\simeq} \to
    (\mathcal{P}^{\xint}_{f(O)/})^{\simeq} \]
  is an equivalence.
\end{defn}

\begin{remark}
  If $f \colon \mathcal{O} \to \mathcal{P}$ has unique lifting of
  inert morphisms, then by \cite[Corollary 7.4]{patterns} we can
  define a functor $\mathcal{P} \to \CatI$ that takes $P \in
  \mathcal{P}$ to $\mathcal{O}^{\act}_{/P}$. A morphism $\alpha
  \colon P \to P'$ in $\mathcal{P}$ is sent to the functor $\alpha_{!} \colon \mathcal{O}^{\act}_{/P}
  \to \mathcal{O}^{\act}_{/P'}$ that takes a pair $(O, f(O) \actto P)$ to
  the pair
  $(O', f(O') \actto P')$ given by first forming the commutative
  square
  \[
    \begin{tikzcd}
      f(O) \arrow[inert]{r} \arrow[active]{d} & Q
      \arrow[active]{d} \\
      P \arrow{r}{\alpha} & P'
    \end{tikzcd}
  \]
  by taking the inert--active factorization of $f(O) \actto P
  \xto{\alpha} P'$ and then lifting the inert map $f(O) \intto Q$ to a
  unique inert map $O \intto O'$ in $\mathcal{O}$.
\end{remark}

\begin{defn}\label{def ext}
  A morphism $f \colon \mathcal{O} \to \mathcal{P}$ of cartesian
  patterns is \emph{extendable} if
  \begin{enumerate}[(1)]
  \item $f$ has unique lifting of inert morphisms,
  \item for $P \in \mathcal{P}$ over $\angled{n}$, the functor
    \[ \mathcal{O}^{\act}_{/P} \to \prod_{i=1}^{n}
      \mathcal{O}^{\act}_{/P_{i}} \] taking
    $(O, \phi \colon f(O) \to P)$ to $(\rho_{i,!}^{P}(O,\phi))_{i}$ is
    cofinal.
  \end{enumerate}
\end{defn}
\begin{remark}
  The general definition of an extendable morphism in \cite[Definition
  7.7]{patterns} has a third condition, but this is automatic in the case
  of cartesian patterns: Namely, given an active morphism $\phi
  \colon f(O) \actto P$, we can use the unique lifting of inert morphisms
  to define a functor
  $\mathcal{P}^{\el,\op}_{P/} \to \CatI$ taking $\alpha \colon P \to E$ to
  $\mathcal{O}^{\el}_{\alpha_{!}O/}$. If $\mathcal{O}^{\el}(\phi) \to
  \mathcal{P}^{\el}_{P/}$ denotes the corresponding cartesian
  fibration, then there is a functor $\mathcal{O}^{\el}(\phi) \to
  \mathcal{O}^{\el}_{O/}$ that takes $(\alpha, \alpha_{!}O \to E')$ to
  $O \to \alpha_{!}O \to E'$. The condition is that this functor
  should induce an equivalence
  \[ \lim_{\mathcal{O}^{\el}_{O/}} F \to
    \lim_{\mathcal{O}^{\el}(\phi)} F \]
  for every functor $F \colon \mathcal{O}^{\el} \to
  \mathcal{S}$. However, if $f$ is a morphism of cartesian patterns,
  then this functor is necessarily an equivalence: if $\angled{n} = |O|$ and
  $\angled{m} = |P|$, then $\mathcal{O}^{\el}_{O/}$ and
  $\mathcal{P}^{\el}_{P/}$ are isomorphic to the discrete sets
 $\{\rho_{i}^{O} \colon
  i = 1,\ldots,n\}$ and $\{\rho_{i}^{P} \colon
  i = 1,\ldots,m\}$, while 
  $\mathcal{O}^{\el}_{\rho^{P}_{i,!}O/}$ is isomorphic to the set
  $|\phi|^{-1}(i)$. Thus $\mathcal{O}^{\el}(\phi)$ is the set of pairs
  $\{(i,j) : 1 \leq i \leq m, j \in |\phi|^{-1}(i) \}$ and the map to
  $\mathcal{O}^{\el}_{O/}$ is the obvious isomorphism of this with
  $\{1,\ldots,n\}$ (implied by $\phi$ being active).
\end{remark}

\begin{example}\label{monext}
  If $f \colon \mathcal{O} \to \mathcal{P}$ is an extendable morphism
  of cartesian patterns, then for any commutative square
  \[
    \begin{tikzcd}
      \mathcal{C}^{\otimes} \arrow{r}{F} \arrow{d}&
      \mathcal{D}^{\otimes} \arrow{d} \\
      \mathcal{O} \arrow{r}{f} &  \mathcal{P},
    \end{tikzcd}
  \]
  where $\mathcal{C}^{\otimes}$ is an $\mathcal{O}$-monoidal
  \icat{}, $\mathcal{D}^{\otimes}$ is a $\mathcal{P}$-monoidal
  \icat{}, and $f$ preserves cocartesian morphisms, then the morphism $F$ is
  extendable. This is a special case of  \cite[Proposition
  9.5]{patterns}. In particular, for any $\mathcal{P}$-monoidal
  \icat{} $\mathcal{D}^{\otimes}$, in the pullback square
  \[
    \begin{tikzcd}
      f^{*}\mathcal{D}^{\otimes} \arrow{r}{\bar{f}} \arrow{d}&
      \mathcal{D}^{\otimes} \arrow{d} \\
      \mathcal{O} \arrow{r}{f} &  \mathcal{P},
    \end{tikzcd}
  \]
  the morphism $\bar{f} \colon f^{*}\mathcal{D}^{\otimes} \to
  \mathcal{D}^{\otimes}$ is extendable.
\end{example}

\begin{propn}\label{propn:monext}
  Suppose $f \colon \mathcal{O} \to \mathcal{P}$ is an extendable
  morphism of cartesian patterns. Then the functor
  $f^{*} \colon \Mon_{\mathcal{P}}(\mathcal{S}) \to
  \Mon_{\mathcal{O}}(\mathcal{S})$ has a left adjoint $f_{!}$, given
  by left Kan extension along $f$, which satisfies
  \[ f_{!}M(P) \simeq \colim_{O \in \mathcal{O}^{\act}_{/P}} M(O).\]
\end{propn}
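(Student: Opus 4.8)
The plan is to define $f_{!}$ as the pointwise left Kan extension along $f\colon\mathcal{O}\to\mathcal{P}$, so that $f_{!}\dashv f^{*}$ holds tautologically at the level of functor \icats{} $\Fun(\mathcal{O},\mathcal{S})\rightleftarrows\Fun(\mathcal{P},\mathcal{S})$, with the usual formula $f_{!}M(P)\simeq\colim_{(O,\phi)\in f_{/P}}M(O)$ over the comma \icat{} $f_{/P}:=\mathcal{O}\times_{\mathcal{P}}\mathcal{P}_{/P}$. Two things then remain: (a) rewrite this colimit as one indexed by $\mathcal{O}^{\act}_{/P}$, and (b) show that $f_{!}$ carries $\mathcal{O}$-monoids to $\mathcal{P}$-monoids. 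Granting (b): since $f^{*}$ preserves monoids by \cref{lem:monoidpb} and $f_{!}$ does by (b), and $\Mon_{\mathcal{O}}(\mathcal{S})$, $\Mon_{\mathcal{P}}(\mathcal{S})$ are full subcategories, the adjunction restricts to $f_{!}\dashv f^{*}$ on monoids, and the colimit formula in the statement is exactly (a).

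For (a) I would prove that the inclusion $\iota\colon\mathcal{O}^{\act}_{/P}\hookrightarrow f_{/P}$ is cofinal, using only clause (1) of extendability. By the cofinality criterion of \cite[\S 4.1]{ht} it suffices that for each $(O,\phi\colon f(O)\to P)\in f_{/P}$ the \icat{} $\mathcal{O}^{\act}_{/P}\times_{f_{/P}}(f_{/P})_{(O,\phi)/}$ is weakly contractible; I claim it has an initial object. Factor $\phi$ as $f(O)\intto Q\actto P$; by unique lifting of inert morphisms there is an inert $O\intto O'$ in $\mathcal{O}$ with $f(O')\simeq Q$, and $\bigl((O',Q\actto P),\ O\intto O'\bigr)$ is initial: any morphism $(O,\phi)\to(O'',\phi'')$ in $f_{/P}$ with $\phi''$ active factors, via an inert--active factorization of its underlying map $O\to O''$ together with essential uniqueness of factorizations and unique lifting of inert morphisms, uniquely through it. Hence $f_{!}M(P)\simeq\colim_{\mathcal{O}^{\act}_{/P}}M$ for \emph{every} functor $M\colon\mathcal{O}\to\mathcal{S}$. (This is also where one sees clause (3) of the general definition of extendability is not needed here.)

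For (b), fix $P$ over $\angled{n}$ with inert maps $\rho^{P}_{i}\colon P\to P_{i}$, and invoke clause (2): the functor $g\colon\mathcal{O}^{\act}_{/P}\to\prod_{i=1}^{n}\mathcal{O}^{\act}_{/P_{i}}$, $(O,\phi)\mapsto\bigl(\rho^{P}_{i,!}(O,\phi)\bigr)_{i}$, is cofinal. Let $D\colon\prod_{i}\mathcal{O}^{\act}_{/P_{i}}\to\mathcal{S}$ be the external product $\bigl((O^{i},\phi^{i})\bigr)_{i}\mapsto\prod_{i}M(O^{i})$. Unwinding the definition of $\rho^{P}_{i,!}$, and using unique lifting of inert morphisms to identify the relevant inert lifts with the structure maps $\rho^{O}_{j}$, one sees that the underlying object $O^{(i)}$ of $\rho^{P}_{i,!}(O,\phi)$ has elementary components exactly $\{O_{j}:j\in|\phi|^{-1}(i)\}$; since the sets $|\phi|^{-1}(i)$ partition the components of $O$ (as $\phi$ is active) and $M$ is an $\mathcal{O}$-monoid, naturality of the Segal maps gives $\prod_{i}M(O^{(i)})\simeq\prod_{j}M(O_{j})\simeq M(O)$, naturally in $(O,\phi)$. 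Thus $g^{*}D$ is naturally equivalent to the diagram $\mathcal{O}^{\act}_{/P}\to\mathcal{O}\xto{M}\mathcal{S}$ computing $f_{!}M(P)$, and since $g$ is cofinal and finite products in $\mathcal{S}$ commute with colimits in each variable,
\[ f_{!}M(P)\simeq\colim_{\mathcal{O}^{\act}_{/P}}g^{*}D\simeq\colim_{\prod_{i}\mathcal{O}^{\act}_{/P_{i}}}D\simeq\prod_{i}\colim_{\mathcal{O}^{\act}_{/P_{i}}}M\simeq\prod_{i}f_{!}M(P_{i}); \]
a final check that this composite is the canonical Segal comparison map—all the intermediate equivalences being natural in the $P_{i}$—shows $f_{!}M$ is a $\mathcal{P}$-monoid.

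I expect the main obstacle to be the bookkeeping in (b): pinning down $\rho^{P}_{i,!}(O,\phi)$ in terms of the elementary components $O_{j}$ of $O$ through unique lifting of inert morphisms, and verifying that the resulting chain of equivalences assembling $f_{!}M(P)$ from the $f_{!}M(P_{i})$ is genuinely the Segal comparison map and not merely an abstract equivalence. The cofinality inputs are comparatively painless: clause (2) is handed to us, and (a) is a short initial-object argument.
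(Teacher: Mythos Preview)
Your proposal is correct and follows essentially the same approach as the paper: both take $f_{!}$ to be left Kan extension on functor \icats{}, reduce the indexing category from the full comma $f_{/P}$ to $\mathcal{O}^{\act}_{/P}$ using unique lifting of inerts (the paper cites \cite[7.2]{patterns} where you give the initial-object cofinality argument directly), and then verify the monoid condition via the cofinality in clause~(2), the monoid condition on $M$, and cartesian closedness of $\mathcal{S}$. Your more explicit treatment of identifying $g^{*}D$ with $M$ and your concern about the Segal comparison map are finer than what the paper records, but the paper simply asserts the chain of equivalences is natural and leaves it at that.
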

\begin{proof}
  This is a special case of \cite[Proposition
  7.13]{patterns}. We give a brief sketch of the proof, as it is
  particularly simple in the case of cartesian patterns. Since
  $f^{*}\colon \Fun(\mathcal{P}, \mathcal{S}) \to \Fun(\mathcal{O},
  \mathcal{S})$ has a left adjoint $f_{!}$ given by left Kan
  extension, it suffices to show that if $M$ is an
  $\mathcal{O}$-monoid, then the left Kan extension $f_{!}M$ is an
  $\mathcal{O}$-monoid. We have natural equivalences
  \begin{align*}
    f_{!}M(P) & \simeq \colim_{O \in \mathcal{O}_{/P}} M(O) && \\
    & \simeq \colim_{O \in \mathcal{O}^{\act}_{/P}} M(O) & &
                                                             \text{(\cref{def
                                                             ext}(1)
                                                             and
                                                             \cite[7.2]{patterns})}
    \\
    & \simeq \colim_{(O_{i}) \in \prod_{i}
      \mathcal{O}^{\act}_{/P_{i}}}  \prod_{i} M(O_{i})  && \text{(\cref{def
                                                             ext}(2))}\\
    & \simeq \prod_{i} \colim_{O_{i} \in
      \mathcal{O}^{\act}_{/P_{i}}}M(O_{i}) && \text{($\mathcal{S}$
                                              cartesian closed)} \\
    & \simeq \prod_{i} f_{!}M(P_{i}),
  \end{align*}
  as required.
\end{proof}

\begin{remark}
  The same result is true more generally for monoids in any \icat{}
  where the cartesian product commutes with colimits indexed by the
  \icats{} $\mathcal{O}^{\act}_{/P}$.
\end{remark}

\begin{defn}\label{def ext patt}
  Let $\mathcal{O}$ be a cartesian pattern. We write
  $\Act_{\mathcal{O}}(O)$ for the \igpd{} of active morphisms to $O$
  in $\mathcal{O}$. We say $\mathcal{O}$ is \emph{extendable} if the functor
  \[ \Act_{\mathcal{O}}(O) \to \prod_{i} \Act_{\mathcal{O}}(O_{i}), \]
  taking $\phi \colon O' \actto O$ to the morphism $\rho_{i,!}^{O}\phi$
  given by the inert-active factorization
  \[
    \begin{tikzcd}
      O' \arrow[active]{r}{\phi} \arrow[inert]{d} & O \arrow[inert]{d}{\rho^{O}_{i}} \\
      \rho_{i,!}^{O}O' \arrow[active]{r}{\rho_{i,!}^{O}\phi} & O_{i},
    \end{tikzcd}
    \]
    is an equivalence.
\end{defn}
\begin{remark}
  Since the equivalences are precisely the morphisms that are both
  active and inert, we can identify $\Act_{\mathcal{O}}(O)$ with
  $(\mathcal{O}^{\xint})^{\act}_{/O}$. Thus $\mathcal{O}$ is extendable
  \IFF{} the inclusion $\mathcal{O}^{\xint} \to \mathcal{O}$ is an
  extendable morphism of cartesian patterns (since unique lifting of
  inert morphisms is tautological in this case).
\end{remark}

\begin{cor}
  Suppose $\mathcal{O}$ is an extendable cartesian pattern. Then the
  functor
  \[U_{\mathcal{O}} \colon \Mon_{\mathcal{O}}(\mathcal{S}) \to
    \Fun(\mathcal{O}^{\el}, \mathcal{S})\] given by restriction to
  $\mathcal{O}^{\el}$ has a left adjoint $F_{\mathcal{O}}$ given by
  right Kan extension along
  $\mathcal{O}^{\el} \hookrightarrow \mathcal{O}^{\xint}$ followed by
  left Kan extension along $\mathcal{O}^{\xint} \to \mathcal{O}$. This
  satisfies
  \[ F_{\mathcal{O}}(\Phi)(O) \simeq \colim_{O' \actto O \in
      \Act_{\mathcal{O}}(O)} \prod_{i} \Phi(O'_{i}). \]
\end{cor}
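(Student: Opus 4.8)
The plan is to factor $U_{\mathcal{O}}$ through $\Mon_{\mathcal{O}^{\xint}}(\mathcal{S})$ and build $F_{\mathcal{O}}$ as a composite of two left adjoints. Equip $\mathcal{O}^{\xint}$ with the cartesian pattern structure in which every morphism is inert (so the active maps are the equivalences), the elementary objects are those of $\mathcal{O}$, and the map to $\xF_{*}^{\flat}$ is the restriction of $|\blank|$; then the inclusion $\iota\colon\mathcal{O}^{\xint}\hookrightarrow\mathcal{O}$ is a morphism of cartesian patterns. By \cref{OintmonRKE}, restriction along $\iota$ carries $\mathcal{O}$-monoids to $\mathcal{O}^{\xint}$-monoids, and the further restriction to $\mathcal{O}^{\el}$ is the equivalence $\Mon_{\mathcal{O}^{\xint}}(\mathcal{S})\isoto\Fun(\mathcal{O}^{\el},\mathcal{S})$ of that remark, whose inverse $R$ is right Kan extension. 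Since $U_{\mathcal{O}}$ is the composite of these two restrictions, it suffices to construct a left adjoint to $\iota^{*}$ and precompose it with $R$.

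Next I would observe, via the remark following \cref{def ext patt}, that $\mathcal{O}$ being extendable is precisely the statement that $\iota$ is an \emph{extendable} morphism of cartesian patterns (unique lifting of inert morphisms being automatic here). Hence \cref{propn:monext} applies and produces a left adjoint $\iota_{!}$ to $\iota^{*}$, computed by left Kan extension along $\iota$, with the colimit formula stated there; moreover that same remark identifies the indexing \icat{} of this colimit with $\Act_{\mathcal{O}}(O)$. Setting $F_{\mathcal{O}}:=\iota_{!}\circ R$ then yields a left adjoint to $U_{\mathcal{O}}$ as a composite of left adjoints, and by construction it is right Kan extension along $\mathcal{O}^{\el}\hookrightarrow\mathcal{O}^{\xint}$ followed by left Kan extension along $\mathcal{O}^{\xint}\hookrightarrow\mathcal{O}$, as in the statement.

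Finally, to extract the colimit formula I would unwind $F_{\mathcal{O}}(\Phi)(O)=\iota_{!}(R\Phi)(O)$. The $\mathcal{O}^{\xint}$-monoid $M:=R\Phi$ satisfies $M|_{\mathcal{O}^{\el}}\simeq\Phi$, since $R$ is inverse to restriction along the fully faithful inclusion $\mathcal{O}^{\el}\hookrightarrow\mathcal{O}^{\xint}$; hence for an active map $O'\actto O$, using that $M$ is an $\mathcal{O}^{\xint}$-monoid, $M(O')\simeq\prod_{i}M(O'_{i})\simeq\prod_{i}\Phi(O'_{i})$, each $O'_{i}$ being elementary. Substituting into the colimit formula of \cref{propn:monext} gives
\[ F_{\mathcal{O}}(\Phi)(O)\simeq\colim_{O'\actto O\,\in\,\Act_{\mathcal{O}}(O)}\prod_{i}\Phi(O'_{i}),\]
as required. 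The argument is almost entirely assembly of facts already in place; the one point that actually needs checking — that $\iota$ satisfies the hypotheses of \cref{propn:monext} and that its indexing \icat{} is $\Act_{\mathcal{O}}(O)$ — is exactly the content of the remark following \cref{def ext patt}, so I do not anticipate a substantive obstacle.
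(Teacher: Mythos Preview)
Your proposal is correct and follows essentially the same route as the paper: the paper's proof is simply ``Combine \cref{propn:monext} with \cref{OintmonRKE}'', and you have accurately unpacked what that combination entails, including the identification of the indexing \icat{} with $\Act_{\mathcal{O}}(O)$ via the remark after \cref{def ext patt}.
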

\begin{proof}
  Combine \cref{propn:monext} with \cref{OintmonRKE}.
\end{proof}

\begin{example}\label{ex:opdext}
  If $\mathcal{O}$ is an extendable cartesian pattern, then for any
  morphism of $\mathcal{O}$-\iopds{} (\ie{} weak Segal
  $\mathcal{O}$-fibrations in the terminology of \cite{patterns})
  \[
    \begin{tikzcd}
      \mathcal{E}  \arrow{rr}{f} \arrow{dr} && \mathcal{F}
      \arrow{dl} \\
      & \mathcal{O},
    \end{tikzcd}
  \]
  the morphism $f$ is extendable by \cite[Corollary 9.16]{patterns}. In particular, any
  $\mathcal{O}$-\iopd{} is an extendable cartesian pattern.
\end{example}

We now want to extend these descriptions of left adjoints from monoids
to algebras, starting with the special case of Day convolution:
\begin{propn}
  Suppose $\mathcal{C}$ is a small $\mathcal{P}$-monoidal
  \icat{}, and $f \colon \mathcal{O} \to \mathcal{P}$ is an extendable
  morphism of cartesian patterns. Then the functor
  \[ f^{*} \colon \Alg_{\mathcal{P}}(\PSh_{\mathcal{P}}(\mathcal{C})^{\otimes}) \to
    \Alg_{\mathcal{O}/\mathcal{P}}(\PSh_{\mathcal{P}}(\mathcal{C})^{\otimes})\]
  has a left adjoint $f_{!}$, which for $P \in \mathcal{P}^{\el}$ satisfies
  \[ (f_{!}A)(P) \simeq \colim_{(O,\phi \colon f(O) \actto P) \in
      \mathcal{O}^{\act}_{/P}} \phi_{!}A(O).\]
\end{propn}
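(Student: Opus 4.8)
The plan is to reduce to the monoid case \cref{propn:monext} via the Day-convolution identifications of \cref{propn:AlgMonPsh} and \cref{cor:f*CopPSh}, and then unwind the resulting colimit formula. First I would observe that $\mathcal{C}^{\op,\otimes} \to \mathcal{P}$ is again a $\mathcal{P}$-monoidal \icat{}, hence a cartesian pattern, and that the canonical projection $\bar{f} \colon f^{*}\mathcal{C}^{\op,\otimes} \to \mathcal{C}^{\op,\otimes}$ over $f$ is a morphism of cartesian patterns: it preserves all cocartesian morphisms, being the projection in a pullback of a cocartesian fibration, while $f$ preserves inert, active and elementary objects. By \cref{monext} applied with $\mathcal{D}^{\otimes} = \mathcal{C}^{\op,\otimes}$ the morphism $\bar{f}$ is extendable. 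Tracing through \cref{propn:AlgMonPsh}, \cref{lem:pbDC} and \cref{cor:f*CopPSh} — whose naturality is evident from their proofs, since they ultimately come from \cite[Proposition 7.3]{freepres} together with \cref{rmk:monalgnat} — one finds that $f^{*}$ is identified with $\bar{f}^{*} \colon \Mon_{\mathcal{C}^{\op,\otimes}}(\mathcal{S}) \to \Mon_{f^{*}\mathcal{C}^{\op,\otimes}}(\mathcal{S})$. By \cref{propn:monext} this has a left adjoint $\bar{f}_{!}$, computed by left Kan extension along $\bar{f}$, with
\[ \bar{f}_{!}M(X) \simeq \colim_{(Y,\, \bar{f}(Y)\actto X)\, \in\, (f^{*}\mathcal{C}^{\op,\otimes})^{\act}_{/X}} M(Y) \]
for $X \in \mathcal{C}^{\op,\otimes}$; transporting back along the equivalences gives the left adjoint $f_{!}$ of $f^{*}$.

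To extract the formula, fix $P \in \mathcal{P}^{\el}$, so that $\mathcal{C}^{\op,\otimes}_{P} \simeq \mathcal{C}_{P}^{\op}$, and let $M_{A}$ be the monoid corresponding to $f^{*}A$. Evaluating $(f_{!}A)(P) \in \PSh(\mathcal{C}_{P})$ at $c \in \mathcal{C}_{P}$ amounts to restricting $\bar{f}_{!}M_{A}$ to the fibre over $P$, so $(f_{!}A)(P)(c) \simeq \colim_{(Y,\psi)\, \in\, (f^{*}\mathcal{C}^{\op,\otimes})^{\act}_{/c}} M_{A}(Y)$. Sending $(Y,\psi)$, with $Y$ over $O$ and $\psi$ over $\phi$, to $(O,\phi\colon f(O)\actto P) \in \mathcal{O}^{\act}_{/P}$ defines a functor $p \colon (f^{*}\mathcal{C}^{\op,\otimes})^{\act}_{/c} \to \mathcal{O}^{\act}_{/P}$ (the target makes sense since $f$, being extendable, has unique lifting of inert morphisms). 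I would then check that $p$ is a cocartesian fibration — a cocartesian lift of an active morphism $O \to O'$ is obtained by pushing $Y$ forward in $f^{*}\mathcal{C}^{\op,\otimes}$ and factoring $\psi$ through the resulting cocartesian morphism in $\mathcal{C}^{\op,\otimes}$ — so that the colimit splits as
\[ (f_{!}A)(P)(c) \simeq \colim_{(O,\phi)\,\in\,\mathcal{O}^{\act}_{/P}}\ \colim_{p^{-1}(O,\phi)} M_{A}. \]

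It then remains to identify the inner colimit with $(\phi_{!}A(O))(c)$. The description of the cocartesian morphisms of $\mathcal{C}^{\op,\otimes} \to \mathcal{P}$ identifies the fibre $p^{-1}(O,\phi)$ with the comma \icat{} $\mathcal{C}^{\op,\otimes}_{f(O)} \times_{\mathcal{C}_{P}^{\op}} (\mathcal{C}_{P}^{\op})_{/c}$ formed along the pushforward $\phi^{\op}_{!} \colon \mathcal{C}^{\op,\otimes}_{f(O)} \to \mathcal{C}_{P}^{\op}$, and under this identification $M_{A}|_{p^{-1}(O,\phi)}$ becomes the restriction of $M_{A}$ to the fibre $\mathcal{C}^{\op,\otimes}_{f(O)}$. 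Since $M_{A}$ is a monoid corresponding to $f^{*}A$, that restriction is the external product of the presheaves $A(O_{i}) \in \PSh(\mathcal{C}_{f(O_{i})})$ — precisely the presheaf whose left Kan extension along $\phi^{\op}_{!}$ is $\phi_{!}A(O)$, by the description of cocartesian morphisms in the Day convolution (the remark following \cref{def Day}). The pointwise formula for left Kan extensions then gives $\colim_{p^{-1}(O,\phi)} M_{A} \simeq (\phi_{!}A(O))(c)$, and since colimits in $\PSh(\mathcal{C}_{P})$ are computed pointwise we conclude $(f_{!}A)(P) \simeq \colim_{(O,\phi)\in\mathcal{O}^{\act}_{/P}}\phi_{!}A(O)$.

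The main obstacle is this last combinatorial reorganization: verifying that $p$ is a cocartesian fibration and that its fibres are exactly the comma \icats{} computing the pointwise left Kan extensions that define $\phi_{!}A(O)$ in the Day convolution, so that the abstract colimit over the active slice of $f^{*}\mathcal{C}^{\op,\otimes}$ becomes the stated formula. The identification $f^{*} \simeq \bar{f}^{*}$ and the extendability of $\bar{f}$ are comparatively routine, resting on \cref{monext} and the naturality of the constructions of \S\ref{sec:dayconv}.
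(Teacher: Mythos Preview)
Your proposal is correct and follows essentially the same approach as the paper: reduce to monoids via the Day-convolution equivalences, apply \cref{propn:monext} to the extendable morphism $\bar{f}$, then decompose the resulting colimit over the cocartesian fibration $(f^{*}\mathcal{C}^{\op,\otimes})^{\act}_{/X} \to \mathcal{O}^{\act}_{/P}$ and identify the fibrewise colimits with the Day-convolution pushforwards. The only difference is that the paper handles the cocartesian-fibration claim and the fibre identification by citing the proof of \cite[Proposition 9.5]{patterns}, whereas you sketch these directly; both arrive at the same place.
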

\begin{proof}
  Consider the pullback square
  \[
    \begin{tikzcd}
    f^{*}\mathcal{C}^{\op,\otimes} \arrow{r}{\bar{f}} \arrow{d} &
    \mathcal{C}^{\op,\otimes} \arrow{d} \\
    \mathcal{O} \arrow{r}{f} & \mathcal{P},
    \end{tikzcd}
    \]
    where the morphism
    $\bar{f} \colon f^{*}\mathcal{C}^{\op,\otimes} \to
    \mathcal{C}^{\op,\otimes}$ is an extendable morphism of cartesian
    patterns by \cref{monext}. From \cref{propn:monext} we
    therefore get a left adjoint
    \[ \bar{f}_{!} \colon \Mon_{f^{*}\mathcal{C}^{\op,\otimes}}(\mathcal{S})
    \to \Mon_{\mathcal{C}^{\op,\otimes}}(\mathcal{S}),\]
    given by left Kan extension; for $X \in \mathcal{C}^{\op,\otimes}$ and
    $M \in \Mon_{f^{*}\mathcal{C}^{\op,\otimes}}(\mathcal{S})$ this satisfies
    \[ \bar{f}_{!}M(X) \simeq
    \colim_{(Y, \bar{f}(Y) \to X) \in
      (f^{*}\mathcal{C}^{\op,\otimes})^{\act}_{/X}} M(Y).\]
    If $X$ lies over $P \in \mathcal{P}$, we see from the proof of
    \cite[Proposition 9.5]{patterns} that the canonical projection 
    $(f^{*}\mathcal{C}^{\op,\otimes})^{\act}_{/X} \to
    \mathcal{O}^{\act}_{/P}$ is a cocartesian fibration, whose fibre at
    $(O, f(O) \actto P)$ is
    \[(f^{*}\mathcal{C}^{\op,\otimes})_{O}
    \times_{\mathcal{C}^{\op,\otimes}_{P}} \mathcal{C}^{\op,\otimes}_{P/X}
    \simeq \mathcal{C}^{\op,\otimes}_{f(O)}
    \times_{\mathcal{C}^{\op,\otimes}_{P}}
    \mathcal{C}^{\op,\otimes}_{P/X} \simeq \mathcal{C}^{\op,\otimes}_{f(O)/X}.\]
    Thus we can rewrite the formula for $\bar{f}_{!}M(X)$ as
    \[ \bar{f}_{!}M(X) \simeq \colim_{(O, f(O) \overset{\phi}{\actto} P) \in
      \mathcal{O}^{\act}_{/P}} \colim_{(Y, \phi_{!}Y \to X) \in
      \mathcal{C}^{\op,\otimes}_{f(O)/X}} M(Y),\]
    where we are omitting notation for the equivalence
    $(f^{*}\mathcal{C}^{\otimes})_{O} \simeq
    \mathcal{C}^{\otimes}_{f(O)}$.
    Here we can identify $\colim_{(Y, \phi_{!}Y \to X) \in
      \mathcal{C}^{\op,\otimes}_{f(O)/X}} M(Y)$ with the value at $X$ of
    the left Kan extension of $M|_{\mathcal{C}^{\op,\otimes}_{f(O)}}$
    along $\phi_{!} \colon \mathcal{C}^{\op,\otimes}_{f(O)} \to
    \mathcal{C}^{\op,\otimes}_{P}$, which is the cocartesian
    pushforward in $\PSh_{\mathcal{P}}(\mathcal{C}^{\otimes})$. 
    Under the natural equivalences
    \[\Mon_{\mathcal{C}^{\op,\otimes}}(\mathcal{S}) \simeq
    \Alg_{\mathcal{P}}(\PSh_{\mathcal{P}}(\mathcal{C})^{\otimes}),
    \quad \Mon_{f^{*}\mathcal{C}^{\op,\otimes}}(\mathcal{S}) \simeq
    \Alg_{\mathcal{O}}(\PSh_{\mathcal{O}}(f^{*}\mathcal{C})^{\otimes})
    \simeq
    \Alg_{\mathcal{O}/\mathcal{P}}(\PSh_{\mathcal{P}}(\mathcal{C})^{\otimes})\]
    this formula therefore corresponds to the one above.  
  \end{proof}
  
  \begin{cor}\label{cor:f!alg}
    Suppose $\mathcal{V}$ is a presentably $\mathcal{P}$-monoidal
    \icat{}, and $f \colon \mathcal{O} \to \mathcal{P}$ is an extendable
    morphism of cartesian patterns. Then the functor
    \[ f^{*} \colon \Alg_{\mathcal{P}}(\mathcal{V}) \to
    \Alg_{\mathcal{O}/\mathcal{P}}(\mathcal{V})\]
    has a left adjoint $f_{!}$, which for $P \in \mathcal{P}^{\el}$ satisfies
    \[ (f_{!}A)(P) \simeq \colim_{(O,\phi \colon f(O) \actto P) \in
      \mathcal{O}^{\act}_{/P}} \phi_{!}A(O).\]
  \end{cor}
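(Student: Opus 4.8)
The plan is to transport the colimit formula for Day convolution, established in the preceding proposition, along the $\mathcal{P}$-monoidal localization that presents $\mathcal{V}^{\otimes}$ --- this is exactly step~(3) of the strategy in the introduction. First I would invoke \cref{cor:presOmonloc} to fix a regular cardinal $\kappa$ together with a $\mathcal{P}$-monoidal localization $L \colon \PSh_{\mathcal{P}}(\mathcal{V}^{\kappa})^{\otimes} \to \mathcal{V}^{\otimes}$ whose right adjoint $R$ (relative to $\mathcal{P}$) is fully faithful, hence lax $\mathcal{P}$-monoidal. Applying \cref{rmk:algloc} once to $f \colon \mathcal{O} \to \mathcal{P}$ and once to $\id_{\mathcal{P}}$ yields adjunctions $L_{*} \dashv R_{*}$ exhibiting $\Alg_{\mathcal{O}/\mathcal{P}}(\mathcal{V})$ and $\Alg_{\mathcal{P}}(\mathcal{V})$ as reflective subcategories of $\Alg_{\mathcal{O}/\mathcal{P}}(\PSh_{\mathcal{P}}(\mathcal{V}^{\kappa})^{\otimes})$ and $\Alg_{\mathcal{P}}(\PSh_{\mathcal{P}}(\mathcal{V}^{\kappa})^{\otimes})$, respectively; since $f^{*}$ on algebras is precomposition with $f$ while $L_{*}$ and $R_{*}$ are postcomposition with $L$ and $R$, the functor $f^{*}$ commutes with both $L_{*}$ and $R_{*}$. (One checks in passing that $R_{*}$ really does land in $\Alg_{\mathcal{O}/\mathcal{P}}$ of the Day convolution, since $R$ preserves inert morphisms, and that $\mathcal{V}^{\kappa,\otimes}$ may be taken small.)

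Next I would take the left adjoint $\widetilde{f_{!}}$ to $f^{*} \colon \Alg_{\mathcal{P}}(\PSh_{\mathcal{P}}(\mathcal{V}^{\kappa})^{\otimes}) \to \Alg_{\mathcal{O}/\mathcal{P}}(\PSh_{\mathcal{P}}(\mathcal{V}^{\kappa})^{\otimes})$ furnished by the preceding proposition, which on $P \in \mathcal{P}^{\el}$ satisfies $(\widetilde{f_{!}}B)(P) \simeq \colim_{(O,\phi)\in\mathcal{O}^{\act}_{/P}}\phi_{!}^{\PSh}B(O)$, and set $f_{!} := L_{*}\circ\widetilde{f_{!}}\circ R_{*}$. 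That $f_{!} \dashv f^{*}$ then follows from the chain of natural equivalences
\[ \Map(L_{*}\widetilde{f_{!}}R_{*}A,\, B) \simeq \Map(\widetilde{f_{!}}R_{*}A,\, R_{*}B) \simeq \Map(R_{*}A,\, f^{*}R_{*}B) \simeq \Map(R_{*}A,\, R_{*}f^{*}B) \simeq \Map(A,\, f^{*}B), \]
valid for $A \in \Alg_{\mathcal{O}/\mathcal{P}}(\mathcal{V})$ and $B \in \Alg_{\mathcal{P}}(\mathcal{V})$, using in turn $L_{*} \dashv R_{*}$, then $\widetilde{f_{!}} \dashv f^{*}$, then $f^{*}R_{*} \simeq R_{*}f^{*}$, then full faithfulness of $R_{*}$.

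Finally I would extract the colimit formula. For $P \in \mathcal{P}^{\el}$ one has $(L_{*}C)(P) = L_{P}(C(P))$, and $L_{P}$ is a left adjoint and hence preserves colimits, so
\[ (f_{!}A)(P) \simeq L_{P}\Bigl(\colim_{(O,\phi)\in\mathcal{O}^{\act}_{/P}}\phi_{!}^{\PSh}R(A(O))\Bigr) \simeq \colim_{(O,\phi)\in\mathcal{O}^{\act}_{/P}} L_{P}\,\phi_{!}^{\PSh}R(A(O)). \]
Because $L$ is $\mathcal{P}$-monoidal it carries the cocartesian morphism $R(A(O)) \to \phi_{!}^{\PSh}R(A(O))$ over $\phi \colon f(O) \actto P$ to a cocartesian morphism over $\phi$, and combined with $L\circ R \simeq \id$ on the fibres over $\mathcal{P}^{\el}$ this identifies $L_{P}\,\phi_{!}^{\PSh}R(A(O))$ with $\phi_{!}A(O)$ computed in $\mathcal{V}^{\otimes}$, giving the asserted formula.

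I expect the above machinery to make the argument essentially formal; the one point deserving real care is that the last identification must be produced as an equivalence of \emph{functors} on the indexing \icat{} $\mathcal{O}^{\act}_{/P}$ (not merely pointwise), which I would get by exhibiting it as the comparison map between the two composites in a commutative square of functors of cocartesian fibrations in which $L$ preserves cocartesian edges, so that naturality is automatic.
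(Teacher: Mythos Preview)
Your proposal is correct and follows essentially the same approach as the paper: both arguments invoke \cref{cor:presOmonloc} to present $\mathcal{V}^{\otimes}$ as a $\mathcal{P}$-monoidal localization of a Day convolution, then transport the left adjoint from the preceding proposition along this localization via $L_{*}\widetilde{f_{!}}R_{*}$ (the paper writes $i$ for your $R$), and finally extract the colimit formula using that $L$ is $\mathcal{P}$-monoidal and colimit-preserving with $LR \simeq \id$. The only cosmetic difference is that the paper phrases the construction of $f_{!}$ as ``passing to the commutative square of left adjoints'' rather than writing out the mapping-space chain you give, but these are the same manoeuvre.
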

  \begin{proof}
    Since $\mathcal{V}$ is presentably $\mathcal{P}$-monoidal, by
    \cref{cor:presOmonloc} there
    exists a small $\mathcal{P}$-monoidal \icat{} $\mathcal{C}$ and a
    $\mathcal{P}$-monoidal localization
    \[ L \colon \PSh_{\mathcal{P}}(\mathcal{C})^{\otimes} \to
    \mathcal{V}^{\otimes},\]
    with a fully faithful lax $\mathcal{P}$-monoidal right adjoint $i \colon
    \mathcal{V}^{\otimes} \to
    \PSh_{\mathcal{P}}(\mathcal{C})^{\otimes}$. From
    \cref{rmk:algloc} we then have a
    commutative square
    \[
    \begin{tikzcd}
    \Alg_{\mathcal{P}}(\mathcal{V}) \arrow{r}{f^{*}_{\mathcal{V}}} \arrow{d}{i_{*}}
    & \Alg_{\mathcal{O}/\mathcal{P}}(\mathcal{V}) \arrow{d}{i_{*}} \\
    \Alg_{\mathcal{P}}(\PSh_{\mathcal{P}}(\mathcal{C})^{\otimes}) \arrow{r}{f^{*}} 
    & \Alg_{\mathcal{O}/\mathcal{P}}(\PSh_{\mathcal{P}}(\mathcal{C})^{\otimes}), 
    \end{tikzcd}
    \]
    where the vertical functors are both fully faithful, with left
    adjoints given by $L_{*}$. It follows that we have a commutative
    square of left adjoints
    \[
    \begin{tikzcd}
    \Alg_{\mathcal{O}/\mathcal{P}}(\PSh_{\mathcal{P}}(\mathcal{C})^{\otimes})
    \arrow{r}{f_{!}} \arrow{d}{L_{*}} & 
    \Alg_{\mathcal{P}}(\PSh_{\mathcal{P}}(\mathcal{C})^{\otimes})
    \arrow{d}{L_{*}} \\
    \Alg_{\mathcal{O}/\mathcal{P}}(\mathcal{V}) \arrow{r}{f_{\mathcal{V},!}}    
    & 
    \Alg_{\mathcal{P}}(\mathcal{V}).
    \end{tikzcd}
    \]
    Since the right adjoint $i_*$ is fully faithful, the counit
    $L_{*}i_{*} \isoto \id$ is invertible, and combining this with the
    equivalence from the square we get
    \[ f_{\mathcal{V},!} \simeq f_{\mathcal{V},!}L_{*}i_{*} \simeq L_{*}f_{!}i_{*}.\]
    Since $L$ preserves colimits and cocartesian
    morphisms, this implies that 
    $f_{\mathcal{V},!}$ satisfies
    \[ f_{\mathcal{V},!}A(P) \simeq L\left(\colim_{(O,\phi \colon f(O) \actto P) \in
      \mathcal{O}^{\act}_{/P}} \phi_{!}i(A(O))\right) \simeq
    \colim_{(O,\phi \colon f(O) \actto P) \in
      \mathcal{O}^{\act}_{/P}} \phi_{!} A(O),\]
  as required.
\end{proof}

\begin{cor}\label{cor:FOmonadic}
  Suppose $\mathcal{O}$ is an extendable cartesian pattern, and
  $\mathcal{V}$ is a presentably $\mathcal{O}$-monoidal \icat{}. Then
  the restriction
  \[ U_{\mathcal{O}} \colon \Alg_{\mathcal{O}}(\mathcal{V}) \to
    \Alg_{\mathcal{O}^{\xint}/\xxO}(\mathcal{V}) \simeq
    \Fun_{/\mathcal{O}^{\el}}(\mathcal{O}^{\el},
    \mathcal{V}) \]
  has a left adjoint $F_{\mathcal{O}} \colon \Fun_{/\mathcal{O}^{\el}}(\mathcal{O}^{\el},
  \mathcal{V}) \to \Alg_{\mathcal{O}}(\mathcal{V})$, which for
  $\Phi \colon \mathcal{O}^{\el} \to \mathcal{V}$ and $E \in
  \mathcal{O}^{\el}$ is given by
  \[ F_{\mathcal{O}}\Phi(E) \simeq \colim_{(\phi \colon O \actto E) \in
      \name{Act}_{\mathcal{O}}(E)}
    \phi_{!}(\Phi(O_{1}),\ldots,\Phi(O_{n})).\]
  Moreover, the adjunction $F_{\mathcal{O}} \dashv U_{\mathcal{O}}$ is monadic.
\end{cor}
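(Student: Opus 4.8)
The plan is to read off the left adjoint $F_{\mathcal{O}}$ and its colimit formula as the special case of \cref{cor:f!alg} in which the extendable morphism is the inclusion $\iota\colon\mathcal{O}^{\xint}\hookrightarrow\mathcal{O}$, and then to prove monadicity by checking the hypotheses of the Barr--Beck--Lurie theorem \cite[Theorem 4.7.3.5]{ha}, following the pattern of \cite[\S 3.2.3]{ha}. As observed in the remark following \cref{def ext patt}, $\mathcal{O}$ being extendable is exactly the statement that $\iota$ is an extendable morphism of cartesian patterns (unique lifting of inert morphisms is tautological here). Since $\mathcal{V}$ is presentably $\mathcal{O}$-monoidal, \cref{cor:f!alg} applied to $\iota$ produces a left adjoint $\iota_{!}$ to $\iota^{*}\colon\Alg_{\mathcal{O}}(\mathcal{V})\to\Alg_{\mathcal{O}^{\xint}/\mathcal{O}}(\mathcal{V})$ with $(\iota_{!}A)(E)\simeq\colim_{(O,\phi\colon O\actto E)\in(\mathcal{O}^{\xint})^{\act}_{/E}}\phi_{!}A(O)$ for $E\in\mathcal{O}^{\el}$. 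Under the equivalence $\Alg_{\mathcal{O}^{\xint}/\mathcal{O}}(\mathcal{V})\simeq\Fun_{/\mathcal{O}^{\el}}(\mathcal{O}^{\el},\mathcal{V})$ of \cref{lem:AlgOint} the functor $\iota^{*}$ is identified with $U_{\mathcal{O}}$, so $\iota_{!}$ is the desired $F_{\mathcal{O}}$; and $(\mathcal{O}^{\xint})^{\act}_{/E}$ is the \igpd{} $\Act_{\mathcal{O}}(E)$ as recorded after \cref{def ext patt}.

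To obtain the stated formula I would then recall from \cref{OintmonRKE} and \cref{lem:AlgOint} that the $\mathcal{O}^{\xint}$-algebra $A$ corresponding to $\Phi\colon\mathcal{O}^{\el}\to\mathcal{V}$ is right Kan extended from $\mathcal{O}^{\el}$, so that for $O$ lying over $\angled{n}$ one has $A(O)\simeq(\Phi(O_{1}),\ldots,\Phi(O_{n}))$ under the equivalence $\mathcal{V}^{\otimes}_{O}\simeq\prod_{i}\mathcal{V}_{O_{i}}$. Substituting this into the colimit formula above gives $U_{\mathcal{O}}F_{\mathcal{O}}\Phi(E)\simeq\colim_{\phi\colon O\actto E\,\in\,\Act_{\mathcal{O}}(E)}\phi_{!}(\Phi(O_{1}),\ldots,\Phi(O_{n}))$, since for $E\in\mathcal{O}^{\el}$ we have $\mathcal{V}^{\otimes}_{E}\simeq\mathcal{V}_{E}$.

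For monadicity, I must check that $U_{\mathcal{O}}$ is conservative and preserves geometric realizations of $U_{\mathcal{O}}$-split simplicial objects. Conservativity: a morphism $\eta\colon A\to B$ in $\Alg_{\mathcal{O}}(\mathcal{V})$ is an equivalence iff each $\eta_{O}$ is an equivalence in $\mathcal{V}^{\otimes}_{O}$; since $A$ and $B$ send the inert maps $\rho_{i}^{O}\colon O\to O_{i}$ to cocartesian morphisms, naturality identifies $\eta_{O}$ with $(\eta_{O_{1}},\ldots,\eta_{O_{n}})$ under $\mathcal{V}^{\otimes}_{O}\simeq\prod_{i}\mathcal{V}_{O_{i}}$, and each $O_{i}$ lies in $\mathcal{O}^{\el}$, so $\eta|_{\mathcal{O}^{\el}}$ being an equivalence forces $\eta$ to be one. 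For the colimit condition I would in fact show $U_{\mathcal{O}}$ preserves all sifted colimits: because $\mathcal{V}^{\otimes}$ is compatible with small colimits, every pushforward functor of $\mathcal{V}^{\otimes}$ preserves sifted colimits (inert pushforwards are projections onto factors, and an active pushforward is built componentwise, via the inert--active factorization, from projections and active pushforwards to elementary objects, the latter preserving colimits in each variable and hence the diagonal sifted colimit); it then follows by a standard argument that the \icat{} of sections of $\mathcal{V}^{\otimes}\to\mathcal{O}$ has sifted colimits computed fiberwise, that a fiberwise sifted colimit of cocartesian lifts of a fixed inert morphism is again cocartesian, and therefore that $\Alg_{\mathcal{O}}(\mathcal{V})$ is closed under sifted colimits in $\Fun_{/\mathcal{O}}(\mathcal{O},\mathcal{V}^{\otimes})$, computed fiberwise. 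Since sifted colimits in $\Fun_{/\mathcal{O}^{\el}}(\mathcal{O}^{\el},\mathcal{V})$ are also fiberwise and $U_{\mathcal{O}}$ is essentially evaluation at the objects of $\mathcal{O}^{\el}$, it preserves sifted colimits, in particular geometric realizations; Barr--Beck--Lurie then yields monadicity.

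The main obstacle is the last step: establishing that sifted colimits of $\mathcal{O}$-algebras are computed fiberwise in $\mathcal{V}^{\otimes}$, which is the cartesian-pattern analogue of \cite[Proposition 3.2.3.1]{ha}. The two nontrivial inputs are that the active pushforwards of a presentably $\mathcal{O}$-monoidal \icat{} preserve sifted colimits and that this makes the fiberwise sifted colimit of a diagram of algebras again send inert morphisms to cocartesian morphisms; once these are in place, conservativity and the monadicity conclusion are formal. Everything else is immediate from \cref{cor:f!alg}, \cref{lem:AlgOint}, and \cref{OintmonRKE}.
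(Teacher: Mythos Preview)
Your argument for the existence of $F_{\mathcal{O}}$ and its colimit formula is exactly the paper's: apply \cref{cor:f!alg} to the inclusion $\mathcal{O}^{\xint}\hookrightarrow\mathcal{O}$ and use \cref{lem:AlgOint}.

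For monadicity you take a genuinely different route. The paper does not argue directly that sifted colimits of $\mathcal{O}$-algebras in $\mathcal{V}$ are computed fibrewise; instead it chooses an $\mathcal{O}$-monoidal localization $\PSh_{\mathcal{O}}(\mathcal{C})^{\otimes}\to\mathcal{V}^{\otimes}$ (via \cref{cor:presOmonloc}), embeds the adjunction in the cartesian square of \cref{rmk:algloc} relating $\Alg_{\mathcal{O}}(\mathcal{V})$ to $\Mon_{\mathcal{C}^{\op,\otimes}}(\mathcal{S})$, and then cites \cite[Corollary 8.2]{patterns} (or alternatively \cref{rmk:Monsifted}) to know that the monoid-side forgetful functor is monadic and creates the required split geometric realizations; since the square is cartesian, these colimits land back in $\Alg_{\mathcal{O}}(\mathcal{V})$. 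Your approach is more self-contained --- it does not reinvoke the Day-convolution machinery and instead proves the cartesian-pattern analogue of \cite[Proposition 3.2.3.1]{ha} directly --- and is correct: the key point, that the cocartesian pushforwards in a presentably $\mathcal{O}$-monoidal \icat{} preserve sifted colimits, follows exactly as you say from compatibility with colimits in each variable and the cofinality of the diagonal for sifted index categories. The paper's approach, by contrast, avoids reproving this fibrewise statement by transporting the problem to $\mathcal{S}$, where it is already established.
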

\begin{proof}
  The existence of the left adjoint follows from \cref{cor:f!alg} 
  applied to the map $\mathcal{O}^{\xint} \to \mathcal{O}$ (together
  with the equivalence of \cref{lem:AlgOint}). To see the adjunction
  is monadic we apply the monadicity theorem for \icats{},
  \cite[Theorem 4.7.3.5]{ha}. We then need to show that
  $U_{\mathcal{O}}$ detects equivalences, which is clear, and that
  $U_{\mathcal{O}}$-split simplicial objects have colimits and these
  are preserved by $U_{\mathcal{O}}$. Suppose therefore that we
  have a $U_{\mathcal{O}}$-split simplicial diagram $\phi \colon \Dop
  \to \Alg_{\mathcal{O}}(\mathcal{V})$. Since $\mathcal{V}^{\otimes}$
  is presentably $\mathcal{O}$-monoidal, it is an
  $\mathcal{O}$-monoidal localization of a Day convolution
  $\mathcal{P}_{\mathcal{O}}(\mathcal{C})^{\otimes}$, which gives a commutative
  diagram
  \[
    \begin{tikzcd}
      \Alg_{\mathcal{O}}(\mathcal{V}) \arrow[hookrightarrow]{r} \arrow{d}{U_{\mathcal{O}}}&
      \Mon_{\mathcal{C}^{\op,\otimes}}(\mathcal{S}) \arrow{d}{U_{\mathcal{O}}'}\\
      \Fun_{/\mathcal{O}^{\el}}(\mathcal{O}^{\el}, \mathcal{V})
      \arrow[hookrightarrow]{r} & \Fun(\mathcal{C}^{\op,\otimes}_{/\el}, \mathcal{S}).
    \end{tikzcd}
  \]
  Here the functor $U_{\mathcal{O}}'$ is a monadic right adjoint by
  \cite[Corollary 8.2]{patterns} and so the colimit of the
  $U'_{\mathcal{O}}$-split simplicial diagram that is the image of
  $\phi$ exists and is preserved by
  $U'_{\mathcal{O}}$. (Alternatively, this follows from the
  description of sifted colimits of monoids in \cref{rmk:Monsifted}.)
  But by \cref{rmk:algloc} the commutative square above is cartesian,
  hence the colimit in $\Mon_{\mathcal{C}^{\op,\otimes}}(\mathcal{S})$
  actually lies in the full subcategory
  $\Alg_{\mathcal{O}}(\mathcal{V})$. Thus the $U_{\mathcal{O}}$-split
  simplicial diagram $\phi$ has a colimit and this is preserved by
  $U_{\mathcal{O}}$, as required.
\end{proof}

\begin{remark}
  We can remove the presentability condition in \cref{cor:FOmonadic}:
  since a small $\mathcal{O}$-monoidal \icat{} $\mathcal{C}^{\otimes}$
  is  always a full
  $\mathcal{O}$-monoidal subcategory of the presentably
  $\mathcal{O}$-monoidal \icat{}
  $\PSh_{\mathcal{O}}(\mathcal{C})^{\otimes}$, we can embed any 
  large $\mathcal{O}$-monoidal \icat{}
  $\mathcal{V}^{\otimes}$ in a presentably $\mathcal{O}$-monoidal
  \icat{} in a larger universe. Moreover, we can do so in a way that
  preserves small colimits, in which case we see that the left adjoint
  from \cref{cor:FOmonadic} restricts to the full subcategory of
  $\mathcal{O}$-algebras in $\mathcal{V}^{\otimes}$ provided the
  $\mathcal{O}$-monoidal structure is compatible with colimits of
  shape $\Act_{\mathcal{O}}(E)$ for $E \in \mathcal{O}^{\el}$.
\end{remark}

\begin{remark}
  If $\mathcal{O}$ is an extendable cartesian pattern, then the
  formula for the free $\mathcal{O}$-monoid monad on
  $\Fun(\mathcal{O}^{\el}, \mathcal{S})$ shows that this is an
  \emph{analytic} monad in the sense of \cite{AnalMnd}, and hence
  corresponds by the results of that paper to an \iopd{} (in the sense
  of a not necessarily complete dendroidal Segal space).
  We expect that this observation can be strengthened:
  there should be a canonical morphism
  $\mathcal{O} \to \mathcal{O}_{\name{opd}}$ of cartesian patterns
  where $\mathcal{O}_{\name{opd}}$ is a symmetric \iopd{}, such that
  $\Mon_{\mathcal{O}}(\mathcal{S}) \isoto
  \Mon_{\mathcal{O}_{\name{opd}}}(\mathcal{S})$ and
  $\mathcal{O}^{\el} \to \mathcal{O}_{\name{opd}}^{\el}$ is an
  epimorphism (\ie{} is surjective on $\pi_{0}$). We hope to address
  this question elsewhere.
\end{remark}

\section{Examples of Extendability}\label{sec:extex}
In this section we will give some examples of extendable patterns and
morphisms, and spell out what our results from the previous
section amount to in these examples.

\begin{example}
  The pattern $\xF_{*}^{\flat}$ is extendable: The category
  $\xF_{*}^{\act}$ can be identified with the category $\xF$ of
  (unpointed) finite sets, so the desired equivalence
  \[ \Act(\angled{n}) \isoto \to \prod_{i=1}^{n}
    \Act(\angled{1}) \]
  corresponds to underlying equivalence of groupoids from the (``straightening'') equivalence
  \[ \xF_{/\mathbf{n}} \to \prod_{i=1}^{n} \xF \]
  between sets over $\mathbf{n}:=\{1, \ldots, n\}$ and families of sets indexed by
  $\mathbf{n}$, given by taking fibres at $i \in \mathbf{n}$.
  The groupoid $\Act_{\xF_{*}}(\angled{1})$ is equivalent to the
  groupoid $\xF^{\simeq}$ of finite sets and bijections, \ie{}
  $\coprod_{n=0}^{\infty} B \Sigma_{n}$, and we recover the expected
  formula for free commutative algebras in a presentably symmetric
  monoidal \icat{} $\mathcal{V}^{\otimes}$: 
  \[ U_{\xF_{*}}F_{\xF_{*}}(V)  \simeq
    \colim_{\phi \colon \angled{n} \to \angled{1} \in
      \Act_{\xF_{*}}(\angled{1})} \phi_{!}(V,\ldots,V) 
    \simeq \coprod_{n=0}^{\infty} V^{\otimes n}_{h \Sigma_{n}}.\]
\end{example}

\begin{example}\label{ex:symopdmorext}
  By \cref{ex:opdext}, every morphism $f \colon \mathcal{O} \to
  \mathcal{O}'$ of symmetric
  \iopds{} is extendable. If $\mathcal{V}^{\otimes}$ is a presentably
  $\mathcal{O}'$-monoidal \icat{}, we recover the formula for the operadic
  left Kan extension $f_{!} \colon \Alg_{\mathcal{O}/\mathcal{O}'}(\mathcal{V}) \to
  \Alg_{\mathcal{O}'}(\mathcal{V})$ from \cite{ha}: for $X \in
  \mathcal{O}'_{\angled{1}}$ and $A \in
  \Alg_{\mathcal{O}/\mathcal{O}'}(\mathcal{V})$, we have
  \[ f_{!}A(X) \simeq \colim_{(O,\phi) \in 
      \mathcal{O}^{\act}_{/X}} f(\phi)_{!}A(O). \]
  In particular, if $\mathcal{V}^{\otimes}$ is a symmetric monoidal
  \icat{}, then we have
  \[ f_{!}A(X) \simeq \colim_{(O,\phi) \in 
      \mathcal{O}^{\act}_{/X}} |\phi|_{!}A(O)
    \simeq
    \colim_{(O,\phi) \in
      \mathcal{O}^{\act}_{/X}} \bigotimes_{i} A(O_{i}).\]
\end{example}

\begin{example}\label{ex:symopdext}
  As a special case of the previous example, every symmetric \iopd{}
  $\mathcal{O}$ is an extendable cartesian pattern, and our results
  recover the expected formula for free $\mathcal{O}$-algebras in a
  presentably symmetric monoidal \icat{} $\mathcal{V}^{\otimes}$: the
  forgetful functor $U_{\mathcal{O}} \colon \Alg_{\mathcal{O}}(\mathcal{V}) \to
  \Fun(\mathcal{O}^{\el}, \mathcal{V})$ has a left adjoint
  $F_{\mathcal{O}}$, which for $E \in \mathcal{O}^{\el}$ satisfies
  \[ F_{\mathcal{O}}\Phi(E) \simeq \colim_{O \xto{\phi} E \in
      \Act_{\mathcal{O}}(E)} \bigotimes_{i}\Phi(O_{i}).\]
  If $\mathcal{O}^{\el} := \mathcal{O}^{\simeq}_{\angled{1}} \simeq *$
  we can define $\mathcal{O}(n)$ to be the fibre of
  $\Act_{\mathcal{O}}(*) \to \xF^{\simeq}$ at the point of
  $B\Sigma_{n}$ (with its canonical $\Sigma_{n}$-action), and then
  rewrite the formula in the more familiar form
  \[ F_{\mathcal{O}}V \simeq \coprod_{n=0}^{\infty}
    \mathcal{O}(n) \otimes_{\Sigma_{n}} V^{\otimes n}.\]
\end{example}

\begin{remark}
  The formula in the previous example does not agree with that given
  in \cite[\S 3.1.3]{ha}. This is because \cite{ha} uses the term
  ``free algebras'' in a non-standard way: instead of considering the
  operadic left Kan extension to $\mathcal{O}$ from the \iopd{}
  $\mathcal{O}^{\xint}$ containing only the inert morphisms in
  $\mathcal{O}$, Lurie considers the extension from the \iopd{}
  $\mathcal{O} \times_{\xF_{*}} \xF_{*}^{\xint}$ containing those
  morphisms that map to inert morphisms in $\xF_{*}$ (but are not
  necessarily cocartesian). The difference is that
  $\mathcal{O} \times_{\xF_{*}} \xF_{*}^{\xint}$ remembers \emph{all} the
  unary operations in $\mathcal{O}$, while $\mathcal{O}^{\xint}$
  remembers only the \emph{invertible} ones. 
\end{remark}

\begin{example}\label{ex:gensymopdext}
  Suppose $f \colon \mathcal{O} \to \mathcal{P}$ is a morphism of
  \emph{generalized} symmetric \iopds{} (in the sense of \cite[\S
  2.3.2]{ha}, or equivalently weak Segal fibrations over
  $\xF_{*}^{\natural}$ in the terminology of \cite{patterns}). This
  certainly has unique lifting of inert morphisms, and so is
  extendable as a morphism of cartesian patterns \IFF{} for every
  $P \in \mathcal{P}$ over $\angled{n}$, the functor
  \[ \mathcal{O}^{\act}_{/P} \to \prod_{i=1}^{n}
    \mathcal{O}^{\act}_{/P_{i}} \]
  is cofinal. In particular, a generalized \iopd{} $\mathcal{O}$ is
  extendable \IFF{}
  \[ \Act_{\mathcal{O}}(O) \to \prod_{i=1}^{n}
    \Act_{\mathcal{O}}(O_{i}) \]
   is an equivalence for $O \in \mathcal{O}$ over $\angled{n}$. By
   \cite[Proposition 9.15]{patterns}, we do have an equivalence
   between $\Act_{\mathcal{O}}(O)$ and the iterated fibre product
   \[ \Act_{\mathcal{O}}(O) \to \Act_{\mathcal{O}}(O_{1})
     \times_{\Act_{\mathcal{O}}(\sigma_{!}O)} \cdots
     \times_{\Act_{\mathcal{O}}(\sigma_{!}O)}
     \Act_{\mathcal{O}}(O_{n}),\]
   where $\sigma$ denotes the unique map $\angled{n} \to
   \angled{0}$. Since the only active map to $\angled{0}$ in $\xF_{*}$
   is the identity, for $X \in \mathcal{O}_{\angled{0}}$ the \igpd{}
   $\Act_{\mathcal{O}}(X)$ is equivalent to
   $\mathcal{O}_{\angled{0}/X}^{\simeq}$. If
   $\mathcal{O}_{\angled{0}}$ is an \igpd{}, then
   $\Act_{\mathcal{O}}(X)$ is therefore contractible for all $X \in
   \mathcal{O}_{\angled{0}}$. This shows that a generalized symmetric
   \iopd{} $\mathcal{O}$ such that $\mathcal{O}_{\angled{0}}$ is an
   \igpd{} is always extendable. More generally, if $f \colon
   \mathcal{O} \to \mathcal{P}$ is a morphism of generalized \iopds{}
   such that $\mathcal{O}_{\angled{0}}$ and $\mathcal{P}_{\angled{0}}$
   are \igpds{} and $f_{\angled{0}} \colon \mathcal{O}_{\angled{0}}
   \to \mathcal{P}_{\angled{0}}$ is an equivalence, then $f$ is extendable,
   since we have
   \[ \mathcal{O}^{\act}_{0} \times_{\mathcal{P}^{\act}_{0}}
     \mathcal{P}^{\act}_{0/P} \simeq *\]
   for every $P \in \mathcal{P}_{0}$. (Note, however, that a more general
   morphism between generalized \iopds{} whose fibres at
     $\angled{0}$ are \igpds{} may still fail to be extendable.)
\end{example}

\begin{example}\label{ex:Dopext}
  The pattern $\simp^{\op,\flat}$ is extendable:
  We can identify the category $\simp^{\op,\act}$ with the
  category $\mathbb{O}$ of finite ordered sets; then the desired
  equivalence
  \[ \Act([n]) \isoto  \to \prod_{i=1}^{n} \Act([1]) \]
  becomes underlying equivalence of groupoids arising from the obvious equivalence
  \[ \mathbb{O}_{/\mathbf{n}} \to \prod_{i=1}^{n}
    \mathbb{O}_{/\mathbf{1}} \]
  that takes an ordered set over $\mathbf{n}$ to its fibres at the
  points of $\mathbf{n}$. Since every object of $\Dop$ has a
  unique active map to $[1]$, the groupoid $\Act_{\Dop}([1])$ is
  isomorphic to the set $\{0,1,\ldots\}$. If $\mathcal{V}^{\otimes}$
  is a presentably $\Dop$-monoidal \icat{} we get the expected formula
  for free associative algebras:
  \[ T_{\Dop}(V) \simeq \colim_{\phi \colon [n] \to [1] \in
      \Act_{\Dop}([1])} \phi_{!}(V,\ldots,V) \simeq
    \coprod_{n=0}^{\infty} V^{\otimes n}. \]
  We also get analogues of
  \cref{ex:symopdmorext,ex:symopdext,ex:gensymopdext}:
  \begin{itemize}
  \item every morphism of non-symmetric \iopds{} is extendable,
  \item every non-symmetric \iopd{} is extendable,
  \item every generalized non-symmetric \iopd{} whose fibre at $[0]$
    is an \igpd{} is extendable,
  \item every morphism of generalized non-symmetric \iopds{} whose
    fibres at $[0]$ are \igpds{} and whose restriction to these is an equivalence, is extendable.\footnote{The existence
      of operadic left Kan extensions in this case was used in
      \cite{enriched}.}
  \end{itemize}
\end{example}

\section{Morita Equivalences}\label{sec:Morita}
In this section we use our results on extendable cartesian patterns to
give a condition for a morphism of cartesian patterns to induce
equivalences on \icats{} of algebras, \ie{} to be a \emph{Morita
  equivalence} in the following sense:

\begin{defn}
  We say that a morphism of cartesian patterns $f \colon \mathcal{O}
  \to \mathcal{P}$ is a \emph{Morita equivalence} if for every
  $\mathcal{P}$-monoidal \icat{} $\mathcal{V}^{\otimes}$ the functor
  \[ f^{*} \colon \Alg_{\mathcal{P}}(\mathcal{V}) \to
    \Alg_{\mathcal{O}/\mathcal{P}}(\mathcal{V}), \]
  given by composition with $f$, is an equivalence.
\end{defn}
\begin{remark}
  If $f$ is a Morita equivalence, then as a special case (taking
  $\mathcal{V}^{\otimes}$ to be $\CatI^{\times}$) we have that pullback
  along $f$ gives an equivalence between $\mathcal{P}$-monoidal and
  $\mathcal{O}$-monoidal \icats{}.
\end{remark}

Our discussion of free algebras leads to a checkable criterion for a
morphism of extendable cartesian patterns to be a Morita equivalence:
\begin{propn}\label{propn:Morita}
  Suppose $\mathcal{O}$ and $\mathcal{P}$ are extendable cartesian
  patterns and $f \colon \mathcal{O} \to \mathcal{P}$ is a morphism of
  cartesian patterns such that
  \begin{enumerate}[(1)]
  \item $f^{\el} \colon \mathcal{O}^{\el} \to \mathcal{P}^{\el}$ is an
    equivalence of \igpds,
  \item for every $E \in \mathcal{O}^{\el}$ the functor
    \[ \Act_{\mathcal{O}}(E) \to \Act_{\mathcal{P}}(f(E)), \]
    induced by $f$,
    is an equivalence of \igpds{}.
  \end{enumerate}
  Then $f$ is a Morita equivalence.
\end{propn}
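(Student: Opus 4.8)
The plan is to deduce the statement from the explicit formula for free algebras in \cref{cor:FOmonadic} together with the monadicity theorem, after first reducing to the presentable case. First I would dispose of a general $\mathcal{P}$-monoidal \icat{} $\mathcal{V}^{\otimes}$: passing to a larger universe in which $\mathcal{V}^{\otimes}$ is small, the Yoneda embedding $\iota \colon \mathcal{V}^{\otimes} \hookrightarrow \PSh_{\mathcal{P}}(\mathcal{V})^{\otimes}$ of \cref{prop:Day} is a fully faithful $\mathcal{P}$-monoidal functor (fibrewise a product of Yoneda embeddings) whose target is a Day convolution, hence presentably $\mathcal{P}$-monoidal. By the $2$-functoriality of $\Alg_{\blank/\mathcal{P}}$ (\cref{Alg2fun}) it induces a commutative square of fully faithful functors $\iota_{*}$ over $f^{*}$. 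The essential image of $\iota_{*}$ on $\Alg_{\mathcal{P}}(\mathcal{V})$ consists of the algebras $A$ with $A(E) \in \mathcal{V}_{E}$ for every $E \in \mathcal{P}^{\el}$, and on $\Alg_{\mathcal{O}/\mathcal{P}}(\mathcal{V})$ of the algebras $A$ with $A(E) \in \mathcal{V}_{f(E)}$ for every $E \in \mathcal{O}^{\el}$; since $f^{\el}$ is essentially surjective by hypothesis~(1), these two conditions correspond to one another along $f^{*}$. Thus if $f^{*}$ is an equivalence for $\PSh_{\mathcal{P}}(\mathcal{V})^{\otimes}$ it is one for $\mathcal{V}^{\otimes}$, and we may assume $\mathcal{V}^{\otimes}$ is presentably $\mathcal{P}$-monoidal.

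Under this assumption $f^{*}\mathcal{V}^{\otimes}$ is presentably $\mathcal{O}$-monoidal: its fibres over $E \in \mathcal{O}^{\el}$ are the presentable \icats{} $\mathcal{V}_{f(E)}$, and for an active $\phi \colon O \actto E$ in $\mathcal{O}$ with $E$ elementary the functor $\phi_{!}^{f^{*}\mathcal{V}} = f(\phi)_{!}^{\mathcal{V}}$ preserves colimits in each variable, because $f(\phi) \colon f(O) \actto f(E)$ is active with $f(E) \in \mathcal{P}^{\el}$ and $\mathcal{V}^{\otimes}$ is compatible with small colimits. Since $\mathcal{O}$ and $\mathcal{P}$ are extendable, applying \cref{cor:FOmonadic} to the inclusions $\mathcal{O}^{\xint} \hookrightarrow \mathcal{O}$ and $\mathcal{P}^{\xint} \hookrightarrow \mathcal{P}$ (and using the identifications of \cref{lem:AlgOint}) yields monadic adjunctions $F_{\mathcal{P}} \dashv U_{\mathcal{P}}$ and $F_{\mathcal{O}} \dashv U_{\mathcal{O}}$ fitting into a commutative square
\[
  \begin{tikzcd}
    \Alg_{\mathcal{P}}(\mathcal{V}) \arrow{r}{f^{*}} \arrow{d}{U_{\mathcal{P}}} &
    \Alg_{\mathcal{O}/\mathcal{P}}(\mathcal{V}) \arrow{d}{U_{\mathcal{O}}} \\
    \Fun_{/\mathcal{P}^{\el}}(\mathcal{P}^{\el}, \mathcal{V}) \arrow{r}{(f^{\el})^{*}} &
    \Fun_{/\mathcal{O}^{\el}}(\mathcal{O}^{\el}, (f^{\el})^{*}\mathcal{V}),
  \end{tikzcd}
\]
where the bottom arrow is an equivalence by hypothesis~(1). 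By the monadicity theorem (which identifies each side with the \icat{} of modules over the corresponding free-algebra monad), $f^{*}$ is then an equivalence provided the canonical natural transformation $F_{\mathcal{O}} \circ (f^{\el})^{*} \to f^{*} \circ F_{\mathcal{P}}$ is an equivalence; in that case $f^{*}$ is exhibited as the equivalence of module \icats{} induced by an equivalence of monads lying over the equivalence $(f^{\el})^{*}$.

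To check this comparison I would use that $U_{\mathcal{O}}$ is conservative, so it suffices that $U_{\mathcal{O}}F_{\mathcal{O}}(f^{\el})^{*} \to (f^{\el})^{*}U_{\mathcal{P}}F_{\mathcal{P}}$ is an equivalence. For a section $\Phi \colon \mathcal{P}^{\el} \to \mathcal{V}$ and $E \in \mathcal{O}^{\el}$, the colimit formula of \cref{cor:FOmonadic} together with the identities $\phi_{!}^{f^{*}\mathcal{V}} = f(\phi)_{!}^{\mathcal{V}}$ and $f(O)_{i} = f(O_{i})$ identifies the value of the left-hand side at $E$ with $\colim_{(\phi \colon O \actto E)\, \in\, \Act_{\mathcal{O}}(E)} f(\phi)_{!}\bigl(\Phi(f(O_{1})), \ldots, \Phi(f(O_{n}))\bigr)$ and the value of the right-hand side with $\colim_{(\psi \colon O' \actto f(E))\, \in\, \Act_{\mathcal{P}}(f(E))} \psi_{!}\bigl(\Phi(O'_{1}), \ldots, \Phi(O'_{n})\bigr)$, the comparison map being the map of colimits induced by the functor $\Act_{\mathcal{O}}(E) \to \Act_{\mathcal{P}}(f(E))$, $\phi \mapsto f(\phi)$. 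This functor is an equivalence of \igpds{} by hypothesis~(2), so the comparison is an equivalence and $f^{*}$ is an equivalence.

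The hard part will be the last step: one must verify that the canonical comparison between the two free-algebra functors — a priori assembled from the units and counits of the two monadic adjunctions — really coincides, after restriction to elementary objects, with the reindexing of colimits along $\Act_{\mathcal{O}}(E) \to \Act_{\mathcal{P}}(f(E))$, rather than merely that its source and target are abstractly equivalent. Keeping the various identifications consistent with one another and with the square above — those of $\Alg_{\mathcal{O}/\mathcal{P}}(\mathcal{V}) \simeq \Alg_{\mathcal{O}}(f^{*}\mathcal{V})$, of $\Alg_{\mathcal{O}^{\xint}/\mathcal{O}}(f^{*}\mathcal{V}) \simeq \Fun_{/\mathcal{O}^{\el}}(\mathcal{O}^{\el}, (f^{\el})^{*}\mathcal{V})$, and of the monad action built from the adjunction — is the routine but genuinely load-bearing bookkeeping.
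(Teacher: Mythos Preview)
Your proposal is correct and follows essentially the same approach as the paper: reduce to the presentably $\mathcal{P}$-monoidal case, set up the commutative square of monadic adjunctions $U_{\mathcal{P}}, U_{\mathcal{O}}$ over the equivalence $(f^{\el})^{*}$, and then use the colimit formula for free algebras to identify the canonical comparison $F_{\mathcal{O}}(f^{\el})^{*} \to f^{*}F_{\mathcal{P}}$ with the reindexing along $\Act_{\mathcal{O}}(E) \to \Act_{\mathcal{P}}(f(E))$. The only cosmetic difference is that the paper treats the presentable case first and then invokes the embedding into a presentably monoidal \icat{} at the end, whereas you do the reduction up front; the paper also cites \cite[Corollary 4.7.3.16]{ha} for the monadic comparison step, which packages exactly the ``hard part'' you flag.
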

\begin{proof}
  We first consider the case where $\mathcal{V}^{\otimes}$ is
  presentably $\mathcal{P}$-monoidal. Then we have a commutative
  diagram
  \[
    \begin{tikzcd}
      \Alg_{\mathcal{P}}(\mathcal{V}) \arrow{r}{f^{*}} \arrow{d}{U_{\mathcal{P}}} &
      \Alg_{\mathcal{O}/\mathcal{P}}(\mathcal{V}) \arrow{d}{U_{\mathcal{O}}} \\
      \Fun_{/\mathcal{P}^{\el}}(\mathcal{P}^{\el}, \mathcal{V})
      \arrow{r}{f^{\el,*}} & \Fun_{/\mathcal{O}^{\el}}(\mathcal{O}^{\el}, f^{*}\mathcal{V})
    \end{tikzcd}
  \]
  where the vertical maps are monadic right adjoints by
  \cref{cor:FOmonadic} and the bottom
  horizontal map is an equivalence by assumption (1). Using
  \cite[Corollary 4.7.3.16]{ha} we see that $f^{*}$ is an equivalence
  \IFF{} for every $\xi \in
  \Fun_{/\mathcal{P}^{\el}}(\mathcal{P}^{\el}, \mathcal{V})$ the
  natural map
  \[ F_{\mathcal{O}}f^{\el,*}\xi \to f^{*}F_{\mathcal{P}}\xi \]
  is an equivalence. Since $U_{\mathcal{P}}$ detects equivalences, it
  suffices to show that the induced map
  \[ (F_{\mathcal{O}}f^{\el,*}\xi)(E) \to  (F_{\mathcal{P}}\xi)(f(E)) \]
  is an equivalence for all $E \in \mathcal{P}^{\el}$. Since
  $\mathcal{O}$ and $\mathcal{P}$ are extendable we have colimit
  formulas for $F_{\mathcal{O}}$ and $F_{\mathcal{P}}$, which identify
  this map with the map
  \[ \colim_{\alpha \colon O \to E \in \Act_{\mathcal{O}}(E)}
    \alpha_{!}\xi(f(O)) \to \colim_{\beta \colon P \to f(E) \in
      \Act_{\mathcal{P}}(f(E))} \beta_{!}\xi(P), \]
  which is the natural map of colimits arising from the morphism
  $\Act_{\mathcal{O}}(E) \to
  \Act_{\mathcal{P}}(f(E))$ induced by $f$. This is an equivalence by
  assumption (2). Thus $f^{*}$ is an equivalence for every presentably
  $\mathcal{P}$-monoidal \icat{}. Since we can embed any
  $\mathcal{P}$-monoidal \icat{} fully faithfully in a presentably
  $\mathcal{P}$-monoidal one (possibly after passing to a larger
  universe) this completes the proof.
\end{proof}

\begin{remark}
  If $f \colon \mathcal{O} \to \mathcal{P}$ is a morphism between
  extendable cartesian patterns such that $f^{\el} \colon
  \mathcal{O}^{\el} \to \mathcal{P}^{\el}$ is an equivalence, then
  this proof in the case of $\mathcal{S}$ shows that condition (2) is
  also \emph{necessary} for $f$ to be a Morita equivalence.
\end{remark}

In some cases this criterion can be used to identify the
\iopd{} corresponding to a cartesian pattern without using the highly
technical machinery of \emph{approximations} from \cite[\S 2.3.3]{ha},
as we will now demonstrate in some simple examples:
\begin{example}[Associative algebras]
  Let $\Ass \to \xF_{*}$ denote the (symmetric) associative
  \iopd{}. As in \cite[Remark 4.1.1.4]{ha} we can think of this as a
  category whose objects are the pointed finite sets
  $\angled{n} \in \xF_{*}$, with a morphism
  $\angled{n} \to \angled{m}$ given by a morphism
  $\phi \colon \angled{n} \to \angled{m}$ in $\xF_{*}$ together with
  linear orderings $\leq_{i}$ of the preimages $\phi^{-1}(i)$,
  $1 \leq i \leq m$. The composite of
  $(\phi, \leq_{i}) \colon \angled{n} \to \angled{m}$ and
  $(\psi, \leq'_{j}) \colon \angled{m} \to \angled{k}$ is given by the
  composite $\psi \phi$ in $\xF_{*}$ with the ordering $\leq''_{t}$ of
  $(\psi \phi)^{-1}(t)$ given by
  \[ i \leq''_{t} i' \iff \phi(i) \leq'_{t} \phi(i') \text{ and } i
    \leq_{s} i' \text{ if } s = \phi(i) =\phi(i').\]
  There is a functor $\mathrm{cut} \colon \Dop \to \Ass$ that
  takes $[n] \in \Dop$ to $\angled{n}$ and a morphism $\phi \colon [m]
  \to [n]$ in $\simp$ to the morphism $\angled{n} \to \angled{m}$
  given by
  \[ i \mapsto
    \begin{cases}
      j,  & \phi(j-1) < i \leq \phi(j), \\
      0, & \text{if no such $j$ exists}
    \end{cases}
  \]
  with the linear ordering of $\mathrm{cut}(\phi)^{-1}(j)$ that given
  by identifying this with $\{i : \phi(j-1) < i \leq \phi(j) \}$. It
  is easy to see that $\mathrm{cut} \colon \Dop \to \Ass$ is a
  morphism of cartesian patterns, and we claim that it is a Morita
  equivalence: Both patterns are extendable by
  \cref{ex:symopdext,ex:Dopext}, with
  $\simp^{\op,\el} \simeq \Ass^{\el} \simeq *$. Moreover,
  $\Act_{\Dop}([1])$ is the discrete set
  $\{[n] \to [1] \colon n = 0,1,\ldots\}$ while
  $\Act_{\Ass}(\angled{1})$ can be identified with the disjoint union
  over $n$ of the contractible groupoid of linear orderings of
  $\{1,\ldots,n\}$. The conditions of \cref{propn:Morita} therefore
  hold, and so we get for any ($\Ass$-)monoidal \icat{}
  $\mathcal{V}^{\otimes}$ an equivalence
  \[ \Alg_{\Dop/\Ass}(\mathcal{V}) \isoto
    \Alg_{\Ass}(\mathcal{V}).\]
\end{example}

\begin{example}[Bimodules]
  Let $\simp_{/[1]}^{\op,\flat}$ denote the category $\Dop_{/[1]} :=
  (\simp_{/[1]})^{\op}$ with the inert/active factorization system
  lifted from $\Dop$ (along the left fibration $\Dop_{/[1]} \to \Dop$)
  and the three maps $[1] \to [1]$ as elementary objects. We can think
  of a morphism $[n] \to [1]$ as a sequence $(i_{0},\ldots,i_{n})$
  with $0 \leq i_{0} \leq \cdots \leq i_{n} \leq 1$; then the
  elementary objects are $(0,0)$, $(0,1)$, and $(1,1)$. We also let
  $\Bimod \to \xF_{*}$ denote the (symmetric) bimodule operad (whose
  algebras are given by a pair of associative algebras and a bimodule
  between them). This can be described (cf.~\cite[Notation
  4.3.1.5]{ha}) as a category where
  \begin{itemize}
  \item objects are lists $(\angled{n},
  (a_{1},b_{1}),\ldots,(a_{n},b_{n}))$ where $0 \leq a_{i} \leq b_{i}
  \leq 1$,
\item  a morphism $(\angled{n},
  (a_{1},b_{1}),\ldots,(a_{n},b_{n})) \to (\angled{m},
  (a'_{1},b'_{1}),\ldots,(a'_{m},b'_{m}))$ is given by a morphism
  $(\phi, \leq_{i}) \colon \angled{n} \to \angled{m}$ in $\Ass$ such
  that for $j = 1,\ldots,m$, if $\phi^{-1}(j) = \{i_{1}<_{j} i_{2}
  <_{j} \cdots <_{j} i_{k})$, then
  \[ a'_{j} = a_{i_{1}} \leq b_{i_{1}} = a_{i_{2}} \leq \cdots \leq
    a_{i_{k}} \leq b_{i_{k}} = b'_{j}.\]
  \end{itemize}
  We then define a functor $\simp_{/[1]}^{\op} \xto{\mathrm{cut}'}
  \Bimod$ by
  \begin{itemize}
  \item $\mathrm{cut}'(i_{0},\ldots,i_{n}) = (\angled{n},
    (i_{0},i_{1}), \ldots, (i_{n-1},i_{n})),$
  \item for a morphism $\phi \colon (i_{0},\ldots,i_{n}) \to
    (j_{0},\ldots,j_{m})$, which is given by $\phi \colon [m] \to [n]$ in
    $\simp$ such that $j_{s} = i_{\phi(s)}$, we set
    $\mathrm{cut}'(\phi) = \mathrm{cut}(\phi)$, which satisfies the
    required condition.
  \end{itemize}
  The functor $\mathrm{cut}'$ then fits in a commutative square
  \[
    \begin{tikzcd}
      \simp_{/[1]}^{\op} \arrow{r}{\mathrm{cut}'} \arrow{d} &  \Bimod
      \arrow{d} \\
      \Dop \arrow{r}{\mathrm{cut}} & \Ass,
    \end{tikzcd}
  \]
  and is a morphism of cartesian patterns (since the factorization
  systems are lifted from those on $\Dop$ and $\Ass$).
  The pattern $\simp_{/[1]}^{\op,\flat}$ is extendable, \eg{} by the
  non-symmetric analogue of \cref{ex:gensymopdext}, while $\Bimod$ is
  extendable by \cref{ex:symopdext}. We have
  $\simp_{/[1]}^{\op,\el} = \{(0,0),(0,1),(1,1)\}$ which is isomorphic
  to
  $\Bimod^{\el} = \{(\angled{1}, (0,0)), (\angled{1}, (0,1)),
  (\angled{1}, (1,1))$. The \igpds{} $\Act_{\Dop_{/[1]}}((0,0))$ and
  $\Act_{\Dop_{/[1]}}((1,1))$ are discrete sets isomorphic to
  $\mathbb{N}$ (with $n$ corresponding to the unique active morphisms
  $(0,\ldots,0) \actto (0,0)$ and $(1,\ldots,1) \actto (1,1)$, respectively,
  with the source lying over $[n]$), while $\Act_{\Dop_{/[1]}}((0,1))$
  is isomorphic to $\mathbb{N} \times \mathbb{N}$ (with $(n,m)$
  corresponding to the unique active map
  $(0,\ldots,0,1,\ldots,1) \to (0,1)$ with $(n+1)$ 0's and $(m+1)$
  1's). On the other hand, the \igpd{}
  $\Act_{\Bimod}((\angled{1}, (i,j))$ we can describe as a coproduct of contractible
  groupoids indexed by the set $\Act_{\Dop_{/[1]}}(i,j))$. Hence
  \cref{propn:Morita} applies, and so we get for any $\Bimod$-monoidal \icat{}
  $\mathcal{V}^{\otimes}$ an equivalence
  \[ \Alg_{\Dop_{/[1]}/\Bimod}(\mathcal{V}) \isoto
    \Alg_{\Bimod}(\mathcal{V}).\]
\end{example}

\begin{example}[Modules over commutative algebras]
  Let $\xF_{*,\angled{1}/}^{\flat}$ denote the slice category
  $\xF_{*,\angled{1}/}$ with the inert/active factorization system
  lifted from $\xF_{*}$ (along the left fibration $\xF_{*,\angled{1}/}
  \to \xF_{*}$) with the two maps $\angled{1} \to \angled{1}$ as
  elementary objects; then $\xF_{*,\angled{1}/}^{\flat}$ is a
  cartesian pattern. We can also think of the objects as pairs
  $(\angled{n}, i)$ with $i \in \angled{n}$, with a morphism
  $(\angled{n},i) \to (\angled{m},j)$ given by a morphism $\phi \colon
  \angled{n} \to \angled{m}$ in $\xF_{*}$ such that $\phi(i) = j$.
  The cartesian pattern $\xF_{*,\angled{1}/}^{\flat}$ is extendable:
  The \igpd{} $\Act_{\xF_{*,\angled{1}/}}(\angled{n},i)$ we can
  describe as the groupoid of pairs $(\phi \colon \angled{m} \to
  \angled{n}, j \in \phi^{-1}(i))$ with $\phi$ active, and so in the
  commutative square
  \[
    \begin{tikzcd}
      \Act_{\xF_{*,\angled{1}/}}(\angled{n},i) \arrow{r} \arrow{d} &
      \prod_{j=1}^{n} \Act_{\xF_{*,\angled{1}/}}(\angled{1},\rho_{j}(i))\arrow{d} \\
      \Act_{\xF_{*}}(\angled{n}) \arrow{r}{\sim} & \prod_{j=1}^{n} \Act_{\xF_{*}}(\angled{1}),
    \end{tikzcd}
  \]
  the map on fibres over each active map $\phi \in
  \Act_{\xF_{*}}(\angled{n})$ is an isomorphism; this is therefore a
  pullback square, so the top horizontal morphism is an equivalence,
  as required.

  We let $\CMod \to \xF_{*}$ denote the (symmetric) \iopd{} whose
  algebras are a commutative algebra together with a module over
  it. This can be described as a category with
  \begin{itemize}
  \item objects lists $(\angled{n}, i_{1},\ldots,i_{n})$ with $i_{s}
    \in \{0,1\}$ (with
    $(\angled{1}, 0)$ representing the algebra and $(\angled{1},1)$
    the module,
  \item a morphism $(\angled{n}, i_{1},\ldots,i_{n}) \to (\angled{m},
    j_{1},\ldots,j_{m})$ is given by a morphism $\phi \colon
    \angled{n} \to \angled{m}$ in $\xF_{*}$ such that for all $s =
    1,\ldots,m$, we have
    \[ \sum_{t \in \phi^{-1}(s)} i_{t} = j_{s}.\]
  \end{itemize}
  We can define a functor $\mu \colon \xF_{*,\angled{1}} \to \CMod$
  given on objects by
  \[\mu(\angled{n}, i) = (\angled{n},
    \delta_{1i},\ldots,\delta_{ni}),\]
  where $\delta_{ji} = 1$ if $j=i$, and $0$ otherwise.
  Given a morphism $(\angled{n},i) \to (\angled{m},j)$ over
  $\phi \colon \angled{n} \to \angled{m}$, we assign to it the
  morphism $\mu(\angled{n},i) \to \mu(\angled{m},j)$ over
  $\phi$, which indeed exists. It is clear that $\mu$ is a morphism of
  cartesian patterns, and identifies $\xF_{*,\angled{1}/}$ with the
  full subcategory of $\CMod$ spanned by the objects with a most one
  copy of $1$. We claim that $\mu$ is a Morita equivalence:
  $\xF_{*,\angled{1}/}^{\el}$ and $\CMod^{\el}$ are both the
  2-element set containing $(\angled{1}, i)$ ($i = 0,1$), and all
  active morphisms to $(\angled{1},i)$ in $\CMod$ are in the image of
  $\mu$, so that \[\Act_{\xF_{*,\angled{1}/}}(\angled{1},i) \simeq\Act_{\CMod}(\angled{1},i).\]
  Since $\CMod$ is extendable by \cref{ex:symopdext}, we can apply
  \cref{propn:Morita} to get for any $\CMod$-monoidal \icat{}
  $\mathcal{V}^{\otimes}$ an equivalence
  \[ \Alg_{\xF_{*,\angled{1}/}/\CMod}(\mathcal{V}) \isoto
    \Alg_{\CMod}(\mathcal{V}).\]
\end{example}

\bibliographystyle{hamsalpha}

\providecommand{\bysame}{\leavevmode\hbox to3em{\hrulefill}\thinspace}
\providecommand{\MR}{\relax\ifhmode\unskip\space\fi MR }
\providecommand{\MRhref}[2]{%
  \href{http://www.ams.org/mathscinet-getitem?mr=#1}{#2}
}
\providecommand{\href}[2]{#2}

\end{document}